\newcommand{\spa}{\hspace{.3mm}}
\DeclareMathOperator{\dote}{\ensuremath{\text{\spa-\spa-\spa-}}}
\theoremstyle{plain}
\newtheorem{theorem}{Theorem}[section]
\newtheorem{lemma}[theorem]{Lemma}
\newtheorem{corollary}[theorem]{Corollary}
\newtheorem{conjecture}[theorem]{Conjecture}
\theoremstyle{definition}
\newtheorem{defin}[theorem]{Definition}
\newtheorem{remark}[theorem]{Remark}
\newtheorem{exmp}[theorem]{Example}
\newtheorem{falseconjecture}[theorem]{False Conjecture}
\def\bx{{{\mathbf x}}}
\def\bz{{{\mathbf z}}}
\def\bv{{{\mathbf v}}}
\def\bw{{{\mathbf {w}}}}
\def\bu{{{\mathbf {u}}}}
\def\bm{{{\mathbf {m}}}}
\def\ba{{{\mathbf {a}}}}
\def\bb{{{\mathbf {b}}}}
\def\F{{{\rm F}}}
\def\K{{{\rm K}}}
\def\colword{{{\rm colword}}}
\def\tab{{{ \text {tab} }}}
\newcommand{\inc}{\text{\rm inc}}
\newcommand{\inv}{\text{\rm inv}}
\newcommand{\SSYT}{\text{\rm SSYT}}
\newcommand{\SYT}{\text{\rm SYT}}
\newcommand{\T}{\text{\rm T}}
\newcommand{\ST}{\text{\rm ST}}
\newcommand{\IYT}{\text{\rm IT}}
\newcommand{\keySSYT}{\text{\rm keyT}}
\newcommand{\keySYT}{\text{\rm keyST}}
\newcommand{\keyIYT}{\text{\rm keyIT}}
\newcommand{\decSSYT}{\text{\rm powT}}
\newcommand{\decSYT}{\text{\rm powST}}
\newcommand{\decIYT}{\text{\rm powIT}}
\newcommand{\posSYT}{\text{\rm HikSYT}}
\newcommand{\decArray}{\text{\rm powArray}}
\newcommand{\injDecArray}{\text{\rm BPA}}
\newcommand{\strongSSYT}{\text{\rm strongT}}
\newcommand{\strongSYT}{\text{\rm strongST}}
\newcommand{\strongIYT}{\text{\rm strongIT}}
\newcommand{\col}{\text{\rm col}}
\newcommand{\row}{\text{\rm row}}
\newcommand{\threeone}{\ensuremath{(\mathbf{3}+\mathbf{1})}}
\newcommand{\twotwo}{\ensuremath{(\mathbf{2} + \mathbf{2})}}
\newcommand{\eval}{\text{\rm Eval}}
\newcommand{\partEval}{\text{\rm partialEval}}
\newcommand{\sort}{\text{\rm sort}}
\newcommand{\tabl}{\text{\rm tab}}
\newcommand{\mult}{\text{\rm mult}}
\newcommand{\peak}{\text{\rm peak}}
\newcommand{\Peak}{\text{\rm Peak}}
\newcommand{\prob}{\text{\rm prob}}
\newcommand{\slot}{\text{\rm slot}}
\newcommand{\ej}{\text{\rm ej}}
\newcommand{\Ej}{\text{\rm Ej}}
\def\by{{{\mathbf {y}}}}
\newcommand{\SK}{\text{ \rm SK}}
\numberwithin{equation}{section}
\title[Lower Bounds for Chromatic Symmetric Functions]{Toward Lower Bounds for\\ Chromatic Symmetric Functions in the elementary basis}
   \author{Isaiah Siegl}
   \address
   {Dept.\ of Mathematics\\
    University of Washington\\
    Seattle, WA}
   \date{\today}
   \email{isaiahsiegl@gmail.com}
\begin{document}

\begin{abstract}
Tatsuyuki Hikita recently proved the Stanley--Stembridge conjecture using probabilistic methods, showing that the chromatic symmetric functions of unit interval graphs are $e$-positive. Finding a combinatorial interpretation for these $e$-coefficients remains a major open problem. One approach is to look for combinatorial interpretations which are subsets of Gasharov's $P$-tableaux. Towards this goal, we introduce sets of \emph{strong} and \emph{powerful} $P$-tableaux, and use them to find combinatorial interpretations for various $e$-coefficients of the chromatic symmetric function $X_{\inc(P)}(\mathbf{x}, q)$. We conjecture that the set of strong $P$-tableaux gives a lower bound for the $e$-coefficients of $X_{\inc(P)}(\mathbf{x}, q)$.
Additionally, we show that strong $P$-tableaux and the Shareshian--Wachs inversion statistic appear naturally in the proof of Hikita's result.
\end{abstract}%

\maketitle
\tableofcontents
\section{Introduction}
In 1995, Stanley \cite{Stanleychromatic} introduced the \emph{chromatic symmetric function} $X_G(\mathbf{x})= \sum_{\kappa} \prod_{v \in V} x_{\kappa(v)}$ of a graph $G = (V,E)$, where the sum is over all proper colorings of $G$. In the years since, the chromatic symmetric function has been a well-studied graph invariant \cite{APdMOZcsfTree, FMWZ, CCcsf, LScsfTree, WYZcsfTree}. The chromatic symmetric function is of particular interest when $G$ is the incomparability graph of a \threeone -free poset. For example, $X_{\inc(P)}(\mathbf{x})$ is Schur-positive when $P$ is \threeone -free, by work of Haiman \cite{Himmanant} and Gasharov \cite{Gasharov}. Stanley observed that in the case of a \threeone -free poset $P$, the coefficients of $X_{\inc(P)}(\mathbf{x}) = \sum_{\lambda \vdash |P|} c_{\lambda}^P e_{\lambda}(\mathbf{x})$ in the basis of elementary symmetric functions are \emph{monomial immanants} of certain matrices, which he previously studied with Stembridge \cite{StanleyStembridge}. Combining work of \cite{Stanleychromatic} and \cite{StanleyStembridge}, Stanley--Stembridge conjectured that the chromatic symmetric functions of incomparability graphs of \threeone -free posets expand positively in the basis of elementary symmetric functions. Guay-Paquet \cite{GPchromatic} reduced the Stanley--Stembridge conjecture to the special case of incomparability graphs of natural unit interval orders. In the case of natural unit interval orders, Shareshian--Wachs \cite{SWchromatic} conjectured and Brosnan--Chow \cite{BrosnanChow} and Guay-Paquet \cite{guaypaquetCohom} proved that the chromatic symmetric function is the Frobenius characteristic of the dot action on the cohomology ring of Hessenberg varieties, giving a representation theoretic realization of Schur-positivity.
Kato \cite{KatoCSF} discovered an alternative geometric interpretation of the chromatic symmetric function, connecting to his previous work on Catalan functions \cite{KatoCatalan}.
Partial results towards the Stanley--Stembridge conjecture have been obtained using a variety of methods \cite{ChoHong, cho2019positivity, Dahlberg, dahlberg2018lollipop, gebhard2001chromatic, harada2019cohomology, Isaiah, TomChromatic}.
Recently, Hikita \cite{HikitaChromatic} proved the Stanley--Stembridge conjecture using probabilistic techniques.

\begin{theorem}[Hikita's Theorem]
\label{SSconj}
\cite{HikitaChromatic}
Let $P$ be a \threeone -free poset, and let $\inc(P)$ denote its incomparability graph. Then $X_{\inc(P)}(\mathbf{x}) = \sum_{\lambda \vdash |P|} c_\lambda^P e_{\lambda}(\mathbf{x})$ is $e$-positive. In other words, each elementary basis coefficient $c_\lambda^P$ is a non-negative integer.
\end{theorem}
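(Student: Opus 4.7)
The plan is first to reduce to the case of natural unit interval orders via Guay-Paquet's result \cite{GPchromatic}, and then to give a \emph{probabilistic} interpretation of each coefficient $c_\lambda^P$. The strategy is to exhibit a finite probability space $\Omega_P$ together with events $A_\lambda \subseteq \Omega_P$ and an explicit positive integer $N_P$ such that $c_\lambda^P = N_P \cdot \Pr(A_\lambda)$. Since probabilities are non-negative and $c_\lambda^P$ is already known to be an integer from its monomial immanant interpretation \cite{StanleyStembridge}, this would immediately force $c_\lambda^P$ to be a non-negative integer.

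The key intermediate object should be the Shareshian--Wachs $q$-refinement $X_{\inc(P)}(\mathbf{x}, q)$, with its combinatorial expansion in terms of $P$-tableaux weighted by the $\inv$-statistic. Starting from that expansion, I would construct a Markov chain on the set of $P$-tableaux (or on a suitably enlarged set of fillings or labelings) whose local moves are swaps of entries in positions that are incomparable in $P$; a carefully chosen acceptance rule together with a detailed balance calculation should force the stationary measure to assign mass proportional to $c_\lambda^P$ on the subset of tableaux whose shape is conjugate to $\lambda$. Evaluating this stationary probability explicitly, using the \threeone-free hypothesis to control the available moves, should then yield the desired formula $c_\lambda^P = N_P \cdot \Pr(A_\lambda)$.

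The main obstacle will be designing the correct transition kernel. A naive uniform random walk would yield the wrong invariant measure, most likely giving a weaker statement such as $h$-positivity rather than $e$-positivity. The \threeone-free hypothesis must enter precisely at the point where accepted moves into and out of each configuration are bijected, and I expect the heart of the argument to be a delicate combinatorial identity — reminiscent of Eulerian-type recursions — that expresses each $\lambda$-contribution as the equilibrium probability of a natural event. If this identity can be verified uniformly for all natural unit interval orders, the desired $e$-positivity follows. Verifying detailed balance is what I would expect to require the most technical work, since it is what ultimately selects the elementary basis rather than the homogeneous or power-sum bases.
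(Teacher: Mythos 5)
The high-level framing is correct: Hikita's proof is indeed probabilistic, and the reduction to natural unit interval orders via Guay-Paquet is the right first step. The identity you're hoping for, $c_\lambda^P = N_P \cdot \Pr(A_\lambda)$, is essentially what Hikita proves, with $N_P$ playing the role of $\prod_i [\lambda_i]_q!$ (at $q=1$ this is $\prod_i \lambda_i!$). However, the mechanism you propose is substantially different from the one Hikita actually uses, and it is not clear your version would go through.

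Hikita does not construct a Markov chain with detailed balance, and he does not work with $P$-tableaux or with swap moves on incomparable entries. Instead, for each reverse Hessenberg function $\bm \in \mathbb{E}_n$, he defines a probability distribution $\prob_\bm$ directly on \emph{classical} standard Young tableaux $\SYT_n$ via a sequential growth process: starting from the empty tableau, one adds the entries $1, 2, \ldots, n$ one at a time, and at step $n$ the new box is placed in a column chosen according to explicit transition probabilities $\varphi_k^{(r)}(T;q)$ that are ratios of $q$-integers (Definition~\ref{def hik prob}). The parameter $r = \bm(n)$ determines, via the indicator sequence $\delta^{(r)}(T)$, which column choices are available and with what weights. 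The theorem is then that $\sum_{T \in \SYT(\lambda)} \prob_\bm(T;q) = q^{|\bm| - \lambda^\star} c_\lambda^{P_\bm}(q) / \prod_i [\lambda_i]_q!$ (Theorem~\ref{thm Hikita}), and positivity follows immediately because the left side is a sum of products of transition probabilities. There is no stationary measure to compute and no detailed balance to verify; the distribution is built by a single forward pass, and the heart of the argument is a $q$-integer identity showing that the resulting normalized sum over $\SYT(\lambda)$ agrees with the $e$-coefficient.

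Your proposed MCMC route faces a concrete gap: a Markov chain whose local moves are swaps of incomparable adjacent entries in a $P$-tableau preserves the shape but also fixes the content, so without further structure it cannot mix between different standard $P$-tableau shapes, and it is not evident how a detailed-balance condition at the level of single incomparable swaps would single out the elementary basis coefficients. Nothing you write identifies the required invariant measure or shows it would even depend on $\lambda$ in the right way. Hikita sidesteps all of this by building the measure explicitly rather than characterizing it as a fixed point.
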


Shareshian--Wachs \cite{SWchromatic} introduced an \emph{inversion} statistic $\inv_G(\kappa)$ on proper colorings of graphs $G$ with vertex set $[n] = \{1,2,...,n\}$, given by
\begin{equation}
\inv_G(\kappa) = \#\{ \{i,j\} \in E(G) : i<j, \ \kappa(i) > \kappa(j)\}.
\end{equation}
Shareshian--Wachs \cite{SWchromatic} then used this inversion statistic to define the \emph{chromatic quasisymmetric function} $X_G(\mathbf{x}, q)$ by
\begin{equation}
X_{G}(\mathbf{x}, q) = \sum_{\kappa} q^{\inv_G(\kappa)} x_{\kappa(1)}x_{\kappa(2)} \cdots x_{\kappa(n)},
\end{equation} 
where the sum is over all proper colorings $\kappa: [n] \to \mathbb{Z}_{>0}$. In the case when $G$ is the incomparability graph of a \emph{natural unit interval order}, $X_{\inc(P)}(\mathbf{x}, q)$ is in fact a symmetric function \cite[Theorem 4.5]{SWchromatic}. The chromatic quasisymmetric functions for incomparability graphs of natural unit interval orders are closely related to the $q$-statistic from the work of Haiman \cite{Himmanant} on immanants in the setting of Hecke algebras. The connection between monomial Hecke algebra immanants in \cite{Himmanant} and the chromatic quasisymmetric functions of \cite{SWchromatic}  was established by Clearman--Hyatt--Shelton--Skandera \cite{CHSS}.
%Furthermore, Shareshian--Wachs conjectured and Brosnan--Chow \cite{BrosnanChow} and Guay-Paquet \cite{guaypaquetCohom} proved that the chromatic quasisymmetric function is the graded Frobenius characteristic of the dot action on the cohomology ring of Hessenberg varieties and therefore Schur-positive.
We write $c_\lambda^P$ for the $e$-coefficients of $X_{\inc(P)}(\mathbf{x})$ and $c_{\lambda}^P(q)$ for the \emph{polynomial} $e$-coefficients of $X_{\inc(P)}(\mathbf{x}, q)$ when $X_{\inc(P)}(\mathbf{x}, q)$ is a symmetric function. When $P$ is clear from context, we will write $c_\lambda$ and $c_\lambda(q)$.
 Shareshian--Wachs made the following refinement of the Stanley--Stembridge conjecture.
\begin{conjecture}\cite[Conjecture 1.3]{SWchromatic}
\label{SWePosConj}
If $P$ is a natural unit interval order, then $X_{\inc(P)}(\mathbf{x}, q) = \sum_{\lambda \vdash n} c_\lambda^P(q) e_\lambda(\mathbf{x})$ is $e$-positive. In other words, the polynomials $c_{\lambda}^P(q)$ have non-negative integer coefficients.
\end{conjecture}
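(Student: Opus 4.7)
The plan is to upgrade Hikita's probabilistic argument from the unrefined setting to the $q$-refined setting by incorporating the Shareshian--Wachs inversion statistic into the probabilistic weights. Hikita establishes $e$-positivity of $X_{\inc(P)}(\mathbf{x})$ by constructing a probability space, indexed by combinatorial data associated to $P$, such that each coefficient $c_\lambda^P$ appears as a non-negative integer count arising from that space. Since this paper already asserts that strong $P$-tableaux and $\inv$ appear naturally in Hikita's construction, the first step is to make that linkage completely explicit and identify precisely where in Hikita's decomposition the inversion statistic can be read off from the underlying configuration, and which subfamily of strong or powerful $P$-tableaux is being enumerated when $q=1$.

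Second, I would introduce a $q$-deformation of the probability measure, replacing each atomic weight with a $q$-weighted analogue that records the inversion count of the associated coloring or tableau. Concretely, I would attempt to rewrite Hikita's identity for $c_\lambda^P$ as a $q$-sum in which each term contributes a monomial $q^k$ with $k \geq 0$. In the language of this paper, the goal would be to realize $c_\lambda^P(q)$ as a generating function $\sum_T q^{\inv(T)}$ over a suitable subset of strong (or powerful) $P$-tableaux, so that the non-negative integer refinement of Theorem~\ref{SSconj} reduces to the existence of such a combinatorial model. Consistency checks (setting $q=1$, verifying known palindromicity, matching small cases) should serve as a guide for picking the correct weighting.

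The main obstacle will be step two: Hikita's probabilities are non-negative rationals that sum to one, and a naive replacement of each weight by $q^{\inv}$ destroys the probabilistic identity that drives the proof. The real difficulty is finding a $q$-analogue of Hikita's coupling or exchangeability argument in which the $q$-weighted measure still satisfies the required invariance or recursion. I expect this to demand either a new involution on Hikita's underlying objects that preserves $\inv$, or a bijective interpretation of $c_\lambda^P(q)$ in terms of the strong/powerful $P$-tableaux that this paper conjectures to undercount and overcount the $e$-coefficients; either route would bypass the probabilistic machinery at the cost of producing a direct combinatorial proof, which would simultaneously address the main open problem highlighted in the abstract.
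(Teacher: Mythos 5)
This statement is labeled a conjecture in the paper (Shareshian--Wachs, Conjecture 1.3), and the paper explicitly says in its introduction that ``Conjecture~\ref{SWePosConj} remains unresolved.'' The paper does not prove it, and your submission does not prove it either: what you have written is a research plan with an admitted gap, not a proof. You yourself flag the obstruction (``the main obstacle will be step two''), so there is nothing to certify as correct.

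The obstruction is real and is made precise by the paper's Theorem~\ref{thm Hik inv}: when one rewrites Hikita's probability identity in terms of the Shareshian--Wachs inversion statistic, one gets
\[
\sum_{T\, \in\, \posSYT(\bm, \lambda)} q^{\inv_{P_\bm}(T)}\, h_T(q) = \frac{c_\lambda^{P_\bm}(q)}{\prod_{i} [\lambda_i]_q!},
\]
where each $h_T(q)$ is a rational function satisfying only $0 < h_T(\alpha) \leq 1$ for $\alpha \geq 0$. The $h_T$ are genuine damping factors, not monomials, so the naive $q$-deformation you propose in step two does not reproduce $c_\lambda^P(q)$ and does not yield a polynomial identity with non-negative integer coefficients. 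Controlling the $h_T$ well enough to extract polynomial non-negativity is exactly what the paper's Conjecture~\ref{conj hik inv} asks for, and that is itself open. In short: the linkage you want to ``make explicit'' in step one is already carried out by the paper (Theorems~\ref{nonzeroCoeffThm} and~\ref{thm Hik inv}), and the result is that the inversion statistic appears with multiplicative correction terms that block the upgrade to a $q$-positivity proof. Your proposal conflates ``the statistic appears'' with ``the statistic witnesses positivity,'' and the gap between those two is precisely the unresolved content of Conjectures~\ref{undercountqConj}, \ref{overcountqConj}, and \ref{conj hik inv}.
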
 

Each natural unit interval order on $[n]$ is indexed by a \emph{reverse Hessenberg function} $\bm.$ Write $P_\bm$ for the natural unit interval order associated to $\bm$. 
To prove the Stanley--Stembridge conjecture, Hikita introduced a probability distribution $\prob_\bm: \SYT_n \to \mathbb{Q}(q)$ on standard Young tableaux for each reverse Hessenberg function $\bm$. Hikita then showed that $c_{\lambda}^{P_\bm}(\tau) > 0$ for all $\tau > 0$ if and only if $\prob_\bm(T; q) \neq 0$ for some $T \in \SYT(\lambda)$. Define the set of \emph{$\bm$-Hikita tableaux of shape $\lambda$} by 
\begin{equation}
\posSYT(\bm, \lambda) = \{T \in \SYT(\lambda) \mid \prob_\bm(T; q) \neq 0\}.
\end{equation}
We recall the necessary details of Hikita's proof in Section~\ref{subsec Hik prelim}.
Hikita's proof of Theorem \ref{SSconj}, and the subsequent proofs of Theorem~\ref{SSconj} given by Griffin--Mellit--Romero--Weigl--Wen \cite{GMRWW} and Huh--Hwang--Kim--Kim--Oh \cite{HHKKO}, leave several questions unanswered. 
In particular, Conjecture \ref{SWePosConj} remains unresolved, and no combinatorial interpretation of the $e$-coefficients of $X_{\inc(P)}(\mathbf{x})$ is known. The problem of finding a combinatorial interpretation of the elementary basis coefficients $c_{\lambda}^P$ and $c_{\lambda}^P(q)$ is the focus of the current paper.
 
%
%Recently, Hikita \cite{HikitaChromatic} proved the Stanley--Stembridge conjecture, showing that $c_\lambda^P$ is non-negative for natural unit interval orders $P$ and partitions $\lambda$. Hikita's proof gives tableaux witnesses for positivity of $c_\lambda^P$, but leaves the problem of finding a combinatorial interpretation of $c_\lambda^P$ unsolved. Hikita's proof does not prove Conjecture \ref{SWePosConj}.

Our search for an interpretation of the $e$-coefficients of $X_{\inc(P)}(\mathbf{x})$ builds on previous work on the Schur expansion coefficients of $X_{\inc(P)}(\mathbf{x})$.
In 1996, Gasharov \cite{Gasharov} introduced $P$-tableaux $\T_P(\lambda)$ and standard $P$-tableaux $\ST_P(\lambda)$ as a generalization of Young tableaux and used them to give a combinatorial interpretation for the Schur coefficients of $X_{\inc(P)}(\mathbf{x})$. 
%\begin{theorem}\cite{Gasharov}
%\label{PSYTthm}
%For a \threeone -free poset $P$ with $n$ elements,
%\begin{equation}
%X_{\inc(P)}(\mathbf{x}) = \sum_{\lambda \vdash n} (\# \SYT_P(\lambda)) s_{\lambda'}(\mathbf{x}).
%\end{equation}
%\end{theorem}
\noindent
Shareshian--Wachs \cite{SWchromatic} also introduced an inversion statistic $\inv_P$ on $P$-tableaux when $P$ is a natural unit interval order and proved an analogue of Gasharov's theorem for $X_{\inc(P)}(\mathbf{x},q)$. We recall the details of these results in Section~2.2.
%\begin{theorem}\cite[Theorem 6.3]{SWchromatic}
%For $P$ a natural unit interval order with $n$ elements,
%\begin{equation}
%X_{\inc(P)}(\mathbf{x}, q) = \sum_{\lambda \vdash n} \sum_{T\, \in\, \SYT_P(\lambda)} q^{\inv_P(T)} s_{\lambda'}(\mathbf{x}).
%\end{equation}
%\end{theorem}

By the triangularity of the change of basis transformation between the Schur and $e$ bases, $P$-tableaux give an upper bound on $c_\lambda^P$. Hence, we may expect to find combinatorial interpretations of $c_\lambda^P$ as subsets of $P$-tableaux.
In this paper we introduce \emph{strong} $P$-tableaux, $\strongSYT_P(\lambda)$, and \emph{powerful} $P$-tableaux, $\decSYT_P(\lambda),$ for \threeone -free posets $P$ such that
\begin{equation}
\strongSYT_P(\lambda) \subseteq \decSYT_P(\lambda) \subseteq \ST_P(\lambda).
\end{equation}
Using these $P$-tableaux, we obtain combinatorial interpretations of $c_\lambda^P$ in various cases.
Furthermore, we conjecture that $\strongSYT_P(\lambda)$ determines a lower bound on the $e$-coefficients in the following way. Additionally, we conjecture that $c_\lambda^P = 0$ whenever $\strongSYT_P(\lambda) = \emptyset$.
\begin{conjecture}
\label{boundsConjecture}
For a \threeone -free poset $P$, 
\begin{equation}
\#\strongSYT_P(\lambda) \leq c_\lambda^P.
\end{equation}
\end{conjecture}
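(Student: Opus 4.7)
Since no combinatorial interpretation of $c_\lambda^P$ is yet known, the plan is to proceed indirectly via Hikita's probabilistic framework, which the abstract signals is the natural arena for strong $P$-tableaux. The overall strategy is to express $c_\lambda^{P_\bm}(q)$ as a weighted sum indexed by standard $P$-tableaux by lifting Hikita's distribution $\prob_\bm$ along a natural map $\Phi: \ST_{P_\bm}(\lambda) \to \SYT(\lambda)$, and then to show that $\strongSYT_{P_\bm}(\lambda)$ injects into the positive support of the resulting sum while $\decSYT_{P_\bm}(\lambda)$ contains it.

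Concretely, I would first define $\Phi$ as the forgetful map sending a standard $P$-tableau to its underlying standard Young tableau, and pull $\prob_\bm$ back along $\Phi$ to obtain a weight $\widetilde{\prob}_\bm(T;q)$ on $\ST_{P_\bm}(\lambda)$. Summing this weight, with the normalization coming from Hikita's formula, should recover $c_\lambda^{P_\bm}(q)$ and specialize at $q = 1$ to $c_\lambda^P$. For the lower bound, I would show that each $T \in \strongSYT_{P_\bm}(\lambda)$ satisfies $\widetilde{\prob}_\bm(T;1) \geq 1$ and sits in a canonical component of the sum not cancelled by any local involution, so that distinct strong tableaux contribute at least $1$ apiece, giving $\#\strongSYT_P(\lambda) \leq c_\lambda^P$. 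For the upper bound, I would construct a sign-reversing involution of Bender--Knuth type, adapted to the poset $P$, pairing non-powerful tableaux in $\ST_{P_\bm}(\lambda) \setminus \decSYT_{P_\bm}(\lambda)$ so that their net contribution vanishes at $q = 1$, while each powerful tableau contributes at most $1$ to what remains, giving $c_\lambda^P \leq \#\decSYT_P(\lambda)$. As a parallel check, I would combine Gasharov's expansion $X_{\inc(P)}(\mathbf{x}) = \sum_\mu \#\ST_P(\mu)\, s_\mu$ with the dual Jacobi--Trudi formula $s_\mu = \det(e_{\mu'_i - i + j})$ to produce an explicit signed $P$-tableau expansion of $c_\lambda^P$ against which the involution can be calibrated.

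The main obstacle is designing the cancellation scheme without an \emph{a priori} target for what $c_\lambda^P$ counts: the strong condition must be rigid enough to resist every local move available to the involution, and the powerful condition must characterize exactly the minimal class closed under such moves. A secondary obstacle is that Hikita's distribution takes values in $\mathbb{Q}(q)$, so extracting integer bounds requires controlling denominators; the Shareshian--Wachs inversion statistic on $\ST_P(\lambda)$ should play a central role here, providing a polynomial refinement compatible with the $P$-tableau structure. Since even a partial proof of the conjecture would supply the first combinatorial handles on $c_\lambda^P$, I expect the full proof to remain out of reach until a genuine combinatorial formula for the $e$-coefficients is found, of which the strong and powerful sets may be the first provable approximations.
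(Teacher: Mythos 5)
This statement is Conjecture~\ref{boundsConjecture}, which the paper leaves open; the paper proves no proof of it exists in the text, only evidence (verification up to $n \le 10$) and several supporting partial results. You correctly recognize this and present a proof \emph{plan} rather than a proof, ending by conceding the goal is likely out of reach. That self-assessment is sound. However, there are two concrete issues with the plan that are worth flagging before you invest time in it.

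First, the ``forgetful map'' $\Phi: \ST_{P_\bm}(\lambda) \to \SYT(\lambda)$ does not exist as stated. A standard $P$-tableau has columns strictly increasing in $P$ and rows non-$P$-descending, but non-$P$-descent ($T_{i,j} \not>_P T_{i,j+1}$) permits $T_{i,j} > T_{i,j+1}$ in the natural order on $[n]$ when the two entries are incomparable. So a standard $P$-tableau is not in general a standard Young tableau of the same shape, and there is no canonical map in the direction you propose along which to pull back $\prob_\bm$. The inclusion the paper actually establishes (Theorem~\ref{nonzeroCoeffThm}) runs the other way: $\posSYT(\bm,\lambda) \subseteq \strongSYT_{P_\bm}(\lambda)$, i.e., the standard Young tableaux in the support of $\prob_\bm$ \emph{are} strong $P$-tableaux, because the Hikita conditions rigidify the entries enough to enforce the column and ladder constraints.

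Second, even granting the direction the paper takes, the lower bound $\#\strongSYT_P(\lambda) \le c_\lambda^P$ would not follow from Theorem~\ref{thm Hik inv} alone. That theorem packages $c_\lambda^{P_\bm}(q)/\prod_i [\lambda_i]_q!$ as a weighted sum over $\posSYT(\bm,\lambda)$ with weights $h_T(q) \in (0,1]$. To turn this into the desired inequality one needs \emph{two} further open statements: Conjecture~\ref{conj strong implies Hik} (so that every strong $P$-tableau is a Hikita tableau, not merely the reverse) and Conjecture~\ref{conj hik inv} (the weights $h_T$ are uniformly $\ge 1/\prod_i [\lambda_i]_q!$). Your phrasing that each strong tableau ``contributes at least $1$'' glosses over the fact that the weights are at most $1$ and the sum carries a nontrivial denominator $\prod_i \lambda_i!$. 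Similarly, for the upper bound $c_\lambda^P \le \#\decSYT_P(\lambda)$ the paper offers no sign-reversing involution and only the $\mathbf{u}$-positivity Conjecture~\ref{overcountConj}; a Bender--Knuth-style argument in $\mathcal{R}_P$ would be genuinely new and is not present anywhere in the current literature cited. Your instinct that the strong condition should be rigid under local moves and the powerful condition should be closed under them is the right heuristic, but realizing it appears to require precisely the combinatorial formula for $c_\lambda^P$ that the conjecture is meant to approximate.
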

\begin{conjecture}
\label{nonzeroCoeffConj}
For a \threeone -free poset $P$, if $\strongSYT_P(\lambda) = \emptyset$, then $c_\lambda^P = 0$.
\end{conjecture}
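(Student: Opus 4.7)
The plan is to prove the statement first for natural unit interval orders via Hikita's probabilistic framework, then descend to general \threeone-free posets via Guay-Paquet's modular reduction. The abstract already signals that ``strong $P$-tableaux and the Shareshian--Wachs inversion statistic appear naturally in the proof of Hikita's result,'' so this connection should do most of the heavy lifting.

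For a natural unit interval order $P_\bm$, the key step would be to establish the containment $\posSYT(\bm, \lambda) \subseteq \strongSYT_{P_\bm}(\lambda)$, where the two sets are identified via the natural shape-preserving correspondence between standard Young tableaux and standard $P$-tableaux. Concretely, I would show that the defining conditions of a strong $P$-tableau are precisely the obstructions to $\prob_\bm(T;q)$ being nonzero, by unrolling Hikita's iterative construction one transition at a time and checking that each strong condition emerges as the nonvanishing condition for a single factor of the probability. Granted this containment, the hypothesis $\strongSYT_{P_\bm}(\lambda) = \emptyset$ forces $\posSYT(\bm,\lambda) = \emptyset$; Hikita's theorem then yields $c_\lambda^{P_\bm}(\alpha) = 0$ for all $\alpha>0$, so the polynomial $c_\lambda^{P_\bm}(q)$ vanishes identically and in particular $c_\lambda^{P_\bm} = c_\lambda^{P_\bm}(1) = 0$.

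For a general \threeone-free poset $P$, I would invoke Guay-Paquet's modular law to write $X_{\inc(P)} = \sum_i \alpha_i X_{\inc(P_i)}$ with $\alpha_i \in \mathbb{Q}_{\geq 0}$ and each $P_i$ a natural unit interval order. Extracting $e$-coefficients gives $c_\lambda^P = \sum_i \alpha_i c_\lambda^{P_i}$, and Theorem~\ref{SSconj} makes each term non-negative, so $c_\lambda^P = 0$ iff $c_\lambda^{P_i} = 0$ for every $i$ with $\alpha_i > 0$. It then suffices to prove the compatibility statement: $\strongSYT_P(\lambda) = \emptyset$ implies $\strongSYT_{P_i}(\lambda) = \emptyset$ for each such $i$, or contrapositively, to exhibit an injection $\strongSYT_{P_i}(\lambda) \hookrightarrow \strongSYT_P(\lambda)$ for each $i$ with $\alpha_i > 0$.

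The main obstacle is precisely this compatibility step. Guay-Paquet's reduction is an identity of symmetric functions rather than an explicit bijection of combinatorial objects, and the strong $P$-tableau condition is defined intrinsically on the poset $P$, so it need not behave functorially under the local modular move. I would attempt to build the required injection by exploiting the local structure of the modular flip (only a few cover relations change), and argue that any strong $P_i$-tableau can be re-read as a strong $P$-tableau outside the flipped region while respecting the hypotheses at the flipped edges. Failing that, a more ambitious route would bypass Guay-Paquet and extend Hikita's probabilistic construction directly to all \threeone-free posets, establishing the natural unit interval case's key containment in that broader generality; the computational evidence up to 10 elements suggests such an extension is plausible, but making it precise is likely where the bulk of the work lies.
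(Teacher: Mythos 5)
Your target is a \emph{conjecture} in the paper — it is stated for all \threeone-free posets and remains open, so there is no complete paper proof to measure your attempt against. That said, the natural unit interval order case you tackle first is exactly what the paper establishes: Theorem~\ref{nonzeroCoeffThm} proves the containment $\posSYT(\bm,\lambda) \subseteq \strongSYT_{P_\bm}(\lambda)$ by induction on $n$, peeling off the largest entry of a Hikita tableau, and combining this with Corollary~\ref{cor hik tab pos} gives $\strongSYT_{P_\bm}(\lambda) = \emptyset \Rightarrow c_\lambda^{P_\bm} = 0$. So your first step matches the paper's Theorem~\ref{nonzeroCoeffThm} in both strategy and content. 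One caution, though: you phrase your plan as showing the strong conditions are ``precisely'' the obstructions to $\prob_\bm(T;q) \neq 0$, which would give the equality $\posSYT(\bm,\lambda) = \strongSYT_{P_\bm}(\lambda)$. The paper proves only the one-sided containment; the reverse inclusion is open (cf.\ Conjecture~\ref{conj strong implies Hik}), and, as your own argument shows, the containment is all that is needed.

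Your Guay-Paquet reduction step is where the conjecture genuinely remains open, and your own flagging of the compatibility gap is the correct diagnosis — this is not a minor technical hurdle. The modular law is an identity among chromatic symmetric functions, not a map on posets or tableaux, and the strong condition (no right-unbalanced ladders) is defined via the incomparability structure of the specific poset, which changes under each modular flip. Worse, the reduction from a general \threeone-free poset to natural unit interval orders is iterated, passing through intermediate \threeone-free posets that are themselves not of unit interval type, so the implication $\strongSYT_P(\lambda) = \emptyset \Rightarrow \strongSYT_{P_i}(\lambda) = \emptyset$ must persist along the entire chain of flips, not a single one. Neither the paper nor your proposal supplies such a mechanism, and the paper leaves the general case entirely open. (A side remark: the paper's text calls Conjecture~\ref{nonzeroCoeffConj} the ``converse'' of Theorem~\ref{nonzeroCoeffThm}, but the implication as written is the contrapositive of the theorem's conclusion and therefore follows from the theorem for natural unit interval orders, exactly as you argue; the genuinely open direction in the unit interval case is the converse, which is Conjecture~\ref{conj strong implies Hik}.)
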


When $P$ is a natural unit interval order, we conjecture that $\strongSYT_P(\lambda)$ gives a stronger lower bound on the coefficients of $c_{\lambda}^P(q)$. 
Conjectures \ref{undercountqConj} and \ref{nonzeroCoeffConj} have been verified for all natural unit interval orders with at most 10 elements via ChatGPT and at most 8 elements with the author's personal computer.
\begin{conjecture}
\label{undercountqConj}
For a natural unit interval order $P$ with $n$ elements and $\lambda \vdash n$, the polynomial
\begin{equation}
c_\lambda^P(q) - \sum_{T\, \in\, \strongSYT_P(\lambda)} q^{\inv_P(T)}
\end{equation}
has non-negative integer coefficients.
\end{conjecture}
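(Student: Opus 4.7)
The plan is to build on the explicit connection between strong $P$-tableaux and Hikita's probability distribution $\prob_\bm$ that the author has developed in Section~\ref{subsec Hik prelim}. Hikita's proof of Theorem~\ref{SSconj} writes $c_\lambda^{P_\bm}(q)$, up to a positive normalizing factor, as a sum over $T \in \SYT(\lambda)$ weighted by $\prob_\bm(T; q)$. Since the abstract advertises that strong $P$-tableaux and the Shareshian--Wachs inversion statistic appear naturally in the proof of Hikita's result, the working hypothesis is that a strong $P$-tableau $T$ corresponds precisely to a standard Young tableau whose weight contains the monomial $q^{\inv_P(T)}$ as a guaranteed contribution, regardless of the more intricate cancellations elsewhere in Hikita's formula.

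Under this hypothesis, the proof reduces to three steps. First, fix a bijection between $\SYT(\lambda)$ and $\ST_P(\lambda)$ that is compatible with the passage from the ordinary inversion statistic to the Shareshian--Wachs statistic $\inv_P$; such a bijection generalizes Gasharov's identification at the ungraded level. Second, use this correspondence together with the explicit form of Hikita's formula to extract, for each $T \in \strongSYT_P(\lambda)$, a distinguished summand equal to $q^{\inv_P(T)}$ in the expression for $c_\lambda^P(q)$. Third, show that the residual polynomial
\[
c_\lambda^P(q) \;-\; \sum_{T\, \in\, \strongSYT_P(\lambda)} q^{\inv_P(T)}
\]
has non-negative integer coefficients. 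Specializing $q = 1$ at the end of Step 3 would simultaneously recover the lower-bound half of Conjecture~\ref{boundsConjecture}.

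The third step is the main obstacle. Hikita's original argument establishes positivity only after substituting real values $\alpha > 0$ for $q$, and extracting a refined statement at the level of polynomial coefficients is delicate because the individual weights $\prob_\bm(T; q)$ are rational functions that involve substantial cancellation upon clearing denominators. Two natural routes present themselves: either construct a sign-reversing involution on the non-strong contributions in Hikita's formula that leaves a manifestly positive remainder, or identify the residual polynomial combinatorially with a $q$-enumeration of a subset of $\decSYT_P(\lambda) \setminus \strongSYT_P(\lambda)$, which would simultaneously advance the companion overcounting Conjecture~\ref{overcountqConj}. Either route will require a finer understanding of $\prob_\bm$ than Hikita needed; an inductive argument exploiting the modular-law-type relations between $X_{\inc(P_\bm)}(\mathbf{x}, q)$ across closely related reverse Hessenberg functions $\bm$ seems like a promising scaffolding for either approach.
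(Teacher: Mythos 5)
The statement you are trying to prove is Conjecture~\ref{undercountqConj}, and the paper does \emph{not} prove it. The author explicitly states that Conjectures~\ref{undercountqConj}, \ref{overcountqConj}, and~\ref{nonzeroCoeffConj} ``have been verified for all natural unit interval orders with at most 10 elements''; no proof is offered, and the conjecture is left open. What the paper \emph{does} establish in Section~4 is a precise bridge to Hikita's work: Theorem~\ref{nonzeroCoeffThm} shows $\posSYT(\bm,\lambda)\subseteq \strongSYT_{P_\bm}(\lambda)$, and Theorem~\ref{thm Hik inv} shows that $c_\lambda^{P_\bm}(q)/\prod_i [\lambda_i]_q! = \sum_{T\in\posSYT(\bm,\lambda)} q^{\inv_{P_\bm}(T)} h_T(q)$ with $0 < h_T(\alpha)\le 1$ for $\alpha\ge 0$. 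The paper then observes (Conjecture~\ref{conj hik inv}) that Conjecture~\ref{undercountqConj} would \emph{imply} a lower bound $h_T(\alpha)\ge 1/\prod_i[\lambda_i]_{q=\alpha}!$. In other words, the Hikita connection you identify is used by the paper to derive \emph{consequences} of the conjecture, not to prove it.

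Beyond the fact that your proposal is a plan rather than a proof (you say so yourself regarding Step~3), two specific points in Steps~1 and~2 would not survive contact with the details. First, there is no natural bijection between $\SYT(\lambda)$ and $\ST_P(\lambda)$ to ``fix'': these are different sets of fillings with different cardinalities in general ($\#\ST_P(\lambda')$ is the Schur coefficient of $X_{\inc(P)}(\bx)$, which is not $\#\SYT(\lambda)$). What actually happens in the paper is a set \emph{inclusion}: since $P = P_\bm$ is a poset on $[n]$, a standard Young tableau with entries $1,\dots,n$ can be read directly as a filling by elements of $P$, and the content of Theorem~\ref{nonzeroCoeffThm} is that every $\bm$-Hikita tableau is a strong $P$-tableau when read this way. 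Second, and more seriously, the direction of containment defeats your Step~2: the paper proves $\posSYT(\bm,\lambda)\subseteq \strongSYT_{P_\bm}(\lambda)$, and whether equality holds is itself open (Conjecture~\ref{conj strong implies Hik} is only the statement that the two sets are simultaneously nonempty). So there may exist strong $P$-tableaux $T$ with $\prob_\bm(T;q) = 0$, for which Hikita's formula contributes nothing at all; you cannot ``extract a distinguished summand $q^{\inv_P(T)}$'' for such $T$. Even for $T \in \posSYT(\bm,\lambda)$, the contribution is $q^{\inv_P(T)} h_T(q) \cdot \prod_i[\lambda_i]_q!$ after clearing the normalization, and since $h_T(q) \le 1$ rather than $h_T(q) \ge 1/\prod_i[\lambda_i]_q!$ being known, you cannot conclude that the contribution dominates $q^{\inv_P(T)}$; that inequality is exactly Conjecture~\ref{conj hik inv}, which the paper shows is a \emph{consequence} of the conjecture you are trying to prove, creating a circularity if you tried to use it.
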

%\begin{conjecture}
%\label{overcountqConj}
%For a natural unit interval order $P$ with $n$ elements and $\lambda \vdash n$, the polynomial
%\begin{equation}
%\sum_{T\, \in\, \decSYT_P(\lambda)} q^{\inv_P(T)} - c_\lambda^P(q)
%\end{equation}
%has non-negative integer coefficients.
%\end{conjecture}

The main results of this paper support these conjectures. 
For a reverse Hessenberg function $\bm$, let $P_\bm$ be the associated natural unit interval order.
In this setting, we show that strong $P_\bm$-tableaux appear naturally in Hikita's proof of the Stanley--Stembridge conjecture. 
In particular, we prove a converse of Conjecture~\ref{nonzeroCoeffConj}, and connect Hikita's proof to the Shareshian--Wachs inversion statistic.
\begin{theorem}
\label{nonzeroCoeffThm}
Let $P$ be the natural unit interval order associated to a reverse Hessenberg function $\bm$. Then $\posSYT(\bm, \lambda) \subseteq \strongSYT_{P}(\lambda)$. Hence, if $c_\lambda^{P} > 0,$ then $\strongSYT_{P}(\lambda) \neq \emptyset$.
\end{theorem}

\begin{theorem}
\label{thm Hik inv}
Let $P$ be the natural unit interval order associated to a reverse Hessenberg function $\bm$. Then for each $T \in \posSYT(\bm, \lambda)$, there exists a function $h_T(q)$ which is a quotient of products of $q$-integers such that $0 < h_T(\tau) \leq 1$ for all real values $\tau \geq 0$, and 
\begin{equation}
 \frac{c_\lambda^{P_\bm}(q)}{[\lambda]_q!} = \sum_{T\, \in\, \posSYT(\bm, \lambda)} q^{\inv_{P_\bm}(T)} h_T(q).
\end{equation}
\end{theorem}

Using strong and powerful $P$-tableaux, we obtain combinatorial interpretations for various special cases of $c_\lambda^P(q)$. In particular, we use the \emph{greedy partition} $\lambda^{gr}(P)$ of a \threeone -free poset $P$ defined by Matherne--Morales--Selover \cite{MMSlorentzian} to show that strong $P$-tableaux are a combinatorial interpretation of $c_\lambda^P(q)$ for certain partitions $\lambda$. We give a combinatorial interpretation of $c_\lambda^P(q)$ when $\lambda = (k, 2^a)$ and $P$ is a natural unit interval order. Additionally, we show that powerful $P$-tableaux are a combinatorial interpretation of $c_\lambda^P(q)$ when $\inc(P)$ is the path graph.

\begin{theorem}
\label{dominantPositivity}
Let $P$ be a natural unit interval order on $[n]$, and let $\lambda^{gr} = \lambda^{gr}(P)$.
Let $S$ be a subset of $[n]$ that contains no adjacent entries, and let $k_i$ be a positive integer for each $i \in S$. 
If $\lambda$ is a partition such that the conjugate partition $\lambda'$ 
satisfies $\lambda'_i = \lambda_i^{gr} - k_i$ if $i \in S$, $\lambda'_i = \lambda^{gr}_i + k_{i-1}$ if $i-1 \in S$, and $\lambda'_i = \lambda_{i}^{gr}$ otherwise, 
then
\begin{equation}
c_{\lambda}^P(q) = \sum_{T\, \in\, \strongSYT_P(\lambda)} q^{\inv_P(T)}.
\end{equation}
\end{theorem}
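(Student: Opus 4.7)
The plan is to reduce the statement to a combination of (i) a base case at the greedy partition $\lambda = \lambda^{gr}(P)$, (ii) a single-column-modification identity, and (iii) an independence argument that exploits the non-adjacency of $S$.

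First, I would treat the base case $S = \emptyset$, so that $\lambda = \lambda^{gr}$. Using the characterization of the greedy partition from Theorem~\ref{greedyPartThm}, I would identify the strong $P$-tableaux of shape $\lambda^{gr}$ explicitly — presumably they are given by a single canonical filling (or a small family) produced by the greedy construction itself — and compute $\sum_{T \in \strongSYT_P(\lambda^{gr})} q^{\inv_P(T)}$ directly. Then I would match this against $c_{\lambda^{gr}}^P(q)$, which is expected to be the boundary case of the $e$-expansion of $X_{\inc(P)}(\mathbf{x},q)$ where the strong lower bound is known to be tight.

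Next, I would establish the single-modification case: fix a position $i$ and integer $k > 0$, and let $\mu$ be the partition with $\mu'_i = \lambda^{gr}_i - k$, $\mu'_{i+1} = \lambda^{gr}_{i+1} + k$, and all other columns unchanged. On the tableau side, I would parametrize $\strongSYT_P(\mu)$ by recording which $k$ boxes of column $i$ are "moved" and their destinations in column $i+1$, verifying that strongness forces these to lie in a controlled window determined by the reverse Hessenberg function $\bm$, and computing the resulting contribution to $\inv_P$. On the coefficient side, I would obtain an explicit expression for $c_\mu^P(q)$ by invoking a local relation for $e$-coefficients near $\lambda^{gr}$ — this could come from modular-law type identities for chromatic symmetric functions or a careful analysis of the relevant monomial immanants — and check that the two expressions agree.

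Finally, for the general case with $|S| \geq 2$, the non-adjacency hypothesis ensures that the modifications at distinct $i, j \in S$ act on disjoint rows of the $P$-tableau and on chromatic-data that do not interact. I would prove that both $\sum_{T \in \strongSYT_P(\lambda)} q^{\inv_P(T)}$ and $c_\lambda^P(q)$ factor as products over $i \in S$ of their single-modification counterparts, reducing the general identity to the case handled in the previous step. The main obstacle will be the single-modification identity: while the enumeration of $\strongSYT_P(\mu)$ and its $\inv_P$ generating function should be tractable from the definitions, establishing the matching closed form for $c_\mu^P(q)$ is the essential input, since Conjecture~\ref{undercountqConj} is in general only known to be a lower bound. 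Verifying the independence/factorization across different $i \in S$ rigorously — in particular, that the tableau constraints really decouple — is the second subtle point where the non-adjacency hypothesis must be used in full force.
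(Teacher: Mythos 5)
Your proposal takes a genuinely different route from the paper's, and a key step of your plan would fail. The paper does not prove a base case at $\lambda^{gr}$ and then perturb; instead it decomposes the conjugate shape $\mu = \lambda'$ into a \emph{sequential concatenation} of one-column and two-column blocks $\mu = \nu^1 \cdot \nu^2 \cdots \nu^m$, where each length-2 block is $(\lambda_i^{gr} - k_i,\, \lambda_{i+1}^{gr} + k_i)$ and each length-1 block is $(\lambda_i^{gr})$. The two inputs are (a) the two-column identity $\keyIYT_P = \strongIYT_P$ from Lemma~\ref{twoColDecLemma}, and (b) the concatenation theorem (Theorem~\ref{concatTheorem}), whose hypothesis $|\mu| = |\lambda^{gr,r}|$ is satisfied at each step because the $k_i$'s cancel within a block. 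Theorem~\ref{concatTheorem} in turn is proved from the dominance bound on injective $P$-arrays (Corollary~\ref{partialDominantCor}) together with elementary facts about products of monomial symmetric functions (Lemma~\ref{mStructLemma}), not from any modular-law or immanant calculus.

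Two specific gaps in your plan. First, the factorization step for $|S| \geq 2$ is not correct: $\strongSYT_P(\lambda)$ is not a Cartesian product of the strong tableaux from the individual modifications, and $\sum_T q^{\inv_P(T)}$ does not factor as a product over $i \in S$. Even in the paper's setup, the concatenation $S \odot T$ must itself be an injective $P$-tableau, which couples adjacent blocks; the non-adjacency of $S$ is used only so that the partition $\lambda'$ is well-defined and each block has at most two columns (so Lemma~\ref{twoColDecLemma} applies), not to make contributions from different $i \in S$ independent. Second, your single-modification step relies on an unspecified ``local relation'' for $c_\mu^P(q)$ near $\lambda^{gr}$; no such relation is available in general, and attempting to compute $c_\mu^P(q)$ directly in closed form is exactly what the concatenation machinery is designed to avoid. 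The insight you are missing is that the greedy partition controls which monomial terms can appear in $\partEval(m_\mu^P(\bu) m_\nu^P(\bu))$, forcing $\partEval(m_\mu^P(\bu) m_\nu^P(\bu)) = \partEval(m_{\mu+\nu}^P(\bu))$ whenever $|\mu| = |\lambda^{gr,r}|$; this, not an explicit formula for $c_\mu^P(q)$, is what propagates the two-column positivity result to the full shape.
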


\begin{theorem}
\label{thm k2positivity}
Let $P$ be a natural unit interval order on $[n]$, and let $\lambda = (k,2^a)\vdash n$. Then there exists a family of $P$-tableaux $\K_P(\lambda)$ defined in Definition~\ref{def KP} such that $\strongSYT_P(\lambda) \subseteq \K_P(\lambda) \subseteq \decSYT_P(\lambda),$ and
\begin{equation}
c_\lambda^P(q) = \sum_{T\, \in\, \K_P(\lambda)} q^{\inv_P(T)}.
\end{equation}
\end{theorem}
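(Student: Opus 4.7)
The plan is to establish the formula $c_{(n-2,2)}^P(q) = \sum_{T \in \K_P(\lambda)} q^{\inv_P(T)}$ by first producing an independent closed-form expansion of the coefficient $c_{(n-2,2)}^P(q)$, and then packaging the resulting combinatorial data as a subset $\K_P(\lambda)$ of standard $P$-tableaux of shape $(n-2,2)$. Standard $P$-tableaux of this shape have a particularly clean structure: the two columns of length $2$ encode two chains of $P$ (equivalently, two edges of $\inc(P)^{c}$), while the remaining $n-4$ length-$1$ columns are forced once those two chains are chosen. So each element of $\ST_P(\lambda)$ is essentially the datum of an ordered pair of two-element chains in $P$, together with the left-to-right order of the length-$1$ columns determined by the Gasharov column conditions.

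First, I would derive a closed form for $c_{(n-2,2)}^P(q)$ using the Shareshian--Wachs expansion of $X_{\inc(P)}(\mathbf{x},q)$ together with a standard $e$-to-$p$ change of basis, or equivalently using the modular law / three-term recurrence that several authors have exploited for small partitions (e.g.\ in the $(n-1,1)$ case). The outcome I expect is a formula in which $c_{(n-2,2)}^P(q)$ is written as a sum over pairs $(\{a,b\},\{c,d\})$ of two-element chains in $P$ satisfying a specific ``$\K$-condition'' (some relative position constraint between the two chains), weighted by $q$ to a natural statistic recording how many edges of $\inc(P)$ are crossed by this pair.

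Second, I would take Definition~\ref{def KP} to be the translation of this $\K$-condition into the language of $P$-tableaux: $\K_P(\lambda)$ is the set of $T \in \ST_P(\lambda)$ whose two length-$2$ columns form a pair of chains satisfying the condition extracted from the closed form. With this definition, the equality $c_\lambda^P(q) = \sum_{T \in \K_P(\lambda)} q^{\inv_P(T)}$ becomes a bookkeeping verification, provided that $\inv_P(T)$ matches the weight in the closed form. This matching is the main technical obstacle; I expect to check it through a case analysis on the relative positions of the endpoints of the two chains, comparing Shareshian--Wachs inversions $\inv_P$ against the intrinsic crossing statistic arising in the $e$-expansion.

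Third, I would verify the sandwich $\strongSYT_P(\lambda) \subseteq \K_P(\lambda) \subseteq \decSYT_P(\lambda)$. Both inclusions should reduce, by unwinding the definitions of \emph{strong} and \emph{powerful} $P$-tableaux from Section~2 applied to the restricted shape $(n-2,2)$, to purely local conditions on the two length-$2$ columns. The strong condition imposes the tightest restriction on these two chains, the powerful condition the loosest, and the $\K$-condition extracted above lies between the two. The main risk here is boundary cases where the two chains share endpoints or lie very close in $P$; these will need to be addressed directly. The heaviest part of the whole argument is the matching of $\inv_P$ with the crossing statistic, since everything else is bookkeeping, and I would expect to devote most of the proof to that comparison.
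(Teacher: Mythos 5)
Your approach differs substantially from the paper's, and the central step of your plan is a genuine gap. The paper never derives a closed form for $c_{(n-2,2)}^P(q)$; it proves the theorem one level upstream, inside the noncommutative ring $\mathcal{R}_P$, starting from the $P$-analogue of the classical identity $m_2 m_{k-2} = m_{(k-2,2)} + m_k$ (for $k>4$). Both $m_2^P m_{k-2}^P$ and $m_k^P$ have positive $\bu$-monomial expansions indexed by powersum words (via Lemma~\ref{hookDecomp}), and the subtraction is handled by an explicit injective, $I_P$-preserving factorization map $\factor_{2,k-2}\colon \decSSYT_P(k)\hookrightarrow\decSSYT_P(2)\times\decSSYT_P(k-2)$ (Definition/Theorem~\ref{thm factor}). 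The complement $\F_P^c(k)$ of its image is then reassembled into powerful $P$-tableaux of shape $(k-2,2)$ by an injective, $I_P$-preserving multiplication map (Lemma~\ref{lem mult map}); the image $\M_P(k-2,2)$ turns out to equal $\keySSYT_P(k-2,2)$ (Theorem~\ref{thm M is key}), $\K_P := \M_P\cap\ST_P$ is Definition~\ref{def KP}, and the stated theorem follows by applying $\eval_q$. The entire content is this cancellation-via-bijection argument modulo $I_P$, which your proposal does not reconstruct.

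Concretely: (i) you expect to extract a closed formula for $c_{(n-2,2)}^P(q)$ from the Shareshian--Wachs expansion, the $e$-to-$p$ change of basis, or the modular law, but none of these gives a transparent sum over pairs of two-element chains weighted by a crossing statistic --- the modular law yields a recursion over Hessenberg functions rather than a product formula, and the $e$-to-$p$ transition matrix carries the alternating signs of the partition-lattice M\"obius function, which you would have to cancel by hand (i.e., rebuild the paper's argument without the structure of $\mathcal{R}_P$ to organize it). (ii) The theorem refers to a \emph{specific} definition of $\K_P$; a $\K_P$ reverse-engineered from whatever formula you obtained would prove at best a variant, not the stated result. (iii) The strong and powerful conditions on $\ST_P(n-2,2)$ are not purely local to the two length-$2$ columns: a right-unbalanced ladder can occur between columns $2$ and $3$ (it arises when the first length-$1$ entry is comparable to \emph{both} entries of column $2$), so the sandwich inclusions constrain the singletons as well, and the ordering of the singletons is not forced once the two chains are chosen. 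Your high-level framing --- a combinatorial set sandwiched between strong and powerful tableaux, weighted by $\inv_P$ --- is sound, but the route you propose lacks the substantive mechanism (the factorization/multiplication maps) that makes the paper's proof go through.
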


\begin{theorem}
\label{dumbbellqPositivityTheorem}
Let $P_n$ be the natural unit interval order on $[n]$ such that $\inc(P_n)$ is a path. Then for all partitions $\lambda \vdash n$,
\begin{equation}
c_\lambda^{P_n}(q) = \sum_{T\, \in\, \decSYT_{P_n}(\lambda)} q^{\inv_{P_n}(T)}.
\end{equation}
\end{theorem}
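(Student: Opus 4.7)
The plan is to prove the identity by induction on $n$, matching a recursion for $c_\lambda^{P_n}(q)$ against a recursion for the generating function $\sum_{T \in \decSYT_{P_n}(\lambda)} q^{\inv_{P_n}(T)}$. The path structure makes both sides tractable: vertex $n$ has the unique neighbor $n-1$ in $\inc(P_n)$, and since the only incomparable pairs of $P_n$ are consecutive integers, the entry $n$ of any $P_n$-tableau must either end its row or have $n-1$ adjacent to it, forcing $n$ into very few possible cells of the shape.

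First I would make the definition of powerful $P_n$-tableaux completely explicit in local terms, using the simple description of $P_n$: columns strictly increase in the integer order and rows can decrease by at most one at each step reading left to right. I would then stratify $\decSYT_{P_n}(\lambda)$ by the cell containing $n$, and verify that deleting $n$ yields a powerful $P_{n-1}$-tableau of shape $\lambda$ with one corner cell removed. The contribution of $n$ to $\inv_{P_n}$ comes only from whether $n-1$ sits strictly above $n$ in $T$ (the unique edge of $\inc(P_n)$ incident to $n$), giving a factor of $q^0$ or $q^1$ depending on that local configuration, and hence a clean recursion on the tableau side.

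Next I would produce the parallel recursion for $c_\lambda^{P_n}(q)$. Here I would invoke the explicit $e$-expansion of the path chromatic symmetric function from Dahlberg's work \cite{dahlberg2018lollipop}, together with its $q$-refinement via the Shareshian--Wachs framework \cite{SWchromatic}, to obtain a formula for $c_\lambda^{P_n}(q)$ that splits according to how the rightmost vertex is incorporated into a partition of $[n]$ indexing the $e$-basis term. Matching this splitting against the tableau recursion term-by-term, and then applying the inductive hypothesis to $c_{\lambda \setminus c}^{P_{n-1}}(q)$ for each valid corner cell $c$, closes the induction.

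The main obstacle I anticipate is aligning the two recursions cleanly, since the chromatic-side recursion most naturally sits in the power-sum or monomial basis, and the translation into the elementary basis can introduce extraneous terms. I expect the cleanest route to be an appeal to the existing closed-form path $e$-expansion rather than a direct deletion argument on $X_{\inc(P_n)}(\mathbf{x}, q)$; the substance of the theorem is then the inversion-preserving bijection between powerful $P_n$-tableaux of shape $\lambda$ and the combinatorial indexing set of that formula, showing that the overcounting which $\decSYT_P(\lambda)$ is expected to exhibit in general \emph{does not occur} in the path case.
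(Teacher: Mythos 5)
Your proposal correctly identifies the key external input---the closed-form $e$-expansion of $X_{\inc(P_n)}(\mathbf{x},q)$ (though the right citation is Shareshian--Wachs, Theorem~\ref{pathSWqFormulaThm}, rather than Dahlberg, whose lollipop result is a $q=1$ statement)---and the correct strategy of showing that powerful tableaux realize that formula exactly. But the deletion-of-$n$ induction you lay out as the primary route has a genuine gap. You assert that removing $n$ from a powerful $P_n$-tableau yields a powerful $P_{n-1}$-tableau of shape $\lambda$ minus a corner. This presupposes $n$ sits in a corner, which is false: since $n-1 \dote_{P_n} n$, $n$ may lie at the bottom of a column with $n-1$ in the cell directly to its right. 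For instance, for $n=8$ and $\lambda=(5,3)$, the powerful array
$A = \begin{ytableau}
1 & 2 & 3 \\
4 & 8 & 7 & 6 & 5
\end{ytableau}$
gives $\tab(A)_{2,2}=8$ and $\tab(A)_{2,3}=7$, so $8$ is not in a corner. Deleting $8$ leaves a non-Ferrers shape; one must also relocate $n-1$, and then re-verify the powerful condition, which depends on the global powersum structure of the row of $A$ containing $n$ and not just on a local neighborhood of $n$ in $T$. Your inversion bookkeeping also needs revision: the relevant statistic $\inv_P$ in \eqref{eqn tab inv} records whether $n-1$ lies in a column to the right of $n$, not strictly above it.

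The paper sidesteps all of this by never working at the level of tableaux and never inducting on $n$. It works directly with the \emph{injective powerful arrays} $\injDecArray_{P_n}(\alpha)$ for each composition $\alpha$ with $\sort(\alpha)=\lambda$, using the bijection $\tab$ of Lemma~\ref{lem tab injective}. Lemmas~\ref{pathOrientationLemma} and~\ref{pathContentLemma} pin down both the content and the internal structure of every row, so an array is recorded exactly by its \emph{peak vector}, and Lemma~\ref{pathPeakLemma} characterizes which peak vectors occur. Lemma~\ref{pathPeakInvLemma} expresses $\inv_{P_n}$ in terms of the peak vector, and Lemma~\ref{pathInjDecArrayqInv} then runs a recursion on the composition (stripping the first part, not the largest entry) to show $\sum_{A\in\injDecArray_{P_n}(\alpha)} q^{\inv_{P_n}(A)} = q^{\ell(\alpha)-1}[\alpha_1]_q\prod_{i\geq 2}[\alpha_i-1]_q$, which is precisely the $\alpha$-term in Theorem~\ref{pathSWqFormulaThm}. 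That is the inversion-preserving enumeration you anticipated at the end, but it is organized composition-by-composition through the arrays, where the bookkeeping is completely explicit, rather than via an $n\to n-1$ deletion on the tableaux, where it is not.
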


The paper is organized as follows. 
In Section 2, we recall previous results and definitions necessary for the rest of the paper. 
In Section 3, we define strong and powerful $P$-tableaux building on the work of \cite{BEPS} and \cite{Hwang}. 
In particular, in Lemma~\ref{twoColDecLemma} we show that $\#\strongSYT_P(\lambda) = c_\lambda^P = \#\decSYT_P(\lambda)$ for $\lambda \vdash n$ a two-column shape, and in Lemma~\ref{hookDecomp}, we show that $c_\lambda^P = \#\decSYT_P(\lambda)$ for $\lambda \vdash n$ a hook shape.
In Section 4, we connect our work to the work of Hikita \cite{HikitaChromatic}, proving Theorem~\ref{nonzeroCoeffThm} and Theorem~\ref{thm Hik inv}. Conjecture~\ref{conj strong implies Hik} is a reformulation of Conjecture~\ref{nonzeroCoeffConj} for natural unit interval orders.
In Section \ref{dominanceSection}, we use concatenation of $P$-tableaux to prove $e$-positivity results related to the greedy partition $\lambda^{gr}(P),$ including Theorem \ref{dominantPositivity}.
As a corollary, we obtain new positivity results in the context of the noncommutative symmetric functions studied in \cite{Hwang} and \cite{BEPS}.
In Section~\ref{sectionk2}, we consider partitions $\lambda$ of the form $(k, 2^a)$ and prove Theorem~\ref{thm k2positivity}. Results of Section~\ref{sectionk2} are given in terms of noncommutative symmetric functions.
In Section \ref{pathSection}, we use a formula of Shareshian--Wachs \cite{SWchromatic} to prove Theorem \ref{dumbbellqPositivityTheorem}.
Conjecture ~\ref{conj barbell powerful} states that $c_{\lambda}^P(q) = \sum_{T \in \decSYT_P(\lambda)} q^{\inv_P(T)}$ when $\inc(P)$ is two complete graphs connected by a path.
Appendix~\ref{append data} contains examples of strong and powerful $P$-tableaux and applications of results of this paper.

In a previous version of this paper, we conjectured that powerful $P$-tableaux give an upper bound for the polynomial $e$-coefficients of $c_\lambda^P(q)$ for natural unit interval orders $P$. The author is very grateful to an anonymous reviewer for pointing out a counterexample to this conjecture. Details of the counterexample are given in Appendix~\ref{append counterexamp}.
\section{Preliminaries}

We assume familiarity with posets and symmetric function theory. The necessary background can be found in \cite{EC1, St}. Some results are stated in terms of \emph{$q$-integers}. For a non-negative integer $n$, define the \emph{$q$-integer} $[n]_q$ to be the polynomial $[n]_q = \frac{1 - q^{n}}{1-q} = 1 + q + \dots + q^{n-1}$. Define the \emph{$q$-integer factorial} by $[n]_q! = [n]_q[n-1]_q \cdots [1]_q$.
For a partition $\lambda$, write $[\lambda]_q!$ for $\prod_{i} [\lambda_i]_q!$.

\subsection{Posets}

Let $P$ be a poset. For elements $a,b \in P$, we write $a <_P b$ if $a$ is less than $b$ in $P$.
\begin{defin}
Let $s$ and $t$ be positive integers. Write $\mathbf{s}$ for the total order on $s$ elements. A poset $P$ is \emph{$\mathbf{s}$-free} if $P$ has no induced subposet isomorphic to $\mathbf{s}$, and $P$ is \emph{$(\mathbf{s} + \mathbf{t})$-free} if $P$ has no induced subposet that is isomorphic to the disjoint union of $\mathbf{s}$ and $\mathbf{t}$.
\end{defin}

\begin{defin}
A poset $P$ on the set $[n]$ is a \emph{natural unit interval order} if there are real numbers $a_1 \leq a_2 \leq \ldots \leq a_n$ such that
\begin{equation}
i <_P j \iff a_{i} + 1 < a_j.
\end{equation}
\end{defin}

\begin{exmp}
\label{ex poset}
Let $ \mathbf{a} = (a_1,a_2,a_3,a_4,a_5) = (0, 0.5, 1.1, 1.4, 2.2)$. Then the natural unit interval order associated to $\mathbf{a}$ is the poset with Hasse diagram
 % https://q.uiver.app/#q=WzAsNSxbMSwyLCIxIl0sWzEsMSwiMyJdLFsxLDAsIjUiXSxbMCwxLCIyIl0sWzIsMSwiNCJdLFswLDFdLFsxLDJdLFszLDJdLFswLDRdXQ==
\[\begin{tikzcd}
	& 5 \\
	2 & 3 & 4 \\
	& 1
	\arrow[from=2-1, to=1-2]
	\arrow[from=2-2, to=1-2]
	\arrow[from=3-2, to=2-2]
	\arrow[from=3-2, to=2-3]
\end{tikzcd}.\]
\end{exmp}

We are primarily concerned with \threeone -free posets and \emph{natural unit interval orders}. 
Observe that if $P$ is a natural unit interval order and $i <_P j$, then $i < j$ in the usual order on $[n]$.
Natural unit interval orders are \threeone -free and  \twotwo -free, and every finite \threeone -free and \twotwo -free poset is isomorphic to a natural unit interval order. Throughout this paper, assume $P$ is a \threeone -free poset on vertex set $[n]$ with order relation $<_P$. When we need the more restrictive case of natural unit interval orders, we will specify that in the hypothesis.

\subsection{$P$-Tableaux}
Let $\lambda = (\lambda_1,\lambda_2,...,\lambda_l) \vdash n$ be a partition. The (English style) Ferrers diagram of shape $\lambda$ is the set of $n$ cells $\{(i,j) \in \mathbb{Z}_{>0}^2: j \le \lambda_i\},$ written using matrix coordinates and drawn as boxes in the plane such that cell $(i,j)$ is in row $i$ and column $j$. We often identify a partition $\lambda$ with its Ferrers diagram.  
We write $\ell(\lambda)$ for the number of nonzero parts of $\lambda$ and $\lambda'$ for the transpose of $\lambda$.

For a composition $\alpha = (\alpha_1,\alpha_2,...,\alpha_l) \vDash n,$ we define the \emph{row shape diagram} $\row(\alpha)$ to be the set of $n$ cells $\{(i,j) \in \mathbb{Z}_{>0}^2 : j \le \alpha_i\}$. Likewise, define the \emph{column shape diagram} $\col(\alpha)$ to be the set $\{(i,j) \in \mathbb{Z}_{>0}^2 : i \leq \alpha_j\}$. Unless specified otherwise, we identify a composition $\alpha$ with its row shape $\row(\alpha)$. For a composition $\alpha \vDash n$, let $\sort(\alpha)$ be the partition of $n$ obtained by sorting the entries of $\alpha$ in decreasing order. For a subset $\beta$ of $\mathbb{Z}^2_{>0}$, we refer to functions with domain $\beta$ as \emph{fillings} of $\beta$. For a filling $A$, we write $A_{i,j}$ for the entry in cell $(i,j)$

A semistandard Young tableau of shape $\lambda \vdash n$ is a filling of $\lambda$ with positive integers such that $T_{i, j} < T_{i+1, j}$ and $T_{i, j} \leq T_{i, j+1}$. 
We say the columns of $T$ are strictly increasing and rows of $T$ are weakly increasing to describe these properties.
Let $\SSYT(\lambda)$ be the set of semistandard Young tableaux of shape $\lambda$. A semistandard Young tableau $T \in \SSYT(\lambda)$ is a \emph{standard} Young tableau if the set of entries of $T$ is $\{1,2,...,n\}$. Let $\SYT(\lambda)$ be the set of standard Young tableaux of shape $\lambda$, and let $\SYT_n$ denote the set of all standard Young tableaux with $n$ cells. 

For \threeone -free posets, Gasharov \cite{Gasharov} introduced $P$-arrays and $P$-tableaux. He then used $P$-arrays to give a combinatorial interpretation for the coefficients of monomial symmetric functions in $X_{\inc(P)}(\mathbf{x})$.

\begin{defin}\cite{Gasharov}
For a column shape diagram $\col(\alpha)$, a \emph{$P$-array} of shape $\col(\alpha)$ is a function $A: \col(\alpha) \to P$ such that $A_{i,j} <_P A_{i+1, j}$. We say the columns of a $P$-array $A$ are \emph{increasing} in $P$. Write $E_P(\col(\alpha))$ for the set of $P$-arrays of shape $\col(\alpha)$. A $P$-array is \emph{injective} if each element of $P$ appears at most once and \emph{bijective} if each element of $P$ appears exactly once.
\end{defin}

\begin{exmp}
For the poset $P$ in Example~\ref{ex poset}, the following is a $P$-array of shape $\col(2,3,1,2)$.
\begin{equation}
\begin{ytableau}
3 & 1 & 4 & 1\\
5 & 3 & \none  & 4 \\
\none & 5 
\end{ytableau}
\end{equation}
\end{exmp}

\begin{theorem}\cite{Gasharov}
For a partition $\lambda \vdash n$, the coefficient of $m_\lambda(\mathbf{x})$ in $X_{\inc(P)}(\mathbf{x})$ is the number of injective $P$-arrays of shape $\lambda'$.
\end{theorem}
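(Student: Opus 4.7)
The plan is to directly unfold the definition of $X_{\inc(P)}(\mathbf{x})$, extract the coefficient of $m_\lambda$ as a count of proper colorings with prescribed color-class sizes, and then observe that such colorings are in bijection with bijective (equivalently, injective) $P$-arrays of the correct shape.

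First, I would expand $X_{\inc(P)}(\mathbf{x}) = \sum_\kappa \prod_{v} x_{\kappa(v)}$ and group the sum by the ``type'' of coloring. Because $X_G$ is symmetric, for any partition $\lambda=(\lambda_1,\dots,\lambda_\ell)\vdash n$ the coefficient of $m_\lambda$ equals the coefficient of the specific monomial $x_1^{\lambda_1}x_2^{\lambda_2}\cdots x_\ell^{\lambda_\ell}$ in $X_{\inc(P)}(\mathbf{x})$. By the defining sum, this coefficient equals the number of proper colorings $\kappa\colon[n]\to\mathbb{Z}_{>0}$ such that $|\kappa^{-1}(i)|=\lambda_i$ for $i=1,\dots,\ell$ (and $0$ otherwise).

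Next, I would translate proper colorings of $\inc(P)$ into chain decompositions of $P$. The edges of $\inc(P)$ are precisely the incomparable pairs of $P$, so an independent set in $\inc(P)$ is exactly a chain in $P$. Hence a proper coloring of type $\lambda$ is the same data as an ordered tuple $(C_1,\dots,C_\ell)$ of disjoint chains of $P$ with $|C_i|=\lambda_i$ whose union is $[n]$. Given such a tuple, I would build a filling by taking the diagram with column heights $(\lambda_1,\dots,\lambda_\ell)$ and placing the elements of each $C_i$ in column $i$ from top to bottom in increasing $P$-order; this produces a bijective $P$-array since the columns are $P$-chains. Conversely, the columns of any bijective $P$-array are chains of $P$, and reading them off recovers the ordered chain decomposition, so the correspondence is a bijection.

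Finally, I would reconcile the shape convention. The diagram with column heights $(\lambda_1,\dots,\lambda_\ell)$ is exactly the Ferrers diagram of the conjugate partition $\lambda'$, which under the paper's convention is $\row(\lambda')$, so the bijective $P$-arrays produced have shape $\lambda'$. Because the total number of cells is $|\lambda'|=n=|P|$, the injectivity and bijectivity conditions for $P$-arrays of shape $\lambda'$ coincide, so the count of injective $P$-arrays of shape $\lambda'$ equals the count above.

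The argument is essentially a direct translation, with no serious obstacle. The only point that requires some care is the shape bookkeeping: confirming that chain-lengths $(\lambda_1,\dots,\lambda_\ell)$ match column-heights of $\lambda'$ under the $\row$-convention, and noting that injectivity upgrades for free to bijectivity since every cell must be filled by a distinct element of $P$.
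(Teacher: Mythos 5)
Your proof is correct and is the standard, direct argument: extract the $m_\lambda$ coefficient as a count of proper colorings of type $\lambda$, identify color classes with $P$-chains (independent sets of $\inc(P)$), and read these chains off as the columns of a bijective $P$-array of shape $\lambda'$, noting that injective and bijective coincide for arrays with $n$ cells. The paper cites this result from Gasharov without proof, so there is nothing to compare against in the text, but your argument is the one Gasharov gives and the shape bookkeeping (column heights of $\row(\lambda')$ are $\lambda_1,\dots,\lambda_\ell$) is handled correctly.
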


\begin{defin}\cite{Gasharov}
For a partition $\lambda$, a $P$-array $T \in E_P(\lambda)$ is a \emph{$P$-tableau of shape $\lambda$} if for all cells $(i,j) \in \lambda$, $T_{i,j} \not>_P T_{i,j+1}$. We say the rows of a $P$-tableau $T$ are \emph{non-descending}. We write $\T_P(\lambda)$ for the set of $P$-tableaux of shape $\lambda$, $\IYT_P(\lambda)$ for the set of injective $P$-tableaux of shape $\lambda$, and $\ST_P(\lambda)$ for the set of bijective $P$-tableaux of shape $\lambda$. We say elements of $\ST_P(\lambda)$ are \emph{standard} $P$-tableaux.
\end{defin}

Note that our definitions of $P$-arrays and $P$-tableaux are the transposes of the arrays and tableaux defined by Gasharov. We take this convention so that $P$-tableaux coincide with Young tableaux when $P$ is a total order.
Gasharov showed that $P$-tableaux give a combinatorial interpretation for the Schur coefficients of $X_{\inc(P)}(\mathbf{x})$. Shareshian--Wachs \cite{SWchromatic} then generalized Gasharov's theorem to the chromatic quasisymmetric function $X_{\inc(P)}(\mathbf{x}, q)$.
\begin{theorem}\cite{Gasharov}
For a \threeone -free poset $P$ and $\lambda \vdash n$, the coefficient of $s_{\lambda}(\mathbf{x})$ in $X_{\inc(P)}(\mathbf{x})$ is $\#\ST_P(\lambda')$.
\end{theorem}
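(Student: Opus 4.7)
The plan is to deduce the Schur expansion from Gasharov's monomial formula (the preceding theorem) via an explicit bijective argument. Since $\{s_\lambda\}$ and $\{m_\mu\}$ are both bases of symmetric functions and $s_\lambda = \sum_\mu K_{\lambda\mu} m_\mu$, the claim $X_{\inc(P)}(\mathbf{x}) = \sum_\lambda \#\ST_P(\lambda')\, s_\lambda$ is equivalent to the numerical identity
\begin{equation*}
\#\{\text{injective $P$-arrays of shape }\mu'\} \;=\; \sum_{\lambda \vdash n} K_{\lambda\mu}\cdot \#\ST_P(\lambda')
\end{equation*}
for every $\mu \vdash n$. The right-hand side counts pairs $(T,U)$ where $T \in \ST_P(\lambda')$ is a standard $P$-tableau and $U \in \SSYT(\lambda)$ has content $\mu$, so it suffices to construct a shape-respecting bijection
\begin{equation*}
\Phi \colon \{\text{injective $P$-arrays of shape } \mu'\} \;\longrightarrow\; \bigsqcup_{\lambda \vdash n} \ST_P(\lambda') \times \SSYT(\lambda,\mu).
\end{equation*}

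For $\Phi$, I would use an RSK-style column insertion adapted to the poset $P$. Given an injective $P$-array $A$ of shape $\mu'$, process its columns from left to right; each column is already strictly $P$-increasing, and its entries are inserted one at a time into a growing standard $P$-tableau $T$ via a $P$-bumping procedure. When an element $a$ attempts to enter a row of $T$, the algorithm locates the leftmost entry $b$ of that row for which $a$ is not strictly greater than $b$ in $P$, replaces $b$ by $a$, and then bumps $b$ into the next row. A recording tableau $U$ stores, in each newly created cell, the index of the column of $A$ responsible for that cell; a standard RSK-style argument then shows that $U$ is semistandard of content $\mu$.

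The main obstacle is verifying that this $P$-bumping preserves the two defining conditions of a standard $P$-tableau (strict $P$-increase down columns and non-$P$-descent along rows) and is reversible. This is precisely where the \threeone-free hypothesis enters: in the local two-row analysis of a bump involving the incoming entry $a$, the displaced entry $b$, and the elements already sitting in the row below, the absence of an induced \threeone-configuration rules out exactly the poset patterns that could otherwise introduce a row $P$-descent or a column incomparability after the bump. Once correctness is established, invertibility is a routine reverse-bumping argument: traverse $U$ from its largest entry inward, undo each insertion step, and read off the columns of $A$ one by one. As a sanity check, when $P$ is totally ordered the procedure specializes to classical RSK column insertion, recovering the identity $p_1^n = \sum_\lambda f^\lambda s_\lambda$ that computes $X_{\inc(P)}$ for the edgeless incomparability graph.
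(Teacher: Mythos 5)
The paper cites this result to Gasharov and gives no proof, so the comparison is with Gasharov's original argument, which proceeds by a sign-reversing involution built on the Jacobi--Trudi expansion (a Lindstr\"om--Gessel--Viennot-type argument), not by any bijective insertion. Your proposed route is genuinely different---an RSK-style bijection $\Phi$---and if it worked it would be stronger, since it would give a bijective refinement of Schur-positivity that the existing proofs (Gasharov, Shareshian--Wachs, the noncommutative Schur function approach of Hwang and BEPS) do not provide. Unfortunately, there is a concrete gap in the bumping step.

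The claim that the \threeone-free hypothesis rules out a new row $P$-descent after a bump is false. Your rule is: insert $a$ into a non-descending row $b_1, b_2, \ldots, b_k$, find the leftmost $j$ with $a \not>_P b_j$, and replace $b_j$ by $a$. For the row to remain non-descending you need $a \not>_P b_{j+1}$, but nothing forces this. Already for a natural unit interval order one can have $a \dote_P b_j$, $b_j \dote_P b_{j+1}$, and $a >_P b_{j+1}$: take the order on $[4]$ with interval endpoints $a_1=0$, $a_2 = 0.5$, $a_3 = 1.1$, $a_4 = 1.7$, so that $4 \dote_P 3$, $3 \dote_P 2$, and $2 <_P 4$. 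The row $(3,2)$ is non-descending, and inserting $a = 4$ locates $j = 1$ (since $4 \dote_P 3$), producing the row $(4,2)$, which has the $P$-descent $4 >_P 2$. No induced \threeone\ appears here, so \threeone-freeness does not protect the local two-row analysis in the way your argument requires. One would need either to bump a whole prefix of the row, or to invoke some global invariant of the partially built tableau to preclude this configuration, and neither is supplied. This is not a small repair: a clean RSK-style insertion for $P$-tableaux of \threeone-free posets is not known, which is precisely why Gasharov's proof (and its quasisymmetric refinement in Shareshian--Wachs) is by sign-reversing involution and why the more recent proofs go through noncommutative Schur functions. Your sanity check for $P$ a total order is consistent but not diagnostic, since in that case every element is comparable to every other and the failure mode cannot arise.

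Separately, even granting a corrected insertion, you would still need to prove that the cells created while processing one column of $A$ form a vertical strip so that the recording tableau $U$ is semistandard of content $\mu$, and that the whole procedure is reversible; both are nontrivial and unaddressed.
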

\begin{theorem}\cite{SWchromatic}
Let $P$ be a natural unit interval order and $\lambda \vdash n$. Then the coefficient of $s_{\lambda}(\mathbf{x})$ in $X_{\inc(P)}(\mathbf{x}, q)$ is 
\begin{equation}
\sum_{T \in\, \ST_P(\lambda')} q^{\inv_P(T)}
\end{equation}
where
\begin{equation}
\label{eqn tab inv}
\inv_P(T) = \#\{(i < j) \mid i, j \text{ incomparable},\ i \text{ appears to the right of } j \text{ in } T\}.
\end{equation}
\end{theorem}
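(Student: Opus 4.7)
The plan is to refine Gasharov's original proof of the $q=1$ case so that the Shareshian--Wachs statistic $\inv_P$ is tracked at every step. The starting point is the observation that, for any weak composition $\alpha$, the pairing $\langle X_{\inc(P)}(\mathbf{x},q), h_\alpha\rangle$ equals $[x^\alpha]\, X_{\inc(P)}(\mathbf{x},q)$ by symmetry and the duality of $\{h_\mu\}$ with $\{m_\mu\}$. This coefficient can be computed directly from the defining sum over proper colorings: each proper coloring of $\inc(P)$ with color multiplicities $\alpha$ corresponds bijectively (by collecting each color class into a $P$-increasing column) to a bijective $P$-array of shape $\col(\alpha)$, and a direct check shows that $\inv_{\inc(P)}(\kappa)$ agrees with $\inv_P$ applied to the resulting array, as in \eqref{eqn tab inv}. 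Consequently,
\begin{equation*}
\langle X_{\inc(P)}(\mathbf{x},q), h_\alpha\rangle = \sum_{A} q^{\inv_P(A)},
\end{equation*}
where $A$ ranges over bijective $P$-arrays of column shape $\alpha$.

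The Jacobi--Trudi identity $s_\lambda = \det(h_{\lambda_i - i + j})$ then yields
\begin{equation*}
[s_\lambda]\, X_{\inc(P)}(\mathbf{x},q) = \sum_{\sigma \in S_{\ell(\lambda)}} \sgn(\sigma) \sum_{A} q^{\inv_P(A)},
\end{equation*}
where the inner sum is over bijective $P$-arrays of shape $\col(\alpha^\sigma)$ with $\alpha^\sigma = (\lambda_i - i + \sigma(i))_i$, and terms containing negative parts vanish. Following Gasharov's strategy, I would then build a sign-reversing involution $\Phi$ on pairs $(\sigma, A)$ that are not fixed points. Scanning $A$ top-down and left-to-right, $\Phi$ locates the first pair of adjacent columns $j,j+1$ and first row $i$ where either $A$ violates the non-descending row condition or $\sigma$ encodes a local inversion at position $j$; it then swaps the tails of columns $j$ and $j+1$ from row $i$ down, simultaneously multiplying $\sigma$ by the transposition $(j\ j{+}1)$. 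This flips the sign, and the fixed points of $\Phi$ are precisely pairs $(\mathrm{id}, T)$ where $T$ is a bijective $P$-tableau of shape $\col(\lambda)$, i.e., an element of $\ST_P(\lambda')$.

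The principal technical obstacle is showing that $\Phi$ preserves $\inv_P(A)$. Inversions between entries outside the pair of swapped columns are untouched, since those entries are not moved. Inversions between two entries both contained in the swapped tails are preserved because the tails are exchanged as blocks between adjacent columns, so the left/right ordering of each incomparable pair is maintained. The delicate case is an inversion between a swapped entry of columns $\{j,j+1\}$ and an entry of those two columns sitting \emph{above} row $i$; here the minimality of $i$ (non-descending condition holds in all higher rows of columns $j,j+1$) combined with the $\threeone$-free hypothesis should force the contributions of the two configurations $(\sigma,A)$ and $\Phi(\sigma,A)$ to $\inv_P$ to cancel in a symmetric fashion. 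Once $\inv_P$-preservation is established, the cancellation from $\Phi$ leaves exactly $\sum_{T \in \ST_P(\lambda')} q^{\inv_P(T)}$, proving the theorem.
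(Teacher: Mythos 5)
This result is cited from Shareshian--Wachs \cite{SWchromatic}; the paper gives no proof of it, so there is no internal argument to compare against. Your general strategy --- extract the monomial coefficients of $X_{\inc(P)}(\mathbf{x},q)$ as $q^{\inv_P}$-weighted sums over bijective $P$-arrays, apply Jacobi--Trudi, and cancel all non-tableau terms by a sign-reversing involution --- is the right template: it is a weighted refinement of Gasharov's proof, and it is in the spirit of Shareshian--Wachs's own treatment. The identification $\inv_{\inc(P)}(\kappa) = \inv_P(A)$ under the coloring-to-array bijection is also correct.

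The gap is that you assert, rather than prove, the one fact that makes this refinement nontrivial: that the involution $\Phi$ is $\inv_P$-preserving. Moreover, the part of the sketch you do supply is wrong. You claim that inversions between two entries both lying in the swapped tails are preserved because ``the left/right ordering of each incomparable pair is maintained,'' but if $a$ lies in the tail of column $j+1$ and $b$ in the tail of column $j$, then after exchanging the tails $a$ sits in column $j$ and $b$ in column $j+1$: their left/right order is reversed, so a pair that contributed to $\inv_P$ before the swap need not afterward. To repair this one must either show that such inversions are created and destroyed in equal number, or arrange the swap point (using the ladder structure between adjacent columns, which is where the natural-unit-interval-order hypothesis beyond mere $\threeone$-freeness enters) so that no incomparable cross-column pair lies entirely inside the exchanged tails. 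Similarly, the ``delicate case'' involving an entry above the swap row is deferred with a ``should.'' Until $\inv_P$-invariance is established in full, the cancellation in the signed double sum does not follow and the proof is incomplete.
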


Let $A$ be a $P$-array with adjacent columns $C = v_1 <_P v_2 <_P ... <_P v_s$ and $D = w_1 <_P w_2 <_P ... <_P w_t$. As $P$ is \threeone -free, each entry of $C$ and $D$ will be incomparable to at most two of the entries in the other column. Furthermore, if an entry of one column is incomparable to two entries in the other column, the two entries will be adjacent in their column. Thus, the connected components of the induced subgraph of $\inc(P)$ in the entries of $C \cup D$ will be a disjoint union of paths and $4$-cycles. In the case when $P$ is a natural unit interval order, the connected components will be paths alternating between $C$ and $D$. 

Let $A$ be a $P$-array with columns $C_1, C_2,...,C_l$. We say a connected component of the induced subgraph of $\inc(P)$ on $C_i \cup C_{i+1}$ is a \emph{ladder} between columns $i$ and $i+1$.
A ladder is \emph{balanced} if it has the same number of elements in each column, \emph{left-unbalanced} if it has more elements in the left column, and \emph{right-unbalanced} if it has more elements in the right column. 
This definition is a slight generalization of \cite[Definition 3.7]{KPchromatic}.
For ladders $K, L$ between a pair of columns, we write $K <_P L$ if $x <_P y$ for all $x \in K$ and $y \in L$. This defines a total order on ladders between a given pair of adjacent columns.

\begin{defin}
Let $\alpha = (\alpha_1, \alpha_2,...,\alpha_l)$ be a composition. For an unbalanced ladder $K$ between columns $C_i$ and $C_{i+1}$ of a $P$-array $A \in E_P(\col(\alpha))$, performing a \emph{ladder swap} on $K$ exchanges the elements of $K$ in $C_i$ with the elements of $K$ in $C_{i+1}$.
Figure~\ref{fig ladder swap} shows a ladder swap.

\begin{lemma}
Let $A$ be a $P$-array of shape $\col(\alpha)$ with unbalanced ladder $K$ between columns $i$ and $i+1$.
Performing a ladder swap on $K$ produces a $P$-array of shape $\col(\tilde{\alpha})$ where $\tilde{\alpha}$ differs from $\alpha$ only in entries $i$ and $i+1$. 
Specifically, if $K$ is a left-unbalanced ladder, then $\tilde{\alpha}_i = \alpha_{i} + 1$ and $\tilde{\alpha}_{i+1} = \alpha_{i+1} - 1$, and if $K$ is a right-unbalanced ladder, then $\tilde{\alpha}_i = \alpha_i - 1$ and $\tilde{\alpha}_{i+1} = \alpha_{i+1} + 1$. 
\end{lemma}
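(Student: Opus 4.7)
The plan is to verify two things: that the swapped filling is still a $P$-array (both affected columns remain strictly increasing in $P$), and that the column-size changes are exactly those stated in the lemma.

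I would first prove a \emph{contiguity} claim: within $C_i$, the elements of $K$ form a contiguous block in the $P$-order on the column, and likewise for $C_{i+1}$. Suppose not, so that $v <_P v' <_P v''$ lie in $C_i$ with $v, v'' \in K$ but $v' \notin K$. Since an unbalanced ladder is a path (by the remark preceding the definition, $4$-cycles are balanced), there is a unique $K$-path from $v$ to $v''$; along that path some consecutive triple $v_x$--$w$--$v_{x'}$ with $v_x, v_{x'} \in C_i$ and $w \in C_{i+1}$ straddles $v'$ in the sense that $v_x <_P v' <_P v_{x'}$. Both $v_x$ and $v_{x'}$ are incomparable to $w$. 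Then $v' <_P w$ would force $v_x <_P w$ by transitivity, contradicting incomparability of $v_x$ and $w$; and $v' >_P w$ would force $v_{x'} >_P w$, contradicting incomparability of $v_{x'}$ and $w$. Hence $w$ is incomparable to $v'$, so $v' \in K$, a contradiction.

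With contiguity established, write the ladder entries as $K \cap C_i = \{v_a <_P \cdots <_P v_{a+r}\}$ at positions $a,\dots,a+r$ of $C_i$ and $K \cap C_{i+1} = \{w_b <_P \cdots <_P w_{b+s}\}$ at positions $b,\dots,b+s$ of $C_{i+1}$. The ladder swap replaces the $v$-block by the $w$-block in $C_i$ (and conversely in $C_{i+1}$), leaving every non-ladder entry fixed. To see the new $C_i$ is still increasing it suffices to verify the two interface inequalities $v_{a-1} <_P w_b$ and $w_{b+s} <_P v_{a+r+1}$, whenever those neighbors exist. For the first: $v_{a-1} \notin K$ means $v_{a-1}$ is comparable to every element of $K \cap C_{i+1}$, so in particular to $w_b$; if $v_{a-1} >_P w_b$ then transitivity with $v_{a-1} <_P v_a$ gives $v_a >_P w_b$, and since every $K$-partner of $w_b$ in $C_i$ lies at or above $v_a$, each such partner is strictly $>_P w_b$, contradicting the incomparability required of a ladder partner. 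The second interface, and the two interfaces for column $i+1$, follow by the symmetric argument.

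The size count is then bookkeeping: column $i$ loses $r+1$ entries and gains $s+1$, so $\tilde{\alpha}_i - \alpha_i = s - r$ and symmetrically $\tilde{\alpha}_{i+1} - \alpha_{i+1} = r - s$. The preceding structural remark shows every ladder is a path or a $4$-cycle, with $4$-cycles and even-length paths balanced; an unbalanced ladder is therefore an odd-length alternating path, forcing $|r - s| = 1$ and yielding the stated $\pm 1$ changes with the direction dictated by which column of the ladder carries the extra vertex. The main obstacle is the contiguity step: the path-tracing argument must use only \threeone-freeness together with the increasing-column property of $P$-arrays, and it has to handle the possibility that the ladder path is not itself monotone in either column's $P$-order---the straddling pair $(v_x, v_{x'})$ need not be adjacent in $C_i$. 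Once contiguity is secured, the interface verification and the size count are nearly automatic.
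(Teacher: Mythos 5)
Your proof is correct in substance and is far more detailed than the paper's own argument, which is the one-liner ``The result follows from the definitions and the fact that the number of elements in each column in an unbalanced ladder differ by~1.'' You rightly identify the two nontrivial facts being glossed over: that the ladder elements occupy a contiguous block of positions in each column, and that the two interface inequalities hold after the swap. Your contiguity argument is sound: along the path from $v$ to $v''$ the $C_i$-entries form a sequence starting $<_P v'$ and ending $>_P v'$ in which $v'$ itself never appears, so a discrete intermediate-value argument yields a consecutive straddling pair, and the ensuing contradiction needs only the degree bound on $\inc(P)$ (each entry has at most two incomparable neighbors in the adjacent column). Your interface argument ($v_{a-1}$ must be comparable to all of $K\cap C_{i+1}$ since $K$ is a full connected component, and $v_{a-1}>_P w_b$ would force every $C_i$-partner of $w_b$ to be comparable to $w_b$) is also correct.

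One thing you should flag: your own careful count gives $\tilde{\alpha}_i-\alpha_i=s-r$, so for a \emph{left}-unbalanced ladder ($r>s$, $r=s+1$) you get $\tilde{\alpha}_i=\alpha_i-1$ and $\tilde{\alpha}_{i+1}=\alpha_{i+1}+1$, which is the \emph{opposite} of what the lemma as printed asserts. You paper over this with the vague phrase ``with the direction dictated by which column carries the extra vertex,'' but the discrepancy is real and is not your error: the signs in the lemma appear to be transposed, as confirmed by the way the lemma is applied in the proof of Theorem~\ref{concatTheorem}, where a \emph{right}-unbalanced swap produces a shape with the $r$-th entry \emph{increased} by~$1$ and the $(r+1)$-st \emph{decreased} by~$1$. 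It would strengthen your write-up to state your computed signs explicitly and note the conflict with the stated lemma, rather than deferring to ``the stated $\pm1$ changes.'' Two small terminology points: the bound of at most two incomparable neighbors (hence degree $\le 2$) is what makes components paths or $4$-cycles, but the remark in the paper does not literally say ``$4$-cycles are balanced''---that is an easy consequence of bipartiteness that you should spell out; and ``odd-length path'' is ambiguous (edges vs.\ vertices)---you mean a path with an odd number of vertices.
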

\begin{proof}
The result follows from the definitions and the fact that the number of elements in each column in an unbalanced ladder differ by 1.
\end{proof}

\begin{figure}[h]
\caption{Ladder Swap}
\label{fig ladder swap}
\[\begin{tikzcd}
	\vdots & \vdots && \vdots & \vdots \\
	{c_1} & {d_1} && {d_1} & {c_1} \\
	{c_2} & {d_2} && {d_2} & {c_2} \\
	\vdots & \vdots & \leftrightarrow & \vdots & \vdots \\
	{c_k} & {d_k} && {d_k} & {c_k} \\
	{c_{k+1}} & \vdots && \vdots & {c_{k+1}} \\
	\vdots &&&& \vdots
	\arrow[from=1-1, to=2-1]
	\arrow[from=1-1, to=2-2]
	\arrow[from=1-2, to=2-1]
	\arrow[from=1-2, to=2-2]
	\arrow[from=1-4, to=2-4]
	\arrow[from=1-4, to=2-5]
	\arrow[from=1-5, to=2-4]
	\arrow[from=1-5, to=2-5]
	\arrow[dashed, no head, from=2-1, to=2-2]
	\arrow[from=2-1, to=3-1]
	\arrow[dashed, no head, from=2-2, to=3-1]
	\arrow[from=2-2, to=3-2]
	\arrow[dashed, no head, from=2-4, to=2-5]
	\arrow[from=2-4, to=3-4]
	\arrow[dashed, no head, from=2-4, to=3-5]
	\arrow[from=2-5, to=3-5]
	\arrow[dashed, no head, from=3-1, to=3-2]
	\arrow[from=3-1, to=4-1]
	\arrow[dashed, no head, from=3-2, to=4-1]
	\arrow[from=3-2, to=4-2]
	\arrow[dashed, no head, from=3-4, to=3-5]
	\arrow[from=3-4, to=4-4]
	\arrow[dashed, no head, from=3-4, to=4-5]
	\arrow[from=3-5, to=4-5]
	\arrow[from=4-1, to=5-1]
	\arrow[from=4-2, to=5-2]
	\arrow[from=4-4, to=5-4]
	\arrow[dashed, no head, from=4-4, to=5-5]
	\arrow[from=4-5, to=5-5]
	\arrow[dashed, no head, from=5-1, to=4-2]
	\arrow[dashed, no head, from=5-1, to=5-2]
	\arrow[from=5-1, to=6-1]
	\arrow[from=5-2, to=6-2]
	\arrow[from=5-2, to=7-1]
	\arrow[dashed, no head, from=5-4, to=5-5]
	\arrow[from=5-4, to=6-4]
	\arrow[dashed, no head, from=5-4, to=6-5]
	\arrow[from=5-4, to=7-5]
	\arrow[from=5-5, to=6-5]
	\arrow[dashed, no head, from=6-1, to=5-2]
	\arrow[from=6-1, to=6-2]
	\arrow[from=6-1, to=7-1]
	\arrow[from=6-5, to=6-4]
	\arrow[from=6-5, to=7-5]
\end{tikzcd}\]
\end{figure}
\end{defin}

%\begin{defin}
%For a $P$-array $A$, let $L_1 <_P L_2 <_P ... <_P L_k$ be the unbalanced ladders between columns $i$ and $i+1$. Define the \emph{$i^{th}$ ladder sequence} $L(A, i)$ of $A$ to be the sequence $a_1a_2\ldots a_k \in \{-1, 1\}^k$ such that $a_j = 1$ if $L_j$ is left-unbalanced and $a_j = -1$ if $L_j$ is right-unbalanced.  
%\end{defin}
%
%\begin{defin}
%A sequence $a_1a_2\ldots a_k \in \{-1, +1\}^k$ is a \emph{ballot sequence} if $\sum_{i=1}^{j} a_i \geq 0$ for each $j \in [k]$. Write $\mathcal{B}(k, r)$ for the set of ballot sequences of length $k$ with $r$ positive entries.
%\end{defin}
%
%\begin{theorem}
%\label{ballotSeqThm}
%\cite[Theorem 7.2]{BEPS}
%Let $P$ be a \threeone -free poset, and let $A$ be a $P$-array. Then $A$ is a $P$-tableau if and only if each ladder sequence $L(A,i)$ is a ballot sequence.
%\end{theorem}

\subsection{Noncommutative $P$-Symmetric Functions}
The main tools we will use to study positivity of $X_{\inc(P)}(\bx, q)$ are the noncommutative symmetric functions studied by Hwang \cite{Hwang} in the natural unit interval case and by Blasiak, Eriksson, Pylyavskyy, and the author \cite{BEPS} in the \threeone -free case. These are a noncommutative generalization of the polynomials introduced by Stanley \cite{StanleyChromatic1998} in 1998, following the noncommutative Schur function techniques developed by Fomin--Greene \cite{FG} and Blasiak--Fomin \cite{BF}.
\begin{defin}
\label{def IP}
 Recall that $P$ is a \threeone -free poset on $[n]$, and let $\mathcal{U} = \{u_1,u_2,...,u_n\}$ be a set of noncommuting indeterminates. Write $\mathbb{Q}\langle \mathcal{U} \rangle$ for the free noncommutative $\mathbb{Q}$-algebra over $\mathcal{U}$.
For $a,b \in P$, write $a \dote_P b$ to mean $a$ and $b$ are equal or incomparable in $P$.
Let $I_P$ denote the ideal of $\mathbb{Q}\langle \mathcal{U} \rangle$ generated by
\begin{align}
u_a u_b - &u_b u_a & &(\text{for all } a <_P b) \\
u_c u_a u_b - &u_b u_c u_a & &(\text{for all } a \dote_P b \dote_P c \text{ and } a <_P c).
\end{align}
Write $\mathcal{R}_P$ for the quotient $\mathbb{Q}\langle \mathcal{U} \rangle/I_P$.
We write $\bu_\bw = u_{w_1}u_{w_2}\dots u_{w_k}$ for a word $\bw = w_1w_2\dots w_k$ with entries in $P$. For words $\bw,\bv$ with entries in $P$, we write $\bw \equiv \bv \mod I_P$ to mean $\bu_{\bw} \equiv \bu_{\bv} \mod I_P$.
\end{defin}
\begin{defin}
For integers $k \geq 1$ and partitions $\lambda = (\lambda_1,\lambda_2,...,\lambda_l)$,
define the \emph{elementary $P$-symmetric functions} in $\mathcal{R}_P$ by 
\begin{equation}
e_k^P(\mathbf{u}) := \sum_{i_1 <_P\, i_2 <_P ... <_P\, i_k} u_{i_k}u_{i_{k-1}}\cdots u_{i_1},
\end{equation}
\begin{equation}
e_{\lambda}^P(\mathbf{u}) := e_{\lambda_1}^P(\mathbf{u})e_{\lambda_2}^P(\mathbf{u}) \cdots e_{\lambda_l}^P(\mathbf{u}).
\end{equation}
\end{defin}
It is useful to think of $e_\lambda^P(\mathbf{u})$ as a generating function for the set of $P$-arrays $E_P(\lambda')$. 
Specifically, we have
\begin{equation}
e_{\lambda}^P(\mathbf{u}) = \sum_{A \in E_P(\lambda')} \mathbf{u}_{\colword(A)},
\end{equation}
where $\colword(A)$ is the word obtained by reading the entries of $A$ bottom-to-top left-to-right.
%Our primary tool for working with $I_P$ is the following lemma.
%\begin{lemma}
%\label{lem IPH ladder swap}
%\cite[Lemma 7.1]{BEPS}
%Let $A$ be a $P$-array, and let $A'$ be a $P$-array obtained by performing a ladder swap on an unbalanced ladder of $A$. Then
%\begin{equation}
%\mathbf{u}_{\colword(A)} = \mathbf{u}_{\colword(A')} \mod I_H^P.
%\end{equation}
%\end{lemma} 
\begin{defin}
For a symmetric function $f(\mathbf{x}) = \sum_{\lambda} a_\lambda e_\lambda(\mathbf{x})$, define the $P$-analogue $f^P(\mathbf{u})$ by $f^P(\mathbf{u}) = \sum_{\lambda} a_\lambda e^P_\lambda(\mathbf{u})$.
\end{defin}
In particular, we have $P$-analogues of the Schur and monomial symmetric functions $\{s_\lambda^P(\mathbf{u})\}$ and $\{m_\lambda^P(\mathbf{u})\}$. Hwang \cite[Theorem 4.2]{Hwang} showed that the $P$-elementary symmetric functions commute in $\mathcal{R}_P$. Hence, any symmetric function identity holds for $P$-symmetric functions. For example, we have the Jacobi--Trudi formula $s_\lambda^P(\mathbf{u}) = \det[e_{\lambda'_{i} - i + j}^P(\mathbf{u})]$ in $\mathcal{R}_P$.

\begin{defin} Let $\Lambda(\mathbf{x})$ denote the ring of symmetric functions in indeterminates $x_1,x_2,...$. Define the \emph{noncommutative $P$-Cauchy product} $\Omega^P(\mathbf{u}, \mathbf{x}) \in \mathcal{R}_P \otimes \Lambda(\mathbf{x})$ by
\begin{equation}
\Omega^P(\mathbf{u}, \mathbf{x}) := \prod_{i \geq 0} \sum_{k \geq 0} e_k^P(\bu) x_i^k = \sum_{\lambda} e_{\lambda}^P(\mathbf{u})m_{\lambda}(\mathbf{x})
\end{equation}
in $\mathcal{R}_P \otimes \Lambda(\bx)$.
\end{defin}
\begin{theorem}\cite[Proposition 2.9]{BEPS}
Let $P$ be a \threeone -free poset. Then
\begin{equation}
\Omega^P(\mathbf{u}, \mathbf{x}) = \sum_{\lambda} s_\lambda^P(\mathbf{u}) s_{\lambda'}(\mathbf{x}) = \sum_{\lambda} m_{\lambda}^P(\mathbf{u}) e_\lambda(\mathbf{x}).
\end{equation} 
\end{theorem}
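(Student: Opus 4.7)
The plan is to lift the classical dual Cauchy identity to the noncommutative setting by exploiting Hwang's theorem that the elementary $P$-symmetric functions commute pairwise in $\mathcal{R}_P$.

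First, I would let $\Lambda(\mathbf{y})$ denote the ring of commutative symmetric functions in auxiliary variables $\mathbf{y} = (y_1, y_2, \ldots)$ and define a $\mathbb{Q}$-algebra homomorphism
\[
\Psi : \Lambda(\mathbf{y}) \longrightarrow \mathcal{R}_P
\]
by sending $e_k(\mathbf{y}) \mapsto e_k^P(\mathbf{u})$. Since $\Lambda(\mathbf{y})$ is freely generated as a commutative $\mathbb{Q}$-algebra by $e_1(\mathbf{y}), e_2(\mathbf{y}), \ldots$, the map $\Psi$ is well-defined precisely because the prescribed images commute, which is exactly \cite[Theorem 4.2]{Hwang}. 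By the excerpt's definition of the $P$-analogue $f^P(\mathbf{u})$ of a symmetric function $f$, it follows immediately that $\Psi(e_\lambda(\mathbf{y})) = e_\lambda^P(\mathbf{u})$, $\Psi(m_\lambda(\mathbf{y})) = m_\lambda^P(\mathbf{u})$, and $\Psi(s_\lambda(\mathbf{y})) = s_\lambda^P(\mathbf{u})$; the last of these uses the Jacobi--Trudi expansion, which is a polynomial identity in the $e_k$'s and so transports through $\Psi$.

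Next, I would invoke the classical dual Cauchy identity in $\Lambda(\mathbf{y}) \otimes \Lambda(\mathbf{x})$,
\[
\sum_\lambda e_\lambda(\mathbf{y}) m_\lambda(\mathbf{x}) \;=\; \prod_{i,j}(1+y_i x_j) \;=\; \sum_\lambda s_\lambda(\mathbf{y}) s_{\lambda'}(\mathbf{x}) \;=\; \sum_\lambda m_\lambda(\mathbf{y}) e_\lambda(\mathbf{x}),
\]
where the middle equality is standard (e.g.\ via the $\omega$-involution applied to the usual Cauchy identity on $h_\lambda, m_\lambda, s_\lambda$) and the rightmost equality follows from the symmetry of $\prod_{i,j}(1+y_i x_j)$ in $\mathbf{x}$ and $\mathbf{y}$. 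Applying $\Psi \otimes \mathrm{id}_{\Lambda(\mathbf{x})}$ term-by-term, the leftmost sum becomes $\Omega^P(\mathbf{u}, \mathbf{x})$ by the definition of the noncommutative $P$-Cauchy product, while the other two sides become exactly $\sum_\lambda s_\lambda^P(\mathbf{u}) s_{\lambda'}(\mathbf{x})$ and $\sum_\lambda m_\lambda^P(\mathbf{u}) e_\lambda(\mathbf{x})$, yielding both claimed equalities at once.

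The only substantive obstacle is the well-definedness of $\Psi$, which rests entirely on the commutation relation among the $e_k^P(\mathbf{u})$; the excerpt flags this exactly, noting that \emph{any} symmetric function identity transports through Hwang's theorem. Once commutativity is granted, the proposition is a formal pullback of a classical identity in $\Lambda \otimes \Lambda$ and requires no further combinatorial input. A more refined combinatorial proof would build explicit bijections between $P$-arrays and appropriate pairs of $P$-tableaux — natural in view of Gasharov's theorem — but such bijections would recover no information beyond the commutation-plus-pullback argument.
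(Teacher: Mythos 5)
Your argument is correct and is essentially the route the paper itself signals: the text remarks, after citing Hwang's commutation theorem, that ``any symmetric function identity holds for $P$-symmetric functions,'' which is exactly your homomorphism $\Psi$ applied to the dual Cauchy identity. The only cosmetic point worth flagging is that $\mathcal{R}_P$ is noncommutative, so one should say that $\Psi$ lands in the \emph{commutative} subalgebra of $\mathcal{R}_P$ generated by the $e_k^P(\mathbf{u})$ (which is what makes the universal property of $\Lambda \cong \mathbb{Q}[e_1, e_2, \ldots]$ apply); once that is noted, your pullback of $\sum_\lambda e_\lambda(\mathbf{y}) m_\lambda(\mathbf{x}) = \sum_\lambda s_\lambda(\mathbf{y}) s_{\lambda'}(\mathbf{x}) = \sum_\lambda m_\lambda(\mathbf{y}) e_\lambda(\mathbf{x})$ via $\Psi \otimes \mathrm{id}$ gives the proposition immediately.
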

\begin{defin}
Define the \emph{evaluation functions} $\eval: \mathcal{R}_P \to \mathbb{Q}$  and, for $P$ a natural unit interval order, $\eval_q: \mathcal{R}_P \to \mathbb{Q}[q]$ to be the linear functions defined by
\begin{equation}
\eval(\bu_{\mathbf{w}}) = \begin{cases} 1 & \text{if }\mathbf{w} \in \mathfrak{S}_n, \\
0 & \text{otherwise,}
\end{cases} \hspace{1cm} \text{ and } \hspace{1cm} \eval_q(\bu_{\mathbf{w}}) = \begin{cases} q^{\inv_P(\mathbf{w})} & \text{if } \mathbf{w} \in \mathfrak{S}_n, \\
0 & \text{otherwise,}
\end{cases}
\end{equation}
where $\inv_P(\mathbf{w}) = \#\{(i, j) \mid i < j, w_i > w_j, \text{ and } w_i \dote_P w_j \}$ and $\mathfrak{S}_n$ is the set of permutations of $[n]$ written as words in one-line notation.
\end{defin}

\begin{theorem}\label{evalTheorem}\cite[Corollary 2.10]{BEPS}
Let $P$ be a \threeone -free poset.
Applying the evaluation maps to $\Omega^P(\mathbf{u}, \mathbf{x})$, we have
\begin{equation}
X_{\inc(P)}(\mathbf{x}) = \eval(\Omega^P(\mathbf{u},\mathbf{x})) = \sum_{\lambda} \eval(s_{\lambda}^P(\mathbf{u}))s_{\lambda'}(\mathbf{x}) = \sum_{\lambda} \eval(m_{\lambda}^P(\mathbf{u}))e_{\lambda}(\mathbf{x}),
\end{equation}
and if $P$ is a natural unit interval order,
\begin{equation}
X_{\inc(P)}(\mathbf{x}, q) = \eval_q(\Omega^P(\mathbf{u}, \mathbf{x})) =  \sum_{\lambda} \eval_q(s_{\lambda}^P(\mathbf{u}))s_{\lambda'}(\mathbf{x}) = \sum_{\lambda} \eval_q(m_{\lambda}^P(\mathbf{u}))e_{\lambda}(\mathbf{x}).
\end{equation}
In particular, for $\lambda \vdash n$, $c_{\lambda}^P = \eval(m_{\lambda}^P(\mathbf{u}))$ and $c_{\lambda}^P(q) = \eval_q(m_{\lambda}^P(\mathbf{u}))$.
\end{theorem}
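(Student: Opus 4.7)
The equalities $\eval(\Omega^P(\mathbf{u},\mathbf{x})) = \sum_\lambda \eval(s_\lambda^P(\mathbf{u})) s_{\lambda'}(\mathbf{x}) = \sum_\lambda \eval(m_\lambda^P(\mathbf{u})) e_\lambda(\mathbf{x})$ and their $q$-analogues follow immediately by applying the linear maps $\eval$ and $\eval_q$ to the Cauchy product identity of Proposition~2.9 from \cite{BEPS} quoted above. So the substantive content is the leftmost equality $X_{\inc(P)}(\mathbf{x}) = \eval(\Omega^P(\mathbf{u},\mathbf{x}))$ and its $q$-refinement, which I would prove by matching coefficients of $m_\lambda(\mathbf{x})$ on both sides.

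Using $\Omega^P(\mathbf{u}, \mathbf{x}) = \sum_\lambda e_\lambda^P(\mathbf{u}) m_\lambda(\mathbf{x})$ together with the formula $e_\lambda^P(\mathbf{u}) = \sum_{A \in E_P(\lambda')} \mathbf{u}_{\colword(A)}$, the definition of $\eval$ identifies $\eval(e_\lambda^P(\mathbf{u}))$ with the number of bijective $P$-arrays having column sizes $\lambda_1, \ldots, \lambda_{\ell(\lambda)}$. On the chromatic side, the coefficient of the leading monomial $x_1^{\lambda_1} \cdots x_{\ell(\lambda)}^{\lambda_{\ell(\lambda)}}$ of $m_\lambda(\mathbf{x})$ in $X_{\inc(P)}(\mathbf{x})$ counts proper colorings $\kappa$ of $\inc(P)$ using colors $\{1, \ldots, \ell(\lambda)\}$ with $|\kappa^{-1}(i)| = \lambda_i$. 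Each color class $\kappa^{-1}(i)$ is an antichain of $\inc(P)$, equivalently a chain of $P$, so sorting each class by $<_P$ and stacking them left-to-right as columns gives a bijection $\kappa \leftrightarrow A_\kappa$ between such colorings and bijective $P$-arrays of the prescribed shape. This proves the non-$q$ identity.

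For the $q$-refinement the same bijection is used, and the additional task is to verify $\inv_P(\kappa) = \inv_P(\colword(A_\kappa))$ when $P$ is a natural unit interval order. For each incomparable pair $a < b$ in $[n]$, a contribution to $\inv_P(\kappa)$ is equivalent to $\kappa(a) > \kappa(b)$, i.e., $a$ lying in a strictly later column of $A_\kappa$ than $b$. Since $a$ and $b$ cannot occupy the same column (they are incomparable) and within-column pairs are comparable in $P$ and contribute nothing to either statistic, this is equivalent to $a$ appearing strictly after $b$ in $\colword(A_\kappa)$, which is precisely the condition for the pair $\{a, b\}$ to form an inversion in that word.

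I expect the main technical obstacle to be confirming that $\eval$ and $\eval_q$ are well-defined on $\mathcal{R}_P = \mathbb{Q}\langle \mathcal{U}\rangle / I_P$, so that they can be validly applied to the Cauchy product identity in the first place. The commutation relations $u_a u_b - u_b u_a$ for $a <_P b$ are immediate since the pair is comparable and never contributes to $\inv_P$. The three-letter relations $u_c u_a u_b - u_b u_c u_a$ with $a \dote_P b \dote_P c$ and $a <_P c$ are more delicate: contributions from the three-position block to outside positions are unchanged because the underlying multiset $\{a, b, c\}$ of entries is preserved, while within-block contributions to $\inv_P$ are matched by a short case analysis after using the interval constraints of a natural unit interval order to rule out all orderings of $a, b, c$ in $[n]$ except $a < b < c$, in which case the pairs $\{a, b\}$ and $\{b, c\}$ make offsetting contributions summing to the same $\inv_P$ total in both words.
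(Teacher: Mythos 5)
The paper does not prove this statement itself; it cites it as \cite[Corollary~2.10]{BEPS}. Your reconstruction is correct and is essentially the standard argument one would expect to find in that reference: you identify $\eval(e_\lambda^P(\mathbf{u}))$ (respectively its $q$-analogue) with the count of bijective $P$-arrays having column sizes $\lambda_1,\dots,\lambda_{\ell(\lambda)}$, biject these with proper colorings having color content $\lambda$ by sorting each color class $\kappa^{-1}(i)$ into a chain of $P$, match coefficients of $m_\lambda(\mathbf{x})$ on both sides of $\Omega^P(\mathbf{u},\mathbf{x})=\sum_\lambda e^P_\lambda(\mathbf{u})m_\lambda(\mathbf{x})$, and then transport the identity across the Cauchy expansions. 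Your verification that $\inv_G(\kappa)=\inv_P(\colword(A_\kappa))$ and your check that $\eval_q$ kills both generators of $I_P$ (including the case analysis that the constraints $a\dote_P b\dote_P c$, $a<_P c$ in a natural unit interval order force $a<b<c$, so the within-block inversion count is $1$ for both $cab$ and $bca$) are both correct.

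Two small points worth making explicit if this were written up in full. First, comparing coefficients of the leading monomials $x_1^{\lambda_1}\cdots x_{\ell(\lambda)}^{\lambda_{\ell(\lambda)}}$ determines the equality of the two sides only because both are \emph{symmetric}: the left side is an $m$-linear combination by construction, and the right side is symmetric by \cite[Theorem~4.5]{SWchromatic} for natural unit interval orders (and always in the $q=1$ case). You use this implicitly; it should be cited. Second, for well-definedness of $\eval_q$ one must check it vanishes on the whole two-sided ideal $I_P$, i.e., on $f\cdot r\cdot g$ for arbitrary words $f,g$ flanking a generating relation $r$; your remark that the multiset of letters in the three-letter block is preserved handles the cross-terms with $f$ and $g$, so the argument does go through, but it is worth stating the reduction to in-block and cross-block pairs explicitly.
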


Thus, using the evaluation maps, if one can prove positivity results for $P$-symmetric functions, one can obtain positivity results for $X_{\inc(P)}(\mathbf{x})$ and $X_{\inc(P)}(\mathbf{x}, q)$. In particular, Theorem \ref{evalTheorem} implies that if $s_\lambda^P(\mathbf{u})$ can be written as a positive sum of $\mathbf{u}$-monomials, then the coefficient of $s_{\lambda'}(\mathbf{x})$ in $X_{\inc(P)}(\mathbf{x})$ is nonnegative. Likewise, if $m_{\lambda}^P(\mathbf{u})$ can be written as a positive sum of $\mathbf{u}$-monomials, then the coefficient of $e_{\lambda}(\mathbf{x})$ is nonnegative. In the Schur basis, this is achieved through the following theorem.
\begin{theorem}
\cite{BEPS, Hwang}
Let $P$ be a \threeone -free poset. Then
\begin{equation}
s_\lambda^P(\mathbf{u}) \equiv \sum_{T\, \in\, \T_P(\lambda)} \mathbf{u}_{\colword(T)} \mod I_P.
\end{equation}
\end{theorem}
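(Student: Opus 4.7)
The plan is to deduce the identity from the Jacobi--Trudi formula in $\mathcal{R}_P$ together with a sign-reversing involution on signed shifted $P$-arrays, whose fixed points are exactly the elements of $\T_P(\lambda)$. First, I would invoke commutativity of the elementary $P$-symmetric functions in $\mathcal{R}_P$ (\cite[Theorem 4.2]{Hwang}) to justify the Jacobi--Trudi expansion
\begin{equation*}
s_\lambda^P(\mathbf{u}) = \det\bigl[e_{\lambda'_i - i + j}^P(\mathbf{u})\bigr] = \sum_{\sigma \in \mathfrak{S}_{\ell(\lambda')}} \sgn(\sigma) \prod_{i=1}^{\ell(\lambda')} e^P_{\lambda'_i - i + \sigma(i)}(\mathbf{u}).
\end{equation*}
Unfolding $e_k^P(\mathbf{u}) = \sum_{i_1 <_P \cdots <_P i_k} u_{i_k} \cdots u_{i_1}$, each $\sigma$-summand is a signed sum over tuples of strictly $P$-increasing columns of heights $\lambda'_i - i + \sigma(i)$. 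I would encode such a tuple as a \emph{shifted $P$-array}, drawn so that the $i$-th column occupies rows $i - \sigma(i) + 1$ through $\lambda'_i$. When $\sigma$ is the identity, the resulting shape is exactly $\col(\lambda')$, and a shifted $P$-array whose rows are non-descending in $P$ is precisely an element of $\T_P(\lambda)$.

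Next, I would define a sign-reversing involution $\iota$ on the set of shifted $P$-arrays that are not $P$-tableaux of shape $\lambda$. Scanning adjacent column pairs from left to right, I locate the smallest index $i$ for which either (a) the shifts of columns $i$ and $i+1$ disagree, so $\sigma$ is non-identity at this step, or (b) the two columns are flush but the row-non-descending condition fails in some row. Between columns $i$ and $i+1$, the \threeone-free hypothesis guarantees that the incomparabilities decompose into disjoint ladders (paths and $4$-cycles in the incomparability graph), so I would select a canonical unbalanced ladder witnessing the defect and apply the ladder swap of Figure~\ref{fig ladder swap}, which exchanges a top segment of column $i$ with a top segment of column $i+1$. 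This modifies the sequence of column heights by transposing the entries at positions $i$ and $i+1$, hence transposes $\sigma$ there and flips $\sgn(\sigma)$, while keeping columns $P$-increasing.

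The main obstacle is to verify that $\iota$ is well-defined and satisfies $\iota \circ \iota = \mathrm{id}$. This requires showing that, after the swap, (i) no new defect appears strictly to the left of the chosen pair, so the leftmost-bad-pair rule still selects the same columns, and (ii) the chosen ladder is recovered by the same canonical rule from the swapped side, ensuring invertibility. The \threeone-freeness is essential here, since it is what forces the two-column incomparability structure to be a disjoint union of ladders in the first place, so that a canonical ``top segment'' swap makes sense and is uniquely reversible. Once $\iota$ is established, signed cancellation leaves only the identity-$\sigma$ contribution with non-descending rows, which is exactly $\sum_{T \in \T_P(\lambda)} \mathbf{u}_{\colword(T)}$, as claimed.
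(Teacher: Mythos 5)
The paper itself cites this theorem from \cite{BEPS, Hwang} without giving a proof, so the relevant comparison is to those sources. Your Jacobi--Trudi plus sign-reversing-involution sketch is a natural Fomin--Greene style plan, and you correctly flag well-definedness and involutivity of $\iota$ as obstacles. But you omit a more fundamental issue: for ``signed cancellation'' to take place in $\mathcal{R}_P$, the paired configurations must yield the \emph{same element} of $\mathcal{R}_P$; that is, you need $\mathbf{u}_{\colword(A)} \equiv \mathbf{u}_{\colword(\iota(A))} \bmod I_P$. In the commutative Gessel--Viennot argument this is automatic, since the weight is a product of commuting variables indexed by cells; here the weight is the noncommutative column reading word, and moving entries between adjacent columns genuinely changes that word. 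Establishing the congruence modulo the relations of Definition~\ref{def IP} is the technical heart of the theorem --- it is exactly where $\threeone$-freeness and the relation $u_c u_a u_b = u_b u_c u_a$ do their work --- and it is entirely absent from your sketch.

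There is also a mismatch between your defect rule and the operation you invoke. The ladder swap of Figure~\ref{fig ladder swap} exchanges the entries of an unbalanced \emph{connected component} of the incomparability graph between two columns, which need not be a ``top segment'' in the Gessel--Viennot sense. Moreover, a row-descent in a flush pair of columns (your case (b)) involves a \emph{comparable} pair $T_{i,j} >_P T_{i,j+1}$, which by definition does not lie inside any ladder, so it is unclear which ladder you would swap or why an unbalanced one must exist there. You would need to specify a single operation covering both defect types and prove that it is an $I_P$-compatible involution; as written, the two operations are conflated and the involution is underdefined.
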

\noindent
%In the monomial basis, this leads to the following strengthening of the Stanley--Stembridge conjecture and Conjecture \ref{SWePosConj}.
%\begin{conjecture}\label{mPposConj}
%\cite[Conjecture 3.10]{Hwang}\cite[Conjecture 3.27]{BEPS}
%For $P$ a \threeone -free poset, there exist nonnegative integers $a_{\mathbf{w}}$ such that 
%\begin{equation}
%m_{\lambda}^P(\mathbf{u}) \equiv \sum_{\mathbf{w} \in P^{|\lambda|}} a_{\mathbf{w}} \mathbf{u}_{\mathbf{w}}\mod I_P,
%\end{equation}
%where $P^{|\lambda|}$ is the set of words of length $|\lambda|$ with entries in $P$.
%\end{conjecture}

Positivity results for $P$-monomial symmetric functions can be obtained using symmetric function identities. For a partition $\lambda$, write $\lambda^-$ for the partition obtained by removing the first column of $\lambda$.
\begin{theorem}
\label{HwangMaxChainThm}
\cite[Theorem 4.37]{Hwang}
Let $r$ be the number of elements in a maximum length chain of $P$. If $\lambda$ is a partition such that $\ell(\lambda) = r$, then
\begin{equation}
m_{\lambda}^P(\mathbf{u}) = e_r^P(\mathbf{u}) m_{\lambda^-}^P(\mathbf{u}).
\end{equation}
\end{theorem}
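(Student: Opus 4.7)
The plan is to reduce the identity in $\mathcal{R}_P$ to a simple commutative identity in $\Lambda$, exploiting two structural facts recalled in Section~2.3. First, by the commutativity result \cite[Theorem~4.2]{Hwang} the elements $e_k^P(\mathbf{u})$ pairwise commute in $\mathcal{R}_P$. Since $\Lambda \cong \mathbb{Q}[e_1, e_2, \ldots]$ as a polynomial ring, the assignment $e_k \mapsto e_k^P(\mathbf{u})$ therefore extends to a well-defined ring homomorphism $\varphi: \Lambda \to \mathcal{R}_P$; by the definition of $f^P(\mathbf{u})$ via the $e$-expansion of $f$, this homomorphism is precisely $f \mapsto f^P(\mathbf{u})$. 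Second, because $r$ is the length of a longest chain in $P$, the definition of $e_k^P(\mathbf{u})$ as a sum over $k$-element chains forces $e_k^P(\mathbf{u}) = 0$ for all $k > r$, so $\ker \varphi$ contains the ideal $J := \langle e_{r+1}, e_{r+2}, \ldots \rangle \subseteq \Lambda$.

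Granted this setup, it suffices to prove the commutative congruence $m_\lambda \equiv e_r \cdot m_{\lambda^-} \pmod{J}$ in $\Lambda$ whenever $\ell(\lambda) = r$. The fundamental theorem of symmetric functions identifies $\Lambda/J$ with the ring of symmetric polynomials in $r$ variables, via the specialization $f(x_1, x_2, \ldots) \mapsto f(x_1, \ldots, x_r, 0, 0, \ldots)$, so it is enough to verify
\[
m_\lambda(x_1, \ldots, x_r) \;=\; e_r(x_1, \ldots, x_r) \cdot m_{\lambda^-}(x_1, \ldots, x_r).
\]

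For this last step I would argue directly by matching monomials. Since $\ell(\lambda) = r$ equals the number of variables, every rearrangement $(\alpha_1, \ldots, \alpha_r)$ of $(\lambda_1, \ldots, \lambda_r)$ has all entries at least $\lambda_r \geq 1$, so
\[
m_\lambda(x_1, \ldots, x_r) \;=\; \sum_{\alpha} x_1^{\alpha_1} \cdots x_r^{\alpha_r},
\]
summed over the distinct permutations $\alpha$ of $(\lambda_1, \ldots, \lambda_r)$. On the other hand $e_r(x_1, \ldots, x_r) = x_1 x_2 \cdots x_r$ and $m_{\lambda^-}(x_1, \ldots, x_r) = \sum_\beta x_1^{\beta_1} \cdots x_r^{\beta_r}$, where $\beta$ ranges over the distinct rearrangements of the tuple $(\lambda_1 - 1, \ldots, \lambda_r - 1)$ (with zero entries allowed in case $\lambda_r = 1$). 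Multiplying through by $x_1 \cdots x_r$ shifts every exponent up by one, producing the bijection $\beta \leftrightarrow \beta + (1, \ldots, 1)$ between the two indexing sets and completing the proof.

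I do not anticipate a serious obstacle, as the commutativity of the $e_k^P(\mathbf{u})$ is the only substantive input and is already in hand. A more combinatorial alternative would be to describe a direct bijection stripping a maximum-length chain from each $P$-array contributing to $m_\lambda^P(\mathbf{u})$, but the ring-theoretic reduction above has the virtue of isolating the single feature that really does the work, namely the vanishing of $e_k^P(\mathbf{u})$ once $k$ exceeds the longest chain length.
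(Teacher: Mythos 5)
Your proof is correct, and it follows exactly the methodology the paper sets up: since the $e_k^P(\mathbf{u})$ pairwise commute in $\mathcal{R}_P$ (Hwang's Theorem 4.2), the assignment $e_k \mapsto e_k^P(\mathbf{u})$ is a ring homomorphism $\Lambda \to \mathcal{R}_P$ sending $f \mapsto f^P(\mathbf{u})$, and pairing this with $e_k^P(\mathbf{u}) = 0$ for $k > r$ reduces the claim to the elementary $r$-variable identity $m_\lambda(x_1,\ldots,x_r) = e_r(x_1,\ldots,x_r)\,m_{\lambda^-}(x_1,\ldots,x_r)$, which you verify by matching monomials. The paper cites this theorem from Hwang rather than reproving it, but its proof of the immediately following corollary (that $m_{c^r}^P(\mathbf{u}) = e_{r^c}^P(\mathbf{u}) = s_{c^r}^P(\mathbf{u})$) relies on precisely the same combination of commutativity plus the vanishing of $e_k^P(\mathbf{u})$ above the longest-chain length, so your argument is the one the paper's framework anticipates.
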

\begin{corollary}
Let $r$ be the number of elements in a maximum length chain of $P$. Then for any positive integer $c$,
\begin{equation}
m_{c^r}^P(\mathbf{u}) = e_{r^c}^P(\mathbf{u}) = s_{c^r}^P(\mathbf{u}).
\end{equation}
\end{corollary}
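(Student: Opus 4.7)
The plan is to establish the two equalities in sequence, using Theorem~\ref{HwangMaxChainThm} for the first and the Jacobi--Trudi formula for the second, together with the simple observation that $e_j^P(\mathbf{u}) = 0$ for $j > r$ since there are no strict chains in $P$ of length greater than $r$.

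First, to prove $m_{c^r}^P(\mathbf{u}) = e_{r^c}^P(\mathbf{u})$, I would induct on $c$. The base case $c = 1$ follows since $m_{1^r}^P(\mathbf{u}) = e_r^P(\mathbf{u}) \cdot m_\emptyset^P(\mathbf{u}) = e_r^P(\mathbf{u})$ by a direct application of Theorem~\ref{HwangMaxChainThm}, or just by definition. For the inductive step, note that $\lambda = (c^r)$ has $\ell(\lambda) = r$, so Theorem~\ref{HwangMaxChainThm} gives
\begin{equation}
m_{c^r}^P(\mathbf{u}) = e_r^P(\mathbf{u}) \cdot m_{(c-1)^r}^P(\mathbf{u}),
\end{equation}
and applying the inductive hypothesis to $m_{(c-1)^r}^P(\mathbf{u}) = e_{r^{c-1}}^P(\mathbf{u})$ produces $m_{c^r}^P(\mathbf{u}) = e_r^P(\mathbf{u}) \cdot e_{r^{c-1}}^P(\mathbf{u}) = e_{r^c}^P(\mathbf{u})$, where the last step is simply the definition of $e_\lambda^P$ as a product of $e_k^P$'s.

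Second, to prove $s_{c^r}^P(\mathbf{u}) = e_{r^c}^P(\mathbf{u})$, I would invoke the Jacobi--Trudi identity, which is valid in $\mathcal{R}_P$ because the $e_k^P(\mathbf{u})$ commute (Hwang's theorem, quoted in the excerpt). Since $(c^r)' = (r^c)$ has length $c$, we obtain a $c \times c$ determinantal expression
\begin{equation}
s_{c^r}^P(\mathbf{u}) = \det\bigl[e_{r-i+j}^P(\mathbf{u})\bigr]_{1 \le i, j \le c}.
\end{equation}
The key observation is that $e_{r+1}^P(\mathbf{u}) = 0$ in $\mathcal{R}_P$: by definition $e_k^P(\mathbf{u})$ is a sum over strictly increasing $P$-chains of length $k$, and by hypothesis no such chain exists for $k > r$. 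Therefore every entry strictly above the diagonal satisfies $r - i + j > r$, hence vanishes, making the matrix lower triangular with diagonal entries $e_r^P(\mathbf{u})$. The determinant collapses to $\bigl(e_r^P(\mathbf{u})\bigr)^c = e_{r^c}^P(\mathbf{u})$, completing the chain of equalities.

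There is essentially no obstacle to this proof: the only subtlety is to notice that the maximum-chain hypothesis forces $e_{r+1}^P = e_{r+2}^P = \cdots = 0$, which is what makes the Jacobi--Trudi matrix degenerate to a lower triangular form. Everything else is either iterated application of Theorem~\ref{HwangMaxChainThm} or a direct consequence of symmetric function identities transferring to $\mathcal{R}_P$.
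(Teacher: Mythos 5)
Your proposal is correct and follows the same route as the paper: the first equality is an iterated application of Theorem~\ref{HwangMaxChainThm}, and the second is Jacobi--Trudi combined with the vanishing $e_k^P(\mathbf{u}) = 0$ for $k > r$, which forces the determinant matrix to be lower triangular with diagonal entries $e_r^P(\mathbf{u})$. The paper's proof is just more terse, leaving the induction and the triangularity observation implicit.
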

\begin{proof}
The identity $m_{c^r}^P(\mathbf{u}) = e_{r^c}^P(\mathbf{u})$ follows directly from Theorem~\ref{HwangMaxChainThm}. The identity $e_{r^c}^P(\mathbf{u}) = s^P_{c^r}(\mathbf{u})$ follows from the Jacobi--Trudi identity and the fact that $e_{k}^P(\mathbf{u}) = 0$ for $k > r$.
\end{proof}

\subsection{Previous $m_\lambda^P(\mathbf{u})$ Positivity Results}
\label{sec prev mlamP results}

One approach to finding a combinatorial interpretation for $c_{\lambda}^P(q)$ is to find sets of \emph{key $P$-tableaux} $\keySSYT_P(\lambda) \subseteq \T_P(\lambda)$ such that
\begin{align}
\label{keySSYTeq}
m^P_{\lambda}(\mathbf{u}) = \sum_{T\, \in\, \keySSYT_P(\lambda)} \mathbf{u}_{\colword(T)} \mod I_P.
\end{align}
We will write $\keySSYT_P(\lambda)$ for any subset of $\T_P(\lambda)$ that satisfies \eqref{keySSYTeq}. 
Towards this goal, such a set $\keySSYT_P(\lambda)$ has been determined when $\lambda$ is a two-column \cite[Theorem 4.29]{Hwang} \cite[Theorem 3.38]{BEPS} or hook \cite[Theorem 4.34]{Hwang} \cite[Theorem 3.32]{BEPS} shape.

\begin{defin}
Define the \emph{partial evaluation} function $\partEval: \mathcal{R}_P \to \mathcal{R}_P$ by
\begin{equation}
\partEval(\mathbf{u}_{\mathbf{w}}) = \begin{cases} \mathbf{u}_{\mathbf{w}} & \text{if }\mathbf{w} \text{ has no repeated entries,} \\
0 & \text{otherwise.}
\end{cases}
\end{equation}
We write $\keyIYT_P(\lambda) \subseteq \IYT_P(\lambda)$ for any set satisfying
\begin{equation}
\label{eq keyIT def}
\partEval(m_{\lambda}^P(\mathbf{u})) \equiv \sum_{T\, \in\, \keyIYT_P(\lambda)} \mathbf{u}_{\colword(T)} \mod I_P.
\end{equation}
Observe that $\eval \circ \partEval = \eval$ and $\eval_q \circ \partEval = \eval_q$, so for the purposes of obtaining a combinatorial interpretation of $c_\lambda^P$ or $c_{\lambda}^P(q)$, it suffices to determine $\keyIYT_P(\lambda)$.
\end{defin}

%Let $T$ be a $P$-tableau with adjacent columns $V = v_1 <_P v_2 <_P ... <_P v_s$ and $W = w_1 <_P w_2 <_P ... <_P w_t$. As $P$ is \threeone -free, each entry of $V$ and $W$ will be incomparable to at most two of the entries in the opposite column. Thus, the connected components of the induced subgraph of $\inc(P)$ in the entries of $T$ will be a disjoint union of paths and $4$-cycles. In the case when $P$ is a natural unit interval order, the connected components will be paths. 
%
%\begin{defin}
%If $T$ is a $P$-tableau, we call the connected components of the induced subgraph in $\inc(P)$ of adjacent columns of $T$ \emph{ladders}. We say a ladder is \emph{balanced} if it has the same number of elements in each column, \emph{left-unbalanced} if it has more elements in the left column, and \emph{right-unbalanced} if it has more elements in the right column.
%\end{defin}

\begin{theorem}\label{twoColmPpos}
\cite[Theorem 4.29]{Hwang}\cite[Theorem 3.38]{BEPS}
For a two-column shape $\lambda$, define the subset $\keySSYT_P(\lambda)$ to be the set of $P$-tableaux with no right-unbalanced ladders. Then 
\begin{equation}
m_{\lambda}^P(\mathbf{u}) \equiv \sum_{T\, \in\, \keySSYT_P(\lambda)} \mathbf{u}_{\colword(T)} \mod I_P.
\end{equation}
\end{theorem}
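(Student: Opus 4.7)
Plan: I would prove the theorem by combining the Schur expansion $s_\mu^P(\mathbf{u}) = \sum_{T \in \T_P(\mu)} \mathbf{u}_{\colword(T)}$ from the preceding theorem with a symmetric-function identity expressing $m_\lambda$ as a signed combination of Schur functions. Since the $P$-analog map is compatible with $e$-basis expansions (the $e_k^P$ commute in $\mathcal{R}_P$ by Hwang's theorem), any identity $m_\lambda = \sum_\mu c_\mu s_\mu$ in $\Lambda$ transfers to $m_\lambda^P = \sum_\mu c_\mu s_\mu^P$ in $\mathcal{R}_P$, yielding
\[
m_\lambda^P(\mathbf{u}) = \sum_\mu c_\mu \sum_{T \in \T_P(\mu)} \mathbf{u}_{\colword(T)}.
\]
For $\lambda = (2^a, 1^b)$, the telescoping Jacobi--Trudi identity $e_{a+b}^P e_a^P = \sum_{k=0}^{a} s_{\lambda^{(k)}}^P$ with $\lambda^{(k)} = (2^{a-k}, 1^{b+2k})$ suggests that the relevant Schur shapes $\mu$ are precisely these two-column shapes obtained by iteratively unfolding cells from the right column. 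The aim is then to cancel all signed contributions so that only tableaux from $\keySSYT_P(\lambda)$ remain.

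The natural cancellation tool is the ladder swap of Figure \ref{fig ladder swap}, applied to a canonically chosen (say, topmost) right-unbalanced ladder of a $P$-tableau. This defines an involution $\iota$ that toggles the shape of a tableau between neighboring two-column shapes $\lambda^{(k)}$ and $\lambda^{(k \pm 1)}$, and whose fixed points are exactly the tableaux of shape $\lambda = \lambda^{(0)}$ with no right-unbalanced ladder, namely $\keySSYT_P(\lambda)$. Two conditions need verification: (a) $\iota$ preserves $\mathbf{u}_{\colword(T)}$ modulo $I_P$, so that paired contributions cancel at the algebra level, and (b) the signs and multiplicities from the Schur expansion pair up correctly under $\iota$.

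The main obstacle is (a). A ladder in a \threeone -free poset is a zig-zag path of incomparabilities between two adjacent columns, and a ladder swap locally exchanges elements across these columns, producing a different column-reading word. I would decompose the swap into elementary moves, each corresponding to a commutation $u_a u_b = u_b u_a$ (for $a <_P b$) or a cyclic rotation $u_c u_a u_b = u_b u_c u_a$ (for an incomparable triple $a \dote_P b \dote_P c$ with $a <_P c$); both relation types lie in $I_P$. The local \threeone -free structure forces each step of a ladder to be one of these two configurations, so induction on ladder size should suffice to establish the invariance of $\mathbf{u}_{\colword(T)}$ modulo $I_P$.

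An alternative route that may prove cleaner is induction on $|\lambda|$ via a Pieri-type recursion for $m_\lambda^P$, with base case $\lambda = (1^b)$ giving $m_{(1^b)}^P = e_b^P = \sum_{T \in \T_P((1^b))} \mathbf{u}_{\colword(T)}$ trivially, since single-column shapes admit no ladders. The inductive step would remove a column-2 cell and match the resulting tableaux on the RHS with those excluded by the right-unbalanced-ladder condition, again using ladder swaps to realize the correction terms. Either way, the technical heart of the argument is tracking the compatibility between the ladder structure and the defining relations of $I_P$.
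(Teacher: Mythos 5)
This theorem is not proved in the present paper; it is cited from \cite{BEPS} and \cite{Hwang}, so there is no ``paper's own proof'' here to compare against directly. Evaluating your proposal on its merits: the central idea---that ladder swaps realize the generating relations of $I_P$ and should drive a sign-reversing cancellation---is the right one, and you correctly identify the key technical lemma, namely that a ladder swap preserves $\mathbf{u}_{\colword}$ modulo $I_P$. Your sketch of how to verify this (factor the swap into adjacent transpositions, each an instance of either $u_a u_b = u_b u_a$ or $u_c u_a u_b = u_b u_c u_a$, using the local \threeone -free structure) is also essentially what the cited proofs do.

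However, there is a concrete gap in the way you set up the cancellation. The shapes $\lambda^{(k)} = (2^{a-k}, 1^{b+2k})$ you produce from telescoping $e_{a+b}^P e_a^P$ are \emph{dominated} by $\lambda = \lambda^{(0)}$, whereas the Kostka matrix is unitriangular so that $m_\lambda = \sum_\mu K^{-1}_{\lambda\mu} s_\mu$ involves only $\mu \trianglerighteq \lambda$ (and for two-column $\lambda$ these $\mu$ are not all two-column). So the set $\coprod_k \T_P(\lambda^{(k)})$ on which you propose to run the involution is a combinatorial model for $e_{\lambda'}^P$, not for a signed Schur expansion of $m_\lambda^P$. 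What the telescoping actually gives is the triangular recursion $e_{\lambda'}^P = \sum_{k \ge 0} \binom{b+2k}{k}\, m_{\lambda^{(k)}}^P$, which you must then invert; the multiplicities $\binom{b+2k}{k}$ mean your involution cannot simply pair terms one-to-one across shapes but must account for these coefficients. Relatedly, the cited proofs run the involution on $P$-\emph{arrays} (elements of $E_P(\col(c,d))$, i.e.\ pairs of chains with no row condition) rather than on $P$-tableaux: this matters because $e_\nu^P$ is a sum over arrays, not tableaux, and the row-nondescent condition you would need to preserve under a shape-changing swap creates unnecessary bookkeeping. You also do not verify that ``swap the topmost right-unbalanced ladder'' is in fact an involution (after the swap, the topmost right-unbalanced ladder of the result need not be the one you just created). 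These issues are repairable but would require a substantially more careful argument than what you have sketched; the second, array-based route you mention in passing is closer to what actually works.
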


\begin{defin}
For a word $\mathbf{w} = w_1w_2...w_k$ in $P^k$ with no $P$-descents, we say $\mathbf{w}$ has a right-left (RL) $P$-minima in position $i$ if $w_i <_P w_j$ for all $j > i$. Note that every word of length $k$ will have a RL $P$-minima in position $k$. We say a word $\mathbf{w}$ of length $k$ has no nontrivial RL $P$-minima if the only RL $P$-minima is in position $k$.
\end{defin}

\begin{defin}
\label{def powersum word}
If a word $\mathbf{w} = w_1w_2 ...w_k$ has no $P$-descents and no non-trivial RL $P$-minima, we say $\mathbf{w}$ is a \emph{powersum word} for $P$. If $\mathbf{w}$ is a powersum word with no repeated entries, then $\mathbf{w}$ is a \emph{powersum permutation} of $\{w_1,w_2,...,w_k\}$ for $P$.
\end{defin}

Athanasiadis \cite{Athanasiadis} used powersum permutations to describe the coefficient of powersum symmetric functions $p_{\lambda}(\mathbf{x})$ in $X_{\inc(P)}(\mathbf{x}, q)$, proving a conjecture of Shareshian--Wachs \cite{SWchromatic}. Using this framework, Hwang \cite{Hwang} and Blasiak--Eriksson--Pylyavskyy--Siegl \cite{BEPS} gave a combinatorial interpretation of $c_{\lambda}^P$ for hook shape partitions $\lambda$ when $P$ is a natural unit interval order and \threeone -free poset, respectively.
 
%\begin{defin} 
%Consider a hook-shape $P$-tableau
%\begin{equation}
%\begin{ytableau}
%v_1 & w_1 & w_2 & ... & w_k \\
%v_2 \\
%v_3 \\
%\vdots \\
%v_l
%\end{ytableau}\,.
%\end{equation}
%We say such a $P$-tableau is a \emph{key} $P$-tableau if $v_iw_1w_2...w_k$ is a powersum word for some $i \in [l].$ For a hook shape partition $\lambda$, write $\keySSYT_P(\lambda)$ for the set of key $P$-tableaux of shape $\lambda$.
%\end{defin}
\begin{theorem}\cite[Theorem 4.34]{Hwang},\cite[Theorem 3.32]{BEPS}.
\label{hookShapemPpos} 
Let $\lambda = a1^b$ be a hook shape partition, and let
\begin{equation}
\keySSYT_P(\lambda) = \{T \in \T_P(\lambda) \mid T_{i,1}T_{1,2}T_{1,3} \cdots T_{1,a} \text{ is a powersum word for some } i \in [b]\}.
\end{equation}
Then 
\begin{equation}
m_\lambda^P(\bu) \equiv \sum_{T\, \in \, \keySSYT_P(\lambda)} \bu_{\colword(T)} \mod I_P.
\end{equation}
%For a \threeone -free poset $P$ and hook shape partition $\lambda$,
%\begin{equation}
%m_{\lambda}^P(\mathbf{u}) = \sum_{T\, \in\, \keySSYT_P(\lambda)} \mathbf{u}_{\colword(T)}.
%\end{equation}
\end{theorem}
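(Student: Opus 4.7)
The plan is to expand $m_{a,1^b}^P(\mathbf{u})$ modulo $I_P$ by lifting a classical identity for hook monomial symmetric functions to $\mathcal{R}_P$, and then performing cancellations in the resulting expansion to recover the hook-tableau interpretation.

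First, I would record the classical hook identity obtained by straightforward inclusion--exclusion on $p_{a+k}\, e_{b-k}$:
\begin{equation*}
m_{a,1^b}(\mathbf{x}) \;=\; \sum_{k=0}^{b} (-1)^k\, p_{a+k}(\mathbf{x})\, e_{b-k}(\mathbf{x})
\end{equation*}
(valid for $a\geq 2$; the cases $a=1$ or $b=0$ reduce to earlier results and can be handled separately). Since Hwang \cite[Theorem 4.2]{Hwang} proved that the $P$-elementary symmetric functions commute in $\mathcal{R}_P$, the subring they generate is isomorphic to a commutative polynomial ring, so this identity lifts verbatim to an identity in $\mathcal{R}_P$ with $e_k$ and $p_k$ replaced by $e_k^P(\mathbf{u})$ and $p_k^P(\mathbf{u})$.

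Second, I would invoke the Athanasiadis-style expansion of the noncommutative $p_k^P(\mathbf{u})$, which was used by both \cite{Hwang} and \cite{BEPS}, to write
\begin{equation*}
p_k^P(\mathbf{u}) \;\equiv\; \sum_{\mathbf{w}} \mathbf{u}_{\mathbf{w}} \pmod{I_P}
\end{equation*}
where the sum ranges over powersum words of length $k$ in $P$. Combined with the expansion of $e_{b-k}^P(\mathbf{u})$ as a sum over increasing chains, the product $p_{a+k}^P(\mathbf{u})\, e_{b-k}^P(\mathbf{u})$ becomes a sum of $\mathbf{u}$-monomials indexed by pairs (powersum word $\mathbf{w}$ of length $a+k$, chain $C$ of length $b-k$), with colword equal to $C$ read bottom-to-top followed by $\mathbf{w}$ read left-to-right.

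Third, I would construct a sign-reversing involution on the alternating sum that transfers the first entry of the powersum word $\mathbf{w}$ to the top of the chain $C$ (or vice versa), decrementing or incrementing $k$ by one while preserving the underlying colword. The fixed points should be precisely those pairs $(\mathbf{w},C)$ for which $|\mathbf{w}|=a$, $|C|=b+1$, and the pair glues into a hook $P$-tableau $T$ such that the word $T_{i,1}T_{1,2}\cdots T_{1,a}$ is a powersum word for some $i \in [b]$; the index $i$ records the row at which the powersum word is anchored into the column-$1$ chain.

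The main obstacle is step three: constructing the involution so that the surviving fixed points are in bijection with the stated set $\keySSYT_P(a1^b)$, counted with multiplicity one. A subtle point is that a single hook $P$-tableau may satisfy the powersum condition for several values of $i \in [b]$, so a canonical anchor must be chosen (e.g.\ the minimal $i$ for which the condition holds) in tandem with the involution's pairing rule in order to avoid overcounting. The analogous obstacles in the proofs of \cite[Theorem~4.34]{Hwang} and \cite[Theorem~3.32]{BEPS} were resolved by careful casework on the interaction between the chain's extremal element and the powersum word's left-right $P$-minima; I would follow that approach, adapting the bookkeeping to make the parameter $i \in [b]$ appear naturally from the fixed-point analysis.
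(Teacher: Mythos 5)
The theorem you are asked to prove is cited by the paper from \cite[Theorem 4.34]{Hwang} and \cite[Theorem 3.32]{BEPS}; the paper itself contains no proof of it, so there is no internal argument to compare against. Evaluating your proposal on its own terms: the opening steps are sound. The identity $m_{(a,1^b)}(\mathbf{x}) = \sum_{k=0}^{b}(-1)^k p_{a+k}(\mathbf{x})e_{b-k}(\mathbf{x})$ is correct for $a\geq 2$, and lifting it verbatim into the commutative subring of $\mathcal{R}_P$ generated by the $e_k^P(\mathbf{u})$ is legitimate once one has Hwang's commutativity theorem. The expansion of $p_k^P(\mathbf{u})$ modulo $I_P$ as a sum over powersum words is likewise the right tool, and your observation that a canonical anchor (the minimal $i\in[b]$ for which the condition holds) is needed to avoid overcounting correctly anticipates a subtlety that appears even in this paper's Lemma~\ref{hookDecomp}, whose proof takes ``$i$ to be the least index such that $v_iw_1w_2\ldots w_l$ is a powersum word.''

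However, there is a genuine gap: you never actually construct the sign-reversing involution, which is the entire content of the theorem. You state that it should ``transfer the first entry of the powersum word $\mathbf{w}$ to the top of the chain $C$ (or vice versa),'' but you do not specify the matching rule, verify that it is a well-defined involution on non-fixed pairs, check that it is sign-reversing, or prove that the fixed points are exactly the claimed set counted once. Transferring the first letter of a powersum word to the chain is not automatic, because after removing that letter the residual word may fail to be a powersum word (it could acquire a nontrivial RL $P$-minimum), and conversely peeling the top of the chain onto the front of $\mathbf{w}$ may not yield a powersum word of one higher length. The casework governing when each direction of the move is legal, and why exactly one direction applies to every non-fixed pair, is precisely what the cited proofs had to do, and you explicitly defer to them for it. As written, your proposal is a reasonable outline of the strategy, not a proof.
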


As a corollary of Theorem~\ref{HwangMaxChainThm}, Hwang \cite{Hwang} determined $\keySSYT_P(\lambda)$ when $P$ is a $\mathbf{3}$-free poset. This result refines theorems of Stanley \cite[Corollary 3.6]{Stanleychromatic} and Harada--Precup \cite{harada2019cohomology}.
\begin{corollary}
\cite[Corollary 4.38]{Hwang}
\label{3freeKeyTabHwang}
Let $P$ be a $\mathbf{3}$-free poset. For a partition $\lambda$, we have
\begin{equation}
m_\lambda^P(\bu) \equiv \begin{cases} 0 & \text{if } \ell(\lambda) > 2, \\
\sum_{\bw} \bu_{\bw}  \mod I_P & \text{otherwise},
\end{cases}
\end{equation}
where the sum is over words $\bw = w_1w_2\dots w_n$ in $P^n$ such that $w_{2i} >_P w_{2i-1}$ for $1 \leq i \leq \lambda_1$ and $w_{2\lambda_2+ 1} w_{2\lambda_2 + 2} \dots w_n$ is a powersum word.
\end{corollary}

\subsection{General $e$-Positivity Results}
\label{subsec Hik prelim}
Hikita \cite{HikitaChromatic} recently gave a probabilistic proof of the Stanley--Stembridge conjecture. Furthermore, for $P$ a natural unit interval order, Hikita characterized when the coefficient $c_\lambda^P$ is non-zero in terms of classical standard tableaux of shape $\lambda$. We follow the presentation of version 1 of Hikita's paper \cite{HikitaChromatic}.  To state Hikita's result, we first recall the definition of \emph{reverse Hessenberg functions}.

\begin{defin}\cite{HikitaChromatic}
A \emph{reverse Hessenberg function} is a function $\mathbf{m}: [n] \to \mathbb{Z}_{\geq 0}$ such that
\begin{enumerate}
\item $\mathbf{m}(i) < i$, and
\item $\mathbf{m}(i) \leq \mathbf{m}(i+1)$
\end{enumerate}
for all $i \in [n]$. Write $\mathbb{E}_n$ for the set of reverse Hessenberg functions from $[n]$ to $\mathbb{Z}_{\geq 0}$. For $\mathbf{m} \in \mathbb{E}_n$ and an integer $1 \leq k < n$, write $\mathbf{m}^{(k)}: [k] \to \mathbb{Z}_{\geq 0}$ for the element of $\mathbb{E}_{k}$ such that $\mathbf{m}^{(k)}(i) = \mathbf{m}(i)$ for $i = 1,2,...,k$.

For a reverse Hessenberg function $\mathbf{m} \in \mathbb{E}_n$, define the associated natural unit interval order $P_{\mathbf{m}}$ on $[n]$ by letting $i <_{P_{\mathbf{m}}} j$ if $i \leq \mathbf{m}(j)$. Every natural unit interval order on $[n]$ is $P_{\mathbf{m}}$ for some $\mathbf{m} \in \mathbb{E}_n$.
\end{defin}

Hikita's probabilistic approach to $e$-positivity builds on the modular law of Abreu--Nigro \cite{AbreuNigro}. In particular, the modular law gives rise to the following identity. Hikita's proof involves constructing probability distributions $\prob_\bm(-;q)$ on standard Young tableau parameterized by nonnegative real values of $q$ such that $\sum_{T \in \SYT(\lambda)} \prob_{\bm}(T; q) =  q^{|\bm| - \lambda^\star} \frac{c_\lambda^{P}(q)}{[\lambda]_q!}$. We recall the construction of $\prob_\bm(-;q)$ below.
\begin{theorem}
Let $\bm \in \mathbb{E}_n$ be a reverse Hessenberg function, and let $P = P_\bm$. Then
\begin{equation}
\sum_{\lambda \vdash n} q^{|\bm| - \lambda^\star} \frac{c_\lambda^{P}(q)}{[\lambda]_q!} = 1,
\end{equation}
where $\lambda^\star = \sum_{i < j} \lambda_i \lambda_j$.
\end{theorem}

\begin{defin}
\label{def hik prob}
\cite{HikitaChromatic}
Let $\lambda \vdash n$, and let $T \in \SYT(\lambda)$. Let $0 \leq r \leq n$ be an integer, and let $t_i = T_{\lambda_i', i}$ be the largest entry in the $i^{th}$ column of $T$. Define the sequence $\delta^{(r)}(T) = (\delta_1,\delta_2,...,\delta_{n+1})$ by
\begin{equation}
\label{eq color seq}
\delta_i = \begin{cases} 1 & \text{if } t_i > r, \\
0 & \text{otherwise.}
\end{cases}
\end{equation}
Collecting consecutive strings of 1s and 0s, write $\delta^{(r)}(T) = (1^{b_0},0^{a_1},1^{b_1},..., 1^{b_\ell}, 0^{a_{\ell+1}})$. For $0 \leq k \leq \ell$, let 
\begin{equation}
c_k^{(r)}(T) = 1 + \sum_{i = 1}^k a_i + \sum_{i=0}^k b_i.
\end{equation}
For each $k$, the largest entry in column $c_k^{(r)}(T) - 1$ of $T$ is less than the largest entry of column $c_k^{(r)}(T)$. Hence, inserting $n+1$ into column $c_k^{(r)}(T)$ of $T$ gives a valid Young tableau. 
Let $f_k^{(r)}(T)$ be the standard Young tableau of size $n+1$ obtained by adding entry $n+1$ to column $c_k^{(r)}(T)$ of $T$. Define the \emph{transition probability} $\varphi_k^{(r)}(T; q)$ in the growth process from $T$ to $f_k^{(r)}(T)$ by
\begin{equation}
\varphi_k^{(r)}(T; q) = q^{\sum_{i=1}^k a_i} \prod_{i=1}^k \frac{\left[\sum_{j=i+1}^k a_j + \sum_{j=i}^k b_j\right]_q}{\left[\sum_{j=i}^k a_j + \sum_{j=i}^k b_j \right]_q} \prod_{i = k+1}^\ell \frac{\left[\sum_{j=k+1}^i a_j + \sum_{j = k+1}^{i-1} b_j \right]_q}{\left[\sum_{j=k+1}^i a_j + \sum_{j=k+1}^i b_j \right]_q}.
\end{equation} 
\noindent
For a reverse Hessenberg function $\bm \in \mathbb{E}_n$ and $q \in \mathbb{R}_{\geq 0}$,
Hikita defines a probability distribution $\prob_{\bm}$ on standard Young tableaux of size $n$ by
\begin{equation}
\label{eq hik prob recur}
\prob_{\bm}(T; q) = \begin{cases} 1 & \text{if } n = 0, \\\varphi^{(r)}_k(S; q) \prob_{\bm^{(n-1)}}(S; q) & \text{if } T = f_k^{(r)}(S), \\
0 & \text{otherwise},
\end{cases}
\end{equation}
where $r = \bm(n)$.
\end{defin}

\begin{theorem}
\label{thm Hikita}
\cite[Theorem 3]{HikitaChromatic} Let $\bm \in \mathbb{E}_n$, and let $P = P_{\bm}$ be the associated natural unit interval order. Then for $\lambda \vdash n$, 
\begin{equation}
\sum_{T\, \in\, \SYT(\lambda)} \prob_\bm(T; q) = q^{|\bm| - \lambda^{\star}}\frac{c_\lambda^P(q)}{[\lambda]_q!},
\end{equation}
where $|\bm| = \bm(1) + \bm(2) + \dots + \bm(n)$ and $\lambda^{\star} = \sum_{i < j} \lambda_i \lambda_j$. Hence, $c_\lambda^P(\tau) \geq 0$ for all real numbers $\tau \geq 0$.
\end{theorem}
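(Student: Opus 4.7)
The plan is to prove the identity by induction on $n$, with the nonnegativity statement then following immediately. The base case $n=0$ reduces to $1=1$ for the empty partition. For the inductive step, I would first unfold the left-hand side using the recursive definition \eqref{eq hik prob recur} of $\prob_\bm$. Each $T \in \SYT(\lambda)$ arises uniquely as $T = f_k^{(\bm(n))}(S)$ for some tableau $S$ of size $n-1$ whose shape $\mu$ is obtained from $\lambda$ by removing a corner, so
\begin{equation*}
\sum_{T \in \SYT(\lambda)} \prob_\bm(T;q) = \sum_{\mu \nearrow \lambda} \sum_{S \in \SYT(\mu)} \varphi_{k(\mu,\lambda)}^{(\bm(n))}(S;q)\, \prob_{\bm^{(n-1)}}(S;q),
\end{equation*}
where $k(\mu,\lambda)$ records which column received the new box.

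On the right-hand side, I would seek a matching recursion for $c_\lambda^{P_\bm}(q)$ in terms of the coefficients $c_\mu^{P_{\bm^{(n-1)}}}(q)$ for $\mu \nearrow \lambda$. Since vertex $n$ in $\inc(P_\bm)$ is adjacent exactly to the clique $\{\bm(n)+1,\dots,n-1\}$ and independent of everything smaller, the generating function $X_{\inc(P_\bm)}(\mathbf{x},q)$ should differ from $X_{\inc(P_{\bm^{(n-1)}})}(\mathbf{x},q)$ by a ``coproduct-type'' step that appends a single new vertex subject to this clique constraint. Using Theorem~\ref{evalTheorem} to pass to $\mathcal{R}_{P_\bm}$ and then extracting the coefficient of $e_\lambda(\mathbf{x})$ should express $c_\lambda^{P_\bm}(q)/\prod [\lambda_i]_q!$ as an explicit linear combination of $c_\mu^{P_{\bm^{(n-1)}}}(q)/\prod [\mu_i]_q!$ whose coefficients can be directly compared to the transition probabilities.

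The main obstacle, and I expect the technical heart of the argument, is that $\varphi_k^{(\bm(n))}(S;q)$ genuinely depends on $S$ through the $0/1$-sequence $\delta^{(\bm(n))}(S)$ recording which columns of $S$ have their maximum entry $\geq \bm(n)$, and not merely on the shape $\mu$. Hence the inductive hypothesis cannot be applied to the inner sum as it stands. To bridge this gap I would stratify the sum over $S \in \SYT(\mu)$ by the value of $\delta^{(\bm(n))}(S)$ and simplify each stratum using standard $q$-integer identities such as $[a+b]_q = [a]_q + q^a[b]_q$ together with $q$-Vandermonde-type convolutions. The factors $\prod[\lambda_i]_q!$ and the prefactor $q^{|\bm|-\lambda^\star}$ on the right should emerge from telescoping these identities, the latter tracking the inversions contributed by the new clique attached to vertex $n$. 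Once the identity is established, the nonnegativity statement is immediate: every $\prob_\bm(T;q)$ is a product of powers of $q$ and ratios of $q$-integers, all of which are nonnegative at $q=\alpha \geq 0$, so the left-hand side is a sum of nonnegative reals and the denominator $\prod[\lambda_i]_\alpha!$ is strictly positive.
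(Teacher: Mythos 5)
This theorem is cited from Hikita's paper and is not proved in the present paper, so there is no in-paper argument to compare against; I'll assess your sketch on its own terms.

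Your unfolding of the probability side over corners $\mu \nearrow \lambda$ is correct, and the final nonnegativity argument (granting the identity) works. The genuine gap is on the $e$-coefficient side: you assert, but do not establish, that $c_\lambda^{P_\bm}(q)/\prod_i[\lambda_i]_q!$ satisfies a matching recursion in the quantities $c_\mu^{P_{\bm^{(n-1)}}}(q)/\prod_i[\mu_i]_q!$. The proposed mechanism — that $X_{\inc(P_\bm)}(\mathbf{x},q)$ differs from $X_{\inc(P_{\bm^{(n-1)}})}(\mathbf{x},q)$ by a ``coproduct-type step'' appending vertex $n$ — is not a theorem and does not obviously hold: chromatic quasisymmetric functions do not obey a clean deletion formula for a non-isolated vertex, and passing through $\mathcal{R}_{P_\bm}$ via Theorem~\ref{evalTheorem} does not produce one either, since that algebra gains a generator and the evaluation map is not compatible in any straightforward way with dropping it. Your proposed remedy for the other acknowledged obstacle (that $\varphi_k^{(\bm(n))}(S;q)$ depends on $S$ via $\delta^{(\bm(n))}(S)$, not merely on $\shape(S)$) — stratifying and telescoping with $q$-Vandermonde identities — is a statement of hope rather than a demonstration that the sums close up at the level of shapes. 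It is telling that the bookkeeping the paper actually performs with the $a_i,b_i$ and the prefactor $q^{\lambda^\star - |\bm|}$, in Lemma~\ref{lem hik inv eq}, only works for $T \in \posSYT(\bm,\lambda)$, precisely because the Hikita condition forces $\bm(n)+1,\dots,n-1$ to be column maxima and thus controls $\delta^{(\bm(n))}$; that control is not available for general $S \in \SYT(\mu)$. In short, the plan's load-bearing step is asserted rather than carried out, and it is exactly the nontrivial content of Hikita's theorem.
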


\begin{corollary}
\label{cor hik tab pos}
Let $\mathbf{m} \in \mathbb{E}_n$, and let $P = P_{\mathbf{m}}$ be the associated natural unit interval order. Then $c_{\lambda}^P > 0$ if and only if there is some $T \in \SYT(\lambda)$ such that $\prob_\bm(T; q) \neq 0$.
\end{corollary}

\subsection{Greedy Partition of $P$}
For partitions $\lambda, \mu \vdash n$, recall that $\lambda$ \emph{dominates} $\mu$ and write $\lambda \trianglerighteq \mu$ if for all $i$, $\lambda_1 + \lambda_2 + \dots +\lambda_i \geq \mu_1 + \mu_2 + \dots + \mu_i$. 
We have that conjugation of partitions is order-reversing. 
%The following theorem characterizes the covering relations of the dominance order.
%\begin{theorem}
%A partition $\lambda \vdash n$ covers a partition $\mu \vdash n$ in dominance order if and only if for some $i$,
%\begin{enumerate}
%\item $\mu_i = \lambda_i - 1$, $\mu_{i+1} = \lambda_{i+1} + 1$, and $\mu_j = \lambda_j$ for $j \neq i, i+1$, or
%\item $\mu_i' = \lambda_i' + 1$, $\mu_{i+1}' = \lambda_{i+1}' - 1$, and $\mu_j' = \lambda_j'$ for $j \neq i, i+1$. 
%\end{enumerate}
%\end{theorem}

Matherne--Morales--Selover \cite[Theorem 5.7]{MMSlorentzian} observed that for a $\threeone$-free poset $P$, there is a unique partition $\lambda^{gr} = \lambda^{gr}(P)$, called the \emph{greedy partition} of $P$, that is maximal in dominance order among partitions $\lambda$ such that the coefficient of $m_{\lambda}(\mathbf{x})$ is non-zero in $X_{\inc(P)}(\mathbf{x})$.
By triangularity of the change of basis transformations, this means that $\lambda^{gr}$ is maximal among partitions $\lambda$ such that $s_{\lambda}(\mathbf{x})$ has a non-zero coefficient in $X_{\inc(P)}(\mathbf{x})$, and $\lambda^{gr'}$ is minimal among partitions such that the coefficient of $e_{\lambda}(\mathbf{x})$ is non-zero. 
Furthermore, the coefficients of $m_{\lambda^{gr}}(\mathbf{x}), s_{\lambda^{gr}}(\mathbf{x})$, and $e_{\lambda^{gr'}}(\mathbf{x})$ in $X_{\inc(P)}(\mathbf{x})$ are all equal.
We can interpret their theorem in terms of $P$-arrays as follows.
\begin{theorem} \label{greedyPartThm}
\cite{MMSlorentzian}
Let $P$ be a \threeone -free poset with $n$ elements. There is a unique partition $\lambda^{gr} = \lambda^{gr}(P)$ such that $\lambda^{gr} \trianglerighteq \mu'$
for all $\mu \vdash n$ such that the set of bijective $P$-arrays of shape $\mu$ is non-empty.
\end{theorem}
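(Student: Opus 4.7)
My plan is to identify $\lambda^{gr}(P)$ with the classical Greene chain-decomposition partition of $P$ and appeal to Greene's theorem.

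First I would translate the hypothesis into the language of chain decompositions. A bijective $P$-array of shape $\mu$ places each element of $P$ exactly once in the Ferrers diagram of $\mu$ with every column strictly increasing in $<_P$. Since column $j$ of the Ferrers diagram of $\mu$ has height $\mu'_j$, such an array is precisely a chain decomposition of $P$ whose multiset of chain-sizes is the partition $\mu'$. Writing $S_P$ for the set of partitions $\nu \vdash n$ that arise as chain-size sequences of chain decompositions of $P$, the hypothesis ``bijective $P$-arrays of shape $\mu$ exist'' is equivalent to $\mu' \in S_P$. The theorem then reduces to showing that there is a unique partition $\lambda^{gr} \in S_P$ that dominates every element of $S_P$. (The implicit requirement $\lambda^{gr} \in S_P$ is needed to exclude the trivial choice $\lambda^{gr} = (n)$, which otherwise would always satisfy the bare dominance inequality.)

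Next I would invoke Greene's theorem: for any finite poset $P$, writing $d_k(P)$ for the maximum cardinality of a union of $k$ chains in $P$, the partition
\[
\lambda(P) := (d_1(P),\ d_2(P) - d_1(P),\ d_3(P) - d_2(P),\ \ldots)
\]
lies in $S_P$ and dominates every element of $S_P$. Setting $\lambda^{gr}(P) := \lambda(P)$ yields the desired conclusion, and uniqueness is automatic: two elements of $S_P$ that each dominate all of $S_P$ must dominate one another and hence agree.

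The main technical obstacle is the existence half of Greene's theorem---producing an actual chain decomposition realizing $\lambda(P)$. For \threeone-free posets one can alternatively argue directly: iteratively extract a longest chain from the remaining poset, and use the \threeone-free hypothesis to show that the resulting sizes are weakly decreasing and dominate every competing chain-decomposition partition (the inequality $\nu_1 + \cdots + \nu_k \leq d_k(P)$ for all $\nu \in S_P$ is immediate from the definition of $d_k$, and this gives dominance-maximality for free). Note that the \threeone-free hypothesis is not strictly needed for the underlying combinatorial statement about $S_P$; it enters only through Gasharov's theorem, which converts the chain-decomposition picture into the $m_\lambda$-coefficient picture used by Matherne--Morales--Selover to define the greedy partition in the first place.
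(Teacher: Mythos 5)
The paper cites this result to Matherne--Morales--Selover \cite{MMSlorentzian} and gives no proof of its own; the only thing the paper adds is the translation to $P$-arrays via Gasharov's theorem, which you have also carried out. Your reduction is correct: a bijective $P$-array of shape $\mu$ is precisely a chain decomposition of $P$ with chain-size partition $\mu'$, and you rightly point out that the real content of the statement is that the set $S_P$ of realizable chain-shapes has a dominance \emph{maximum}, not merely an upper bound.

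The gap is in the existence step, and it is a genuine one. Greene's theorem, as classically stated, asserts only that the differences $d_k(P)-d_{k-1}(P)$ are weakly decreasing (so that $\lambda(P)$ is a partition) and that $\lambda(P)$ is conjugate to the corresponding antichain partition; it does \emph{not} assert that $\lambda(P)$ is realized by an actual chain decomposition of $P$. That realizability---equivalently, the existence of a ``completely saturated'' chain partition---is exactly what you need, and it is a separate claim requiring its own argument or reference, so the parenthetical that \threeone-freeness ``is not strictly needed for the underlying combinatorial statement'' is, at best, unsupported. Your greedy fallback is also incomplete: removing an \emph{arbitrary} maximum chain at each step need not produce $\lambda(P)$ even in the \threeone-free case. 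For example, take the \threeone-free poset $P$ on $\{a,b,c,d,e\}$ with order relations $a<_P b<_P c$, $a<_P e<_P c$, $d<_P e$, and $d<_P c$. Here $d_1(P)=3$ and $d_2(P)=5$, so $\lambda(P)=(3,2)$; but if the greedy step removes the maximum chain $\{a,e,c\}$ first, the remainder $\{b,d\}$ is an antichain and greedy produces $(3,1,1)$, which does not dominate $(3,2)$. To close the gap you would have to specify \emph{which} longest chain to remove at each stage and prove that the resulting decomposition attains $d_k(P)$ for every $k$---and that is precisely where the \threeone-free hypothesis (or the Schur-positivity machinery that Matherne--Morales--Selover actually use) must enter.
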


\begin{corollary}
\label{partialDominantCor}
For a \threeone -free poset $P$ and partition $\mu = (\mu_1, \mu_2,...,\mu_r) \vdash m \leq n$, if there is an injective $P$-array of shape $\mu'$, then
\begin{equation}
\mu_1 + \mu_2 + \dots + \mu_i \leq \lambda^{gr}_1 + \lambda^{gr}_2 + \dots + \lambda^{gr}_i
\end{equation}
for all $i = 1,2,...,r$. 
\end{corollary}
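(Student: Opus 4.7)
The plan is to reduce Corollary \ref{partialDominantCor} to Theorem \ref{greedyPartThm} by extending the given injective $P$-array into a bijective one, in a way that preserves the column heights of the original array in the first $\mu_1$ columns. Since Theorem \ref{greedyPartThm} bounds the partial sums of the conjugate of any bijective shape by $\lambda^{gr}$, it will then suffice to check that the extended shape has a conjugate whose $i$-th partial sum dominates $\mu_1 + \cdots + \mu_i$.

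Concretely, let $A$ be an injective $P$-array of shape $\mu'$, and let $v_1, \ldots, v_{n-m}$ be the elements of $P$ that do not appear in $A$. I append $n - m$ new singleton columns filled with $v_1, \ldots, v_{n-m}$ (in any order) to obtain a $P$-array $\tilde{A}$. Each new column contains a single element and is therefore trivially $P$-increasing, so $\tilde{A}$ is a valid $P$-array, and by construction it is bijective. Its column heights form the sequence $(\mu'_1, \mu'_2, \ldots, \mu'_{\mu_1}, 1, 1, \ldots, 1)$, which is weakly decreasing (since $\mu'_{\mu_1} \geq 1$) and thus the column shape of $\tilde{A}$ corresponds to a partition $\nu \vdash n$.

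A direct conjugation calculation then yields $\nu' = (\mu_1 + n - m,\, \mu_2,\, \mu_3, \ldots, \mu_r)$: for $j \geq 2$ the trailing $1$'s of $\nu$ do not contribute to $\nu'_j$, so $\nu'_j$ equals the number of parts of $(\mu'_1, \ldots, \mu'_{\mu_1})$ that are $\geq j$, which is exactly $\mu_j$, while $\nu'_1$ counts all $\mu_1 + (n - m)$ parts of $\nu$. Applying Theorem \ref{greedyPartThm} to $\tilde{A}$, we obtain $\lambda^{gr} \trianglerighteq \nu'$, so for each $i = 1, \ldots, r$,
\[
\lambda^{gr}_1 + \cdots + \lambda^{gr}_i \;\geq\; \nu'_1 + \cdots + \nu'_i \;=\; (\mu_1 + n - m) + \mu_2 + \cdots + \mu_i \;\geq\; \mu_1 + \cdots + \mu_i,
\]
which is the desired inequality. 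The argument is essentially an unpacking of Theorem \ref{greedyPartThm}; the only genuine step is the conjugation calculation for $\nu'$, and the single $n - m$ slack appearing in $\nu'_1$ is harmless because we only need inequality, not equality. I do not anticipate any serious obstacle.
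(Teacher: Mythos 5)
Your high-level plan---pad $A$ out to a bijective $P$-array with singleton columns and then invoke Theorem~\ref{greedyPartThm}---is exactly the paper's argument, but the bookkeeping goes wrong, and the intermediate dominance inequality you assert is false.

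The crux is the direction of conjugation. Theorem~\ref{greedyPartThm} says that if a bijective $P$-array of (Ferrers) shape $\eta$ exists, then $\lambda^{gr} \trianglerighteq \eta'$; since $\eta'$ is the sorted sequence of column heights of the array, the theorem applied to $\tilde{A}$ with column-height sequence $\nu$ gives $\lambda^{gr} \trianglerighteq \nu$, not $\lambda^{gr} \trianglerighteq \nu'$. Your claimed inequality $\lambda^{gr} \trianglerighteq \nu' = (\mu_1 + (n-m), \mu_2, \dots, \mu_r)$ is false in general: for $\mu = (1)$ with $m = 1$, you have $\nu = (1^n)$ and $\nu' = (n)$, so your first partial sum would require $\lambda^{gr}_1 \geq n$, which fails for any $P$ that is not a chain. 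The ``harmless'' slack of $n-m$ is in fact the symptom of the error.

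There is also an earlier slip in identifying the column heights of $A$: the Ferrers diagram of $\mu'$ has $\mu'_1 = r$ columns, with column $j$ having height $(\mu')'_j = \mu_j$, so the column heights of $A$ are $\mu_1, \dots, \mu_r$, not $\mu'_1, \dots, \mu'_{\mu_1}$. The column-height sequence of $\tilde{A}$ is therefore $\tilde{\mu} := (\mu_1, \dots, \mu_r, 1, \dots, 1)$, and the correctly applied theorem gives $\lambda^{gr} \trianglerighteq \tilde{\mu}$ directly; taking partial sums through $i \leq r$ finishes the proof with no further conjugation and no slack. This is exactly what the paper does.
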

\begin{proof}
If $A$ is an injective $P$-array of shape $\mu'$, then we can form a bijective $P$-array $\tilde{A}$ of shape $\tilde{\mu}' = (\mu_1, \mu_2,...,\mu_r, 1,1,...,1)' \vdash n$  by putting each element of $P$ not appearing in $A$ in its own column. Then $\tilde{\mu} \trianglelefteq \lambda^{gr}$ and the statement follows from Theorem~\ref{greedyPartThm}.
\end{proof}

For $P$ a natural unit interval order, define the greedy chain $C^{gr}(P)$ of $P$ to be the chain $i_1 <_P i_2 <_P \dots <_P i_k$ obtained by letting $i_1 = \min P$ and $i_j = \min\{p \in P \mid p >_P i_{j-1}\}$. Let $C_1^{gr} = C^{gr}(P)$ and $C_i^{gr}(P) = C^{gr}(P \setminus \bigcup_{j=1}^{i-1} C_j^{gr}(P))$.
\begin{theorem}\cite[Proposition~4.6]{MMSlorentzian}
If $P$ is a natural unit interval order, then $\lambda^{gr}(P) = (|C_1^{gr}(P)|, |C_2^{gr}(P)|,\dots)$.
\end{theorem}

\begin{exmp}
For $\bm = (0,0,1,1,2,4,4,6)$, $\lambda^{gr}(P) = (4,2,2)$. The chains $C_1^{gr}(P_\bm)$, $C_2^{gr}(P_\bm)$, and $C_3^{gr}(P_\bm)$ are labeled with $1,2,$ and $3$ respectively in the Hasse diagram shown below.
\[\begin{tikzcd}[ampersand replacement=\&,column sep=small, row sep=small]
	{8^{\color{red} 1}} \\
	{5^{\color{green} 2}} \& {6^{\color{red} 1}} \& {7^{\color{blue} 3}} \\
	{2^{\color{green} 2}} \& {3^{\color{red} 1}} \& {4^{\color{blue} 3}} \\
	\& {1^{\color{red} 1}}
	\arrow[from=2-1, to=1-1]
	\arrow[from=2-2, to=1-1]
	\arrow[from=3-1, to=2-1]
	\arrow[from=3-1, to=2-2]
	\arrow[from=3-1, to=2-3]
	\arrow[from=3-2, to=2-2]
	\arrow[from=3-3, to=2-2]
	\arrow[from=3-3, to=2-3]
	\arrow[from=4-2, to=3-2]
	\arrow[from=4-2, to=3-3]
	\arrow[from=3-2, to=2-3]
\end{tikzcd}\]
\end{exmp}

\subsection{Positivity for the Path Graph}
One important special case of incomparability graphs of natural unit interval orders are the naturally labeled paths. Write $P_n$ for the poset on $[n]$ associated to the reverse Hessenberg function $(0,0,1,2,\ldots, n-2) \in \mathbb{E}_n$. Observe that $\inc(P_n)$ is the path graph on $[n]$ labeled such that vertices $i,j$ are adjacent if and only if $|i - j| = 1$. See Figure~\ref{fig path}. In this case, Shareshian--Wachs \cite[Theorem C.4]{SWchromatic} gave a positive formula for $c_\lambda^{P_n}(q)$ in terms of $q$-integers.

\begin{theorem}
\label{pathSWqFormulaThm}
\cite[Theorem C.4]{SWchromatic} For a positive integer $n$, we have 
\begin{equation}
X_{\inc(P_n)}(\mathbf{x}, q) = \sum_{\alpha\, \vDash\, n} q^{\ell(\alpha)-1} [\alpha_\ell]_q \prod_{i=1}^{\ell(\alpha) - 1} [\alpha_i - 1]_q e_{\sort(\alpha)}(\mathbf{x}).
\end{equation}
\end{theorem}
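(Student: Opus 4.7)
My plan is to prove this formula by direct combinatorial decomposition of proper colorings of $\inc(P_n)$, taking advantage of the particularly simple structure of the path.

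First, I would unwind the definition of $X_{\inc(P_n)}(\mathbf{x}, q)$ and observe that, since every edge of $\inc(P_n)$ has the form $\{i, i+1\}$, the inversion statistic $\inv_{\inc(P_n)}(\kappa)$ equals the number of descent positions $\#\{i \in [n-1] : \kappa(i) > \kappa(i+1)\}$ in the word $\kappa(1)\kappa(2)\cdots\kappa(n)$. Each proper coloring $\kappa$ then decomposes uniquely into maximal ascending runs of lengths $\alpha = (\alpha_1, \ldots, \alpha_\ell) \vDash n$. Properness together with the absence of descents within a run forces each run to be \emph{strictly} increasing, so the $i$-th run corresponds to an $\alpha_i$-element subset $S_i \subseteq \mathbb{Z}_{>0}$. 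Every run boundary is a descent, giving $\inv_{\inc(P_n)}(\kappa) = \ell - 1$, and hence
\[
X_{\inc(P_n)}(\mathbf{x}, q) = \sum_{\alpha\, \vDash\, n} q^{\ell(\alpha) - 1} G_\alpha(\mathbf{x}),
\]
where $G_\alpha(\mathbf{x})$ is the generating function of tuples $(S_1, \ldots, S_\ell)$ with $|S_i| = \alpha_i$ satisfying the descent condition $\max S_i > \min S_{i+1}$ at each boundary $i < \ell$.

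Next, I would compute $G_\alpha(\mathbf{x})$ via inclusion-exclusion. Without any constraints at the boundaries, the generating function over all such tuples is $e_{\alpha_1} \cdots e_{\alpha_\ell} = e_{\sort(\alpha)}(\mathbf{x})$. Classifying each boundary as a descent, ascent, or equality---the equality case corresponding to non-proper colorings, and the ascent case allowing a merge of two adjacent blocks into a single strictly increasing block---one can write $G_\alpha$ as an alternating sum indexed by coarsenings of $\alpha$. Finally, I would reassemble $\sum_{\alpha} q^{\ell - 1} G_\alpha(\mathbf{x})$, regroup terms by the underlying sorted partition, and apply $q$-integer identities such as $[k]_q = [k-1]_q + q^{k-1}$ and $[k]_q - 1 = q[k-1]_q$ to collapse the alternating inclusion-exclusion expression into the non-negative $q$-integer product $[\alpha_\ell]_q \prod_{i<\ell}[\alpha_i - 1]_q$ claimed in the theorem.

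The main obstacle is this last step: the inclusion-exclusion produces alternating signs, and it is not obvious that they conspire to form the clean product of $q$-integers on the right. One route to sidestep this is to establish a direct bijection: expanding each $[\alpha_i - 1]_q$ and $[\alpha_\ell]_q$ as a geometric sum, the RHS becomes a sum over quadruples $(\alpha, (a_1, \ldots, a_\ell), (S_1, \ldots, S_\ell))$ with $a_i \in \{0, \ldots, \alpha_i - 2\}$ for $i < \ell$ and $a_\ell \in \{0, \ldots, \alpha_\ell - 1\}$, weighted by $q^{\ell - 1 + \sum a_i}$; the problem reduces to constructing a bijection between such quadruples and proper colorings of $P_n$ that preserves the color multiset and matches the $q$-exponents. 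Alternatively, an induction on $n$---adding vertex $n$ to $P_{n-1}$ and showing that both sides satisfy the resulting recursion on refined generating functions that track $\kappa(n)$---could avoid the explicit matching of alternating sums with $q$-integer products altogether.
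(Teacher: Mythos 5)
The paper cites this result from Shareshian--Wachs \cite[Theorem C.4]{SWchromatic} and does not reprove it, so there is no in-paper proof to compare against; I'll evaluate your plan on its own terms.

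Your setup is correct: since $\inc(P_n)$ is the path, $\inv_{\inc(P_n)}(\kappa)$ is the number of descents of the word $\kappa(1)\cdots\kappa(n)$, properness forces each maximal ascending run to be strictly increasing, and every run boundary is a strict descent, so
$X_{\inc(P_n)}(\mathbf{x},q)=\sum_{\alpha\vDash n}q^{\ell(\alpha)-1}G_\alpha(\mathbf{x})$
with $G_\alpha$ the generating function for tuples of strictly increasing blocks with a descent at every boundary. The gap is in the inclusion-exclusion for $G_\alpha$. The coarsenings $\alpha/T$ obtained by merging at a subset $T$ of boundaries correspond to requiring a \emph{strict ascent} there, and summing those contributions recovers only the generating function of tuples $(S_1,\dots,S_\ell)$ with $\max S_i\ne\min S_{i+1}$ at every boundary, not the unconstrained product $e_{\alpha_1}\cdots e_{\alpha_\ell}$, which also allows equalities. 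Concretely, for $\alpha=(1,1,1)$ the coefficient of $x_1^2x_2$ in $\sum_{\beta\text{ coarsens }\alpha}G_\beta$ is $1$ (only the Smirnov word $(1,2,1)$) while in $e_{111}$ it is $3$. So M\"obius inversion over coarsenings does not express $G_\alpha$ in the $\{e_\beta\}$ basis; you would have to separately account for the "boundary equality" generating functions, which involve repeated variables and do not collapse to products of $e_k$'s. You flag the final $q$-integer regrouping as the main obstacle, but the obstacle actually appears one step earlier, in this inclusion-exclusion. Your fallback suggestions (a weight-preserving bijection, or an induction on $n$) are reasonable escape routes but remain unexecuted sketches.

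For reference, a clean route that sidesteps all of this bookkeeping is the rational generating function
\begin{equation*}
\sum_{n\ge 1} X_{\inc(P_n)}(\mathbf{x},q)\,z^n \;=\; \frac{\displaystyle\sum_{k\ge 1}[k]_q\,e_k(\mathbf{x})\,z^k}{1-\displaystyle q\sum_{k\ge 2}[k-1]_q\,e_k(\mathbf{x})\,z^k},
\end{equation*}
which, upon expanding the geometric series in the denominator, immediately yields the sum over compositions $\alpha=(\alpha_1,\dots,\alpha_\ell)$ with weight $q^{\ell-1}[\alpha_\ell]_q\prod_{i<\ell}[\alpha_i-1]_q\,e_\alpha$. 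Establishing this rational form (by a transfer-matrix or Carlitz--Scoville--Vaughan style argument, as Shareshian--Wachs do in their Appendix C) is what replaces your unresolved alternating-sum step.
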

\begin{figure}[h]
\caption{The Path Graph}
\label{fig path}
% https://q.uiver.app/#q=WzAsMTIsWzAsMSwiUF9uID0gIl0sWzEsMiwiMSJdLFsxLDEsIjMiXSxbMSwwLCI1Il0sWzIsMCwiNCJdLFsyLDEsIjIiXSxbNCwxLCJcXHRleHRybXtpbmN9KFBfbikgPSJdLFs1LDEsIjEiXSxbNiwxLCIyIl0sWzcsMSwiMyJdLFs4LDEsIjQiXSxbOSwxLCI1Il0sWzEsMl0sWzIsM10sWzUsM10sWzEsNF0sWzUsNF0sWzEsNSwiIiwxLHsic3R5bGUiOnsiYm9keSI6eyJuYW1lIjoiZGFzaGVkIn0sImhlYWQiOnsibmFtZSI6Im5vbmUifX19XSxbMiw1LCIiLDEseyJzdHlsZSI6eyJib2R5Ijp7Im5hbWUiOiJkYXNoZWQifSwiaGVhZCI6eyJuYW1lIjoibm9uZSJ9fX1dLFsyLDQsIiIsMSx7InN0eWxlIjp7ImJvZHkiOnsibmFtZSI6ImRhc2hlZCJ9LCJoZWFkIjp7Im5hbWUiOiJub25lIn19fV0sWzMsNCwiIiwxLHsic3R5bGUiOnsiYm9keSI6eyJuYW1lIjoiZGFzaGVkIn0sImhlYWQiOnsibmFtZSI6Im5vbmUifX19XSxbNyw4LCIiLDEseyJzdHlsZSI6eyJoZWFkIjp7Im5hbWUiOiJub25lIn19fV0sWzgsOSwiIiwxLHsic3R5bGUiOnsiaGVhZCI6eyJuYW1lIjoibm9uZSJ9fX1dLFs5LDEwLCIiLDEseyJzdHlsZSI6eyJoZWFkIjp7Im5hbWUiOiJub25lIn19fV0sWzEwLDExLCIiLDEseyJzdHlsZSI6eyJoZWFkIjp7Im5hbWUiOiJub25lIn19fV1d
\[\begin{tikzcd}[ampersand replacement=\&]
	\& 5 \& 4 \\
	{P_5 = } \& 3 \& 2 \&\& {\textrm{inc}(P_5) =} \& 1 \& 2 \& 3 \& 4 \& 5 \\
	\& 1
	\arrow[dashed, no head, from=1-2, to=1-3]
	\arrow[from=2-2, to=1-2]
	\arrow[dashed, no head, from=2-2, to=1-3]
	\arrow[dashed, no head, from=2-2, to=2-3]
	\arrow[from=2-3, to=1-2]
	\arrow[from=2-3, to=1-3]
	\arrow[no head, from=2-6, to=2-7]
	\arrow[no head, from=2-7, to=2-8]
	\arrow[no head, from=2-8, to=2-9]
	\arrow[no head, from=2-9, to=2-10]
	\arrow[from=3-2, to=1-3]
	\arrow[from=3-2, to=2-2]
	\arrow[dashed, no head, from=3-2, to=2-3]
\end{tikzcd}\]
\end{figure}

An important generalization of the path graph in the context of finding a positive formula for $c_{\lambda}^P(q)$ are \emph{$K$-chains}.
For graphs $G_1, G_2$ with vertex sets $[n_1]$ and $[n_2]$ respectively, we define the \emph{sum} $G_1 + G_2$ of $G_1$ and $G_2$ to be the graph on vertex set $[n_1 + n_2 - 1]$ obtained by identifying vertex $n_1$ in $G_1$ with vertex $1$ in $G_2$ and relabeling vertices. 
\begin{defin}
\label{def K chain}
For a composition $\gamma = (\gamma_1, \gamma_2,...,\gamma_m)$ with each $\gamma_i \geq 2$, let $K_\gamma = K_{\gamma_1} + K_{\gamma_2} + \dots + K_{\gamma_m}$ be the \emph{$K$-chain associated to $\gamma$}, where $K_n$ denotes the complete graph on $n$ vertices.
\end{defin}
\begin{figure}
\caption{The $K$-Chain $K_{(2,3,4)}$}
% https://q.uiver.app/#q=WzAsNyxbMCwwLCIxIl0sWzEsMCwiMiJdLFsyLDAsIjMiXSxbMywwLCI0Il0sWzQsMCwiNSJdLFs1LDAsIjYiXSxbNiwwLCI3Il0sWzAsMSwiIiwwLHsic3R5bGUiOnsiaGVhZCI6eyJuYW1lIjoibm9uZSJ9fX1dLFsxLDIsIiIsMCx7InN0eWxlIjp7ImhlYWQiOnsibmFtZSI6Im5vbmUifX19XSxbMCwyLCIiLDIseyJjdXJ2ZSI6LTIsInN0eWxlIjp7ImhlYWQiOnsibmFtZSI6Im5vbmUifX19XSxbMiwzLCIiLDIseyJzdHlsZSI6eyJoZWFkIjp7Im5hbWUiOiJub25lIn19fV0sWzMsNCwiIiwyLHsic3R5bGUiOnsiaGVhZCI6eyJuYW1lIjoibm9uZSJ9fX1dLFs0LDUsIiIsMix7InN0eWxlIjp7ImhlYWQiOnsibmFtZSI6Im5vbmUifX19XSxbNSw2LCIiLDIseyJzdHlsZSI6eyJoZWFkIjp7Im5hbWUiOiJub25lIn19fV0sWzMsNSwiIiwxLHsiY3VydmUiOi0yLCJzdHlsZSI6eyJoZWFkIjp7Im5hbWUiOiJub25lIn19fV0sWzQsNiwiIiwxLHsiY3VydmUiOi0yLCJzdHlsZSI6eyJoZWFkIjp7Im5hbWUiOiJub25lIn19fV0sWzMsNiwiIiwxLHsiY3VydmUiOi0zLCJzdHlsZSI6eyJoZWFkIjp7Im5hbWUiOiJub25lIn19fV1d
\[
K_{(2,3,4)} = 
\begin{tikzcd}[ampersand replacement=\&]
	1 \& 2 \& 3 \& 4 \& 5 \& 6 \& 7
	\arrow[no head, from=1-1, to=1-2]
	\arrow[curve={height=-12pt}, no head, from=1-2, to=1-4]
	\arrow[no head, from=1-2, to=1-3]
	\arrow[no head, from=1-3, to=1-4]
	\arrow[no head, from=1-4, to=1-5]
	\arrow[curve={height=-12pt}, no head, from=1-4, to=1-6]
	\arrow[curve={height=-18pt}, no head, from=1-4, to=1-7]
	\arrow[no head, from=1-5, to=1-6]
	\arrow[curve={height=-12pt}, no head, from=1-5, to=1-7]
	\arrow[no head, from=1-6, to=1-7]
\end{tikzcd}\]
\end{figure}
The $K$-chains are a special case of incomparability graphs of natural unit interval orders. Gebhard--Sagan \cite{gebhard2001chromatic} proved $e$-positivity of the chromatic symmetric function $X_{K_\gamma}(\mathbf{x})$. More recently, Tom \cite{TomChromatic} proved $e$-positivity of the chromatic quasisymmetric function $X_{K_\gamma}(\mathbf{x}, q)$, giving a formula in terms of $q$-integers.

\begin{defin}\cite[Definition 4.15]{TomChromatic} For a composition $\gamma = (\gamma_1, \gamma_2,...,\gamma_l)$ with parts at least 2, define $A_\gamma$ to be the set of compositions $\alpha = (\alpha_1,\alpha_2,...,\alpha_l)$ such that $\alpha_1 \geq 1$ and for each $2 \leq i \leq l$ we have either
\begin{enumerate}
\item $\alpha_i < \gamma_{i-1} - 1$ and $\alpha_i+ \alpha_{i+1} + \dots + \alpha_l < \gamma_i + \gamma_{i+1} + \dots + \gamma_l - (l - i)$, or
\item $\alpha_i \geq \gamma_{i-1}$ and $\alpha_i + \alpha_{i+1} + \dots + \alpha_l \geq \gamma_i + \gamma_{i+1} + \dots + \gamma_l - (l-i).$
\end{enumerate}
\end{defin}
\begin{theorem}
\label{TomFormulaThm}
\cite[Corollary 4.18]{TomChromatic} For a composition $\gamma = (\gamma_1,\gamma_2,...,\gamma_l)$ with parts at least~2, 
\begin{equation}
X_{K_\gamma}(\mathbf{x}, q) = [\gamma_1 - 2]_q![\gamma_2 - 2]_q! \cdots [\gamma_{l-1} - 2]_q! [\gamma_l - 1]_q! \sum_{\alpha \in A_\gamma} [\alpha_1]_q\prod_{i=2}^l q^{m_i} [|\alpha_i - (\gamma_{i-1} - 1)|]_q e_{\sort(\alpha)}(\mathbf{x}),
\end{equation}
where $m_i = \min\{\alpha_i, \gamma_{i-1}-1\}$.
\end{theorem}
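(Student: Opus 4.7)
The plan is induction on $l$, the length of $\gamma$. For the base case $l=1$, the graph $K_{\gamma_1}$ is the complete graph, so its only proper colorings assign distinct colors to all vertices; summing $q^{\inv}$ over the orderings of any fixed color multiset yields $[\gamma_1]_q!$, giving $X_{K_{\gamma_1}}(\mathbf{x},q) = [\gamma_1]_q!\,e_{\gamma_1}(\mathbf{x})$. One checks that with the implicit size constraint $|\alpha| = \sum_i \gamma_i - (l-1)$, the set $A_{(\gamma_1)}$ consists only of $(\gamma_1)$, and the formula reduces to $[\gamma_1-1]_q!\,[\gamma_1]_q\,e_{\gamma_1}(\mathbf{x}) = [\gamma_1]_q!\,e_{\gamma_1}(\mathbf{x})$, matching the base case.

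For the inductive step, I would decompose $K_\gamma = K_{(\gamma_1, \dots, \gamma_{l-1})} + K_{\gamma_l}$ at the unique shared vertex $v$. Any proper coloring $\kappa$ of $K_\gamma$ corresponds to a pair $(\kappa_1,\kappa_2)$ of proper colorings of the two pieces that agree on $v$; since no edges of $K_\gamma$ cross between the two sides except through $v$, the inversion statistic splits as $\inv_{K_\gamma}(\kappa) = \inv_{K_{(\gamma_1,\dots,\gamma_{l-1})}}(\kappa_1) + \inv_{K_{\gamma_l}}(\kappa_2)$. After stratifying by how many color classes of $\kappa_1$ are reused by $\kappa_2$ (of which $v$ itself is always one), this yields a convolution-type identity relating the coefficient of each $e_\lambda$ in $X_{K_\gamma}$ to analogous coefficients for the shorter chain.

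The main obstacle is matching this convolution against the precise shape of the formula. The composition $\alpha \in A_\gamma$ should be interpreted as recording, for each block, the number of color classes that fall inside it; the dichotomy in the definition of $A_\gamma$ (case $\alpha_i < \gamma_{i-1}-1$ versus $\alpha_i \geq \gamma_{i-1}$) separates whether block $i$ introduces fewer or more new colors than would saturate the inherited boundary vertex. The factors $[\gamma_i - 2]_q!$ for $i < l$ and $[\gamma_l - 1]_q!$ should arise from internally ordering each block after fixing its boundary vertices, while $q^{m_i}[|\alpha_i - (\gamma_{i-1}-1)|]_q$ should count the ways to interlace the new block's color distribution with the previous one, with the $q$-weight tracking newly created inversions. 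Verifying that induction produces exactly these factors, and in particular correctly realizes the absolute value inside $[\,|\alpha_i - (\gamma_{i-1}-1)|\,]_q$ from a case split between the two branches of $A_\gamma$, is where the technical work concentrates.

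An alternative approach, better suited to the machinery of this paper, exploits that $K_\gamma = \inc(P)$ for an explicit natural unit interval order $P$ and invokes Theorem~\ref{evalTheorem} to reduce the claim to showing that $m_\lambda^P(\mathbf{u})$ rewrites modulo $I_P$ as the positive sum of monomials whose $\eval_q$-image yields the right-hand side. Because the straightening relations of $\mathcal{R}_P$ for a $K$-chain have a particularly rigid block structure (the incomparability classes are cliques glued in a chain), one can hope to carry out this rewriting block-by-block by induction, bypassing the delicate cancellations inherent in the direct enumerative argument at the cost of careful bookkeeping on the noncommutative rearrangements $I_P$ permits.
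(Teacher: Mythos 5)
This theorem is not proved in the present paper: it is quoted verbatim from Tom~\cite[Corollary~4.18]{TomChromatic}, so there is no in-paper argument to compare your attempt against. Evaluating your sketch on its own merits: the base case is correct (with the implicit constraint $|\alpha| = \sum_i\gamma_i - (l-1)$, the set $A_{(\gamma_1)}$ reduces to $\{(\gamma_1)\}$ and the formula collapses to $[\gamma_1-1]_q![\gamma_1]_q\,e_{\gamma_1} = [\gamma_1]_q!\,e_{\gamma_1}$), and the observation that $\inv_{K_\gamma}$ splits additively across the two blocks is sound because $K_{(\gamma_1,\dots,\gamma_{l-1})}$ and $K_{\gamma_l}$ share a single vertex and no edge.

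The gap is that the inductive step is only an outline, and the part you defer is exactly where a proof would have to live. A one-vertex wedge does not factor the (quasi)symmetric function: one must sum over $c = \kappa(v)$ and convolve the two pointed generating functions, and --- because $e_\lambda$ coefficients track color-class \emph{sizes} --- also account for accidental coincidences between colors used on the two sides, not merely the color of $v$. Your ``stratify by how many color classes of $\kappa_1$ are reused by $\kappa_2$'' correctly names this bookkeeping but does not carry it out; in particular you never explain why the convolution produces the precise $A_\gamma$ dichotomy ($\alpha_i < \gamma_{i-1}-1$ with a suffix-sum inequality versus $\alpha_i \geq \gamma_{i-1}$ with the opposite one), the factor $q^{m_i}[|\alpha_i-(\gamma_{i-1}-1)|]_q$, or why the leading factor is $[\gamma_i-2]_q!$ for $i<l$ but $[\gamma_l-1]_q!$ for the last block. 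Cancellations do occur (Tom's derivation passes through a signed expansion), so the formula is not simply a term-by-term unpacking of a convolution, and asserting that ``the technical work concentrates'' there is not a substitute for it. The alternative $\mathcal{R}_P$-based route you mention is likewise only gestured at: you would still need to exhibit the rewriting of $m_\lambda^P(\mathbf{u})$ modulo $I_P$ and show its $\eval_q$-image matches Tom's coefficient, which is the same computation in different clothing. As it stands this is a plausible proof plan, not a proof.
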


\section{Strong and Powerful $P$-Tableaux}

Recall that $P$ is assumed to be a \threeone -free poset on $[n]$ throughout the paper, where $n$ is a non-negative integer, including the empty poset.
In this section, we define the sets $\strongSSYT_P(\lambda)$ and $\decSSYT_P(\lambda)$ of \emph{strong} and \emph{powerful} $P$-tableaux for all partitions $\lambda$. We show that $\strongSSYT_P(\lambda) \subseteq \decSSYT_P(\lambda)$ and state a refinement of Conjecture~\ref{undercountqConj} in terms of $P$-analogues. We then reinterpret Theorem~\ref{twoColmPpos} and Theorem~\ref{hookShapemPpos} in terms of strong and powerful $P$-tableaux. Recall the definition of \emph{powersum words} from Definition~\ref{def powersum word}.

%From Theorem \ref{evalTheorem}, finding a combinatorial interpretation for the Stanley--Stembridge conjecture is equivalent to finding a positive combinatorial interpretation for the $P$-monomial symmetric functions $m_\lambda^P(\mathbf{u})$. As the $P$-monomial symmetric functions  are equal to the usual monomial symmetric polynomials $m_\lambda(\mathbf{x})$ when $P$ is a total order, one approach to finding a combinatorial interpretation for $m_\lambda^P(\mathbf{u})$ is to generalize properties of $m_\lambda(\mathbf{x})$. We can define a combinatorial interpretation of $m_\lambda(\mathbf{x})$ in the following way.
%Let $\alpha = (\alpha_1,\alpha_2,...,\alpha_l)$ be a composition with nonzero parts, and let $M(\alpha)$ be the set of fillings $A$ of the diagram of $\alpha$ such that each entry in row $i$ is a positive integer $A(i)$, and $A(i) < A(i+1)$. We then have that 
%\begin{equation}
%m_\lambda(\mathbf{x}) = \sum_{\substack{\alpha \vDash n \\ \sort(\alpha) = \lambda}} \sum_{A \in M(\alpha)} \prod_{i = 1}^{\ell(\lambda)} x_{A(i)}^{\alpha_i}.
%\end{equation}
%\begin{exmp}
%An element of $M(4,3,2,4)$
%\begin{equation}
%\begin{ytableau}
%1 & 1 & 1 & 1 \\
%2 & 2 & 2 \\
%4 & 4 \\
%6 & 6 & 6 & 6
%\end{ytableau}.
%\end{equation}
%\end{exmp}
%To generalize $M(\alpha)$ to the $P$-symmetric function case, we define \emph{decomposition arrays}.

\begin{defin}
\label{def dec array}
For a composition $\alpha = (\alpha_1, \alpha_2,...,\alpha_l)$ with positive entries, a \emph{powerful array} $A$ of shape $\alpha$ is a function $A: \row(\alpha) \to P$ such that
\begin{enumerate}
\item each row of $A$ is a powersum word,
\item $A_{r,t} <_P A_{s,t}$ for all cells $(r, t), (s,t) \in \alpha$ with $r < s$, and
\item for each row $r$, $A_{r,\alpha_r} <_P A_{s, t}$ for all $s > r$ and $t > \alpha_r$.
\end{enumerate}
Let $\decArray_P(\alpha)$ be the set of powerful arrays of $P$ of shape $\alpha$. 
\end{defin}
%\begin{remark}
%Note that from \cite{Hwang, BEPS} we have that the single row $P$-monomial symmetric function is given by $m_{n}^P(\mathbf{u}) = \sum_{\mathbf{w}} \mathbf{u}_{\mathbf{w}}$ where the sum is over all powersum words of length $n$. It is useful to think of powersum words as a generalization of repeated entries to the $P$-symmetric setting. 
%\end{remark}

\begin{defin}
For a filling $A$ of a composition shape $\alpha$, define $\tabl(A)$ to be the filling of $\sort(\alpha)$ obtained by pushing entries upward in their column so that the columns are top justified. We say $\tab$ is the \emph{tableau map}. 
\end{defin}
%Lemma \ref{lem tab well defined} shows that $\tab$ maps $\decArray_P(\alpha)$ to $\T_P(\sort(\alpha))$, and Lemma \ref{lem tab injective} shows that $\tab$ is injective.
Figure~\ref{fig tab map} shows a poset $P$, a powerful array $A \in \decArray_P(1,3,2)$, and the image $\tab(A)$ of $A$ under the tableau map.
Note that $A_{1,1} = 1 <_P A_{2,2} = 2$ and $A_{1,1} = 1 <_P A_{2,3} = 4$, so $A$ satisfies condition $(3)$ in Definition~\ref{def dec array}.
\begin{figure}[h]
\caption{The tableau map}
\label{fig tab map}
% https://q.uiver.app/#q=WzAsNyxbMiwyLCIxIl0sWzIsMSwiMyJdLFsyLDAsIjYiXSxbMSwxLCIyIl0sWzMsMSwiNCJdLFsxLDAsIjUiXSxbMCwxLCJQPSJdLFswLDFdLFsxLDJdLFszLDJdLFswLDRdLFszLDVdLFswLDNdXQ==
\begin{align*}
\begin{tikzcd}[ampersand replacement=\&]
	\& 5 \& 6 \\
	{P=} \& 2 \& 3 \& 4 \\
	\&\& 1
	\arrow[from=2-2, to=1-2]
	\arrow[from=2-2, to=1-3]
	\arrow[from=2-3, to=1-3]
	\arrow[from=3-3, to=2-2]
	\arrow[from=3-3, to=2-3]
	\arrow[from=3-3, to=2-4]
\end{tikzcd} & &
A = \begin{ytableau} 1 \\ 3 & 2 & 4 \\ 6 & 5 \end{ytableau} & & \tab(A) = \begin{ytableau} 1 & 2 & 4 \\ 3 & 5 \\ 6 \end{ytableau}
\end{align*}
\end{figure}

\begin{lemma}
\label{lem tab well defined}
For $A \in \decArray_P(\alpha)$, $\tab(A)$ is a $P$-tableau of shape $\sort(\alpha)$.
\end{lemma}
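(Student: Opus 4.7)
The plan is to verify the three defining conditions of a $P$-tableau of shape $\sort(\alpha)$ by using the three axioms of a powerful array in turn. The shape claim is essentially bookkeeping: the column heights of $\row(\alpha)$ depend only on the multiset of parts of $\alpha$, so they equal the column heights of $\sort(\alpha)$, and pushing all entries to the top within each column produces the top-justified diagram with these heights, namely $\sort(\alpha)$. Strict $P$-increase in columns carries over for free because the tableau map does not reorder entries within a column, so condition (2) of a powerful array transfers directly to $\tab(A)$.

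The substantive content is the row condition. I would introduce the notation $R_j = \{i : \alpha_i \geq j\}$, enumerated in increasing order as $i_{j,1} < i_{j,2} < \cdots$, so that by construction $\tab(A)_{r, j} = A_{i_{j, r}, j}$. For each pair of adjacent cells $(r, j), (r, j+1)$ in $\tab(A)$, setting $x = A_{i_{j, r}, j}$ and $z = A_{i_{j+1, r}, j+1}$, the task reduces to showing $x \not>_P z$. Since $R_{j+1} \subseteq R_j$ forces $i_{j+1, r} \geq i_{j, r}$, I would split into three cases.

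When $i_{j+1, r} = i_{j, r}$, the entries $x, z$ are consecutive in a single row of $A$, and the non-descent property of powersum words (condition (1)) finishes the job. When $i_{j+1, r} > i_{j, r}$ but $i_{j, r} \notin R_{j+1}$, we have $\alpha_{i_{j, r}} = j$, so $x$ is the terminal entry of its row in $A$, and condition (3) applied with $s = i_{j+1, r}$ and $t = j+1$ yields even $x <_P z$.

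The nontrivial case is $i_{j+1, r} > i_{j, r}$ with $i_{j, r} \in R_{j+1}$, and this is where I expect the only real work. My plan is to introduce the detour entry $y = A_{i_{j, r}, j+1}$, which exists because $\alpha_{i_{j, r}} \geq j+1$: condition (1) on row $i_{j, r}$ gives $x \not>_P y$, while condition (2) on column $j+1$ gives $y <_P z$ since $i_{j, r} < i_{j+1, r}$ places $y$ strictly above $z$. A hypothetical $x >_P z$ would then yield the chain $y <_P z <_P x$, hence by transitivity $y <_P x$, contradicting $x \not>_P y$. This two-axiom detour is the main obstacle; the rest of the proof amounts to unpacking the effect of the push-up on cell positions.
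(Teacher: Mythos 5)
Your proposal is correct and takes essentially the same route as the paper: it splits on whether the two preimage rows in $A$ coincide, whether the left entry is terminal in its row (so that condition (3) gives a strict $<_P$), or whether the left row extends into column $j+1$ (in which case the detour through $y = A_{i_{j,r},j+1}$ combines conditions (1) and (2)). The only difference is notational — you index the preimages via $R_j$ and $i_{j,r}$ while the paper writes $T_{i,j}=A_{r,j}$, $T_{i,j+1}=A_{s,j+1}$ with $r \le s$ — and you spell out the transitivity contradiction that the paper leaves implicit in the line $A_{r,j}\not>_P A_{r,j+1}<_P A_{s,j+1}$.
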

\begin{proof}
Let $A$ be a powerful array of shape $\alpha$, and let $T = \tab(A)$. 
As $\tab$ pushes the entries of $A$ upward, $T$ is a filling of $\sort(\alpha)$.
As each column of $A$ is increasing in $P$, it suffices to show that the rows of $T$ are non-$P$-descending. Let $T_{i,j}$ and $T_{i,j+1}$ be adjacent entries of $T$. 
Observe that $T_{i,j} = A_{r,j}$ and $T_{i, j+1} = A_{s, j+1}$ for some $r \leq s.$ 
If $r = s$, then $T_{i,j} = A_{s,j} \not>_P A_{s, j+1} = T_{i, j+1}$. 
If $r < s$ and $j < \alpha_r$, then $T_{i,j} = A_{r,j} \not>_P A_{r, j+1} <_P A_{s, j+1} = T_{i,j+1}$, so $T_{i,j} \not>_P T_{i, j+1}$. 
If $r < s$ and $j = \alpha_r$, then $T_{i,j} = A_{r,j} <_P A_{s,j+1} = T_{i,j+1}$. Thus, $T$ has no $P$-descents in its rows and is therefore a $P$-tableau.
\end{proof}

\begin{defin}
\label{def pow tabx}
Define the set of \emph{powerful $P$-tableaux} $\decSSYT_P(\lambda)$ of shape $\lambda$ to be all $P$-tableaux of shape $\lambda$ that are in the image of $\tab$. Let $\decSYT_P(\lambda)$ be the set of standard powerful $P$-tableaux of shape $\lambda$, and let $\decIYT_P(\lambda)$ be the set of injective powerful $P$-tableaux.
\end{defin}

Consider the reverse Hessenberg function $\mathbf{m} = (0, 0, 1, 1, 3, 4, 5) \in \mathbb{E}_7$.
Figure~\ref{fig powerful Ptabx} shows all powerful $P_{\mathbf{m}}$-tableaux of shape $(3,2,2)$.

\begin{figure}[h]
\caption{Powerful $P_{\mathbf{m}}$-tableaux of shape $(3,2,2)$ with powersum words highlighted}
\label{fig powerful Ptabx}
\begin{align*}
\begin{ytableau}
*(red!20) 2 & *(red!20) 1 & *(green!20)3 \\
*(green!20)5 & *(green!20)4 \\
*(yellow!20) 7 & *(yellow!20)6
\end{ytableau}
\ \ \
\begin{ytableau}
*(red!20)1 & *(red!20)2 & *(red!20)3 \\
*(green!20)4 & *(green!20)5  \\
*(yellow!20)6 & *(yellow!20)7
\end{ytableau}
\ \ \
\begin{ytableau}
*(red!20)1 & *(red!20)3 & *(red!20)2 \\
*(green!20)4 & *(green!20)5 \\
*(yellow!20)6 & *(yellow!20)7
\end{ytableau}
\end{align*}
\end{figure}

\begin{lemma}
\label{lem tab injective}
For a partition $\lambda \vdash n$, 
the restriction of the tableau map 
\begin{equation}
\tabl: \bigsqcup_{\substack{\alpha\, \vDash\, n \\ \sort(\alpha) = \lambda}} \decArray_P(\alpha) \to \decSSYT_P(\lambda)
\end{equation}
is a bijection.
\end{lemma}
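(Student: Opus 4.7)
The plan is to observe that surjectivity of the restricted map is immediate from Definition~\ref{def pow tabx}, which defines $\decSSYT_P(\lambda)$ as precisely the image of $\tab$; so the substance of the lemma is injectivity. Because $\tab$ only pushes entries upward within their own columns, the columns of $A$ and of $T = \tab(A)$ contain the same multisets of entries in the same $P$-increasing order by Condition~(2) of Definition~\ref{def dec array}. Consequently, for each $j \leq \alpha_1$, the entry $A_{1,j}$ is the topmost entry $T_{1,j}$ of column $j$. Recovering $A$ from $T$ therefore reduces to recovering the composition $\alpha$, and the main task is to show that $\alpha_1$ is uniquely determined.

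I would proceed by induction on $n = |\lambda|$, with the case $n = 0$ trivial. For the inductive step, the claim is that $\alpha_1$ is the unique positive integer $j$ such that (a) the word $T_{1,1} T_{1,2} \cdots T_{1,j}$ is a powersum word, and (b) $T_{1,j} <_P T_{i', j'}$ for every cell $(i', j') \in \lambda$ with $j' > j$. That the true $\alpha_1$ satisfies (a) follows from Condition~(1) of Definition~\ref{def dec array}, and (b) follows from Condition~(3) together with the observation that every cell of $T$ in a column $j' > \alpha_1$ originates from some row $s > 1$ of $A$.

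For uniqueness, suppose two candidates $j_1 < j_2$ both satisfy (a) and (b). Applying (b) at $j_1$ yields $T_{1,j_1} <_P T_{1,j}$ for all $j_1 < j \leq j_2$, so position $j_1$ is a right-to-left $P$-minimum of the prefix $T_{1,1} \cdots T_{1,j_2}$. Since $j_1 < j_2$, this is a \emph{non-trivial} right-to-left $P$-minimum, contradicting condition (a) applied at $j_2$. Hence $\alpha_1$ is determined by $T$ alone, and with it the first row of $A$.

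Given $\alpha_1$ and row $1$ of $A$, I would form $T'$ by deleting the cells $(1, j)$ for $j = 1, \ldots, \alpha_1$ and shifting the remaining entries in those columns upward by one. A direct verification shows that $T' = \tab(A')$, where $A'$ is the array obtained from $A$ by deleting its first row (and reindexing), and that $A'$ still satisfies all three conditions of Definition~\ref{def dec array}. Since $|T'| < |T|$, the inductive hypothesis recovers $A'$ uniquely from $T'$, and stacking row $1$ on top of $A'$ recovers $A$. The main obstacle is the uniqueness step for $\alpha_1$, where the powersum property of row~$1$ and Condition~(3) must be combined precisely as above; the recursive step is then a routine bookkeeping check.
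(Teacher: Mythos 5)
Your proposal is correct and follows essentially the same approach as the paper: both arguments identify $\alpha_1$ (the length of the first row of $A$) from the position of the left-most right-to-left $P$-minimum in the first row of $T$, using the powersum property of row 1 of $A$ together with Condition~(3) of Definition~\ref{def dec array}, and then recurse on the array with its first row removed. The only cosmetic difference is the base case ($n=0$ in yours, one-row arrays in the paper) and your slightly stronger formulation of condition~(b), which tests $T_{1,\alpha_1}$ against all cells in later columns rather than just the first row; the uniqueness argument still reduces to the same contradiction with the powersum property.
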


\begin{proof}
Let $A$ be a powerful array of shape $\alpha = (\alpha_1, \alpha_2,..., \alpha_l),$ and let $T = \tab(A)$. 
To prove injectivity, we perform induction on the number of rows $l$ of $A$.
If $l = 1$, then $A = T$ and $\tab$ is injective.
Suppose $\tab$ is injective when $A$ has fewer than $l$ rows.
Observe that $T_{1,j} = A_{1,j}$ for $j=1,2,...,\alpha_1$.
By Definition~\ref{def dec array}(3), $T_{1, \alpha_1} <_P T_{1, j}$ for all $j > \alpha_1$, so the first row of $T$ has a RL $P$-minima in position $\alpha_1$. Furthermore, as $T_{1, 1}T_{1,2}\cdots T_{1, \alpha_1}$ is a powersum word, $T_{1,\alpha_1}$ is the left-most RL $P$-minima in the first row of $T$. 
Thus, we can recover the first row of $A$ from $T$. 
Let $\hat{A}$ be the powerful array obtained by removing the first row of $A$.
Then $\hat{T} = \tab(\hat{A})$ is a $P$-tableau with entries
\begin{equation}
\hat{T}_{i,j} = \begin{cases} T_{i+1,j} & \text{if }j \leq \alpha_1, \\ T_{i,j} & \text{if } j > \alpha_1. \end{cases}
\end{equation}
Then by the inductive hypothesis, we can recover the rows of $\hat{A}$ from $\hat{T}$. Hence, $\tab$ is injective. As $\decSSYT_P(\lambda)$ is the image of $\bigsqcup_{\sort(\alpha) = \lambda} \decArray_P(\alpha)$ under the tableau map, the restricted tableau map is bijective as desired. 
\end{proof}

\begin{defin} For a \threeone -free poset $P$, define the set of \emph{strong $P$-tableaux} to be the set of $P$-tableaux such that each pair of adjacent columns has no right-unbalanced ladders. Write $\strongSSYT_P(\lambda)$ for the set of strong $P$-tableaux of shape $\lambda$, $\strongIYT_P(\lambda)$ for injective strong $P$-tableaux, and $\strongSYT_P(\lambda)$ for bijective strong $P$-tableaux, so $\strongSYT_P(\lambda) \subseteq \strongIYT_P(\lambda) \subseteq \strongSSYT_P(\lambda) \subseteq \T_P(\lambda)$.
\end{defin}

The first two $P$-tableaux in Figure~\ref{fig powerful Ptabx} are strong $P$-tableaux. The third $P$-tableau has a right-unbalanced ladder with entries $\{3,4,5,6,7\}$ between columns 1 and 2. More examples of strong $P$-tableaux can be found in Appendix~\ref{Appendix}.

\begin{lemma} Let $P$ be a \threeone -free poset. If $V = \{v_1 <_P v_2 <_P ... <_P v_s\}$ and $W = \{w_1 <_P w_2 <_P ... <_P w_t\}$ are chains in $P$ considered as columns in a $P$-array $VW$, then $VW$ has no right-unbalanced ladders if and only if there is an injection $g: [t] \to [s]$ such that $w_i \dote_P v_{g(i)}$ for each $i \in [t]$.
\end{lemma}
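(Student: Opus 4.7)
The plan is to recast the existence of the injection $g$ as a matching problem. Form the bipartite graph $H$ on vertex set $V \cup W$ in which $v \in V$ and $w \in W$ are joined by an edge exactly when $v \dote_P w$. Then an injection $g: [t] \to [s]$ with $w_i \dote_P v_{g(i)}$ for every $i$ is precisely a matching of $H$ saturating $W$. The connected components of $H$ are exactly the ladders between $V$ and $W$ (with isolated vertices counted as trivial ladders). By \threeone-freeness, every vertex of $H$ has degree at most $2$: if some $v$ were incomparable to three distinct elements $w_i <_P w_j <_P w_k$ of the chain $W$, those three form a $3$-chain disjoint from $v$, yielding an induced \threeone. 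Hence every component of $H$ is either a path or an even cycle, and as noted earlier in the paper, the only cycles that can arise are $4$-cycles.

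For the backward direction, suppose some ladder $L$ is right-unbalanced, so $|W \cap L| > |V \cap L|$. Because $L$ is a connected component of $H$, every $w \in W \cap L$ has all of its $H$-neighbors inside $V \cap L$, and pigeonhole forbids any matching of $H$ that saturates $W \cap L$. Consequently, no injection $g$ with the required property can exist.

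For the forward direction, I build the matching ladder by ladder and take the disjoint union across ladders. On a $4$-cycle ladder (which has exactly two $V$- and two $W$-elements) I choose either of its two perfect matchings. On a path ladder $x_1 - x_2 - \cdots - x_k$, the ``not right-unbalanced'' hypothesis forces $x_1 \in V$ or $x_k \in V$; in the former case I match each $w$-vertex of the path to its left neighbor on the path, and in the latter to its right neighbor. This saturates $W \cap L$ using genuine edges of $H$, and concatenating over all ladders yields the desired injection $g$.

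The only non-trivial ingredient is the structural dichotomy of ladders into paths and $4$-cycles, and this follows directly from the \threeone-free degree bound used in the first paragraph. The rest is a straightforward case analysis on component type, so I anticipate no serious obstacle.
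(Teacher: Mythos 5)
Your proof is correct and takes essentially the same approach as the paper: decompose into ladders (which you recast as components of a bipartite incomparability graph on $V \cup W$), rule out a saturating matching on a right-unbalanced ladder by pigeonhole, and exhibit an explicit matching on each remaining ladder via the path/cycle structure. The paper's proof is terser, reducing at once to a single ladder and asserting $w_i \dote_P v_i$ directly, but your matching framing and endpoint analysis simply make the same argument more explicit.
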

\begin{proof}
It suffices to consider the case when $VW$ consists of a single ladder. If $VW$ is a right-unbalanced ladder, then $t > s$ and there are no injections from $[t]$ to $[s]$. If $VW$ is not right-unbalanced, then for each $i \in [t]$, $w_i \dote_P v_i$. Therefore we can take $g: [t] \to [s]$ to be the function $g: i \mapsto i$. 
\end{proof}

\begin{corollary}
\label{cor strong inj}
Let $P$ be a \threeone -free poset and $\lambda$ a partition. A $P$-tableau $T \in \T_P(\lambda)$ is a strong $P$-tableau if and only if for each $i \in [\ell(\lambda') - 1]$ there is an injection $g_i: [\lambda_{i+1}'] \to [\lambda_i']$ such that $T_{r,i+1} \dote_P T_{g_i(r), i}$ for each $r \in [\lambda_{i+1}']$.
\end{corollary}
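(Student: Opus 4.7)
The plan is to derive this corollary directly from the preceding lemma by applying it to each pair of adjacent columns of $T$. Recall that, by definition, $T \in \T_P(\lambda)$ is a strong $P$-tableau if and only if for every index $i \in [\ell(\lambda') - 1]$, the pair of adjacent columns $(C_i, C_{i+1})$ — where $C_i$ denotes the $i^{\text{th}}$ column of $T$, read as a chain $T_{1,i} <_P T_{2,i} <_P \cdots <_P T_{\lambda_i',i}$ — contains no right-unbalanced ladders. Thus I would simply fix $i$ and apply the preceding lemma to the two-column $P$-array $VW$ given by $V = C_i$ (of length $s = \lambda_i'$) and $W = C_{i+1}$ (of length $t = \lambda_{i+1}'$).

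By the lemma, the absence of right-unbalanced ladders between $C_i$ and $C_{i+1}$ is equivalent to the existence of an injection $g_i : [\lambda_{i+1}'] \to [\lambda_i']$ such that $T_{r, i+1} \dote_P T_{g_i(r), i}$ for every $r \in [\lambda_{i+1}']$. Quantifying this equivalence over all $i \in [\ell(\lambda')-1]$ yields exactly the statement of the corollary: $T$ is strong if and only if such an injection $g_i$ exists for each adjacent pair of columns.

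There is no real obstacle; the only thing to verify is the compatibility of the one-ladder setting of the lemma with the multi-ladder structure of a pair of columns in $T$. Since the ladders partition the union $C_i \cup C_{i+1}$ into connected components and the property of being right-unbalanced is local to each ladder, the injections produced ladder-by-ladder by the lemma can be concatenated into a single injection $g_i$ on $[\lambda_{i+1}']$, and conversely any such $g_i$ restricts to injections on each ladder (so no individual ladder can be right-unbalanced). This decomposition/reassembly is essentially bookkeeping, so the corollary follows immediately.
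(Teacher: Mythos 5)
Your proof is correct and is exactly the intended argument: since a strong $P$-tableau is by definition one in which every pair of adjacent columns is free of right-unbalanced ladders, applying the preceding lemma to each pair $(C_i, C_{i+1})$ and quantifying over $i$ gives the corollary, which is why the paper states it without a separate proof. One small remark: the ``compatibility of the one-ladder setting with the multi-ladder structure'' you raise is not actually an obstacle, because the lemma is already stated for a general two-column $P$-array $VW$ (its \emph{proof} reduces to a single ladder, but the \emph{statement} does not), so the decomposition and reassembly you describe is subsumed by invoking the lemma as written.
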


\begin{lemma}
\label{lem strong concatenation}
Let $T$ be a $P$-tableau with no right-unbalanced ladders between the first two columns, and let $T^-$ be the $P$-tableau obtained by deleting the first column of $T$. If $T^-$ is a powerful $P$-tableau, then $T$ is a powerful $P$-tableau. 
\end{lemma}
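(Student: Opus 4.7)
The plan is to construct a powerful array $A \in \decArray_P(\alpha)$ with $\tab(A) = T$, building on a powerful array $A^- \in \decArray_P(\alpha^-)$ guaranteed by the hypothesis that $T^-$ is powerful. I will index the rows of $A$ by $[\lambda'_1]$, placing $T_{i,1}$ at position $(i,1)$ so that column~$1$ of $A$ automatically equals column~$1$ of $T$. The remaining cells will be filled by inserting the rows of $A^-$ via an order-preserving injection $\phi\colon [\lambda'_2] \to [\lambda'_1]$: row $\phi(r)$ of $A$ consists of $T_{\phi(r),1}$ followed by $A^-_{r,1}, A^-_{r,2}, \ldots, A^-_{r,\alpha^-_r}$, and the rows $i \notin \phi([\lambda'_2])$ are singletons. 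A direct check, using that $\phi$ is order-preserving, verifies $\tab(A) = T$ and that columns of $A$ are strictly $<_P$-increasing.

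To choose $\phi$, I use the hypothesis that $T$ has no right-unbalanced ladders between its first two columns, combined with the fact that every ladder in a $\threeone$-free poset is a path or a $4$-cycle. Within each such ladder, whose column~$1$ entries $a_1 <_P \cdots <_P a_p$ and column~$2$ entries $b_1 <_P \cdots <_P b_q$ satisfy $p \geq q$, the canonical matching $b_j \mapsto a_j$ pairs incomparable elements of $P$; collecting these ladder-level matchings produces an order-preserving injection $\phi$ with $T_{\phi(r),1} \dote_P T_{r,2}$ for each $r$. The incomparability of the prepended entry with the next entry prevents both a $P$-descent and a nontrivial right-to-left $P$-minimum at position~$1$, so each matched row of $A$ is a powersum word; singleton rows are trivially powersum, verifying condition~(1) of Definition~\ref{def dec array}.

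The main obstacle is Definition~\ref{def dec array}(3). For matched rows $i = \phi(r)$, the last entry is $A_{i,\alpha_i} = A^-_{r,\alpha^-_r}$, and the required inequalities reduce directly (using that $\phi$ is order-preserving) to condition~(3) for $A^-$ on row~$r$. The subtle case is singleton rows $i \notin \phi([\lambda'_2])$: one must show $T_{i,1} <_P A^-_{r',t-1}$ whenever $\phi(r') > i$ and $t-1 \leq \alpha^-_{r'}$. The case $t=2$ follows because the canonical ladder matching leaves unmatched column~$1$ entries as $<_P$-largest within their own ladder, so any $T_{\phi(r'),1}$ at a strictly greater row index necessarily lies in a strictly higher ladder, forcing $T_{i,1} <_P T_{r',2} = A^-_{r',1}$. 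Extending this strict inequality to $t \geq 3$ is the delicate step, since powersum rows of $A^-$ may contain entries incomparable to $A^-_{r',1}$, obstructing naive transitivity; I expect to handle this by induction using condition~(3) of $A^-$ applied to shorter rows and the $\threeone$-freeness of $P$, and if the canonical ladder matching proves too rigid, a hybrid injection blending it with the identity matching $\phi = \mathrm{id}$ (which would relocate singletons to the bottom rows of $A$, making condition~(3) for singletons vacuous) should close the argument.
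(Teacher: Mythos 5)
Your proposal correctly identifies the right construction (prepend column-$1$ entries to rows of $A^-$ via an injection, make the leftover entries singleton rows) and correctly identifies where the difficulty lies: Definition~3.3(3) for the singleton rows. But the proof does not close that gap, and the fallback ideas you float are not enough. Fixing the injection $\phi$ at the outset as the "canonical ladder matching" uses only the incomparability structure of columns $1$ and $2$ of $T$, whereas condition~(3) involves the entries $A^-_{r',s}$ for $s\geq 2$, which are invisible from the columns of $T$ alone. Your $t=2$ argument (unmatched entries are $<_P$-largest in their ladder, so they lie $<_P$-below every later matched $w_{r'}$) is fine, but $t\geq 3$ genuinely fails to follow from it: you get $v_i <_P A^-_{r',1}$, but a powersum row of $A^-$ can have $A^-_{r',1}\dote_P A^-_{r',2}$, and then $\threeone$-freeness does not rule out $v_i \dote_P A^-_{r',2}$. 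The proposed "hybrid with $\phi=\mathrm{id}$" also does not work in general: with $\phi=\mathrm{id}$ the prepended word $v_iA^-_i$ need not be a powersum word, since the no-right-unbalanced-ladders hypothesis only guarantees existence of \emph{some} injection $g$ with $v_{g(i)}\dote_P w_i$, not the identity.

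The paper resolves this by making the choice of injection sensitive to the full rows of $A^-$, not just to $W$. It defines $\mathcal{I}$ to be the set of all injections $g:[t]\to[s]$ such that $v_{g(i)}A^-_i$ is a powersum word (nonempty because the ladder-matching injection $\tilde g$ lies in $\mathcal{I}$), and then takes $\hat g$ to be the lexicographically minimal element of $\mathcal{I}$. With that choice, condition~(3) for a singleton row $j$ reduces to: if $j<\hat g(i)$ and $v_j\not<_P A^-_{i,s}$ for some $s$, then replacing $\hat g(i)$ by $j$ yields another element of $\mathcal{I}$ that is lex-smaller, a contradiction. (The only thing to check is $v_j\not>_P A^-_{i,1}$, which follows from $v_j<_P v_{\hat g(i)}$ and $v_{\hat g(i)}\not>_P A^-_{i,1}$ by a short transitivity argument; no new RL minima can appear at interior positions.) So the lex-minimality is doing the real work that your fixed ladder matching cannot. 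You should replace the fixed $\phi$ by this extremal choice over a set of admissible injections, and then your condition~(3) difficulty evaporates.
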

\begin{proof}
Let $V = v_1 <_P v_2 <_P ... <_P v_s$ and $W = w_1 <_P w_2 <_P ... <_P w_t$ be the entries of the first and second columns of $T$, and suppose $VW$ has no right-unbalanced ladders.
Let $A^-$ denote the powerful array such that $T^- = \tab(A^-)$.
Consider the set $\mathcal{I}$ of injective functions $g: [t] \to [s]$ such that $v_{g(i)}A^-_i$ is a powersum word for all $i \in [t]$, where $A^-_i$ is row $i$ of $A^-$. As there are no right-unbalanced ladders between $V$ and $W$, there is an injective function $\tilde{g}:[t] \to [s]$ such that $v_{\tilde{g}(i)} \dote_P w_i$ for all $i \in [t]$. Thus, we have $\tilde{g} \in \mathcal{I}$ and $\mathcal{I}$ is non-empty.  

Let $\hat{g}$ be the lexicographic minimum of $\mathcal{I}$, meaning for all $h \in \mathcal{I}$, $\hat{g}(i) < h(i)$ for the smallest $i \in [t]$ such that $\hat{g}(i) \neq h(i)$. We can then take $A$ to be the powerful array with rows
\begin{equation}
A_j = 
\begin{cases} v_j & \text{if } j \neq \hat{g}(i) \text{ for any } i \in [t], \\
					v_jA^-_i & \text{if }j = \hat{g}(i).
					\end{cases}
\end{equation}
By definition of $\mathcal{I}$, the rows of $A$ are powersum words. Furthermore, by the minimality of $\hat{g}$, if $j \neq \hat{g}(i)$ for any $i$, then $v_j <_P A_{r,t}$ for all $r > j$ and $t > 1$. Thus, $A$ is a powerful array, and $T$ is a powerful $P$-tableau.
\end{proof}

\begin{corollary}\label{strongDecLemma} For a \threeone -free poset $P$, $\strongSSYT_P(\lambda) \subseteq \decSSYT_P(\lambda)$.
\end{corollary}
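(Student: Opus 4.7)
The plan is to prove Corollary~\ref{strongDecLemma} by induction on the number of columns of $\lambda$, leveraging Lemma~\ref{lem strong concatenation} as the inductive step. The key observation is that the property of being a strong $P$-tableau is preserved under deletion of the first column: if $T \in \strongSSYT_P(\lambda)$, then the tableau $T^-$ obtained by removing the first column of $T$ lies in $\strongSSYT_P(\lambda^-)$, since the set of adjacent column pairs in $T^-$ is a subset of those in $T$, and no right-unbalanced ladders are introduced by deletion.

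For the base case, I would take $\lambda$ to be a one-column shape (or the empty shape). In that case every $P$-tableau $T \in \T_P(\lambda)$ is trivially a powerful $P$-tableau: the array $A$ consisting of $T$ viewed as a single-column array has shape $\alpha = (1, 1, \dots, 1)$, each row is a single entry (vacuously a powersum word), and conditions (2) and (3) of Definition~\ref{def dec array} follow from the column being a chain in $P$. Thus $T = \tab(A) \in \decSSYT_P(\lambda)$.

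For the inductive step, assume the inclusion $\strongSSYT_P(\mu) \subseteq \decSSYT_P(\mu)$ whenever $\mu$ has fewer columns than $\lambda$. Let $T \in \strongSSYT_P(\lambda)$. Since $T$ is strong, no pair of adjacent columns contains a right-unbalanced ladder; in particular, no right-unbalanced ladder exists between the first two columns of $T$, and the tableau $T^-$ (with its first column removed) is itself strong of shape $\lambda^-$. By the inductive hypothesis, $T^- \in \decSSYT_P(\lambda^-)$. Applying Lemma~\ref{lem strong concatenation} then yields $T \in \decSSYT_P(\lambda)$, completing the induction.

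There is no genuine obstacle here: the substantive work has already been carried out in Lemma~\ref{lem strong concatenation}, where the lexicographically minimal injection $\hat{g}$ is used to assemble the powerful array witnessing that $T$ is powerful. The only thing to verify with minor care is the preservation of strongness under column deletion, which is immediate from the definition since the ladders between columns $j$ and $j+1$ of $T^-$ coincide with those between columns $j+1$ and $j+2$ of $T$.
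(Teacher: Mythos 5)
Your proof is correct and follows essentially the same route as the paper's: induction on the number of columns, the observation that deleting the first column of a strong $P$-tableau leaves a strong $P$-tableau, and an application of Lemma~\ref{lem strong concatenation} to conclude the inductive step. The only cosmetic difference is that you spell out the base case in more detail (checking conditions (2) and (3) of Definition~\ref{def dec array}) where the paper simply notes $\strongSSYT_P(\lambda) = \decSSYT_P(\lambda) = \T_P(\lambda)$ for a single-column shape.
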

\begin{proof}
Let $T \in \strongSSYT_P(\lambda)$. We perform induction on the number of columns of $T$. If $\lambda$ is a single column shape, then $\strongSSYT_P(\lambda) = \decSSYT_P(\lambda) = \T_P(\lambda)$, and we are done.
Suppose $T$ has $m$ columns and every strong $P$-tableau with $m-1$ columns is a powerful $P$-tableau. Let $T^-$ denote the $P$-tableau obtained by removing the first column of $T$.
Then $T^-$ is a strong $P$-tableau with $m-1$ columns, so $T^-$ is powerful by the inductive hypothesis. As there are no right-unbalanced ladders between the first two columns of $T$, we have that $T$ is a powerful $P$-tableau by Lemma~\ref{lem strong concatenation}.
\end{proof}

We can now state a refinement of Conjecture~\ref{undercountqConj} in terms of $P$-analogues. Applying $\eval_q$ to Conjecture~\ref{undercountConj}, we obtain Conjecture~\ref{undercountqConj}.

\begin{conjecture}
\label{undercountConj}
Let $P$ be a $\threeone$-free poset. Then for $\lambda \vdash n$,
\begin{equation}
m_{\lambda}^P(\mathbf{u}) - \left( \sum_{T\, \in\, \strongSSYT_P(\lambda)} \mathbf{u}_{\colword(T)} \right)
\end{equation}
has a $\mathbf{u}$-positive expansion as an element of $\mathcal{R}_P = \mathbb{Q}\langle u_1, u_2, ..., u_n \rangle /I_P$.
\end{conjecture}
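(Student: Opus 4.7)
The plan is to induct on the number of columns of $\lambda$, with small-shape base cases handled directly and the inductive step attempted via a sign-reversing involution on $P$-arrays. The single-column case is immediate, since $m_{(1^n)}^P(\mathbf{u}) = e_n^P(\mathbf{u}) = \sum_{T\, \in\, \strongSSYT_P((1^n))} \mathbf{u}_{\colword(T)}$; every single-column $P$-tableau is vacuously strong. The two-column case follows from Theorem~\ref{twoColmPpos}, where the key set defined there coincides with $\strongSSYT_P(\lambda)$, giving the conjecture with equality rather than merely a positive difference.

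For the inductive step, I would use the expansion $m_\lambda^P(\mathbf{u}) = \sum_\mu K^{-1}_{\lambda\, \mu'}\, e_\mu^P(\mathbf{u})$ transported from the corresponding identity in $\Lambda(\mathbf{x})$; this transfers to $\mathcal{R}_P$ because the $e_\mu^P(\mathbf{u})$ commute by \cite[Theorem 4.2]{Hwang}. Each $e_\mu^P(\mathbf{u}) = \sum_{A\, \in\, E_P(\mu')} \mathbf{u}_{\colword(A)}$ is manifestly $\mathbf{u}$-positive as a generating function over $P$-arrays. The goal is then to construct a sign-reversing involution $\iota$ on the signed disjoint union $\bigsqcup_\mu \sgn(K^{-1}_{\lambda\, \mu'})\, E_P(\mu')$ such that (i) $\iota$ preserves $\mathbf{u}_{\colword(A)}$ modulo $I_P$, (ii) every negatively-signed contribution is cancelled against a positively-signed partner, and (iii) the fixed-point set contains $\strongSSYT_P(\lambda)$, with the remaining fixed-point sum being $\mathbf{u}$-positive. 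The natural local move for $\iota$ is the ladder swap of Figure~\ref{fig ladder swap}, which carries a $P$-array of shape $\mu'$ to one of shape $\tilde{\mu}'$ with $\tilde{\mu}$ differing from $\mu$ by a unit exchange, precisely the combinatorics encoded by the transition between the monomial and elementary bases.

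The main obstacle is that the entries of $K^{-1}$ are not merely $\pm 1$ but arbitrary integers, so a single ladder-swap involution will not suffice; rather, one needs to organize the ladder swaps into a systematic cancellation mechanism, likely indexed by a refining order on $P$-array shapes. This is closely parallel to existing combinatorial proofs of inverse Kostka formulas in the classical setting, and I would expect a variant of their tabloid-based constructions, transported to $P$-arrays, to do the job. Two complications arise that are absent classically: first, in the general $\threeone$-free case, adjacent columns can share $4$-cycle ladders in addition to path ladders, and the involution must behave well on both; second, the involution must respect the relations in $I_P$, not merely the free monoid structure. I would first try to resolve the natural unit interval case, where only path ladders occur, and then attempt to lift to arbitrary $\threeone$-free posets using the modular techniques of \cite{GPchromatic} and \cite{BEPS}. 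The resolved two-column case (Theorem~\ref{twoColmPpos}), together with the empirical verification for posets with $\le 10$ elements, should serve as both a guiding example and a sanity check for any candidate involution.
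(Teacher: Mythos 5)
This statement is a conjecture in the paper, not a theorem, and the paper does not prove it; the supporting evidence offered is verification for posets with at most $10$ elements together with proofs in special cases (two-column shapes via Lemma~\ref{twoColDecLemma} and Theorem~\ref{twoColmPpos}, certain greedy-partition-related shapes via Theorem~\ref{dominantPositivity}, and shapes $(n-2,2)$ via Theorem~\ref{thm k2positivity}). So there is no proof in the paper for you to have matched; any complete argument would be new.

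Your proposal is an honest research program rather than a proof, and you acknowledge this. The base cases are fine: the single-column case is trivial since $m_{1^n}^P(\mathbf{u}) = e_n^P(\mathbf{u})$ and every one-column $P$-tableau is strong, and the two-column case follows from Lemma~\ref{twoColDecLemma} together with Theorem~\ref{twoColmPpos}, where the difference is in fact zero. However, the inductive step is not carried out: the sign-reversing involution on $\bigsqcup_\mu \sgn(K^{-1}_{\lambda\,\mu'})\, E_P(\mu')$ is stipulated with desirable properties (i)--(iii), not constructed, and you correctly flag the two central obstacles yourself --- the inverse Kostka entries have arbitrary magnitude so a bare ladder-swap involution cannot provide the needed cancellation, and any involution must be compatible with $I_P$ rather than just the free monoid. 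There is also a subtler issue your sketch skates past: condition (iii) asks the fixed-point set to contain $\strongSSYT_P(\lambda)$ and for the remaining fixed-point sum to be $\mathbf{u}$-positive, but even if an involution produces a $\mathbf{u}$-positive expression for $m_\lambda^P(\mathbf{u})$, it is an extra nontrivial claim that the strong $P$-tableaux are a literal subset of the fixed points rather than merely matching after reduction mod $I_P$. Until the involution is defined, there is nothing to check, so this stands as a plausible plan of attack with a genuine gap at its core, not a proof.
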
 

Special cases of strong and powerful $P$-tableaux appear in the results of \cite{BEPS, Hwang}. 
We can reinterpret Theorems \ref{twoColmPpos} and \ref{hookShapemPpos}, in terms of these $P$-tableaux as follows.
\begin{lemma}
\label{twoColDecLemma}
For $\lambda$ a two-column shape, let $\keySSYT_P(\lambda)$ be as defined in Theorem~\ref{twoColmPpos}. Then
$\decSSYT_P(\lambda) = \keySSYT_P(\lambda) = \strongSSYT_P(\lambda)$.
\end{lemma}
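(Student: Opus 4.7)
The statement breaks into three set equalities, which I would attack as follows.

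First, I would observe that the equality $\keySSYT_P(\lambda) = \strongSSYT_P(\lambda)$ is essentially definitional. For a two-column shape $\lambda$, the only pair of adjacent columns in any $P$-tableau consists of columns $1$ and $2$, so the condition defining $\strongSSYT_P(\lambda)$ (no right-unbalanced ladder between any pair of adjacent columns) collapses to exactly the condition defining $\keySSYT_P(\lambda)$ in Theorem~\ref{twoColmPpos}. No calculation is required.

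Next, the containment $\strongSSYT_P(\lambda) \subseteq \decSSYT_P(\lambda)$ is Corollary~\ref{strongDecLemma}. So the only real work is the reverse inclusion $\decSSYT_P(\lambda) \subseteq \strongSSYT_P(\lambda)$. The plan is to take $T \in \decSSYT_P(\lambda)$, write $T = \tab(A)$ for some powerful array $A$ of shape $\alpha$ with $\sort(\alpha) = \lambda$, and directly exhibit the injection required by Corollary~\ref{cor strong inj}. Since $\lambda$ has at most two columns, every row of $A$ has length $1$ or $2$. Let $r_1 < r_2 < \cdots < r_m$ index the rows of $A$ of length $2$, where $m = \lambda'_2$. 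The key observation is that a powersum word of length $2$ is forced to consist of two entries that are equal or incomparable in $P$: the no-$P$-descent condition rules out $w_1 >_P w_2$, and the no-nontrivial RL $P$-minima condition rules out $w_1 <_P w_2$. Hence $A_{r_j, 1} \dote_P A_{r_j, 2}$ for each $j$.

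Finally, reading off the columns of $\tab(A)$, column $1$ of $T$ is $A_{1,1}, A_{2,1}, \ldots, A_{l,1}$ (top to bottom) and column $2$ of $T$ is $A_{r_1,2}, A_{r_2,2}, \ldots, A_{r_m, 2}$. So the map $g_1 \colon [m] \to [l]$ defined by $g_1(j) = r_j$ is an injection with $T_{j,2} = A_{r_j,2} \dote_P A_{r_j,1} = T_{g_1(j),1}$, and by Corollary~\ref{cor strong inj} this proves $T \in \strongSSYT_P(\lambda)$. I do not anticipate a major obstacle here — the only subtlety is the small combinatorial check that a length-$2$ powersum word has $\dote_P$-related entries, which is an immediate unpacking of Definition~\ref{def powersum word}. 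Everything else is bookkeeping about the tableau map.
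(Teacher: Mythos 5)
Your proof is correct and takes essentially the same route as the paper's: both identify $\keySSYT_P(\lambda) = \strongSSYT_P(\lambda)$ as definitional for two-column shapes, both handle $\strongSSYT_P(\lambda) \subseteq \decSSYT_P(\lambda)$ via Corollary~\ref{strongDecLemma}, and both obtain the reverse containment by reading off from the powerful array $A$ the injection $g$ sending each index in column $2$ to the row of $A$ containing that entry, then using the powersum condition on rows of $A$ to get the required $\dote_P$ relations for Corollary~\ref{cor strong inj}. Your only addition is the explicit (and correct) remark that a length-$2$ powersum word must have $\dote_P$-related entries, which the paper treats as immediate.
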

\begin{proof}
Let $P$ be a \threeone -free poset, and let $\lambda$ be a two-column partition.
From Theorem \ref{twoColmPpos}, $\keySSYT_P(\lambda) = \strongSSYT_P(\lambda)$. Suppose $T \in \decSSYT_P(\lambda)$. Then there is a powerful array $A$ such that $\tab(A) = T$. Let $g: [\lambda_2'] \to [\lambda_1']$ be the function that sends $i \in [\lambda_2']$ to the row $g(i)$ of $A$ containing $T_{i,2}$. Then $g$ is an injection, and $T_{g(i), 1}T_{i,2}$ is a powersum word for each $i \in [\lambda_2']$. Hence, $T_{i,2} \dote_P T_{g(i),1}$, and $T$ is a strong $P$-tableau from Corollary \ref{cor strong inj}.
\end{proof}

\begin{lemma}
\label{hookDecomp}
For $\lambda$ a hook shape, let $\keySSYT_P(\lambda)$ be as defined in Theorem~\ref{hookShapemPpos}. Then $\decSSYT_P(\lambda) = \keySSYT_P(\lambda)$.
\end{lemma}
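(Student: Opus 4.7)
The plan is to apply Lemma~\ref{lem tab injective} to reduce $\decSSYT_P(\lambda)$, for $\lambda=a1^b$, to a disjoint union of powerful arrays over compositions $\alpha$ with $\sort(\alpha)=\lambda$. Such compositions are exactly $\alpha=(1,\ldots,1,a,1,\ldots,1)$ with the unique part $a$ in some position $r$. For any such $\alpha$, every column past the first contains a single cell of $\alpha$ (in row $r$), so the tableau map $\tab$ acts by sending row $r$ directly into the first row of $T$ from column $2$ onward, while preserving the first column. Explicitly, $T_{j,1}=A_{j,1}$ for every $j$, and $T_{1,j}=A_{r,j}$ for $j\ge 2$. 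Consequently the powersum word condition on row $r$ of $A$ becomes precisely that $T_{r,1}T_{1,2}\cdots T_{1,a}$ is a powersum word.

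This immediately gives the forward inclusion $\decSSYT_P(\lambda)\subseteq\keySSYT_P(\lambda)$ by taking $i=r$. For the reverse, given $T\in\keySSYT_P(\lambda)$ with witness $i$, I would define $\alpha$ to have $a$ in position $i$ and build $A$ by setting $A_{j,1}=T_{j,1}$ for all $j$ and $A_{i,j}=T_{1,j}$ for $j\ge 2$. The resulting filling has all rows powersum words (trivially for singletons, by hypothesis for row $i$) and its first column inherits the strict $P$-increase from $T$. What remains is to verify condition~(3) of Definition~\ref{def dec array}.

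Here lies the main (and essentially the only) subtle point. Condition~(3) requires $A_{r,\alpha_r}<_P A_{s,t}$ whenever $s>r$ and $t>\alpha_r$. When $r=i$ there are no cells of $\alpha$ past column $a$, so the condition is vacuous; when $r>i$ all rows below $i$ are singletons, again giving no $t>1$. The only substantive case is $r<i$ with $\alpha_r=1$, where one needs $T_{r,1}<_P T_{1,t}$ for $2\le t\le a$. Since $T_{i,1}T_{1,2}\cdots T_{1,a}$ is a powersum word, $T_{i,1}$ is a right-to-left $P$-minimum of that word, so $T_{i,1}<_P T_{1,t}$ for $t\ge 2$; combining this with the column-strict condition $T_{r,1}<_P T_{i,1}$ of the $P$-tableau $T$ gives the desired inequality. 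This interaction of the powersum property with the chain in column~$1$ is the crux of the argument. Once condition~(3) is verified, $A\in\decArray_P(\alpha)$, and by construction $\tab(A)=T$, establishing $T\in\decSSYT_P(\lambda)$ and the reverse inclusion.
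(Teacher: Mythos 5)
Your reduction via Lemma~\ref{lem tab injective} and the identification of compositions $\alpha$ with a single long row are correct, as is the forward inclusion. However, the verification of Definition~\ref{def dec array}(3) in the reverse direction hinges on the claim that since $T_{i,1}T_{1,2}\cdots T_{1,a}$ is a powersum word, \emph{$T_{i,1}$ is a right-to-left $P$-minimum of that word, so $T_{i,1}<_P T_{1,t}$ for $t\ge 2$}. This is exactly backwards. By Definition~\ref{def powersum word}, a powersum word has \emph{no nontrivial} RL $P$-minima; position $1$ of $T_{i,1}T_{1,2}\cdots T_{1,a}$ is therefore explicitly \emph{not} a RL $P$-minimum whenever $a\ge 2$, i.e., $T_{i,1}\not<_P T_{1,t}$ for some $t\ge 2$. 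So the inequality you use is the one property of the entry $T_{i,1}$ that the powersum condition rules out, and the chain of inequalities $T_{r,1}<_P T_{i,1}<_P T_{1,t}$ collapses.

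Moreover, the construction with an \emph{arbitrary} witness $i$ genuinely fails: take $P$ on $[4]$ with $1<_P 3$, $1<_P 4$, $2<_P 4$, and all other pairs incomparable, and the hook tableau $T$ of shape $(2,1)$ with first column $1,3$ and arm entry $2$. Both $i=1$ and $i=2$ are witnesses (both $12$ and $32$ are powersum words), but choosing $i=2$ produces an array whose condition~(3) requires $1<_P 2$, which is false. The paper's proof deliberately takes $i$ to be the \emph{least} index making $v_iw_1\cdots w_l$ a powersum word, and the argument for condition~(3) then runs differently: for $r<i$, the word $v_rw_1\cdots w_l$ still has no $P$-descents (if $v_r>_P w_1$ then transitivity forces $v_i>_P w_1$, a contradiction), but by minimality it is not a powersum word, so it has a nontrivial RL $P$-minimum; a nontrivial RL $P$-minimum in any position $>1$ of $v_rw_1\cdots w_l$ would equally be a nontrivial RL $P$-minimum of $v_iw_1\cdots w_l$, contradicting that the latter is a powersum word, so the only possibility is position~$1$, i.e.\ $v_r<_P w_t$ for all $t$, which is exactly condition~(3). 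Your write-up is missing both the appeal to minimality and this comparison of RL $P$-minima between $v_rw$ and $v_iw$.
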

\begin{proof}
Recall that a hook shape tableau $T$ with first column $v_1 <_P v_2 <_P \ldots <_P v_k$ and arm $w_1 \not >_P w_2 \not>_P \ldots \not>_P w_l$ is a key $P$-tableau if $v_iw_1w_2\ldots w_l$ is a powersum word for some $i \in [k]$.
Taking $i$ to be the least index such that $v_iw_1w_2\ldots w_l$ is a powersum word, we have a powerful array $A$ with $j^{th}$ row $v_j$ if $j \neq i$ and $i^{th}$ row $v_iw_1w_2\ldots w_l.$ Then $\tab(A) = T$. As any powerful $P$-tableau of shape $\lambda$ has some $v_i$ such that $v_iw_1w_2\ldots w_l$ is a powersum word, $\decSSYT_P(\lambda) = \keySSYT_P(\lambda)$ as desired.
\end{proof}
\section{Tableaux Witnesses for Positivity}
In this section, we connect the work of Hikita \cite{HikitaChromatic} to Conjecture~\ref{nonzeroCoeffConj}.
Recall from Corollary~\ref{cor hik tab pos} that, for a reverse Hessenberg function $\mathbf{m} \in \mathbb{E}_n$ and partition $\lambda \vdash n$, the $e$-coefficient $c_\lambda^{P_{\mathbf{m}}}$ is non-zero if and only if there is a tableau $T \in \SYT(\lambda)$ such that $\prob_\bm(T; q) \neq 0$. We say such a tableau \emph{witnesses positivity} of $c_{\lambda}^{P_\bm}$. 
As $P_{\mathbf{m}}$ is a poset on $[n]$, we can interpret the entries of a standard Young tableau $T \in \SYT(\lambda)$ as elements of $P_{\mathbf{m}}$. We show that if $T$ is a witness to positivity of $c_\lambda^{P_\bm}$, then $T$ is a strong $P_{\bm}$-tableau when the entries of $T$ are interpreted as elements of $P_\bm$. 

Recall that for a reverse Hessenberg function $\mathbf{m} \in \mathbb{E}_n$ and integer $0 \leq k < n$, $\mathbf{m}^{(k)}\in \mathbb{E}_k$ is the reverse Hessenberg function such that $\mathbf{m}^{(k)}(i) = \mathbf{m}(i)$ for $1 \leq i \leq k$. For a standard Young tableau $T$ with $n$ cells, $T^{(k)}$ is the tableau obtained by removing entries $k+1, k+2,...,n$. 
\begin{defin}
\label{def Hikita tabs}
Let $\bm \in \mathbb{E}_n$, and let $T$ be a standard tableau with $n$ cells. We say $T$ is an \emph{$\bm$-Hikita tableau} if $\prob_{\bm}(T;q) \neq 0$. Write $\posSYT(\bm, \lambda)$ for the set of $\bm$-Hikita tableaux of shape $\lambda$.
\end{defin}

\begin{lemma}
\label{lem Hikita tabs}
Let $\mathbf{m} \in \mathbb{E}_n$, let $T$ be a standard tableau with $n$ cells, and let $(i,j)$ be the cell of $T$ containing $n$. 
Then $T$ is an $\mathbf{m}$-Hikita tableau if and only if $n = 0$ or
\begin{enumerate}
\item \label{def hik tab recur} $T^{(n-1)}$ is an $\mathbf{m}^{(n-1)}$-Hikita tableau, 
\item if $i > 1$, then $T_{i-1,j} \leq \mathbf{m}(n)$, and 
\item if $j > 1$ and $x$ is the maximal entry of column $j-1$ of $T$, then $x > \mathbf{m}(n)$.
\end{enumerate}
Hence, $T^{(k)}$ is an $\mathbf{m}^{(k)}$-Hikita tableau for all $0 \leq k < n$.
\end{lemma}
\begin{proof}
For $n = 0$, $T$ must be the empty tableau, so $\prob_\bm(T; q) = 1$, and $T$ is an $\bm$-Hikita tableau. 
Suppose $n > 0$.
Let $r = \bm(n)$, and let $S = T^{(n-1)}$.
If $T$ is a $\bm$-Hikita tableau, then by \eqref{eq hik prob recur}, $\prob_\bm(T; q)$ is a multiple of $\prob_{\bm^{(n-1)}}(S; q)$, so $S$ is an $\bm^{(n-1)}$-Hikita tableau. Furthermore, there is some $k$ such that $T = f_k^{(r)}(S)$.
Let $\delta^{(r)}(S) = (\delta_1, \delta_2,\dots, \delta_{n})$ as defined in \eqref{eq color seq}. Then by the definition of $f_k^{(r)}(S)$, we have $\delta_j = 0$. Thus, if $i > 1$, we have $T_{i-1, j} \leq r.$
Likewise, if $j > 1$, we have $\delta_{j-1} = 1$. Thus, if $j > 1$ and $x$ is the maximum entry of the $j-1^{th}$ column of $T$, then $x > r$.
Therefore, $T$ satisfies conditions $(1),(2)$, and $(3)$.
Conversely, if $T$ satisfies conditions $(1)$, $(2)$, and $(3)$, then $T = f_k^{(r)}(S)$ for some $k$, $\varphi_{k}^{(r)}(S; q) \neq 0$, and $\prob_{\bm^{(n-1)}}(S; q) \neq 0$, so $\prob_{\bm}(T; q) \neq 0$, as desired.
\end{proof}
%\begin{defin}
%For $\mathbf{m} \in \mathbb{E}_n$, $\lambda \vdash n$, $T \in \posSYT(\mathbf{m}, \lambda)$, and $k \in [n]$, define $T^{(k)}$ to be the tableau obtained by removing entries $k+1,k+2,...,n$ from $T$. As $T$ is a standard Young tableau, $T^{(k)}$ is also a standard Young tableau. From Definition \ref{def Hikita tabs}, we have that $T^{(k)} \in \posSYT(\mathbf{m}^{(k)}, \mu)$ for some $\mu \vdash k$.
%\end{defin}

\begin{theorem}[Theorem~\ref{nonzeroCoeffThm}]
Let $\bm$ be a reverse Hessenberg function on $[n]$, and let $\lambda \vdash n$. Then $\posSYT(\bm, \lambda) \subseteq \strongSYT_{P_{\bm}}(\lambda)$. Hence, if $c_\lambda^{P_{\bm}} > 0,$ then $\strongSYT_{P_{\bm}}(\lambda) \neq \emptyset$.
\end{theorem}
\begin{proof}
We perform induction on $n$. If $n = 0$, then $T$ must be the empty tableau which is by definition a strong $P$-tableau.
Suppose for all $k < n$, $\mathbf{m}' \in \mathbb{E}_k$, $\mu \vdash k$, and $S \in \posSYT(\mathbf{m}', \mu)$, we have that $S$ is a strong $P_{\mathbf{m}'}$-tableau. Let $\mathbf{m} \in \mathbb{E}_{n}$, $\lambda \vdash n$, and $T \in \posSYT(\mathbf{m}, \lambda)$. We want to show that $T \in \strongSYT_P(\lambda)$.

We first show that the columns of $T$ are increasing chains in $P$. By Lemma~\ref{lem Hikita tabs} and the inductive hypothesis, $T^{(n-1)}$ is a strong $P_{\mathbf{m}^{(n-1)}}$-tableau. 
Let $T_{i,j} = n$.
From condition $(2)$ of Lemma~\ref{lem Hikita tabs}, if $i > 1$, then $T_{i-1, j} \leq \mathbf{m}(n)$, which means that $T_{i-1,j} <_{P_{\mathbf{m}}} T_{i,j} = n$. Thus, as $T^{(n-1)}$ is a $P_{\mathbf{m}^{(n-1)}}$-tableau by the inductive hypothesis, the columns of $T$ are increasing in $P_{\mathbf{m}}$.

Next we show that $T$ has no right-unbalanced ladders. Recall from Definition~\ref{def IP} that we write $a \dote_P b$ to mean $a$ is equal or incomparable to $b$ in $P$. From Corollary~\ref{cor strong inj}, it suffices to show that for each pair of adjacent columns $s$, $s+1$ of $T$, there is an injection $g_{s}: [\lambda_{s+1}'] \to [\lambda_s']$ such that $T_{t,s+1} \dote_P T_{g_s(t), s}$ for each $t \in [\lambda_{s+1}']$. 
As the columns of $T$ are increasing in $P_{\mathbf{m}}$ and $n$ is a maximal element of $P_{\mathbf{m}}$, we have that $i = \lambda_j'$.
As $T^{(n-1)}$ is a strong $P_{\mathbf{m}^{(n-1)}}$-tableau, we have that $T$ is a strong $P_{\mathbf{m}}$-tableau if $j = 1$. Suppose then that $j > 1$. 
By property $(3)$ of Lemma~\ref{lem Hikita tabs}, $T_{\lambda_{j-1}', j-1} > \mathbf{m}(n)$. 
Hence, we have that  $T_{\lambda_{j-1}', j-1} \dote_{P_{\mathbf{m}}} T_{\lambda_j', j}$.
Let $k = T_{\lambda_{j-1}', j-1}$. By the inductive hypothesis, $T^{(k-1)}$ is a strong $P_{\mathbf{m}^{(k-1)}}$-tableau.
Furthermore, if $\mu$ is the shape of $T^{(k-1)}$, then $\mu_{j-1}' = \lambda_{j-1}'-1$ as $k$ is the largest entry in column $j-1$, and $\mu_j' = \lambda_j' - 1$ as $k > \mathbf{m}(n) \geq T_{\lambda_j'-1,j}$.
Therefore, there is an injection $g_{j-1}^{(k-1)}: [\lambda_j' - 1] \to [\lambda_{j-1}' - 1]$ such that $T_{g_{j-1}^{(k-1)}(t), j-1} \dote_{P_{\mathbf{m}^{(k-1)}}} T_{t, j}$ for all $t \in [\lambda_j' - 1]$.
Hence, $g_{j-1}: [\lambda_j'] \to [\lambda_{j-1}']$ defined by $g_{j-1}(t) = g_{j-1}^{(k-1)}(t)$ if $t \in [\lambda_j'-1]$ and $g_{j-1}(\lambda_j') = \lambda_{j-1}'$ is an injection.
Therefore, $T_{g_{j-1}(t), j-1} \dote_{P_{\mathbf{m}}} T_{t, j}$ for all $t \in [\lambda_j']$, so $T$ is a strong $P_{\mathbf{m}}$-tableau.
\end{proof}

The following definition and lemma will be used in the proof of Theorem~\ref{thm Hik inv}. Recall the notation found in Definition~\ref{def hik prob}.

\begin{defin}
\label{def mod Hik prob}
Let $\bm \in \mathbb{E}_n, \lambda \vdash n,$ and $T \in \SYT(\lambda)$. Let $0 \leq r \leq n$ be an integer such that $\bm + (r) := (\bm(1), \bm(2),...,\bm(n), r)$ is a reverse Hessenberg function. Let $\delta^{(r)}(T) = (1^{b_0}, 0^{a_1}, 1^{b_1},0^{a_2},...,1^{b_\ell}, 0^{a_{\ell + 1}})$.  
Define the \emph{normalization factor} $A_k^{(r)}(T;q)$ by
\begin{equation}
A_k^{(r)}(T;q) = q^{\sum_{i=1}^k a_i}.
\end{equation}
Define $\zeta_{\bm}(T; q)$ by 
\begin{equation}
\zeta_{\bm}(T; q) = \begin{cases} 
1 & \text{if } n = 0, \\
A^{(\bm(n))}_k(S; q) \zeta_{\bm^{(n-1)}}(S; q) & \text{if } T = f_k^{(\bm(n))}(S),\\
0 & \text{otherwise.}
\end{cases}
\end{equation}
\end{defin}

\begin{lemma}
\label{lem hik inv eq}
Let $\bm \in \mathbb{E}_n$, $\lambda \vdash n$, and $T \in \posSYT(\bm, \lambda)$. Then
\begin{equation}
\label{eq inv zeta}
q^{\inv_{P_{\bm}}(T)} = q^{\lambda^{\star} - |\bm|} \zeta_\bm(T; q),
\end{equation}
where $\lambda^\star = \sum_{i < j} \lambda_i \lambda_j$.
\end{lemma}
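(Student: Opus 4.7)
The plan is to induct on $n$. The base case $n = 0$ is trivial since both sides equal $1$. For the inductive step, write $T = f_k^{(r)}(S)$ where $S = T^{(n-1)}$ has shape $\mu$, $r = \bm(n)$, and $n$ occupies cell $(m, c)$ of $T$ with $c = c_k^{(r)}(S)$ and $m = \mu_c' + 1$. By Definition~\ref{def mod Hik prob}, $\zeta_{\bm}(T; q) = q^{\sum_{i=1}^{k} a_i} \zeta_{\bm^{(n-1)}}(S; q)$; combined with $|\bm| = |\bm^{(n-1)}| + r$ and the inductive hypothesis applied to $S$, equation~\eqref{eq inv zeta} for $T$ reduces to the numerical identity
\begin{equation*}
\inv_{P_{\bm}}(T) - \inv_{P_{\bm^{(n-1)}}}(S) = (\lambda^{\star} - \mu^{\star}) - r + \sum_{i=1}^{k} a_i.
\end{equation*}

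For the left side of this identity, since $n$ is a maximal element of $P_{\bm}$ and the posets $P_{\bm}, P_{\bm^{(n-1)}}$ agree on $[n-1]$, removing $n$ preserves all other inversion pairs; hence the difference equals the number $R$ of integers $j$ with $r < j < n$ (those incomparable with $n$ in $P_{\bm}$) that appear strictly to the right of $n$ in $T$. For the right side, $\lambda$ differs from $\mu$ only by a cell added in row $m$, so a short partition calculation yields $\lambda^{\star} - \mu^{\star} = (n - 1) - \mu_m$; and because $(m, c)$ is an addable corner of $\mu$, row $m$ of $\mu$ contains exactly $c - 1$ cells, giving $\mu_m = c - 1$. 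Substituting, the identity further reduces to the assertion that the number of entries of $S$ strictly greater than $r$ lying in columns $1, \ldots, c$ equals $(c - 1) - \sum_{i=1}^{k} a_i = \sum_{i=0}^{k} b_i$.

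The key auxiliary input is the following claim: \emph{for $S \in \posSYT(\bm^{(n-1)}, \mu)$ and $r = \bm(n)$, each column of $S$ contains at most one entry strictly exceeding $r$.} I prove this by a secondary induction tracking the Hikita construction of $S$: when entry $j$ is inserted into a column $c'$ via $f_{k'}^{(\bm(j))}$, the definition of $c' = c_{k'}^{(\bm(j))}$ forces every pre-existing entry of column $c'$ to be at most $\bm(j)$. Were two entries $j' < j''$ both exceeding $r$ to lie in the same column, then at the instant $j''$ was inserted $j'$ would already be present, so the prior column maximum would be at least $j' > r \geq \bm(j'')$ (using the monotonicity $\bm(j'') \leq \bm(n)$), contradicting the admissibility of inserting $j''$ into column $c'$. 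With the claim established, the number of entries of $S$ exceeding $r$ in columns $1, \ldots, c$ equals the number of columns in $\{1, \ldots, c\}$ whose maximum exceeds $r$, which by the block structure of $\delta^{(r)}(S)$ and the definition of $c_k^{(r)}$ is precisely $\sum_{i=0}^{k} b_i$.

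The main obstacle is proving the auxiliary claim: it is not immediate from the shape-based characterization of Hikita tableaux in Definition~\ref{def Hikita tabs}, and its proof depends essentially on the monotonicity $\bm(j) \leq \bm(n)$ for $j \leq n$ that defines reverse Hessenberg functions. Once the claim is in hand, the remainder of the argument is direct bookkeeping converting the inversion difference into block-sums of $\delta^{(r)}(S)$.
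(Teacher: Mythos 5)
Your proof is correct, and it is essentially the same argument as the paper's, differing mainly in how the bookkeeping is arranged. You reduce the inductive step to the numerical identity $\inv_{P_{\bm}}(T) - \inv_{P_{\bm^{(n-1)}}}(S) = (\lambda^{\star}-\mu^{\star}) - r + \sum_{i=1}^k a_i$ and then verify it by counting the entries exceeding $r$ in columns $1,\dots,c$, showing that count is $\sum_{i=0}^k b_i$. The paper instead counts the new inversions directly as $b_{k+1}+\dots+b_{\ell}$ (the columns to the right of $c$ with $\delta=1$) and records the identities $n - r = \sum_{i=0}^{\ell} b_i$ and $\lambda^{\star} = \mu^{\star} + n - \sum_{i=0}^k b_i - \sum_{i=1}^k a_i$ before telescoping; these are two ways of slicing the same pie. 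Both arguments hinge on the same structural fact — that each integer greater than $r$ appears as the largest entry of its column of $S$ — which the paper reads off from the maximality of $r+1,\dots,n$ in $P_{\bm^{(n-1)}}$ together with Theorem~\ref{nonzeroCoeffThm} (columns of a Hikita tableau are $P$-chains), whereas you extract it directly from the insertion rule: the definition of $c_{k'}^{(\bm(j))}$ forces $\delta=0$ at the target column, so every pre-existing entry there is at most $\bm(j)\leq\bm(n)=r$ by monotonicity of $\bm$. Your route is slightly more self-contained since it does not lean on Theorem~\ref{nonzeroCoeffThm}, but it is not a different method. One small expository note: you call the proof of the auxiliary claim a ``secondary induction,'' but as written it is a direct contradiction argument; and the reduction implicitly uses the equality $\mu_m = c - 1$, which follows because $(m,c)$ is an addable corner — worth saying explicitly, though it does not affect correctness.
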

\begin{proof}
We prove the statement by induction on $n$. For $n = 0$, we have $\lambda^\star = 0$, $|\bm| = 0$, and $\zeta_\bm(T; q) = 1$, so the right hand side of \eqref{eq inv zeta} is 1. As $T$ must be the empty tableau, the left hand side of \eqref{eq inv zeta} is 1, and the statement holds for $n = 0$.

Suppose $n > 0$. Let $T \in \posSYT(\bm, \lambda)$, and let $S = T^{(n-1)} \in \posSYT(\bm^{(n-1)}, \mu)$.
Suppose \eqref{eq inv zeta} holds for $S$. 
Let $r = \bm(n)$, and let $k$ be the index such that $f_k^{(r)}(S)$.
Say $\delta^{(r)}(S) = (1^{b_0}, 0^{a_1}, 1^{b_1},0^{a_2},...,1^{b_\ell}, 0^{a_{\ell + 1}})$ as defined in \eqref{eq color seq}.
Observe that $r+1,r+2,...,n-1$ are maximal elements of $P_{\bm^{(n-1)}}$. Hence, $r+1, r+2,...,n-1$ are the largest entries in their columns of $S$. As $n$ is incomparable to $r+1,r+2,...,n-1$ in $P_{\bm}$, we have $n- 1 - r = \sum_{i = 0}^\ell b_i.$ As $b_{k+1} + b_{k+2} + \dots + b_{\ell}$ is the number of elements of $\{r+1,r+2,\dots, n-1\}$ appearing to the right of $n$ in $T$, we have 
\begin{equation}
\inv_{P_{\bm}}(T) = \inv_{P_{\bm'}}(S) + b_{k+1} + b_{k+2} + \dots + b_{\ell}.
\end{equation}
Observe that if entry $n$ is in row $i$ of $T$, then $\lambda^\star = \mu^\star + \sum_{j \neq i} \mu_j$ and $\mu_i = \sum_{j=0}^k b_j + \sum_{j=1}^k a_j$. Hence, $\lambda^\star = \mu^\star + n-1 - \sum_{j=0}^k b_j - \sum_{j=1}^k a_j.$
Therefore,
\begin{align}
q^{\inv_{P_{\bm}}(T)} &= q^{\inv_{P_{\bm'}}(S) + b_{k+1} + b_{k+2} + \dots + b_{\ell}} \\
&=  q^{\mu^\star - |\bm'|}\zeta_{\bm'}(S; q)q^{b_{k+1} + b_{k+2} + \dots + b_{\ell}} \\
&= q^{\lambda^\star -n + 1 + \sum_{j=0}^kb_j + \sum_{j=1}^k a_j - |\bm| + r}\zeta_{\bm'}(S; q) q^{b_{k+1} + b_{k+2} + \dots + b_{\ell}} \\
&= q^{\lambda^\star +(r + 1 - n + \sum_{j=0}^\ell b_j)+ \sum_{j=1}^k a_j - |\bm|}\zeta_{\bm'}(S; q) \\
&= q^{\lambda^\star - |\bm| + \sum_{j=1}^k a_j} \zeta_{\bm'}(S; q).
\end{align}
From Definition~\ref{def mod Hik prob}, we have $\zeta_{\bm'}(S; q) = q^{-\sum_{j=1}^k a_j} \zeta_\bm(T;q)$, so
\begin{align}
q^{\inv_{P_\bm}(T)} &= q^{\lambda^\star - |\bm| + \sum_{j=1}^k a_j} \zeta_{\bm'}(S; q) \\
&=  q^{\lambda^\star - |\bm| + \sum_{j=1}^k a_j} q^{-\sum_{j=1}^k a_j} \zeta_{\bm}(T; q) \\
&= q^{\lambda^\star - |\bm|} \zeta_{\bm}(T; q),
\end{align}
as desired.
\end{proof}
\begin{theorem}[Theorem~\ref{thm Hik inv}]
Let $\bm$ be a reverse Hessenberg function on $[n]$, and let $\lambda \vdash n$. Then for each $T \in \posSYT(\bm, \lambda)$, there exists a function $h_T(q)$ such that $0 < h_T(\tau) \leq 1$ for all $\tau \geq 0$, and 
\begin{equation}
\sum_{T\, \in\, \posSYT(\bm, \lambda)} q^{\inv_{P_\bm}(T)} h_T(q) = \frac{c_\lambda^{P_\bm}(q)}{[\lambda]_q!}.
\end{equation}
\end{theorem}
\begin{proof}
Let $T \in \posSYT(\bm, \lambda)$.
Define $h_T(q)$ by
\begin{equation}
h_T(q) = \begin{cases}
1 & \text{if } T = \emptyset, \\
\frac{\varphi_k^{(r)}(S; q)}{A_k^{(r)}(S; q)} h_S(q) & \text{if } T = f_k^{(r)}(S).
\end{cases}
\end{equation}
From Definition~\ref{def hik prob} and Definition~\ref{def mod Hik prob}, we have $0 < \frac{\varphi_k^{(r)}(T; \tau)}{A_k^{(r)}(T; \tau)} \leq 1$ for all real $\tau \geq 0$, so $0 < h_T(\tau) \leq 1$ for all $T \in \posSYT(\bm; \lambda)$. Furthermore, we have $h_T(q) = \frac{\prob_{\bm}(T; q)}{\zeta_{\bm}(T; q)}$. Then by Theorem~\ref{thm Hikita} and Lemma~\ref{lem hik inv eq}, 
\begin{equation}
\sum_{T \, \in \, \posSYT(\bm, \lambda)} q^{\inv_{P_\bm}(T)} h_T(q) = \frac{c_\lambda^{P_\bm}(q)}{[\lambda]_q!}.
\end{equation}
\end{proof}

\begin{corollary}
Let $\bm \in \mathbb{E}_n$, and let $\lambda \vdash n$. Then for all $\tau \geq 0$, 
\begin{equation}
\sum_{T \, \in\, \posSYT(\bm, \lambda)} \tau^{\inv_{P_\bm}(T)} \geq \frac{c_\lambda^{P_\bm}(\tau)}{[\lambda]_{\tau}!}.
\end{equation}
\end{corollary}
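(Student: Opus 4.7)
The plan is to deduce this immediately from Theorem~\ref{thm Hik inv}, which is already proved just above. That theorem produces functions $h_T(q)$ satisfying $0 < h_T(\alpha) \le 1$ for all $\alpha \ge 0$ and the identity
\begin{equation}
\sum_{T\, \in\, \posSYT(\bm, \lambda)} q^{\inv_{P_\bm}(T)} h_T(q) \;=\; \frac{c_\lambda^{P_\bm}(q)}{\prod_{i} [\lambda_i]_q!}.
\end{equation}
So the entire content of the corollary is bounding the left-hand side from above by dropping the factors $h_T(\alpha)$.

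Concretely, I would first specialize $q$ to a nonnegative real number $\alpha$ and observe that for each $T \in \posSYT(\bm,\lambda)$, the monomial $\alpha^{\inv_{P_\bm}(T)}$ is nonnegative (using the convention $0^0 = 1$ in the edge case $\alpha = 0$). Because $0 < h_T(\alpha) \le 1$, we have the termwise inequality
\begin{equation}
\alpha^{\inv_{P_\bm}(T)} h_T(\alpha) \;\le\; \alpha^{\inv_{P_\bm}(T)}.
\end{equation}
Summing over $T \in \posSYT(\bm,\lambda)$ and applying Theorem~\ref{thm Hik inv} on the left gives
\begin{equation}
\frac{c_\lambda^{P_\bm}(\alpha)}{\prod_{i} [\lambda_i]_{q=\alpha}!} \;=\; \sum_{T \, \in\, \posSYT(\bm, \lambda)} \alpha^{\inv_{P_\bm}(T)} h_T(\alpha) \;\le\; \sum_{T \, \in\, \posSYT(\bm, \lambda)} \alpha^{\inv_{P_\bm}(T)},
\end{equation}
which is exactly the claimed inequality.

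There is essentially no obstacle here beyond bookkeeping: the corollary is a direct weakening of the identity in Theorem~\ref{thm Hik inv}. The only thing one must be slightly careful about is the $\alpha = 0$ case, where the factorials $[\lambda_i]_{q=\alpha}!$ evaluate to $1$ and $\alpha^{\inv_{P_\bm}(T)}$ equals $1$ or $0$ depending on whether $\inv_{P_\bm}(T) = 0$; in that case the inequality reduces to $c_\lambda^{P_\bm}(0) \le \#\{T \in \posSYT(\bm,\lambda) : \inv_{P_\bm}(T) = 0\}$, which is again immediate from the same argument since each $h_T(0) \in (0,1]$.
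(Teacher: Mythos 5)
Your proof is correct and is exactly the intended derivation: the paper states this corollary immediately after Theorem~\ref{thm Hik inv} without further argument, precisely because it follows by specializing $q = \alpha$ and dropping the factors $h_T(\alpha) \in (0,1]$, just as you do. The handling of the $\alpha = 0$ edge case is a nice touch but not strictly necessary since the theorem's bound $0 < h_T(\alpha) \le 1$ is stated for all $\alpha \ge 0$.
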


\begin{exmp}
Figure~\ref{fig Hik Tabs} shows $\prob_{\bm}$ and related functions for $\bm = (0,0,1) \in \mathbb{E}_3$ and $\bm = (0,0,1,2) \in \mathbb{E}_4$. Note that each tableau is a strong $P_{\bm}$-tableau.
\end{exmp}

\begingroup
\renewcommand{\arraystretch}{2} 
\ytableausetup{boxsize = 1em}
\begin{figure}
\caption{Hikita Tableaux for $\bm = (0,0,1)$ and $\bm = (0,0,1,2)$}
\label{fig Hik Tabs}
\begin{tabular}{c|c|c|c|c|c|c|c|c}
$\bm$ & $T$ & $k$ & $\delta$ & $\prob_{\bm}(T;q)$ & $h_T(q)$ & $q^{\inv_{P_\bm}(T)}$ & $\zeta_\bm(T;q)$ & $q^{\lambda^\star - |\bm|}$ \\
\hline
$(0,0,1)$ & $\begin{ytableau} 1 & 2 & 3 \end{ytableau}$ & $1$ & $[0,1,0]$ & $\frac{q}{[2]_q}$ & $\frac{1}{[2]_q}$ & $q^0$ & $q^1$ & $q^{-1}$
\\
\hline
$(0,0,1)$ & $\begin{ytableau} 1 & 2 \\ 3 \end{ytableau}$ & $0$ & $[0, 1, 0]$ & $\frac{1}{[2]_q}$ & $\frac{1}{[2]_q}$ & $q^1$ & $q^0$ & $q^1$
\\
\hline
$(0,0,1,2)$ & $\begin{ytableau}1 & 2 & 3 & 4 \end{ytableau}$ & $1$ & $[0,0,1,0]$ & $\frac{q^3}{[2]_q[3]_q}$ & $\frac{1}{[2]_q[3]_q}$ & $q^0$ & $q^3$ & $q^{-3}$ \\
\hline 
$(0,0,1,2)$ & $\begin{ytableau}1 & 2 & 3 \\4\end{ytableau}$ & $0$ & $[0,0,1,0]$ & $\frac{q}{[3]_q}$ & $\frac{1}{[3]_q}$ & $q^1$ & $q^1$ & $q^0$ \\ 
\hline
$(0,0,1,2)$ & $\begin{ytableau}1 & 2 \\3 & 4\end{ytableau}$ & $0$ & $[1,0,0,0]$ & $\frac{1}{[2]_q}$ & $\frac{1}{[2]_q}$ & $q^1$ & $q^0$ & $q^1$ \\
\hline
\end{tabular}
\end{figure}
\ytableausetup{boxsize = 1.5em}
\endgroup

In Conjecture~\ref{nonzeroCoeffConj}, we conjecture that, for a \threeone -free poset $P$, if $\strongSYT_P(\lambda) = \emptyset$, then $c_\lambda^P = 0$. Theorem~\ref{nonzeroCoeffThm} is a converse of this conjecture in the case when $P$ is a natural unit interval order, showing that if $c_\lambda^P > 0$, then $\strongSYT_P(\lambda) \neq \emptyset$. 
For natural unit interval orders, we may then restate Conjecture~\ref{nonzeroCoeffConj} as the following.
\begin{conjecture}
\label{conj strong implies Hik}
Let $\mathbf{m} \in \mathbb{E}_n$, let $P = P_{\mathbf{m}}$, and let $\lambda \vdash n$. Then $\strongSYT_P(\lambda)$ is non-empty if and only if $\posSYT(\mathbf{m}, \lambda)$ is non-empty. 
\end{conjecture}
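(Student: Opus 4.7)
The plan is to prove the nontrivial direction, namely $\strongSYT_P(\lambda)\neq\emptyset\Rightarrow\posSYT(\mathbf{m},\lambda)\neq\emptyset$, since Theorem~\ref{nonzeroCoeffThm} already supplies the converse inclusion $\posSYT(\mathbf{m},\lambda)\subseteq\strongSYT_P(\lambda)$. By Corollary~\ref{cor hik tab pos}, the target statement is equivalent to showing that the existence of a single strong $P$-tableau of shape $\lambda$ forces $c_\lambda^P>0$, so all constructive work may also be phrased in that language if convenient.

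I would proceed by induction on $n=|\lambda|$, the base $n=0$ being immediate. Given $T\in\strongSYT_P(\lambda)$, note that $n$ is a maximal element of $P_{\mathbf{m}}$, so it must occupy the bottom cell of its column, say position $(\lambda_j',j)$. The first technical reduction is to show that, replacing $T$ by another strong $P$-tableau of the same shape if necessary, we may assume $n$ sits in a corner cell of $\lambda$. I expect this to be achievable via ladder-swap-style moves localized around $n$: if $\lambda_{j+1}'=\lambda_j'$, the entry to the right of $n$ must be incomparable to $n$ in $P_{\mathbf{m}}$, and the \threeone-free hypothesis lets one slide $n$ rightward through such incomparable neighbors without introducing right-unbalanced ladders.

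Once $n$ occupies a corner cell $(r,j)$, delete it to obtain a filling $T^-$ of shape $\mu=\lambda\setminus\{(r,j)\}$ with entries $[n-1]$. The next step is to verify that $T^-\in\strongSYT_{P'}(\mu)$, where $P'=P_{\mathbf{m}^{(n-1)}}$: columns not containing $n$ are untouched; column $j$ loses only its maximum, hence remains a $P'$-chain; and because $(r,j)$ was a corner and $n$ is incomparable in $P_{\mathbf{m}}$ only to entries exceeding $\mathbf{m}(n)$, no right-unbalanced ladder can be created between columns $j-1$ and $j$ or $j$ and $j+1$. By the inductive hypothesis, $\posSYT(\mathbf{m}^{(n-1)},\mu)\neq\emptyset$.

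The main obstacle, and the subtle part of the argument, is the reinsertion of $n$. A Hikita tableau $S$ of shape $\mu$ yields a Hikita tableau of shape $\lambda$ under the insertion of $n$ at $(r,j)$ only when $S_{r-1,j}\leq\mathbf{m}(n)$ (if $r>1$) and the maximal entry of column $j-1$ of $S$ exceeds $\mathbf{m}(n)$ (if $j>1$); an arbitrary $S$ returned by induction has no reason to meet both requirements at the distinguished corner. To overcome this, I would strengthen the inductive hypothesis to track compatibility data: for instance, assert that for every strong $P$-tableau $T$ with $n$ at a corner $(r,j)$, there exists a Hikita tableau of shape $\mu$ whose column $j-1$ maximum is at least the column $j-1$ maximum of $T^-$, and whose cell $(r-1,j)$ is at most $T^-_{r-1,j}$. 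The strong-tableau hypotheses on $T$ force these bounds to be compatible with $\mathbf{m}(n)$, precisely because the absence of a right-unbalanced ladder between columns $j-1$ and $j$ in $T$ pins down the relation between the column $j-1$ maximum and $\mathbf{m}(n)$. Carrying such refined bookkeeping through every inductive step is where I expect the real work to lie, and it may well require a simultaneous induction on a statistic recording, for each column, the extremal entries achievable in some compatible Hikita tableau.

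An alternative route worth running in parallel is to bypass Hikita tableaux and prove the equivalent statement $\strongSYT_P(\lambda)\neq\emptyset\Rightarrow c_\lambda^P>0$ directly inside $\mathcal{R}_P$: exhibit for each $T\in\strongSYT_P(\lambda)$ a term $\mathbf{u}_{\colword(T)}$ whose coefficient in the $\mathbf{u}$-expansion of $m_\lambda^P(\mathbf{u})$ modulo $I_P$ is strictly positive and not cancelled. Success here would also partially substantiate Conjecture~\ref{undercountConj}, but the cancellation analysis in $\mathcal{R}_P$ is likely to be just as intricate as the Hikita-side reinsertion problem, so I view the two approaches as of comparable difficulty rather than genuine alternatives.
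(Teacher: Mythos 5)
The statement you are addressing is labeled \textbf{Conjecture}~\ref{conj strong implies Hik} in the paper, and the paper does \emph{not} prove it. The paper supplies only one direction: Theorem~\ref{nonzeroCoeffThm} shows $\posSYT(\mathbf{m},\lambda)\subseteq\strongSYT_{P_{\mathbf{m}}}(\lambda)$, so nonemptiness of $\posSYT(\mathbf{m},\lambda)$ implies nonemptiness of $\strongSYT_{P_{\mathbf{m}}}(\lambda)$. The converse --- that a strong $P$-tableau of shape $\lambda$ forces $c_\lambda^P>0$ --- is precisely Conjecture~\ref{nonzeroCoeffConj}, and the paper explicitly records Conjecture~\ref{conj strong implies Hik} as a reformulation of that open problem. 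So there is no paper proof to match yours against; anything you produce here would be new mathematics.

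Your proposal correctly isolates the nontrivial direction and the correct equivalence, but it is a plan rather than a proof, and the gaps you flag are real. The central one --- the reinsertion step --- is acknowledged in your own text: after peeling off $n$ at a corner $(r,j)$ and invoking induction, the Hikita tableau $S$ you obtain has no reason to satisfy both $S_{r-1,j}\le\mathbf{m}(n)$ and $\max(\text{col } j-1 \text{ of } S)>\mathbf{m}(n)$, and the ``strengthened inductive hypothesis tracking extremal column entries'' is gestured at but never formulated or verified. Until that lemma is stated precisely and proved, nothing is established. There are also unexamined technical points earlier in the argument. First, the claim that $n$ can be slid rightward to a corner ``via ladder-swap-style moves'' is not spelled out; swapping $n$ with the entry to its right requires that the displaced entry form a valid $P$-chain in the new column and that no right-unbalanced ladder is introduced on either side, and neither is automatic. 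Second, deleting $n$ from a corner does not obviously preserve strongness: $n$ may sit inside a ladder $c_1\dote d_1\dote n\dote d_2$ between columns $j$ and $j+1$, and removing $n$ can disconnect that ladder into a balanced piece $\{c_1,d_1\}$ plus a right-unbalanced singleton $\{d_2\}$. You assert ``no right-unbalanced ladder can be created,'' but this is exactly the kind of local surgery that needs a careful case analysis before it can be trusted. The alternative route through $\mathcal{R}_P$ is a reasonable idea but, as you note yourself, replaces one cancellation problem with another. In short, the outline is sensible and identifies the right obstructions, but it does not constitute a proof, and the missing pieces are not routine.
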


As $\posSYT(\bm, \lambda) \subseteq \strongSYT_{P_{\bm}}(\lambda)$, we have that the polynomial
\begin{equation}
\sum_{T\, \in\, \strongSYT_{P_{\bm}}(\lambda)} q^{\inv_{P_{\bm}}(T)} - \sum_{T\, \in\, \posSYT(\bm, \lambda)} q^{\inv_{P_\bm}(T)}
\end{equation}
has nonnegative coefficients. Hence, relaxing Conjecture~\ref{undercountqConj}, we conjecture that $\posSYT(\bm, \lambda)$ gives a lower bound for $c_\lambda^{P_\bm}(q)$.
\begin{conjecture}
\label{conj hik tab undercount}
Let $\bm \in \mathbb{E}_n$ and $\lambda \vdash n$. Then the polynomial
\begin{equation}
c_\lambda^{P_\bm}(q) - \sum_{T\, \in\, \posSYT(\bm, \lambda)} q^{\inv_{P_\bm}(T)}
\end{equation}
has nonnegative coefficients.
\end{conjecture}
Assuming Conjecture~\ref{conj hik tab undercount}, the inequality
\begin{equation}
\label{eq hik eval ineq}
\sum_{T\, \in\, \posSYT(\bm, \lambda)} \tau^{\inv_{P_\bm}(T)} \leq c_\lambda^{P_\bm}(\tau)
\end{equation}
holds for all real numbers $\tau \geq 0$.
As $\sum_{T \in \posSYT(\bm, \lambda)} \tau^{\inv_{P_\bm}(T)}h_T(\tau) = \frac{c_\lambda^{P_\bm}(\tau)}{[\lambda]_\tau!},$ it is natural to investigate the relationship between $h_T(q)$ and $\frac{1}{[\lambda]_q!}$ for $T \in \posSYT(\bm, \lambda)$. Optimistically, one might hope that $h_T(\tau) \geq \frac{1}{[\lambda]_\tau!}$ for all real values $\tau \geq 0$, as this would imply \eqref{eq hik eval ineq}.
\ytableausetup{boxsize = 1em}
Unfortunately, this does not hold in general. For $\bm = (0,0,1,1,2,4)$, $\lambda = (3,2,1)$, and 
\begin{equation}
T = \begin{ytableau} 1 & 2 & 3 \\ 4 & 5 \\ 6 \end{ytableau} \in \posSYT(\bm, \lambda),
\end{equation}
we have 
\begin{equation}
h_T(\tau) [\lambda]_\tau! = \frac{1}{\tau + 1} < 1
\end{equation}
for all real numbers $\tau > 0$.

\section{Greedy Partition Coefficients and Tableaux Concatenation}
\label{dominanceSection}
In this section we use the greedy partition and concatenation of $P$-tableaux to obtain $e$-positivity results for $X_{\inc(P)}(\mathbf{x})$ and $X_{\inc(P)}(\mathbf{x}, q)$ in some special cases. In particular, Theorem~\ref{dominantPositivity} follows as a corollary of Theorem~\ref{concatTheorem}. Additionally, we obtain refinements of Theorem~\ref{HwangMaxChainThm} and Theorem~\ref{3freeKeyTabHwang}.

For partitions $\mu = (\mu_1, \mu_2,...)$ and $\nu = (\nu_1, \nu_2,...)$, write $\mu + \nu$ for the partition $(\mu_1 + \nu_1, \mu_2 + \nu_2,...)$. 
For compositions $\alpha = (\alpha_1,\alpha_2,...,\alpha_l)$ and $\beta = (\beta_1, \beta_2,...,\beta_m)$, 
let $\alpha \cdot \beta = (\alpha_1, \alpha_2,...,\alpha_l, \beta_1, \beta_2,..., \beta_m)$.
For $P$-arrays $A \in E_P(\col(\alpha))$ and $B \in E_P(\col(\beta))$, let the \emph{concatenation} $A\odot B \in E_P(\col(\alpha \cdot \beta))$ 
be the $P$-array with columns $A_1,A_2,...,A_l,B_1,B_2,...,B_m$, where $A_i$ and $B_i$ denote the columns of $A$ and $B$. Figure~\ref{fig array concat} shows the concatenation of arrays of shape $\col(3,2)$ and $\col(3,1)$. 

\begin{figure}[h]
\caption{Concatenation of tableaux}
\label{fig array concat}
\begin{align}
S = \begin{ytableau} 1 & 2 \\ 3 & 4 \\ 5 \end{ytableau} & & T = \begin{ytableau} 6 & 7 \\ 8 \\ 9 \end{ytableau} & & S\odot T = \begin{ytableau} 1 & 2 & 6 & 7 \\3 &4 & 8 \\ 5 & \none & 9 \end{ytableau}\,.
\end{align}
\end{figure}

Let $P$ be a \threeone -free poset, and let $\lambda^{gr} = \lambda^{gr}(P)$ be the greedy partition of $P$ as defined in Theorem~\ref{greedyPartThm}. 
For a positive integer $r$, let $\lambda^{gr,r} = (\lambda_1^{gr}, \lambda_2^{gr},...,\lambda_r^{gr})$ be the initial $r$ entries of $\lambda^{gr}$.
Recall the definition of $\keyIYT_P(\lambda)$ from Equation~\eqref{eq keyIT def}.
The main result of this section is the following.
\begin{theorem}
\label{concatTheorem}
Let $\mu$ be a partition such that $\mu_1 = r$ and $|\mu| = |\lambda^{gr, r}|$. If subsets of powerful $P$-tableaux $\keyIYT_P(\mu)$ and $\keyIYT_P(\nu)$ satisfying \eqref{eq keyIT def} are known, then for any partition $\nu$,
\begin{equation}
\keyIYT_P(\mu + \nu) = \{S\odot T \in \IYT_P(\mu + \nu) \mid S \in \keyIYT_P(\mu), \, T\in \keyIYT_P(\nu)\}.
\end{equation}
Furthermore, if $\keyIYT_P(\mu) = \strongIYT_P(\mu)$ and $\keyIYT_P(\nu) = \strongIYT_P(\nu)$, then $\keyIYT_P(\mu+\nu) = \strongIYT_P(\mu+\nu)$.
\end{theorem}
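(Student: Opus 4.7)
My plan is to derive the claimed characterization of $\keyIYT_P(\mu+\nu)$ by proving an algebraic factorization
\[
\partEval(m_{\mu+\nu}^P(\mathbf{u})) \equiv \partEval(m_\mu^P(\mathbf{u})) \cdot \partEval(m_\nu^P(\mathbf{u})) \pmod{I_P},
\]
and then reading off the combinatorial sum using the hypothesized defining formulas for $\keyIYT_P(\mu)$ and $\keyIYT_P(\nu)$.

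For the factorization, I would start from the classical symmetric function expansion $m_\mu\,m_\nu = m_{\mu+\nu} + \sum_{\lambda\triangleleft\mu+\nu}d_{\mu,\nu}^\lambda\,m_\lambda$ with $d_{\mu,\nu}^\lambda\in\mathbb{Z}_{\geq 0}$, which transfers to $\mathcal{R}_P$ linearly via the $P$-analogue functor. The key technical claim is that $\partEval(m_\lambda^P(\mathbf{u})) = 0$ for every strictly lower-dominance $\lambda\triangleleft\mu+\nu$. Here the hypothesis $\mu_1=r$ and $|\mu|=|\lambda^{gr,r}|$ is essential: it forces $\mu$ to saturate the greedy bound from Corollary~\ref{partialDominantCor} at index $r$, so that any partition $\lambda$ of size $|\mu|+|\nu|$ strictly dominated by $\mu+\nu$ (but differing from it) must, after accounting for how cells can redistribute among the first $r$ rows versus later rows, violate $\lambda_1+\cdots+\lambda_i\leq \lambda_1^{gr}+\cdots+\lambda_i^{gr}$ at some index $i\leq r$. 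By Corollary~\ref{partialDominantCor}, no injective $P$-array of shape $\lambda'$ exists, so $\partEval(m_\lambda^P)=0$.

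Once the factorization is established, using the hypothesized formulas and the identity $\mathbf{u}_{\colword(S\odot T)}=\mathbf{u}_{\colword(S)}\,\mathbf{u}_{\colword(T)}$ yields
\[
\partEval(m_{\mu+\nu}^P(\mathbf{u})) \equiv \sum_{(S,T)}\mathbf{u}_{\colword(S\odot T)} \pmod{I_P},
\]
where the sum is over pairs $(S,T)\in\keyIYT_P(\mu)\times\keyIYT_P(\nu)$ whose concatenation is injective; the outer $\partEval$ enforces exactly the disjointness of entries, equivalently $S\odot T\in\IYT_P(\mu+\nu)$. For the refinement to strong $P$-tableaux, I would invoke Lemma~\ref{lem strong concatenation} together with its strong analogue: restricting a strong tableau to either side of the column split preserves the absence of right-unbalanced ladders in the interior, and the saturation of $\mu$ precludes a right-unbalanced ladder forming at the junction between the last column of $S$ and the first column of $T$ (otherwise a ladder swap across the junction would produce an injective $P$-array exceeding the greedy partial-sum bound).

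The main obstacle is justifying the vanishing $\partEval(m_\lambda^P)=0$ for strictly dominated $\lambda$. The dominance relation $\lambda\triangleleft\mu+\nu$ does not by itself contradict $\lambda\trianglelefteq\lambda^{gr}$; it is precisely the tightness $|\mu|=|\lambda^{gr,r}|$ that forces a partial-sum violation at some index $i\leq r$. Translating this algebraic tightness into a precise combinatorial obstruction—enumerating the possible ways cells can be reallocated between the first $r$ rows and later rows while respecting $\lambda\triangleleft\mu+\nu$, and checking each case against the greedy bound—is where the bulk of the argument should reside.
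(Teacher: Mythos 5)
Your overall strategy matches the paper's: reduce to the identity $\partEval(m_\mu^P(\mathbf{u})\,m_\nu^P(\mathbf{u})) = \partEval(m_{\mu+\nu}^P(\mathbf{u}))$ and then read off the combinatorics, using a ladder swap at the junction to control the shape. However, there are two concrete gaps.

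First, your ``key technical claim'' --- that $\partEval(m_\lambda^P(\mathbf{u})) = 0$ for \emph{every} $\lambda\triangleleft\mu+\nu$ of the right size --- is false, and dominance plus saturation is not enough to prove the vanishing you need. The paper does not attempt to show this for all strictly dominated $\lambda$; it only needs (and only proves) the vanishing for $\eta$ with $g_{\mu,\nu}^\eta\neq 0$, and for those it crucially invokes Lemma~\ref{mStructLemma}(3), which gives the \emph{componentwise} bound $\eta'_i\geq\max\{\mu'_i,\nu'_i\}$, together with Lemma~\ref{mStructLemma}(4), which forces $\eta'_i>\mu'_i$ at some $i\leq r$. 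Componentwise domination of $\mu'$ by $\eta'$ over $i=1,\dots,r$, plus a single strict inequality, is what makes $\eta'_1+\cdots+\eta'_r > |\mu| = |\lambda^{gr,r}|$ and triggers Corollary~\ref{partialDominantCor}. Mere strict dominance $\lambda\triangleleft\mu+\nu$ does \emph{not} give this: there exist partitions $\lambda\triangleleft\mu+\nu$ with $\lambda'_1+\cdots+\lambda'_r = |\lambda^{gr,r}|$ (for instance $\mu=(2,2),\,\nu=(2),\,\lambda=(3,3)$), and these are only ruled out because $g_{\mu,\nu}^\lambda=0$ --- which is exactly the content of Lemma~\ref{mStructLemma}(4) and is invisible to a pure dominance argument. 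You also phrase the greedy bound in terms of $\lambda_i$ rather than $\lambda'_i$; Corollary~\ref{partialDominantCor} controls partial sums of the partition whose \emph{conjugate} is the column-height sequence of the array, and since $m_\lambda^P$ expands in $e_\tau^P$ with $\tau\trianglerighteq\lambda'$, the relevant inequality is on the conjugate side.

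Second, the parenthetical ``the outer $\partEval$ enforces exactly the disjointness of entries, equivalently $S\odot T\in\IYT_P(\mu+\nu)$'' is not correct: surviving $\partEval$ only says $S\odot T$ is an injective $P$-\emph{array}, not that its column heights are weakly decreasing nor that its rows are non-$P$-descending at the junction. The paper closes this gap by a ladder swap across columns $r$ and $r+1$: a right-unbalanced ladder there would yield an injective $P$-array whose first $r$ sorted column heights sum to more than $|\lambda^{gr,r}|$, contradicting Corollary~\ref{partialDominantCor}. That argument establishes both the shape $\mu+\nu$ (since no right-unbalanced ladder forces $\mu'_r\geq\nu'_1$) and the $P$-tableau condition, and it must be made for \emph{all} $S,T$, not just in the strong case as you suggest. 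So the ladder-swap step needs to be promoted from a refinement remark to a central step of the proof.
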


To prove Theorem \ref{concatTheorem}, we recall some facts about products of monomial symmetric functions. Lemma~\ref{mStructLemma} follows easily from Definition~\ref{mStructCoeffsDef}.
\begin{defin}
\label{mStructCoeffsDef}
For partitions $\mu, \nu, \eta$ such that $|\eta| = |\mu| + |\nu|$, let $g_{\mu, \nu}^{\eta}$ be the structure coefficients for monomial symmetric functions such that
\begin{equation}
m_{\mu}(\mathbf{x}) m_{\nu}(\mathbf{x}) = \sum_{\eta} g_{\mu, \nu}^{\eta} m_{\eta}(\mathbf{x}).
\end{equation}
\end{defin}

\begin{lemma}
\label{mStructLemma}
For partitions $\mu, \nu, \eta$ such that $|\eta| = |\mu| + |\nu|$, then
\begin{enumerate}
\item $g_{\mu,\nu}^{\mu + \nu} = 1$,
\item if $g_{\mu, \nu}^{\eta} \neq 0$, then $\mu + \nu \trianglerighteq \eta$,
\item if $g_{\mu, \nu}^{\eta} \neq 0$, then for each $i$, $\eta_i' \geq \max\{\mu_i', \nu_i'\},$ and
\item if $\mu_1 = r, \mu_r' \geq \nu_1'$, and $g_{\mu,\nu}^\eta \neq 0$, then either $\eta = \mu + \nu$, or $\eta_i' > \mu_i'$ for some $i \leq r$.
\end{enumerate}
\end{lemma}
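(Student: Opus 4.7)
The plan is to interpret $g_{\mu,\nu}^\eta$ combinatorially and then deduce each part via rearrangement inequalities. Expanding the product yields $m_\mu(\mathbf{x}) m_\nu(\mathbf{x}) = \sum_{\alpha, \beta} x^{\alpha + \beta}$, where $\alpha, \beta$ range over all finite-support sequences of nonnegative integers whose nonzero entries rearrange the parts of $\mu$ and $\nu$, respectively. Since $\eta$ is a partition, the monomial $x^\eta = x_1^{\eta_1} x_2^{\eta_2} \cdots$ appears in $m_\lambda$ exactly when $\lambda = \eta$, so
\[
g_{\mu,\nu}^\eta = \#\{(\alpha, \beta) : \sort(\alpha) = \mu,\ \sort(\beta) = \nu,\ \alpha + \beta = \eta\}.
\]

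For parts (1) and (2), the key tool is the elementary inequality $\sum_{i \leq k} \alpha_i \leq \sum_{i \leq k} \mu_i$, valid whenever $\sort(\alpha) = \mu$ because $\mu$ places its largest entries first. For (1), if $\alpha + \beta = \mu + \nu$, summing this bound for both $\alpha$ and $\beta$ forces equality at every $k$; taking consecutive differences yields $\alpha_k = \mu_k$ and $\beta_k = \nu_k$ for every $k$, so $(\mu, \nu)$ is the unique contributor. For (2), given any $\alpha + \beta$ sorting to $\eta$, select the $k$ positions that carry the top $k$ entries of $\alpha + \beta$; the $\alpha$-entries at those positions form a size-$k$ sub-multiset of $\mu$'s parts, so they sum to at most $\mu_1 + \cdots + \mu_k$, and likewise for $\beta$. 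Adding gives $\eta_1 + \cdots + \eta_k \leq (\mu+\nu)_1 + \cdots + (\mu+\nu)_k$, i.e. $\eta \trianglelefteq \mu + \nu$.

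Part (3) is immediate from $(\alpha + \beta)_j \geq \alpha_j$: every position $j$ with $\alpha_j \geq i$ satisfies $(\alpha+\beta)_j \geq i$, yielding $\eta'_i \geq \mu'_i$, and symmetrically $\eta'_i \geq \nu'_i$.

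Part (4) is the main obstacle, and I would handle it by contrapositive: assume $\eta'_i \leq \mu'_i$ for every $i \leq r$, and deduce $\eta = \mu + \nu$. Combined with (3), the assumption forces $\eta'_i = \mu'_i$ for every $i \leq r$, so the inclusion from (3) is an equality of sets $\{j : (\alpha+\beta)_j \geq i\} = \{j : \alpha_j \geq i\}$ for each such $i$. Applied with $i = \alpha_j + 1 \leq r$ at any position with $\alpha_j < r$, this forces $\beta_j = 0$, so $\beta$ is supported on the $\mu'_r$ positions where $\alpha_j = r$; the hypothesis $\mu'_r \geq \nu'_1 = \ell(\nu)$ guarantees enough room. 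The nonzero parts of $\alpha + \beta$ are then precisely the values $r + \nu_k$ for $k = 1, \dots, \ell(\nu)$, together with $\mu'_r - \ell(\nu)$ additional copies of $r$ and the parts of $\mu$ smaller than $r$. Comparing with $(\mu+\nu)_i = r + \nu_i$ for $i \leq \ell(\nu)$, $(\mu+\nu)_i = r$ for $\ell(\nu) < i \leq \mu'_r$, and $(\mu+\nu)_i = \mu_i$ for $i > \mu'_r$, we see the two multisets coincide term by term, so $\eta = \mu + \nu$. The delicate step is the support-tracking after the set equality is established; all other parts reduce cleanly to partial-sum bookkeeping.
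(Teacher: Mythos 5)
Your proof is correct. The paper gives no argument at all for this lemma: immediately before the statement it simply remarks that the lemma ``follows easily from Definition~\ref{mStructCoeffsDef},'' and then uses it without proof in Theorem~\ref{concatTheorem}. Your proposal supplies exactly the details the paper leaves implicit, starting from the standard combinatorial interpretation
\[
g_{\mu,\nu}^\eta = \#\{(\alpha,\beta) : \sort(\alpha)=\mu,\ \sort(\beta)=\nu,\ \alpha+\beta=\eta\},
\]
and using partial-sum and conjugate-partition bookkeeping. Parts (1)--(3) are handled by the usual rearrangement inequalities, and your contrapositive argument for part (4) is the delicate one and is carried out correctly: the set equality $\{j:(\alpha+\beta)_j\geq i\}=\{j:\alpha_j\geq i\}$ for $i\leq r$ does force $\beta$ to vanish off the positions where $\alpha_j=r$, and the multiset comparison with $\mu+\nu$ goes through given $\mu_r'\geq\nu_1'$. (A minor remark: once you assume $\eta_i'\leq\mu_i'$ for all $i\leq r$ and $g_{\mu,\nu}^\eta\neq 0$, the bound $\mu_r'\geq\ell(\nu)$ is in fact forced rather than merely consistent; the stated hypothesis just makes the final multiset identification with $(\mu+\nu)$ clean.) Nothing is missing.
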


\begin{proof}[Proof of Theorem~\ref{concatTheorem}]
Let $\mu$ be a partition such that $\mu_1 = r$ and $|\mu| = |\lambda^{gr,r}|$, and let $\nu$ be any partition.
We claim that 
\begin{equation}
\partEval(m_\mu^P(\mathbf{u}) m_{\nu}^P(\mathbf{u})) = 
\partEval(m_{\mu + \nu}^P(\mathbf{u}))
\end{equation}
Let $\eta \vdash |\mu + \nu|$ be a partition such that $g_{\mu, \nu}^\eta \neq 0$.
If $\eta_i' > \mu_i'$ for some $1 \leq i \leq r$, then $\partEval(m_\eta^P(\mathbf{u})) = 0$ by Lemma~\ref{mStructLemma}(3), Corollary~\ref{partialDominantCor}, and the fact that $|\mu| = |\lambda^{gr,r}|$.
Suppose $\mu_r' \geq \nu_1'$.
By Lemma~\ref{mStructLemma}(4), either $\eta = \mu + \nu$ or $\partEval(m_\mu^P(\bu)m_\nu^P(\bu)) = 0.$ Thus, as $g_{\mu,\nu}^{\mu + \nu} = 1$, we have  $\partEval(m_\mu^P(\bu)m_\nu^P(\bu)) = \partEval(m_{\mu + \nu}^P(\bu))$. 
Suppose $\mu_r' < \nu_1'$.
Then $(\mu + \nu)_i' > \mu_i'$ for some $i \leq r$. Therefore, by Lemma~\ref{mStructLemma}(2) and the fact that $(\mu + \nu)_j' \geq \mu_j'$ for all $1 \leq j \leq r$,
\begin{equation}
\eta_1' + \eta_2' + \dots + \eta_r' \geq (\mu + \nu)_1' + (\mu + \nu)_2' + \dots + (\mu + \nu)_r' > \mu_1' + \mu_2' + \dots + \mu_r'.
\end{equation}
Then by Corollary~\ref{partialDominantCor}, $\partEval(m_\eta^P(\mathbf{u})) = 0$. Therefore, $\partEval(m_{\mu}^P(\bu)m_\nu^P(\bu)) = \partEval(m_{\mu + \nu}^P(\bu)) = 0.$

As $\partEval(m_{\mu}^P(\bu)m_{\nu}^P(\bu)) = \partEval(m_{\mu + \nu}^P(\bu))$, we have
\begin{equation}
\partEval(m_{\mu + \nu}^P(\mathbf{u})) = \partEval\left(\sum_{S \in \keyIYT_P(\mu)} \sum_{T \in \keyIYT_P(\nu)} \mathbf{u}_{\colword(S\odot T)} \right).
\end{equation}
Let $S \in \keyIYT_P(\mu)$, and let $T \in \keyIYT_P(\nu)$.
Suppose the concatenation $R = S\odot T$ is an injective $P$-array. If $R$ has a right-unbalanced ladder $L$ between columns $r$ and $r+1$, then performing a ladder swap on $L$ gives an injective $P$-array $R'$ of shape $\col(\mu_1',..., \mu_{r-1}', \mu_r'+1, \nu_1' -1,\nu_2',...,\nu_{\nu_1}')$. But then, as $|\mu| = |\lambda^{gr,r}|$, $R'$ contradicts Corollary~\ref{partialDominantCor}, and $R$ has no right-unbalanced ladders between columns $r$ and $r+1$. This implies $\mu_r' \geq \nu_1'$, so $R$ is a $P$-tableau of shape $\mu + \nu$. Furthermore, if $S$ and $T$ are strong $P$-tableaux, then $R$ is a strong $P$-tableau.
\end{proof}

Theorem~\ref{dominantPositivity} is an application of Theorem~\ref{concatTheorem}. We recall the statement of Theorem~\ref{dominantPositivity}.

\begin{theorem}[Theorem~\ref{dominantPositivity}]
\label{thm dom pos proof}
Let $P$ be a natural unit interval order on $[n]$, and let $\lambda^{gr}(P) = (\lambda_1^{gr}, \lambda_2^{gr},...,\lambda_l^{gr})$ be the \emph{greedy partition} of $P$ as defined in Theorem~\ref{greedyPartThm}.
Let $S$ be a subset of $[n]$ that contains no adjacent entries, and let $k_i$ be a positive integer for each $i \in S$. 
If $\lambda$ is a partition such that the conjugate partition $\lambda'$ 
satisfies $\lambda'_i = \lambda_i^{gr} - k_i$ if $i \in S$, $\lambda'_i = \lambda^{gr}_i + k_{i-1}$ if $i-1 \in S$, and $\lambda'_i = \lambda_{i}^{gr}$ otherwise, 
then
\begin{equation}
c_{\lambda}^P(q) = \sum_{T\, \in\, \strongSYT_P(\lambda)} q^{\inv_P(T)}.
\end{equation}
\end{theorem} 

\begin{proof}
Let $S \subset [n]$ have no adjacent elements. Let $(k_i)_{i \in S}$ be positive integers such that the composition $\mu \vDash n$ with parts
\begin{equation}
\mu_i = \begin{cases}
            \lambda_i^{gr} - k_i &\text{if } i \in S, \\
            \lambda_i^{gr} + k_{i-1} & \text{if }i - 1 \in S, \\
            \lambda_i^{gr} & \text{otherwise,}
\end{cases}
\end{equation}
is a partition.
Observe that $\mu$ can be written as the concatenation of length 1 partitions $(\lambda_i^{gr})$ and length 2 partitions $(\lambda_i^{gr} - k_i, \lambda_{i+1}^{gr} + k_i)$.
Write $\mu = \nu^1 \cdot \nu^2 \cdots \nu^m$ for this concatenation, and for each $j \in [m]$, write $\mu^{j} = \nu^1 \cdot \nu^2 \cdots \nu^j$. 
Let $\lambda = \mu'$, $\lambda^j = (\mu^j)'$, and $r_j = \lambda^j_1$.
Observe that for each $j \in [m]$, $|\lambda^{j}| = |\lambda^{gr, r_j}|$. 
As $\lambda^1$ is either a one or two column shape, $\keyIYT_P(\lambda^1) = \strongIYT_P(\lambda^1)$ by Lemma~\ref{twoColDecLemma}. Suppose that $\keyIYT_P(\lambda^j) = \strongIYT_P(\lambda^j)$ for some $j \in [m]$. By Lemma~\ref{twoColDecLemma}, $\keyIYT_P((\nu^{j+1})') = \strongIYT_P((\nu^{j+1})')$. Then as $|\lambda^j| = |\lambda^{gr, r_j}|$, $\keyIYT_P(\lambda^{j+1}) = \strongIYT_P(\lambda^{j+1})$ by Theorem~\ref{concatTheorem}. By induction, $\keyIYT_P(\lambda) = \strongIYT_P(\lambda)$ as desired.
\end{proof}

\begin{remark}
A key ingredient to the proof of Theorem~\ref{thm dom pos proof} is the concatenation of two-column key $P$-tableaux. Using Theorem~\ref{concatTheorem}, key $P$-tableaux could be determined for various other shapes by concatenating known key $P$-tableaux. In particular, the condition that $S$ has no adjacent entries in Theorem~\ref{thm dom pos proof} could be relaxed if a description of key $P$-tableaux for partitions with more than two columns were known.
\end{remark}

\begin{exmp}
Suppose for some poset $P$, $\lambda^{gr} = (18, 12, 11, 11, 5, 2).$ Taking $S = \{1,4,6\}$ and $(k_1,k_4,k_6) = (2,3,1)$. Then for $\lambda = (7, 6^7, 3^3, 2^3, 1^2) = (16, 14, 11,8, 8 ,1,1)'$, we have $c_\lambda^P(q) = \sum_{T\, \in\, \strongSYT_P(\lambda)} q^{\inv_P(T)}$.

\ytableausetup{boxsize = 0.5em}
\begin{equation*}
\raisebox{-1.2cm}{$\lambda^{gr'} =\ \ $} 
\begin{ytableau} 
\ & \ & \ & \ & \ & \  \\
\ & \ & \ & \ & \ & *(red) \ \\
\ & \ & \ & \ & \ \\
\ & \ & \ & \ & \ \\
\ & \ & \ & \ & \ \\
\ & \ & \ & \ \\
\ & \ & \ & \ \\
\ & \ & \ & \ \\
\ & \ & \ & *(red) \ \\
\ & \ & \ & *(red) \ \\
\ & \ & \ & *(red) \ \\
\ & \ \\
\ \\
\ \\
\ \\
\ \\
*(red) \ \\
*(red) \
\end{ytableau}
\ \ \ \ \ \ \ 
\raisebox{-1.2cm}{$\lambda = \ \ $}
\begin{ytableau} 
\ & \ & \ & \ & \ & \ & *(red) \ \\
\ & \ & \ & \ & \ \\
\ & \ & \ & \ & \ \\
\ & \ & \ & \ & \ \\
\ & \ & \ & \ & \ \\
\ & \ & \ & \  & *(red) \ \\
\ & \ & \ & \  & *(red) \ \\
\ & \ & \ & \  & *(red) \ \\
\ & \ & \  \\
\ & \ & \  \\
\ & \ & \  \\
\ & \ \\
\ & *(red) \ \\
\ & *(red) \  \\
\ \\
\ \\
\end{ytableau}
\end{equation*}
\ytableausetup{boxsize = 2em}
\end{exmp}

\begin{exmp}
Let $P = P_\bm$ for $\bm = (0,0,1,1,3),$ as shown in Figure~\ref{fig 5 elem poset}. Then $\lambda^{gr}(P) = (3,1,1)$. We apply Theorem~\ref{thm dom pos proof} with $S = \emptyset$ to obtain $c_{(3,1,1)}^P(q) = \sum_{T \in \strongSYT_P(3,1,1)} q^{\inv_P(T)} = q^2 + q^3$. With $S = \{1\}$ and $k_1 = 1$, we obtain $c_{(3,2)}^P(q) = \sum_{T \in \strongSYT_P(3,2)} q^{\inv_P(T)} = q^2 + q^3.$ Figure~\ref{fig greedy strong tabs} shows all standard strong $P$-tableau of shape $(3,1,1)$ and $(3,2)$. The collection of all standard powerful $P$-tableaux is shown in Figure~\ref{fig 5 elem pow tabs} in Appendix~\ref{Appendix}.
\end{exmp}

\begin{figure}[h]
\caption{Strong $P_\bm$-tableaux of shape $(3,1,1)$ and $(3,2)$ for $\bm = (0,0,1,1,3)$}
\label{fig greedy strong tabs}
\ytableausetup{boxsize = 1.2em}
\setlength{\tabcolsep}{20pt}
\renewcommand{\arraystretch}{1.5}
\vspace{.1cm}
\begin{tabular}{cccc}
$\begin{ytableau}
1 & 2 & 4 \\ 
3 \\
5
\end{ytableau}$ & 
$\begin{ytableau}
1 & 4 & 2 \\ 
3 \\
5
\end{ytableau}$ & 
$\begin{ytableau} 1 & 2 & 3 \\ 4 & 5 \end{ytableau}$ & 
$\begin{ytableau} 2 & 1 & 3\\ 5 & 4 \end{ytableau}$ \\
$q^2$ & $q^3$ & $q^2$ & $q^3$ 
\end{tabular}
\ytableausetup{boxsize = 1.5em}
\end{figure}

%\begin{theorem}
%\label{concatTheorem}
%Let $P$ be a \threeone -free poset with $n$ elements, and let $\lambda^{gr} = \lambda^{gr}(P)$. Suppose $\mu$ is a partition such that $\mu_1' \leq \lambda^{(i,k,r)}_r$ and $\keyIYT_P(\mu)$ is known. Let $\nu = \lambda^{(i,k,r)'} + \mu$. Then
%\begin{equation}
%\keyIYT_P(\nu) = \{ ST \in \IYT_P(\nu) \mid S \in \strongSYT_P(\lambda^{(i,k,r)'}), T \in \keyIYT_P(\mu)\}.
%\end{equation}
%\end{theorem}

For a partition $\lambda = (\lambda_1, \lambda_2,...,\lambda_l)$, write $\lambda^- = (\lambda_1-1, \lambda_2 - 1,..., \lambda_l-1)$ for the partition obtained by removing the first column of $\lambda$. For a $P$-tableau $T \in \T_P(\lambda)$, write $T^- \in \T_P(\lambda^-)$ for the $P$-tableau obtained by removing the first column of $T$. The following theorem gives an interpretation of \cite[Theorem~4.37]{Hwang}, which we recall in Theorem~\ref{HwangMaxChainThm}, in terms of $P$-tableaux. The following corollary interprets Corollary~\ref{3freeKeyTabHwang} in terms of $P$-tableaux.

\begin{theorem}
\label{maxChainThm}
Let $P$ be a \threeone -free poset with maximal chain length $r$.
For a partition $\lambda$ with $\ell(\lambda) = r$,
\begin{equation}
\keySSYT_P(\lambda) = \{ T \in \T_P(\lambda) \mid T^- \in \keySSYT_P(\lambda^-)\}.
\end{equation}
Furthermore, 
\begin{itemize}
\item if $\keySSYT_P(\lambda^-) \subseteq \decSSYT_P(\lambda^-)$, then $\keySSYT_P(\lambda) \subseteq \decSSYT_P(\lambda)$, and
\item if $\keySSYT_P(\lambda^-) = \strongSSYT_P(\lambda^-)$, then $\keySSYT_P(\lambda) = \strongSSYT_P(\lambda)$.
\end{itemize}
\end{theorem}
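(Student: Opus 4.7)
The plan is to leverage Theorem~\ref{HwangMaxChainThm}, which gives the factorization $m_\lambda^P(\mathbf{u}) = e_r^P(\mathbf{u}) \, m_{\lambda^-}^P(\mathbf{u})$ when $\ell(\lambda) = r$. Since $e_r^P(\mathbf{u}) = \sum_{C} \mathbf{u}_{\colword(C)}$ with $C$ ranging over length-$r$ chains of $P$, and since by hypothesis $m_{\lambda^-}^P(\mathbf{u}) \equiv \sum_{T' \in \keySSYT_P(\lambda^-)} \mathbf{u}_{\colword(T')}$ modulo $I_P$, multiplying produces
\begin{equation*}
m_\lambda^P(\mathbf{u}) \equiv \sum_{C} \sum_{T'\, \in\, \keySSYT_P(\lambda^-)} \mathbf{u}_{\colword(C\, \odot\, T')} \pmod{I_P}.
\end{equation*}
I would then show that $(C, T') \mapsto C \odot T'$ is a bijection onto $X := \{T \in \T_P(\lambda) : T^- \in \keySSYT_P(\lambda^-)\}$, with inverse $T \mapsto (\text{first column of } T,\, T^-)$, establishing that one can take $\keySSYT_P(\lambda) = X$.

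The content of this bijection is a short chain argument showing every concatenation $C \odot T'$ is in fact a valid $P$-tableau: if instead $C_i >_P T'_{i,1}$ for some row $i \le \lambda_2'$, splicing the column chains $T'_{1,1} <_P \cdots <_P T'_{i,1}$ and $C_i <_P \cdots <_P C_r$ produces a chain in $P$ of length $r+1$, contradicting the maximum chain length $r$. All other row conditions are automatic, and both columns are $P$-chains by construction.

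For the first ``furthermore,'' I would invoke Lemma~\ref{lem strong concatenation}: given $T \in X$ with $T^-$ powerful, it suffices to rule out right-unbalanced ladders between the first two columns of $T$. Suppose such a ladder existed, with $k$ entries $C_{a_1} <_P \cdots <_P C_{a_k}$ in column $1$ and $k+1$ entries $T'_{b_1,1} <_P \cdots <_P T'_{b_{k+1},1}$ in column $2$. Using that elements of column $1$ outside the ladder are comparable in $P$ to each of the ladder's column-$2$ entries (else they would themselves belong to the ladder), one shows the $a_j$ must be consecutive in $C$ and that $C_{a_1 - 1}$ and $C_{a_k + 1}$ (when they exist) lie below $T'_{b_1,1}$ and above $T'_{b_{k+1},1}$ in $P$. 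Gluing yields the chain
\begin{equation*}
C_1 <_P \cdots <_P C_{a_1 - 1} <_P T'_{b_1,1} <_P \cdots <_P T'_{b_{k+1},1} <_P C_{a_k + 1} <_P \cdots <_P C_r
\end{equation*}
of length exactly $r + 1$, a contradiction. For the second furthermore, the same absence of right-unbalanced ladders between columns $1$ and $2$, combined with strongness of $T^-$, gives strongness of $T$, so $X \subseteq \strongSSYT_P(\lambda)$ whenever $\keySSYT_P(\lambda^-) = \strongSSYT_P(\lambda^-)$; the reverse inclusion is immediate from the definitions.

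The main obstacle is the ladder-structure analysis in the furthermore step. One must carefully verify both that the $C$-entries adjacent to the ladder boundary sit on the correct sides of the extremal column-$2$ ladder entries in $P$, and that the ladder's column-$1$ positions $a_j$ are consecutive. Each alternative is ruled out by pushing it into a contradiction with the ladder's definition as a connected component of $\inc(P)$ restricted to the two columns; once these structural facts are in hand, the chain-length accounting is routine.
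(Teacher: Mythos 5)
Your proposal mirrors the paper's proof closely: factor $m_\lambda^P(\mathbf{u}) = e_r^P(\mathbf{u})\,m_{\lambda^-}^P(\mathbf{u})$ via Theorem~\ref{HwangMaxChainThm}, realize the resulting monomials as column words of concatenations $S \odot T$ with $S$ a length-$r$ chain and $T \in \keySSYT_P(\lambda^-)$, rule out right-unbalanced ladders between the first two columns using the maximal-chain hypothesis, and close by invoking Lemma~\ref{lem strong concatenation}. Your chain-splicing argument that $S \odot T$ has no $P$-descent between its first two columns (so that $S \odot T$ really is a $P$-tableau) makes explicit a step the paper's proof glosses over, and that check is genuinely needed before Lemma~\ref{lem strong concatenation} can be applied. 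Where you diverge is in how you rule out right-unbalanced ladders: you propose a manual structural analysis of the ladder (consecutivity of the column-$1$ indices, comparisons at the boundary, gluing to get a length-$(r+1)$ chain), whereas the paper's intended argument is much shorter --- a ladder swap on a right-unbalanced ladder between columns $1$ and $2$ would produce a $P$-array whose first column is a chain with $r+1$ elements, an immediate contradiction with the maximal chain length, with no case analysis. The ladder-swap route sidesteps exactly the ``one shows'' steps you flag as the main obstacle, so it is worth using the machinery the paper already provides.
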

\begin{proof}
Let $P$ be a \threeone -free poset with maximal chain of length $r$ and let $\lambda$ be a partition such that $\ell(\lambda) = r$.
By Theorem~\ref{HwangMaxChainThm},
\begin{equation}
m_{\lambda}^P(\mathbf{u}) = e_r^P(\mathbf{u})m_{\lambda^-}^P(\mathbf{u}).
\end{equation}
We can then write
\begin{equation}
m_{\lambda}^P(\mathbf{u}) \equiv \sum_{S \in\, \T_P(1^r)} \sum_{T \in\, \keySSYT_P(\lambda^-)} \mathbf{u}_{\colword(S)}\mathbf{u}_{\colword(T)} \mod I_P.
\end{equation}
Let $S \in \T_P(1^r)$ and $T \in \keySSYT_P(\lambda^-)$. As $P$ has maximal chain length $r$, $S\odot T$ has no right-unbalanced ladders between columns $1$ and $2$. Thus, by Lemma~\ref{lem strong concatenation}, $S\odot T$ is a powerful $P$-tableau. Therefore,
\begin{equation}
m_{\lambda}^P(\mathbf{u}) = \sum_{U \in\, \keySSYT_P(\lambda)} \mathbf{u}_{\colword(U)} \mod I_P, 
\end{equation}
where $\keySSYT_P(\lambda) = \{S\odot T \in \T_P(\lambda) : S \in \T_P(1^r),\, T \in \keySSYT_P(\lambda^-)\}.$
\end{proof}

%As a corollary of Theorem \ref{maxChainThm}, we have that Conjecture \ref{mPposConj} is true for $\mathbf{3}$-free posets. In fact, when $P$ is $\mathbf{3}$-free, $\keySSYT_P(\lambda) = \decSSYT_P(\lambda)$ for all partitions $\lambda$.

\begin{corollary}
\label{twoChainCor}
Let $P$ be a $\mathbf{3}$-free poset. Then for any partition $\lambda$,
\begin{equation}
m_{\lambda}^P(\mathbf{u}) \equiv \sum_{T \in\, \decSSYT_P(\lambda)} \mathbf{u}_{\colword(T)} \mod I_P.
\end{equation}
\end{corollary}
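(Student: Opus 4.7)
The plan is to reduce the identity to Corollary~\ref{3freeKeyTabHwang} by a column-stripping induction, showing inductively that one may take the key set $\keySSYT_P(\lambda) = \decSSYT_P(\lambda)$. First I would dispose of the vanishing case: if $P$ contains no chain of length $\ell(\lambda)$, then $\T_P(\lambda) = \emptyset$, so $\decSSYT_P(\lambda) \subseteq \T_P(\lambda)$ is also empty, while Corollary~\ref{3freeKeyTabHwang} forces $m_\lambda^P(\mathbf{u}) = 0$. Since $P$ is $\mathbf{3}$-free this handles $\ell(\lambda) > 2$ and also $\ell(\lambda) = 2$ when $P$ is an antichain. For the base case $\ell(\lambda) \leq 1$, unwinding Definitions~\ref{def dec array} and~\ref{def pow tabx} shows that $\decSSYT_P((k))$ is precisely the set of one-row $P$-tableaux whose row is a powersum word for $P$, which matches $\keySSYT_P((k))$ as given in Corollary~\ref{3freeKeyTabHwang}.

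For the inductive step, assume $\ell(\lambda) = 2$, so the maximum chain length of $P$ is $r = 2$, and assume the identity for $\lambda^- = (\lambda_1 - 1, \lambda_2 - 1)$, interpreted as a one-row shape when $\lambda_2 = 1$. Setting $\keySSYT_P(\lambda^-) := \decSSYT_P(\lambda^-)$ in Theorem~\ref{maxChainThm}, the theorem produces a valid choice $\keySSYT_P(\lambda) = \{T \in \T_P(\lambda) : T^- \in \decSSYT_P(\lambda^-)\}$ satisfying Equation~\eqref{keySSYTeq}, and its first bullet gives the inclusion $\keySSYT_P(\lambda) \subseteq \decSSYT_P(\lambda)$. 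The remaining task, and the main obstacle in the argument, is to prove the reverse inclusion $\decSSYT_P(\lambda) \subseteq \keySSYT_P(\lambda)$, or equivalently that removing the first column of a powerful $P$-tableau yields a powerful $P$-tableau.

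For this, given $T \in \decSSYT_P(\lambda)$, Lemma~\ref{lem tab injective} provides a unique shape $\alpha \in \{\lambda, (\lambda_2, \lambda_1)\}$ and unique powerful array $A \in \decArray_P(\alpha)$ with $\tab(A) = T$. I would verify that deleting the first column of $A$ (and dropping any now-empty row) produces a powerful array $A^-$ satisfying $\tab(A^-) = T^-$. The row-powersum condition is inherited because deleting the first letter of a powersum word preserves the property, since no new $P$-descent can arise and no new right-to-left $P$-minimum is introduced; column-monotonicity is automatic. The only condition requiring genuine verification is property~$(3)$ of Definition~\ref{def dec array}: for the skew case $\alpha = (\lambda_2, \lambda_1)$ it translates to $A_{1, \lambda_2} <_P A_{2, t}$ for $\lambda_2 < t \leq \lambda_1$, which is precisely condition~$(3)$ for $A$ itself. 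Hence $T^- \in \decSSYT_P(\lambda^-)$ and $T \in \keySSYT_P(\lambda)$, closing the induction.
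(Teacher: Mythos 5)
Your proof is correct and follows the same route as the paper's: dispose of $\ell(\lambda) > 2$ by emptiness, treat the one-row base case, and then induct on $\lambda_2$ by combining Theorem~\ref{maxChainThm} (applied with maximal chain length $r = 2$) with the claim that removing the first column of a powerful $P$-tableau yields a powerful $P$-tableau. The paper states this last claim as a one-line assertion (``As $T^-$ is a powerful $P$-tableau whenever $T$ is a powerful $P$-tableau''), whereas you verify it explicitly via Lemma~\ref{lem tab injective}: pass to the unique powerful array $A$ with $\tab(A) = T$, delete its first column, and check that the row-powersum condition, column increase, and condition~(3) of Definition~\ref{def dec array} are all inherited. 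Your checks are correct — in particular, deleting the leftmost letter of a powersum word cannot create a $P$-descent or a new nontrivial right-to-left $P$-minimum, and property~(3) for $A^-$ in the skew case is literally property~(3) for $A$ shifted by one column. So the proposal matches the paper's proof in structure while supplying the detail the paper leaves implicit.
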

\begin{proof}
Let $P$ be a $\mathbf{3}$-free poset. Observe that $\T_P(\lambda) = \emptyset$ if $\ell(\lambda) > 2$.
Therefore, it suffices to consider $\lambda = (\lambda_1, \lambda_2)$.
As a consequence of Lemma~\ref{hookDecomp}, if $\lambda_2 = 0$, then
\begin{equation}
m_\lambda^P(\mathbf{u}) \equiv \sum_{T\, \in \, \decSSYT_P(\lambda)} \mathbf{u}_{\colword(T)} \mod I_P.
\end{equation} 
Suppose that $\keySSYT_P(\lambda) = \decSSYT_P(\lambda)$ whenever $\lambda_2 < k$. Let $\lambda_2 = k$, then $\lambda^-_2 = k-1$ and $\keySSYT_P(\lambda^-) = \decSSYT_P(\lambda^-)$ by the inductive hypothesis. Then by Theorem~\ref{maxChainThm}, $\keySSYT_P(\lambda) = \{T \in \decSSYT_P(\lambda) \mid T^- \in \decSSYT_P(\lambda^-)\}$. As $T^-$ is a powerful $P$-tableau whenever $T$ is a powerful $P$-tableau, $\keySSYT_P(\lambda) = \decSSYT_P(\lambda)$, as desired.
\end{proof}

\section{Positivity for $\lambda = (k, 2^a)$.}
\label{sectionk2}
Fix $P$ to be a natural unit interval order.
In this section, we determine a set $\keySSYT_P(\lambda)$ for $\lambda$ of the form $(k, 2^a)$. Hence, we obtain a $\bu$-positive expansion for $m_{(k,2^a)}^P(\bu),$ as outlined in Section~\ref{sec prev mlamP results}. We use this $\bu$-positive expansion to prove Theorem~\ref{thm k2positivity}. 
%The case of $\lambda = (k-2,2)$ is of particular interest as $\strongSSYT_P(\lambda) \subsetneq \keySSYT_P(\lambda) \subsetneq \decSSYT_P(\lambda)$ for general \threeone -free posets $P$.
%Example~\ref{ex k2 Key} shows a poset $P$ for which $\strongSSYT_P(\lambda) \subsetneq \keySSYT_P(\lambda) \subsetneq \decSSYT_P(\lambda).$
For a positive integer $k$, identify one-row powerful $P$-tableaux $\decSSYT_P(k)$ with powersum words of length $k.$ In particular, observe that $\decSSYT_P(2)$ is the set of words $\ba = a_1a_2$ such that $a_1 \dote_P a_2$, meaning $a = b$ or $a$ is incomparable to $b$. Recall that for $\bv, \bw \in P^k$, we write $\bv \equiv \bw \mod I_P$ to mean $\bu_{\bv} \equiv \bu_\bw \mod I_P$.
\begin{defin}
\label{def slots}
Let $\ba = a_1a_2 \in \decSSYT_P(2)$ and $\bb = b_1b_2 \dots b_k \in \decSSYT_P(k)$ be powersum words. We say $\ba$ \emph{slots into} $\bb$ if one of the following hold.
\begin{enumerate}
\item $\ba\bb = a_1a_2b_1b_2\dots b_k$ is a powersum word,
\item $a_2 <_P b_j$ for all $j \in [k]$, $a_1 \dote_P b_1$, and $a_1b_2b_3 \dots b_k$ is a powersum word,
\item there exists $j$ such that $b_1 <_P b_2 <_P \dots <_P b_j <_P a_2$,\ \ $b_j <_P a_1$, and $a_2 \not>_P b_{j+1}$, or
\item there exists $j$ such that $b_1 <_P b_2 <_P \dots <_P b_j <_P a_2$,\ \  $b_j \dote_P a_1$, and $a_1 \not>_P b_{j+1}$.
\end{enumerate}
If $\ba$ slots into $\bb$, define $\slot(\ba, \bb)$ by
\begin{equation}
\slot(\ba,\bb) = \begin{cases}
	\ba\bb = a_1a_2b_1b_2 \dots b_k & \text{if } (\ba, \bb) \text{ satisfies condition (1)},\\
	a_2 b_1 a_1 b_2 b_3 \dots b_k & \text{if } (\ba, \bb) \text{ satisfies condition (2)},\\
	b_1b_2\dots b_j a_1 a_2 b_{j+1} b_{j+2} \dots b_k & \text{if } (\ba, \bb) \text{ satisfies condition (3)},\\
	b_1b_2 \dots b_j a_2 a_1 b_{j+1} b_{j+2} \dots b_k & \text{if } (\ba, \bb) \text{ satisfies condition (4)}.
\end{cases}
\end{equation}
We say $\ba$ \emph{strictly slots into} $\bb$ if $(\ba,\bb)$ satisfies condition (2), (3), or (4).
\end{defin}

\begin{defin}
\label{def KP}
Let $k \in \mathbb{Z}_{\geq 0}$, let $T \in \decSSYT_P(k, 2^a),$ and let $A$ be the powerful array such that $\tab(A) = T$. We define \emph{non-slotting $P$-tableaux} as follows.
If $k \leq 2$, $T$ is a non-slotting $P$-tableau. Otherwise, let $A_i$ be the unique row of length $k$ in $A$. Then $T$ is a non-slotting $P$-tableau if row $A_{i+1}$ does not strictly slot into row $A_i$.
Write $\K_P(k,2^a)$ for the set of non-slotting $P$-tableaux of shape $(k, 2^a)$, and write $\SK_P(k,2^a)$ for the set of standard non-slotting $P$-tableaux.
\end{defin}

Recall the definition of the ideal $I_P$ from Definition~\ref{def IP}.
\begin{theorem}
\label{thm k2a key tabs}
Let $P$ be a natural unit interval order, and let $k > 2$. Then 
\begin{equation}
m_\lambda^P(\bu) \equiv \sum_{T\, \in \, \K_P(k2^a)} \bu_{\colword(T)} \mod I_P.
\end{equation}
Hence, $c_{k2^a}^P(q) = \sum_{T \in \SK_P(k2^a)} q^{\inv_P(T)}.$
\end{theorem}

\begin{exmp} Consider $\bm = (0,0,1,1,1,3,4,6) \in \mathbb{E}_8$ and $\lambda = (4,2,2)$. Then $c_{\lambda}^{P_\bm}(q) = q^7 + 2q^6 + q^5$. Figure~\ref{fig slot tabs} shows all standard powerful $P_{\bm}$-tableaux $T$ of shape $\lambda$, colored to show their decomposition into powersum words and marked with $q^{\inv_{P_\bm}(T)}$. A tableau $T$ is marked 'slotting' if $T \not\in \SK_{P_\bm}(\lambda)$ and 'non-slotting' if $T \in \SK_{P_\bm}(\lambda)$.
\ytableausetup{boxsize=1.2em}
\setlength{\tabcolsep}{20pt}
\renewcommand{\arraystretch}{1.5}
\begin{figure}[h]
\caption{Slotting and non-slotting powerful $P_{\bm}$-tableaux of shape $(4,2,2)$ for $\bm = (0,0,1,1,1,3,4,6)$}
\label{fig slot tabs}
\vspace{.5cm}
\begin{tabular}{ccc}
$\begin{ytableau} *(red!20) 1 &*(red!20) 3 &*(red!20) 2 &*(red!20) 5 \\
                  *(green!20) 4 & *(green!20) 6 \\
                  *(yellow!20) 7 & *(yellow!20) 8 
\end{ytableau}$ 
&
$\begin{ytableau} *(red!20)1 & *(red!20)3 & *(red!20)5 & *(red!20)2 \\
                  *(green!20)4 & *(green!20)6 \\
                  *(yellow!20)7 & *(yellow!20)8 
\end{ytableau}$
&
$\begin{ytableau} *(red!20)1 & *(red!20)2 & *(red!20)3 & *(red!20)5 \\
                  *(green!20) 4 & *(green!20)6 \\
                  *(yellow!20) 7 & *(yellow!20) 8 
\end{ytableau}$ 
\\
$q^6$ & $q^7$ & $q^5$ \\
slotting & slotting & non-slotting \\
& & \\
$\begin{ytableau} *(red!20) 1 & *(red!20)2 & *(red!20)5 & *(red!20)3 \\
                  *(green!20) 4 & *(green!20) 6 \\
                  *(yellow!20) 7 & *(yellow!20) 8 
\end{ytableau}$ 
&
$\begin{ytableau} *(red!20)2 & *(red!20)1 & *(green!20) 3 & *(green!20) 5 \\
                  *(green!20) 6 & *(green!20) 4 \\
                  *(yellow!20) 8 & *(yellow!20) 7 
\end{ytableau}$ 
&
$\begin{ytableau} *(red!20)2 & *(red!20)1 & *(green!20)5 & *(green!20)3 \\
                  *(green!20)6 & *(green!20)4 \\
                  *(yellow!20) 8 & *(yellow!20) 7 
\end{ytableau}$ 
\\
$q^6$ & $q^6$ & $q^7$ \\
non-slotting & non-slotting & non-slotting
\end{tabular}
\end{figure}
\ytableausetup{boxsize = 1.5em}
\end{exmp}

Recall that for words $\bw, \bv \in P^k$, we write $\bw \equiv \bv \mod I_P$ to mean $\bu_\bw \equiv \bu_\bv \mod I_P$. We prove Theorem~\ref{thm k2a key tabs} below, with many of the technical details postponed to later in the section.

\begin{proof}[Proof of Theorem~\ref{thm k2a key tabs}]
We want to show that $m_{k2^a}^P(\bu) = \sum_{T \in \K_P(k,2^a)} \bu_{\colword(T)}$. Recall the symmetric function identites
\begin{equation}
m_{2^a}(\bx)m_k(\bx) = m_{k2^a}(\bx) + m_{(k+2)2^{a-1}}(\bx)
\end{equation}
for $k \neq 2$, and for $k = 2$,
\begin{equation}
m_{2^a}(\bx)m_2(\bx) = (a+1) m_{2^{a+1}}(\bx) + m_{42^{a-1}}(\bx).
\end{equation}
Hence,
\begin{equation}
\label{eq m2aP mkP}
m_{2^a}^P(\bx)m_k(\bu) = m_{k2^a}^P(\bu) + m_{(k+2)2^{a-1}}^P(\bu)
\end{equation}
for $k \neq 2$, and for $k = 2$,
\begin{equation}
\label{eq m2aP m2P}
m_{2^a}^P(\bu)m_2^P(\bu) = (a+1) m_{2^{a+1}}(\bu) + m_{42^{a-1}}^P(\bu).
\end{equation}
From Lemma~\ref{twoColDecLemma} and Lemma~\ref{hookDecomp}, we have that $m_\lambda^P(\bu) \equiv \sum_{T \in \decSSYT_P(\lambda)}  \bu_{\colword(T)}$ modulo $I_P$ when $\lambda$ is either a two-column or hook shape.
Hence,
\begin{equation}
m_{2^a}^P(\bu) m_k^P(\bu) \equiv \sum_{T \, \in \, \decSSYT_P(2^a)} \sum_{\bz\, \in\, \decSSYT_P(k)} \bu_{\colword(T)} \bu_{\bz} \mod I_P.
\end{equation}
In Definition~\ref{def FP phi}, we define sets $\F_P(2^a, k) \subseteq \decSSYT_P(2^a) \times \decSSYT_P(k)$ and define a map 
\begin{equation}
\varphi_{a,k}: \decSSYT_P(2^a) \times \decSSYT_P(k) \to \decSSYT_P(2^a) \times \decSSYT_P(k) \sqcup \decSSYT_P(2^{a-1}) \times \decSSYT_P(k+2).
\end{equation}
In Lemma~\ref{lem phi inj}, Lemma~\ref{lem phi im FP}, and Lemma~\ref{lem phi sur}, we will show that $\varphi_{a,k}$ is an injection from $\decSSYT_P(2^a) \times \decSSYT_P(k)$ to $\F_P(2^a,k) \sqcup \F_P(2^{a-1}, k+2)$. From Lemma~\ref{lem ej IP}, Lemma~\ref{lem Ej inj}, and the definition of $\varphi_{a,k}$, if $(S, \by) = \varphi_{a,k}(T, \bz)$ for $T \in \decSSYT_P(2^a)$ and $\bz \in \decSSYT_P(k)$, we have
\begin{equation}
\colword(S)\by \equiv \colword(T)\bz \mod I_P.
\end{equation}
Hence, applying $\varphi_{a,k}$ to each $(T, \bz) \in \decSSYT_P(2^a) \times \decSSYT_P(k)$, we have
\begin{align}
m_{2^a}^P(\bu) m_k^P(\bu) &= \sum_{T \in \keySSYT_P(2^a)} \sum_{\bz \in \keySSYT_P(k)} \bu_{\colword(T)}\bu_{\bz} \\
\label{eq FP exp}
&\equiv \sum_{(S, \by) \in \F_P(2^a, k)} \bu_{\colword(S)} \bu_{\by} + \sum_{(S,\by) \in \F_P(2^{a-1}, k+2)} \bu_{\colword(S)} \bu_{\by} \mod I_P. 
\end{align}

In Definition~\ref{def mult}, we will define a map $\mult_{a,k}: \F_P(2^a, k) \to \T_P(k,2^a)$ such that 
\begin{equation}
\colword(T) \bz \equiv \colword(\mult_{a,k}(T, \bz)) \mod I_P 
\end{equation}
for all $(S, \bz) \in \F_P(2^a, k).$ In Lemma~\ref{lem mult im}, we show that the image of $\mult_{a,k}$ is $\K_P(k, 2^a)$, and in Lemma~\ref{lem mult sur}, we show that for $T \in \K_P(k, 2^a)$, we have
\begin{equation}
|\mult_{a,k}^{-1}(T)| = \begin{cases}
							a + 1 & \text{if } k = 2, \\
							1     & \text{otherwise.}
						\end{cases}
\end{equation}

Thus, applying $\mult_{a,k}$ and $\mult_{a-1, k+2}$ to each term in \eqref{eq FP exp}, we have
\begin{equation}
\label{eq m2a mk KP}
m_{2^a}^P(\bu) m_k^P(\bu) \equiv \sum_{T\, \in\, \K_P(k, 2^a)} \bu_{\colword(T)} + \sum_{R\, \in\, \K_P(k+2,2^{a-1})} \bu_{\colword(R)} \mod I_P
\end{equation}
for $k \neq 2$, and for $k = 2$,
\begin{equation}
\label{eq m2a m2 KP}
m_{2^a}^P(\bu) m_2^P(\bu) \equiv (a+1)\sum_{T\, \in\, \K_P(2^{a+1})} \bu_{\colword(T)} + \sum_{R\, \in\, \K_P(4,2^{a-1})} \bu_{\colword(R)} \mod I_P,
\end{equation}
matching \eqref{eq m2aP mkP} and \eqref{eq m2aP m2P}. In Lemma~\ref{lem KP induction}, we will show that 
\begin{equation}
m_{k, 2^a}^P(\bu) \equiv \sum_{T\, \in \, \K_P(k, 2^a)} \bu_{\colword(T)} \mod I_P
\end{equation}
follows from \eqref{eq m2a mk KP} and \eqref{eq m2a m2 KP}.
\end{proof}

\begin{lemma}
Suppose $\ba = a_1a_2 \in \decSSYT_P(2)$ slots into $\bb = b_1b_2 \dots b_k \in \decSSYT_P(k)$. Then $\slot(\ba, \bb) \in \decSSYT_P(k+2),$ and $\ba\bb \equiv \slot(\ba, \bb) \mod I_P$.
\end{lemma}
\begin{proof}
We prove the claim case by case. If $\ba, \bb$ satisfy condition (1), then $\slot(\ba, \bb) = \ba\bb$ and $\slot(\ba,\bb)$ is a powersum word. Suppose $\ba,\bb$ satisfy condition (2). As $a_2 <_P b_1$, $a_1 \dote_P a_2$, and $a_1 \dote_P b_1$, we have $a_1 a_2 b_1 \equiv a_2 b_1 a_1 \mod I_P$ and $\ba\bb \equiv a_2b_1a_1b_2b_3 \dots b_k \mod I_P$. As $a_1b_2 b_3 \dots b_k$ is a powersum word, $\slot(\ba, \bb) = a_2b_1a_1b_2b_3 \dots b_k$ is a powersum word. If $\ba, \bb$ satisfy condition (3), then $b_i <_P a_1$ and $b_i <_P a_2$ for all $i \leq j$. Hence, $\ba\bb \equiv b_1 b_2 \dots b_j a_1 a_2 b_{j+1} b_{j+2} \dots b_k = \slot(\ba,\bb) \mod I_P$. As $a_2 \not>_P b_{j+1}$, $\slot(\ba,\bb)$ has no $P$-descents. As $\bb$ is a powersum word, there is some $t > j$ such that $b_j \dote_P b_t$. Thus, $b_t \not>_P a_2$ and $\slot(\ba,\bb)$ is a powersum word. Suppose $\ba,\bb$ satisfy condition (4). As $b_1 <_P b_2 <_P \dots <_P b_{j-1} <_P b_j <_P a_2$, $a_1 \dote_P a_2$, and $P$ is \threeone -free, we have $b_1 <_P b_2 <_P \dots <_P b_{j-1} <_P a_1$. Hence, as $a_1 \dote_P b_j$, $\ba\bb \equiv b_1 b_2 \dots b_{j-1} a_1 a_2 b_j b_{j+1} \dots b_k \equiv b_1 b_2 \dots b_{j-1} b_j a_2 a_1 b_{j+1} \dots b_k = \slot(\ba, \bb) \mod I_P$. As $\bb$ is a powersum word, there is some minimal $t > j$ such that $b_j \dote_P b_t$. As $t$ is minimal among such indices and $a_2 \not>_P b_{j+1}$, we have $a_2 \not>_P b_t$. As $P$ is \twotwo -free, $b_j <_P a_1$, $b_j \dote_P a_2$, $a_1 \dote_P a_2$, $b_j \dote_P b_t$, and $a_2 \not>_P b_t$, we have $a_2 \dote_P b_t$ and $\slot(\ba,\bb)$ is a powersum word.
\end{proof}

\begin{lemma}
\label{lem slot inj}
Let $\bw = w_1w_2\dots w_{k+2} \in \decSSYT_P(k + 2)$. Then there is a unique pair $(\ba, \bb) \in \decSSYT_P(2) \times \decSSYT_P(k)$ such that $\bw = \slot(\ba,\bb)$.
\end{lemma}
\begin{proof}
Let $r$ be the smallest index such that $w_r \dote_P w_{r+1}$. If $r = 1$, then $\bw = \slot(\ba, \bb)$ where $\ba = w_1w_2$ and $\bb = w_3w_4\dots w_{k+2}$. Conversely, if $\bw = \slot(\ba,\bb)$ and $r = 1$, then $\ba,\bb$ satisfy condition (1). Suppose $r > 1$ and $w_{r-1} <_P w_{r+1}$. Observe that if $\bw = \slot(\ba,\bb)$, then $\ba,\bb$ satisfy condition (3). Let $\ba = w_r w_{r+1}$ and $\bb = w_1 w_2 \dots w_{r-1} w_{r+2} \dots w_{k+2}$. Then $\bw = \slot(\ba,\bb)$.

Suppose $r > 1$, $w_{r-1} \dote_P w_{r+1},$ and $w_{r-1} <_P w_j$ for $r+2 \leq j \leq k+2$. If $r > 2$, then as $P$ is \threeone -free, $w_{r-2} <_P w_{r-1} <_P w_r$, $w_{r-1} \dote_P w_{r+1}$, and $w_{r} \dote_P w_{r+1}$, we have $w_{r-2} <_P w_j$ for $r-1 \leq j \leq k+2$, which contradicts the fact that $\bw$ is a powersum word. Hence, we have that $r = 2$. Furthermore, if $\bw = \slot(\ba,\bb)$, then $\ba, \bb$ satisfy condition (3), and we must have $\ba = w_3w_1$ and $\bb = w_2w_4w_5\dots w_{k+2}$. As $\bw$ is a powersum word, there is some $t > 3$ such that $w_3 \dote_P w_t$. As $P$ is \threeone -free, $w_1 <_P w_2$, $w_1 \dote_P w_3,$ $w_2 \dote_P w_3$, and $w_1 <_P w_t$, we have $w_2 \dote_P w_t$. Similarly, as $w_3 \not>_P w_4$, we have $w_2 \not>_P w_4$. Therefore $\bb = w_2w_4w_5 \dots w_{k+2}$ is a powersum word and $\bw = \slot(\ba, \bb)$. 

Suppose $r > 1$, $w_{r-1} \dote_P w_{r+1},$ and $w_{r-1} \not<_P w_j$ for some $r+2 \leq j \leq k+2$. If $\bw = \slot(\ba,\bb)$ then $\ba,\bb$ must satisfy condition (4), and therefore $\ba = w_{r+1}w_r$ and $\bb = w_1w_2 \dots w_{r-1}w_{r+2}w_{r+3}\dots w_{k+2}$. As $P$ is \threeone -free, $w_{r-1} <_P w_r$, $w_{r-1} \dote_P w_{r+1}$, $w_r \dote_P w_{r+1}$, and $w_{r+1} \not >_P w_{r+2}$, we have $w_{r-1} \not>_P w_{r+2}$. As $w_{r-1} \not<_P w_j$ for some $r+2 \leq j \leq k+2$, this means $\bb$ is a powersum word, and $\bw = \slot(\ba, \bb)$.
\end{proof}

\begin{defin}
Let $a > 0$, and let $T \in \decSSYT_P(2^a)$. Write $v_1 <_P v_2 <_P \dots <_P v_a$ and $w_1 <_P w_2 <_P \dots <_P w_a$ for the columns of $T$. For $i \in [a]$, let $s \leq i \leq t$ be such that $\{v_s, w_s, v_{s+1}, w_{s+1},\dots, v_t, w_t\}$ is a ladder in $T$. 
Define the $i^{th}$ \emph{ejection} $\ej_i(T) \in \decSSYT_P(2^{a-1})$
to be the $P$-tableau with columns
\begin{equation}
v_1 <_P \dots <_P v_{s-1} <_P w_s <_P w_{s+1} <_P \dots <_P w_{i-1} <_P v_{i+1} <_P v_{i+2} <_P \dots <_P v_a
\end{equation}
\begin{equation}
w_1 <_P \dots <_P w_{s-1} <_P v_s <_P v_{s+1} <_P \dots <_P v_{i-1} <_P w_{i+1} <_P w_{i+2} <_P \dots <_P w_a
\end{equation}
if $v_s \dote_P w_s \dote_P v_{s+1} \dote_P \dots \dote_P v_t \dote_P w_t$ and columns
\begin{equation}
v_1 <_P \dots <_P v_{i-1} <_P w_{i+1} <_P w_{i+2} <_P \dots <_P w_t <_P v_{t+1} <_P v_{t+2} <_P \dots <_P v_a
\end{equation}
\begin{equation}
w_1 <_P \dots <_P w_{i-1} <_P v_{i+1} <_P v_{i+2} <_P \dots <_P v_t <_P w_{t+1} <_P w_{t+2} <_P \dots <_P w_a
\end{equation}
if $w_s \dote_P v_s \dote_P w_{s+1} \dote_P v_{s+1} \dote_P \dots \dote_P w_t \dote_P v_t$.
As $P$ is \twotwo -free, $\ej_i(T)$ is well-defined.
\end{defin}

\begin{lemma}
\label{lem ej IP}
If $T \in \decSSYT_P(2^a)$, then $\colword(T) \equiv \colword(\ej_i(T)) T_{i,1}T_{i,2} \mod I_P$.
\end{lemma}
\begin{proof}
Let $v_1 <_P v_2 <_P \dots <_P v_a$ and $w_1 <_P w_2 <_P \dots <_P w_a$ be the columns of $T$. 
Then $\colword(T) = v_a v_{a-1} \dots v_1 w_a w_{a-1} \dots w_1$.
Let $S$ be the $P$-tableau obtained from $T$ by removing $w_i$. As $w_i$ commutes with each $w_j$ modulo $I_P$, we have $\colword(T) \equiv \colword(S)w_i \mod I_P$. Observe that $S$ has a unique unbalanced ladder $L$ containing $v_i$. 
If $v_s \dote_P w_s \dote_P \dots \dote_P v_t \dote_P w_t$, then $L = \{v_s, w_s, v_{s+1},\dots, w_{i-1}, v_i\}$. If instead we have $w_s \dote_P v_s \dote_P w_{s+1} \dote_P \dots \dote_P w_t \dote_P v_t$, then $L = \{v_i, w_{i+1}, v_{i+1}, \dots, w_t, v_t\}$. Observe that we obtain $\ej_i(T)$ from $S$ by performing a ladder swap on $L$ and removing $v_i$. We then have
\begin{equation}
\colword(T) \equiv \colword(S)w_i \equiv \colword(\ej_i(T))v_iw_i \mod I_P
\end{equation}
as desired.
\end{proof}

\begin{lemma}
\label{lem Ej inj}
Let $P$ be a natural unit interval order. The map 
\begin{equation}
\Ej_i: \decSSYT_P(2^a) \to \decSSYT_P(2^{a-1}) \times \decSSYT_P(2)
\end{equation}
given by $\Ej_i(T) = (\ej_i(T), T_{i,1}T_{i,2})$ is an injection. Furthermore, if $S = \ej_i(T)$, then for all $j < i$, $S_{j,2} <_P T_{i,1}$ and $S_{j,2} <_P T_{i,2}$, and for each $k \geq i$, $S_{k,1} >_P T_{i,1}$ and $S_{k, 2} >_P T_{i,2}$.
\end{lemma}
\begin{proof}
Observe that $\ej_i(T)$ can be obtained from $T$ by removing $T_{i,2}$, performing a ladder swap on the unbalanced ladder containing $T_{i,1}$, and removing the entry $T_{i,1}$. Each step in this process is reversible, so $\Ej_i$ is an injection. The fact that $S_{1,2} <_P \dots <_P S_{i-1,2} <_P T_{i,1} <_P S_{i,2} <_P \dots <_P S_{a-1, 2}$ and $S_{1,2} <_P \dots <_P S_{i-1,2} <_P T_{i,2} <_P S_{i,2} <_P \dots <_P S_{a-1,2}$ follows from the definition of $\ej_i$ and the fact that $P$ is \twotwo -free.  
\end{proof}

\begin{defin}
\label{def FP phi}
Write $\F_P(2^a, k) \subseteq \decSSYT_P(2^a) \times \decSSYT_P(k)$ for the set of pairs $(T, \bz)$ such that there is no row $i$ such that $T_{i,1}T_{i,2}$ slots into $\bz$. 
Let $(T, \bz) \in \decSSYT_P(2^a) \times \decSSYT_P(k) \setminus \F_P(2^a, k)$. If $i$ is the minimal index such that $T_{i,2}\bz$ is a powersum word, let $i_{T, \bz} = i$. Otherwise, let $i_{T, \bz}$ be the smallest index such that $T_{i_{T, \bz}, 1}T_{i_{T, \bz}, 2}$ slots into $\bz$.
Define the map 
\begin{equation}
\varphi_{a,k}: \decSSYT_P(2^a) \times \decSSYT_P(k) \to \decSSYT_P(2^a) \times \decSSYT_P(k) \sqcup \decSSYT_P(2^{a-1}) \times \decSSYT_P(k+2)
\end{equation}
by
\begin{equation}
\varphi_{a,k}(T, \bz) = \begin{cases}
(T, \bz) & \text{if } (T, \bz) \in \F_P(2^a, k), \\
(\ej_{i_{T, \bz}}(T), \slot(T_{i_{T, \bz},1}T_{i_{T, \bz},2}, \bz)) & \text{otherwise.}
\end{cases}
\end{equation}
\end{defin}

\begin{lemma}
\label{lem phi inj}
The map $\varphi_{a,k}$ is an injection.
\end{lemma}
\begin{proof}
Observe that $\varphi_{a,k}$ is the identity map when restricted to $F_P(2^a, k)$, so it suffices to show that the restriction of $\varphi_{a,k}$ to $\decSSYT_P(2^a) \times \decSSYT_P(k) \setminus F_P(2^a, k)$ is an injection.

Suppose $(S, \by) \in \decSSYT_P(2^{a-1}) \times \decSSYT_P(k+2)$ is in the image of $\varphi_{a,k}$. By Lemma~\ref{lem slot inj}, there are unique $q_1 q_2 \in \decSSYT_P(2)$ and $\bz \in \decSSYT_P(k)$ such that $\by = \slot(q_1q_2, \bz)$. As $(S, \by)$ is in the image of $\varphi_{a,k}$, there is some $T \in \decSSYT_P(2^a)$ and $i \in [a]$ such that $(S, q_1q_2) = \Ej_i(T)$. By Lemma~\ref{lem Ej inj}, $i$ is the unique index such that $S_{i-1,2} <_P q_1 <_P S_{i,2}$. Therefore, $(T, \bz)$ is the unique pair such that $\varphi_{a,k}(T, \bz) = (S, \by)$, so $\varphi_{a,k}$ is injective.
\end{proof}

\begin{lemma}
\label{lem phi im FP}
If $T \in \decSSYT_P(2^a)$ and $\bz \in \decSSYT_P(k)$, then 
\begin{equation}
\varphi_{a,k}(T,\bz) \in \F_P(2^a, k) \sqcup \F_P(2^{a-1}, k+2).
\end{equation}
\end{lemma}
\begin{proof}
If $T \in \F_P(2^a, k)$, then the claim follows immediately from the definition of $\varphi_{a,k}$.
Let $T \in \decSSYT_P(2^a) \times \decSSYT_P(k) \setminus F_P(2^a, k)$, and let $v_1 <_P v_2 <_P \dots <_P v_a$ and $w_1 <_P w_2 <_P \dots <_P w_a$ be the columns of $T$. 
Let $i = i_{T, \bz}$ and let $S = \ej_{i}(T)$.
Let $\by = y_1 y_2 \dots y_{k+2} = \slot(v_iw_i,\bz)$.
Suppose $v_iw_i\bz$ is a powersum word. Then $\slot(v_iw_i, \bz) = v_iw_i \bz$. 
If $w_{i-1} <_P v_i$, then $S_{i-1,1}S_{i-1,2} = v_{i-1}w_{i-1}$, $w_{i-1} <_P y_j$ for all $j \in [k+2]$, and $v_{i-1} <_P y_1$, so $S_{j,1}S_{j,2}$ does not slot into $\by$ for any $j < i$. If $w_{i-1} \dote_P v_i$, then $S_{i-1,1}S_{i-1,2} = w_{i-1}v_{i-1}$. As $P$ is \twotwo -free, $v_{i-1} <_P v_i$, $w_{i-1}$ is incomparable to $v_{i-1}$ and $v_i$, and $w_{i-1} <_P z_j$ for all $j \in [k]$, we have $v_{i-1} <_P y_j$ for all $j \in [k+2]$. Thus $S_{j,1}S_{j,2}$ does not slot into $\by$ for any $j < i$.
If $v_i <_P w_{i+1}$, $S_{i, 1}S_{i,2} = v_{i+1} w_{i+1}$, and $S_{i,1}S_{i,2}$ does not slot into $\by$. Hence, $S_{j,1}S_{j,2}$ does not slot into $\by$ for any $j \geq i$. If $v_i \dote_P w_{i+1}$, then $S_{i, 1}S_{i,2} = w_{i+1}v_{i+1}$, and $S_{j,1}S_{j,2}$ does not slot into $\by$ for any $j \geq i$. Thus, $\varphi_{a,k}(T) \in \F_P(2^{a-1}, k+2)$.

Suppose $v_iw_i$ slots into $\bz$ by satisfying condition $(2)$ of Definition~\ref{def slots}. We then have $\by = w_i z_1 v_i z_2z_3 \dots z_k$. As $w_{i-1} <_P w_i <_P z_1$, $v_i$ is incomparable to $w_i$ and $z_1$, and $P$ is \threeone -free, we have $w_{i-1} <_P v_i$ and $S_{j,1}S_{j,2} = v_jw_j$ for $j < i$. Then for each $j < i$, $S_{j,2} <_P y_t$ for all $t \in [k+2]$ and $S_{j,1} <_P y_1$. Thus, $S_{j,1}S_{j,2}$ does not slot into $\by$ for $j < i$.
By the definition of $i_{T, \bz}$, $w_{i+1}$ is comparable to $z_1$. As $v_i$ is incomparable to $w_i$ and $z_1$, $w_i <_P z_1$, and $P$ is \threeone -free, we have $z_1 <_P w_{i+1}$ and $v_i <_P w_{i+1}$. Hence, $S_{j,1}S_{j,2} = v_{j+1}w_{j+1}$ for $j \geq i$. Then for each $j \geq i$, we have $y_1 <_P y_2 <_P S_{j,2}$, $y_3 <_P S_{j,2}$, and $y_3 <_P S_{j,1}$, so $S_{j,1}S_{j,2}$ does not slot into $\by$ for $j \geq i$.

Suppose $v_iw_i$ slots into $\bz$ by satisfying conditions $(3)$ or $(4)$ of Definition~\ref{def slots}.  
Then $\by = z_1 z_2 \dots z_r q_1 q_2 z_{r+1} z_{r+2} \dots z_k$, where $\{q_1, q_2\} = \{v_i, w_i\}$ and $z_1<_P z_2 <_P \dots <_P z_r$. 
As there is no index $j$ such that $v_jw_j\bz$ is a powersum word, $w_j <_P z_t$ for all $j < i$ and $t \in [k]$, and $w_j >_P z_1$ for all $j > i$. 
As $v_iw_i$ satisfies conditions $(3)$ or $(4)$, we have $z_1 <_P w_i$. 
As $P$ is \threeone -free and $w_{i-1} <_P z_1 <_P w_i$, we have $w_{i-1} <_P v_i$. 
Hence, $S_{j,1}S_{j,2} = v_jw_j$ for all $j < i$ and, by the definition of $i$, $S_{j,2} <_P y_t$ for all $t \in [k+2]$. 
As $i$ is the minimal index such that $v_iw_i$ slots into $\bz$, if $v_{i-1} \dote_P z_1$, then $v_{i-1} <_P z_t$ for $t = 2,3,\dots,k$. 
As $P$ is \threeone -free and $w_{i-1} <_P z_1 <_P w_i$, if $v_{i-1} \dote_P z_1$, then $v_{i-1} <_P w_i$. Hence, $v_{i-1} <_P y_t$ for all $t=2,3,\dots,k+2$, and $S_{j,1}S_{j,2}$ does not slot into $\by$ for any $j < i$. From Lemma~\ref{lem Ej inj}, we have that $q_1 <_P S_{j,2}$ and $q_2 <_P S_{j,2}$ for all $j \geq i$. As at most one $\{q_1,q_2\}$ will be incomparable to any $S_{j,1}$ for $j \geq i$, we have that $S_{j,1}S_{j,2}$ does not slot into $\bz$ for any $j \geq i$. Thus, $(S, \by) \in F_P(2^{a-1},k+2)$.
\end{proof}

\begin{lemma}
\label{lem phi sur}
If $(S, \by) \in \F_P(2^{a-1}, k+2)$, then there exist $T \in \decSSYT_P(2^a)$ and $\bz \in \decSSYT_P(k)$ such that $(S, \by) = \varphi_{a,k}(T, \bz)$.
\end{lemma}
\begin{proof}
By Lemma~\ref{lem slot inj}, there is a unique pair $\ba \in \keySSYT_P(2)$ and $\bb\in \keySSYT_P(k)$ such that $\by = \slot(\ba, \bb)$. As $(S, \by) \in \F_P(2^{a-1}, k+2),$ there is some $i \in [a]$ such that $S_{j,2} <_P y_t$ for all $j < i$ and $t \in [k+2]$ and $y_1 <_P S_{j',2}$ for all $j' \geq i$. 

Suppose $(\ba,\bb)$ satisfies condition $(1)$ or $(3)$ of Definition~\ref{def slots}. Then for some $r \geq 1$, $\by = b_1b_2\dots b_{r-1} a_1 a_2 b_r b_{r+1} \dots b_k$. As $S_{i,1}S_{i,2}$ does not slot into $\by$, $a_1 <_P S_{i,2}$. Furthermore, either $a_1 <_P S_{i,1}$ and $a_2 <_P S_{i,2}$ or $a_{1} \dote_P S_{i,1}$ and $a_2 <_P S_{i,1}$. In either case, as $P$ is \twotwo -free, $a_2 <_P S_{i,2}$.
Suppose $S_{i-1, 1} \dote_P a_1$. Then, as $P$ is \threeone -free, $S_{i-1,1} \dote_P S_{i-1, 2}$, and $S_{i-1,2} <_P b_1 <_P b_2 <_P \dots <_P b_{r-1} <_P a_1$, we have $r = 1$. As $S_{i-1,1}S_{i-1,2}$ does not slot into $\by$, we have $S_{i-1,1} <_P a_2$. Let $S' \in \keySSYT_P(2^{a-1}1)$ be the $P$-tableau obtained by inserting $a_1$ into the second column of $S$ and performing a ladder swap on the right-unbalanced ladder containing $a_1$. From the previous arguments, we have that $S'_{1,2}<_P S'_{2,2} <_P \dots <_P S'_{i-1,2} <_P a_2 <_P S'_{i,2} <_P S'_{i+1,2} <_P \dots <_P S'_{a-1,2}$. Let $T \in \keySSYT_P(2^a)$ be the $P$-tableau obtained by inserting $a_2$ into the second column of $S'$. Then $(S, \by) = \varphi_{a,k}(T, \bb)$.

Suppose $(\ba, \bb)$ satisfies condition $(2)$ of Definition~\ref{def slots}.
Then $\by = a_2 b_1 a_1 b_2 b_3 \dots b_k$. As $S_{i-1,1}S_{i-1,2}$ does not slot into $\by$, we have $S_{i-1,2} <_P a_1$ and, if $S_{i-1, 1} \dote_P a_2$, then $S_{i-1,1} <_P a_1$. As $S_{i,1} S_{i,2}$ does not slot into $\by$, we have $a_2 <_P b_1 <_P S_{i,2}$. As $P$ is \threeone -free, $S_{i,1} \dote_P S_{i,2}$, and $a_1 \dote_P a_2$, we have $a_2 <_P S_{i-1,2}$ and $a_1 <_P S_{i,2}$. Hence, we can take $S' \in \keySSYT_P(2^{a-1}1)$ to be the $P$-tableau obtained by inserting $a_1$ into the second column of $S$ and performing a ladder swap on the ladder containing $a_1$. We then take $T \in \keySSYT_P(2^a)$ to be the $P$-tableau obtained by inserting $a_2$ into the second column of $S'$. Then $(S, \by) = \varphi_{a,k}(T, \bb)$.

Suppose $(\ba,\bb)$ satisfies condition $(4)$ of Definition~\ref{def slots}. Then for some $r > 1$, $\by = b_1 b_2 \dots b_{r-1} a_2a_1 b_r b_{r+1} \dots b_k$. As $S_{i-1,1}S_{i-1,2}$ does not slot into $\by$, we have $S_{i-1,2} <_P b_{r-1} <_P a_2$. As $S_{i-1,1} \dote_P S_{i-1,2}$ and $P$ is \threeone -free, $S_{i-1,1} <_P a_2$. As $S_{i,1}S_{i,2}$ does not slot into $\by$, we have $a_2 <_P S_{i,2}$. As $P$ is \threeone -free, $b_{r-1} <_P a_2 <_P S_{i,2}$, and $a_1 \dote_P b_{r-1}$, we have $a_1 <_P S_{i,2}$. Hence, we can take $S' \in \decSSYT_P(2^{a-1},1)$ to be the $P$-tableau obtained by inserting $a_1$ into the second column of $S$ and performing a ladder swap on the ladder containing $a_1$. We then have $(S, \by) = \varphi_{a,k}(T, \bb)$, where $T \in \decSSYT_P(2^a)$ is the $P$-tableau obtained by inserting $a_2$ into the second column of $S'$.
\end{proof}

\begin{defin}
\label{def mult}
Let $(S, \by) \in \F_P(2^a, k)$. Let $i \in [a]$ be the unique index such that $S_{j,2} <_P y_1$ for all $j \leq i$ and $y_1 <_P S_{j, 2}$ for all $j > i$. Observe that as $S_{i-1,1}S_{i-1,2}$ does not slot into $\by$, $S_{i-1,2} <_P y_2$, and if $S_{i-1,1} \dote_P y_1$, then $S_{i-1, 1} <_P y_2$. As $S_{i,1}S_{i,2}$ does not slot into $\by$, if $y_1 \dote_P S_{i,1}$, then $y_2 <_P S_{i,1}$, and if $y_1 <_P S_{i,1}$, then $y_2 <_P S_{i,2}$. Define $\mult_{a,k}(S, \by) \in \T_P(k,2^a)$ to be the $P$-tableau obtained by inserting $y_1$ into the second column of $S$, performing a ladder swap on the ladder containing $y_1$, inserting $y_2$ into the second column, and adding each $y_3,y_4,\dots, y_k$ to its own column. 
\end{defin}

\begin{lemma}
\label{lem mult im}
Let $P$ be a natural unit interval order, and let $(S, \by) \in \F_P(2^a, k)$. Then $\mult_{a,k}(S, \by) \in \K_P(k,2^a)$.
\end{lemma}
\begin{proof}
Let $v_1 <_P v_2 <_P \dots <_P v_{a-1}$ and $w_1 <_P w_2 <_P \dots <_P w_{a-1}$ be the columns of $S$. As $v_{i-1}w_{i-1}$ does not slot into $\by$, if $v_{i-1} \dote_P y_1$, then $v_{i-1} <_P y_t$ for all $t > 1$. Hence, $T = \mult_{a,k}(S, \by)$ is a powerful $P$-tableau. Suppose $T_{i+1,1}T_{i+1,2}$ strictly slots into $\by$. Then $y_1 <_P T_{i+1, 2}$, which means $T_{i+1,1}T_{i+1,2} = v_iw_i$. But this contradicts the fact that $(S, \by) \in \F_P(2^a, k)$. Thus, $T \in \K_P(k,2^a)$, as desired.
\end{proof}

\begin{lemma}
\label{lem mult sur}
Let $T \in \K_P(k, 2^a)$. Then
\begin{equation}
|\mult^{-1}_{a,k}(T)| = \begin{cases} a+1 & \text{if } k = 2,\\
1 & \text{otherwise.}
\end{cases}
\end{equation}
\end{lemma}
\begin{proof}
Let $A$ be the powerful array corresponding to $T$. If $k \neq 2$, there is a unique row $A_i$ of length $k$. If $k = 2$, we can take any of the $a+1$ rows to be $A_i$. We then have $T  = \mult_{a,k}(S, \by)$, where $\by = A_i$, and $S$ is the $P$-tableau obtained taking the first two columns of $T$, removing $T_{i,2}$, performing a ladder swap on the ladder containing $T_{i,1}$, and removing $T_{i,1}$.
\end{proof}

\begin{lemma}
\label{lem KP induction}
Let $k, a \in \mathbb{Z}_{\geq 0}$.
Suppose for $k \neq 2$, 
\begin{equation}
m_{2^a}^P(\bu) m_k^P(\bu) \equiv \sum_{T \, \in \, \K_P(k, 2^a)} \bu_{\colword(T)} + \sum_{R \, \in \, \K_P(k+2, 2^{a-1})}\bu_{\colword(R)} \mod I_P,
\end{equation}
and for $k = 2$,
\begin{equation}
m_{2^a}^P(\bu) m_2^P(\bu) \equiv (a+1) \sum_{T\, \in \, \K_P(2^{a+1})} \bu_{\colword(T)} + \sum_{R \, \in \, \K_P(4, 2^{a-1})}\bu_{\colword(R)} \mod I_P.
\end{equation}
Then $m_{k, 2^a}^P(\bu) \equiv \sum_{T \in \K_P(k, 2^a)} \bu_{\colword(T)} \mod I_P$.
\end{lemma}
\begin{proof}
We prove the claim by induction on $k$.
As $\K_P(k,2^a) = \decSSYT_P(k, 2^a)$ and $m_{k,2^a}^P(\bu) = \sum_{T \in \decSSYT_P(k, 2^a)} \bu_{\colword(T)}$ when $k$ is 1 or 2, $m_{k, 2^a}^P(\bu) = \sum_{T \in \K_P(k, 2^a)} \bu_{\colword(T)}$ and the claim holds for $k \in \{1,2\}$.
Recall that $m_{2^{a+1}}(\bx) m_2(\bx) = (a+2) m_{2^{a+2}}(\bx) + m_{4, 2^{a}}(\bx)$,
so 
\begin{equation}
m_{4, 2^a}^P(\bu) = m_{2^{a+1}}^P(\bu) m_2 - (a+2) m_{2^{a+2}}^P(\bu).
\end{equation}
By the assumption of the claim, and the fact that $m_{2^{a+2}}^P(\bu) \equiv \sum_{T \in \K_P(2^{a+2})} \bu_{\colword(T)}$, we have 
\begin{equation}
m_{4, 2^a}^P(\bu) \equiv \sum_{T \, \in \, \K_P(4, 2^a)} \bu_{\colword(T)} \mod I_P.
\end{equation}

Suppose $k \not\in \{1,2,4\}$ and that $m_{k-2, 2^{a+1}}^P(\bu) \equiv \sum_{T \, \in \, \K_P(k-2,2^{a+1})} \bu_{\colword(T)} \mod I_P$.
Recall that $m_{2^{a+1}}(\bx) m_{k-2}(\bx) = m_{k-2, 2^{a+1}}(\bx) + m_{k, 2^a}(\bx)$. Hence,
\begin{align}
m_{k, 2^a}^P(\bu) &= m_{2^{a+1}}^P(\bu) m_{k-2}^P(\bu) - m_{k-2, 2^{a+1}}^P(\bu) \\
				 &\equiv \left(\sum_{T \, \in \, \K_P(k-2, 2^{a+1})} \bu_{\colword(T)} + \sum_{R \, \in \, \K_P(k, 2^{a})}\bu_{\colword(R)}\right) \\
				 & \hspace{2in} - \sum_{T \, \in \, \K_P(k-2, 2^{a+1})} \bu_{\colword(T)} \\
&\equiv \sum_{R \, \in \, \K_P(k, 2^{a})}\bu_{\colword(R)} \mod I_P.
\end{align} 
Therefore, $m_{k, 2^a}^P(\bu) \equiv \sum_{R \in \K_P(k,2^a)} \bu_{\colword(R)}$ modulo $I_P$ for all $k, a \in \mathbb{Z}_{\geq 0}$, as desired.
\end{proof}
\section{Positivity of the Path Graph}
\label{pathSection}
%One important special case of incomparability graphs of natural unit interval orders is the path with vertices labelled $1$ to $n$ from left to right. A formula for the $e$-expansion of the chromatic quasisymmetric function in this case was given by Shareshian--Wachs \cite[Theorem C.4]{SWchromatic}.
%Write $P_n$ for the poset with $n$ elements such that $\inc(P)$ is the naturally labelled path. 
%Alternatively, $P_n$ is the poset on $[n]$ such that $i <_P j$ whenever $i + 1 < j$.
Recall from Section~2.7 that $P_n$ is the natural unit interval order on $[n]$ such that $\inc(P_n)$ is a path with $n$ vertices.
In this section, we give a proof of Theorem \ref{dumbbellqPositivityTheorem}, showing that $\keySYT_{P_n}(\lambda) = \decSYT_{P_n}(\lambda)$.
To begin, we observe some properties of powersum permutations and powerful arrays for $P_n$. Write $\mathcal{P}(S)$ for the set of powersum permutations for $P_n$ of a set $S \subseteq [n]$, and write $\injDecArray_{P_n}(\alpha)$ for the set of \emph{bijective powerful arrays} of shape $\alpha$.

\begin{lemma}\label{pathOrientationLemma}
Let $\mathbf{w} = w_1w_2...w_k$ be a powersum permutation of $S \subseteq [n]$ for $P_n$. 
Then $\{w_1,w_2,...,w_k\} = \{a, a+1,a+2,...,a+k-1\}$ for some $a \in [n]$, and  
\begin{equation}
\mathbf{w} = (a)\ (a+1)\, ...\, (a+i-2)\ (a+k-1)\ (a+k-2)\ ...\ (a+i-1)
\end{equation}
for some $1 \leq i \leq k$.
\end{lemma}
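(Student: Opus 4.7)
The plan is to pin down the entries of $\mathbf{w}$ by locating the position $p$ of the maximum entry and then forcing the structure of the rest of the word from there. Set $M = \max\{w_1, \ldots, w_k\}$, let $p$ be the unique index with $w_p = M$, and aim to show that $\mathbf{w}$ has the stated form with $a = M - k + 1$ and $i = p$. Throughout, recall that in $P_n$ two entries $x,y$ satisfy $x <_{P_n} y$ iff $y - x \geq 2$, so the no-$P$-descent condition $w_t \not>_{P_n} w_{t+1}$ unpacks to $w_{t+1} \geq w_t - 1$, while position $t$ failing to be a right-left $P$-minima unpacks to the existence of $s > t$ with $w_s \leq w_t + 1$.

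The first main step is the suffix $w_p, w_{p+1}, \ldots, w_k$. I would prove by induction on $t$ that $w_{p+t} = M - t$ for $t = 0, 1, \ldots, k - p$. The base $t=0$ is immediate. For the step, the no-$P$-descent condition gives $w_{p+t+1} \geq M - t - 1$, while the maximality of $M$ together with distinctness excludes every value in $\{M - t, M - t + 1, \ldots, M\}$, so $w_{p+t+1} = M - t - 1$ is forced.

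The second main step handles the prefix $w_1, \ldots, w_{p-1}$ by downward induction on the position, showing $w_{p-1-j} = M - k + p - 1 - j$ for $j = 0, 1, \ldots, p - 2$. For each $j$, the no-$P$-descent condition at position $p - 1 - j$ gives $w_{p-1-j} \leq w_{p-j} + 1$, which combined with distinctness and the values already assigned in Step~1 and in earlier inductive steps forces $w_{p-1-j} \leq M - k + p - j - 1$. Meanwhile, the absence of a nontrivial RL $P$-minima at position $p - 1 - j$ demands some later entry of value at most $w_{p-1-j} + 1$; the minimum of the already-assigned later entries is $M - k + p - j$, which forces $w_{p-1-j} \geq M - k + p - j - 1$. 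Together these give the claimed value, and the induction closes.

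Combining the two steps, the entries of $\mathbf{w}$ are exactly the consecutive integers $\{M - k + 1, M - k + 2, \ldots, M\}$, and setting $a = M - k + 1$ writes $\mathbf{w}$ in the form $(a)(a+1)\cdots(a+p-2)(a+k-1)(a+k-2)\cdots(a+p-1)$, which matches the statement with $i = p$. The edge cases $p = 1$ (empty prefix, giving the strictly decreasing word) and $p = k$ (empty descending tail after $M$, giving the strictly increasing word) are both captured. I do not expect a serious obstacle; the only care needed is to keep track of which values remain available at each inductive step, since everything reduces to the two local conditions that $P$-descents can drop by at most $1$ and that each non-final position has a later entry within $1$ of it.
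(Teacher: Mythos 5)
Your proof is correct and follows essentially the same strategy as the paper: locate the position of the maximum entry, show the suffix after it must decrease by one at each step (using the no-$P$-descent condition and distinctness), and then show the prefix must increase by one at each step (using the no-nontrivial-RL-$P$-minima condition together with no $P$-descents). Your version simply writes out the two inductions in more detail than the paper, which presents the same argument more tersely.
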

\begin{proof}
Let $\mathbf{w} = w_1 w_2... w_k$ be a powersum permutation of a set $S$ for $P_n$, and let $w_i$ be the maximum entry of $\mathbf{w}$. As $\mathbf{w}$ has no $P_n$-descents and no nontrivial RL $P_n$-minima, $w_i w_{i+1} ... w_k = w_i\, (w_{i}-1)\, ...\,(w_{i} - k + i)$. Likewise, $w_{i-1}$ must be incomparable to one of $\{w_i, w_{i+1},...,w_k\}$, so $w_{i-1} = w_k - 1 = w_{i} - k + i -1$. Similarly, we have $w_1w_2...w_{i-1} = w_1\, (w_1 + 1)\, (w_1 +2)\,...\,(w_1 +i -2)$. Then, letting $a = w_1$, 
\begin{equation}
\mathbf{w} =  (a)\ (a+1)\, ...\, (a+i-2)\ (a+k-1)\ (a+k-2)\ ...\ (a+i-1),
\end{equation}
as desired.
%
%Suppose for the purpose of contradiction that for some $p \in S$, we have $p+2 \in S$ and $p+1 \not\in S$.
%As $\mathbf{w}$ has no $P$-descents and no nontrivial RL $P$-minima,  
%Observe that if $\mathbf{w}$ is a powersum permutation, $\mathbf{w}$ has no nontrivial RL $P_n$-minima, so for each $1 \leq i < k$, $w_i$ is incomparable to at least one of $\{w_{i+1},w_{i+2},...,w_k\}$.
%Hence, as each $p \in P_n$ is only incomparable to $p-1$ and $p+1$, $\{w_1,w_2,...,w_k\} = \{a, a+1,..., a+k-1\}$ for some $a \in [n]$.
%Let $i$ be the index such that $w_i = p_k$. As $\mathbf{w}$ has no $P$-descents, $w_iw_{i+1}...w_k = p_kp_{k-1}...p_{i}$. Likewise, as $\mathbf{w}$ has no non-trivial RL $P$-minima, $w_1w_2...w_{i-1} = p_1p_2...p_{i-1}$. Therefore, $\mathbf{w} = p_1p_2...p_{i-1}p_kp_{k-1}...p_i$, as desired. 
\end{proof}
\begin{corollary}
\label{pathqIntCor}
Let $S = \{a, a+1,...,a+k-1\}$ be a subset of $[n]$. Then
\begin{equation}
\sum_{\mathbf{w} \in \mathcal{P}(S)} q^{\inv_{P_n}(\mathbf{w})} = [k]_q = 1 + q + q^2 + ... + q^{k-1}.
\end{equation}
\end{corollary}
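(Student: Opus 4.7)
The plan is to apply Lemma~\ref{pathOrientationLemma} directly: it already tells us that the powersum permutations of $S = \{a, a+1, \ldots, a+k-1\}$ for $P_n$ are exactly the $k$ words
\[
\mathbf{w}_i \;=\; (a)(a+1)\cdots(a+i-2)\,(a+k-1)(a+k-2)\cdots(a+i-1), \qquad 1 \le i \le k,
\]
where $\mathbf{w}_1$ is the fully decreasing word and $\mathbf{w}_k$ is the identity word on $S$. So the entire task reduces to computing $\inv_{P_n}(\mathbf{w}_i)$ for each $i$, and then summing $\sum_{i=1}^{k} q^{\inv_{P_n}(\mathbf{w}_i)}$.

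For the inversion count I would first record the relevant special feature of $P_n$: two distinct elements of $[n]$ are incomparable in $P_n$ exactly when they differ by $1$ (this is immediate from the reverse Hessenberg function $(0,0,1,2,\ldots,n-2)$). So an inversion of $\mathbf{w}_i$ is a pair of positions $(p,q)$ with $p < q$ and $\mathbf{w}_p = \mathbf{w}_q + 1$. Decomposing $\mathbf{w}_i$ into its increasing prefix (values $\{a,\ldots,a+i-2\}$) and its decreasing suffix (values $\{a+i-1,\ldots,a+k-1\}$), I would check three cases: the prefix is increasing and contributes no inversions; every prefix value is strictly less than every suffix value, so cross-pairs contribute nothing; and within the strictly decreasing suffix the only pairs whose values differ by $1$ are the adjacent positions, giving exactly $k-i$ inversions. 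Hence $\inv_{P_n}(\mathbf{w}_i) = k-i$.

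Summing the resulting geometric series then finishes the proof:
\[
\sum_{\mathbf{w} \in \mathcal{P}(S)} q^{\inv_{P_n}(\mathbf{w})} \;=\; \sum_{i=1}^{k} q^{k-i} \;=\; 1 + q + q^2 + \cdots + q^{k-1} \;=\; [k]_q.
\]
There is essentially no obstacle here once Lemma~\ref{pathOrientationLemma} is in hand; the only care required is to correctly translate $P_n$-incomparability into the condition "differ by exactly $1$" and to observe that the ascending/descending split of $\mathbf{w}_i$ rules out all inversions except the $k-i$ adjacent ones inside the descending tail.
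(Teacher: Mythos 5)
Your proof is correct and follows essentially the same route as the paper: invoke Lemma~\ref{pathOrientationLemma} to index the powersum permutations by the position of the maximum, show each such word has exactly $k-i$ inversions, and sum the resulting geometric series. The only cosmetic difference is that you count inversions by decomposing into prefix/suffix/cross-pairs, whereas the paper phrases the same count as the number of entries $w_s$ whose predecessor $w_s-1$ lies to the right; both are the same observation that adjacent entries in the descending tail are the only incomparable descending pairs.
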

\begin{proof}
By Lemma \ref{pathOrientationLemma}, a powersum permutation $\mathbf{w}$ for $P_n$ of a set $S$ is uniquely determined by the position of its maximum entry. Note that $\inv_{P_n}(\mathbf{w})$ is the number of entries $w_i$ such that $w_i -1$ appears to the right of $w_i$. If $\mathbf{w} = w_1w_2...w_k$ is a powersum permutation such that $w_j = \max(S) = a+k-1$, then $w_s - 1$ appears to the left of $w_s$ for $1 < s < j$ and $w_s - 1$ appears to the right of $w_s$ for $j \leq s < k$. Thus, $\inv_{P_n}(\mathbf{w}) = k - j$. As $j$ ranges from $1$ to $k$, 
\begin{equation}
\sum_{\mathbf{w} \in \mathcal{P}(S)} q^{\inv_{P_n}(\mathbf{w})} = 1 + q + q^2 + ... + q^{k-1} = [k]_q.
\end{equation}  
\end{proof}

\begin{lemma}\label{pathContentLemma}
Let $\alpha = (\alpha_1,...,\alpha_l) \vDash n$. If $A \in \injDecArray_{P_n}(\alpha)$, then the content of row $i$ of $A$ is $\{z_i + 1, z_i+2,..., z_i+\alpha_i\}$ where $z_i = \sum_{j=1}^{i-1} \alpha_j$.
\end{lemma}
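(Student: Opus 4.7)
The plan is an induction on $l = \ell(\alpha)$. The base case $l = 1$ is immediate, since bijectivity forces row~$1$ to contain all of $[n] = \{z_1 + 1, \ldots, z_1 + \alpha_1\}$. For the inductive step, the strategy is to first pin down that row~$1$ of $A$ is exactly $\{1, 2, \ldots, \alpha_1\}$, and then delete it and apply induction to the remaining subarray.

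The key preliminary observation, using Lemma~\ref{pathOrientationLemma}, is that each row of $A$ is a powersum permutation of a set of consecutive integers, so by bijectivity the rows of $A$ partition $[n]$ into intervals of sizes $\alpha_1, \ldots, \alpha_l$. I would then show that $1$ must lie in row~$1$ by locating it. Let $r_1$ be the row containing $1$ and $c_1$ the column. If $c_1 = 1$, then column~$1$ of $A$ (which has a cell in every row since each $\alpha_r \geq 1$) is a $P_n$-increasing chain, and since $1$ is the $P_n$-minimum of $[n]$, it must sit at the top, forcing $r_1 = 1$. Otherwise $c_1 > 1$, and Lemma~\ref{pathOrientationLemma} tells us the minimum of a length-$k$ powersum permutation occupies position $1$ or position $k$, so $c_1 = \alpha_{r_1}$ and row $r_1$ is the strictly descending sequence $\alpha_{r_1}, \alpha_{r_1}-1, \ldots, 1$. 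Column-strictness at column $c_1$ then forces $\alpha_r < \alpha_{r_1}$ for every $r < r_1$ (otherwise $A_{r, c_1} <_{P_n} 1$, impossible). Taking $s = r_1$ and $t = \alpha_{r_1} > \alpha_{r_1 - 1}$ in powerful-array condition~(3) of Definition~\ref{def dec array} applied to row $r_1 - 1$ yields $A_{r_1-1, \alpha_{r_1-1}} <_{P_n} A_{r_1, \alpha_{r_1}} = 1$, which is impossible; hence $r_1 = 1$, and row~$1$ has content $\{1, 2, \ldots, \alpha_1\}$.

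For the inductive step, delete row~$1$ to obtain a subarray $A'$ of shape $(\alpha_2, \ldots, \alpha_l)$ with entries in $\{\alpha_1 + 1, \ldots, n\}$. The induced subposet of $P_n$ on these entries is isomorphic to $P_{n - \alpha_1}$ via $x \mapsto x - \alpha_1$, and all three powerful-array conditions for $A'$ are inherited directly from $A$, so (after shifting) $A'$ is a bijective powerful array of shape $(\alpha_2, \ldots, \alpha_l)$ for $P_{n - \alpha_1}$. Applying the inductive hypothesis and shifting back gives the claimed content for rows $2, \ldots, l$, using the identity $z_{r+1} = z_r' + \alpha_1$ where $z_r' = \sum_{j=1}^{r-1} \alpha_{j+1}$.

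The main obstacle in this plan is the case $r_1 > 1$ with $c_1 = \alpha_{r_1}$, where $1$ sits at the end of a descending row. Ruling this out requires the interplay of all three pieces of information: Lemma~\ref{pathOrientationLemma} (to force the descending shape of row $r_1$), column-strictness (to force $\alpha_{r_1-1} < \alpha_{r_1}$), and condition~(3) of the powerful-array definition (to derive $A_{r_1-1, \alpha_{r_1-1}} <_{P_n} 1$). Without condition~(3) the configuration really could occur, so this is the step where the subtlety of Definition~\ref{def dec array} pays off.
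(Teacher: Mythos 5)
Your proof is correct, but it follows a genuinely different route from the paper's. The paper gives a direct minimality argument: it takes the smallest row index $i$ whose content is wrong, observes that $z_i + 1$ must then sit at some cell $(r,t)$ with $r > i$, and splits into two cases depending on whether $t \le \alpha_i$ (use column-strictness against cell $(i,t)$) or $t > \alpha_i$ (use Definition~\ref{def dec array}(3) applied to row $i$ at position $\alpha_i$). Crucially, the paper never needs to know \emph{where} in its row the misplaced element sits, so it does not invoke Lemma~\ref{pathOrientationLemma} to restrict that position. Your proof instead sets up a formal induction on $\ell(\alpha)$: you locate the element $1$, use Lemma~\ref{pathOrientationLemma} to pin its column to $1$ or $\alpha_{r_1}$, handle the first sub-case by column-strictness, and handle the second by first deriving $\alpha_r < \alpha_{r_1}$ for $r < r_1$ and then invoking Definition~\ref{def dec array}(3) at \emph{row $r_1 - 1$} (rather than at row $1$, which is what a translation of the paper's argument would use). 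Both arguments are sound; the paper's is a bit shorter because its case split on $t$ versus $\alpha_i$ avoids the intermediate inequality $\alpha_{r_1-1} < \alpha_{r_1}$ and sidesteps Lemma~\ref{pathOrientationLemma} entirely in this step, while your version makes the reduction to smaller $P_{n-\alpha_1}$ explicit and cleanly isolates the role of condition~(3). One small wording issue in your write-up: $1$ is a \emph{minimal} element of $P_n$ but not the $P_n$-minimum, since $1 \dote_{P_n} 2$; the conclusion that $1$ must sit at the top of column $1$ still holds because nothing is $<_{P_n} 1$, but the phrase ``the $P_n$-minimum'' should be replaced by ``a minimal element.''
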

\begin{proof}
Let $A \in \injDecArray_{P_n}(\alpha)$.
By Lemma \ref{pathOrientationLemma}, the entries of row $i$ of $A$ will be a set of the form $\{a_i + 1, a_i+2, a_i+3,...,a_i+\alpha_i\}$ for some $0 \leq a_i < n$.
Suppose for the purpose of contradiction that for some $i$, $a_i \neq z_i$. 
Without loss of generality, let $i$ be the minimal such row index. 
Then $z_i + 1$ must appear in some row $r > i$.
Suppose $A_{r,t} = z_i+1.$ If $t \leq \alpha_i$, then $A_{r, t} < A_{i,t}$ in the natural order on $[n]$, which contradicts the condition that columns of powerful arrays are increasing in $P$. If $t > \alpha_i$, then $A_{i, \alpha_i} < A_{r,t},$ which contradicts the condition that $A_{i, \alpha_i} <_P A_{s,u}$ for all $s > i$ and $u > \alpha_i$.
Therefore, if $A \in \injDecArray_{P_n}(\alpha)$, then the content of row $i$ is $\{z_i+1,z_i + 2,..., z_i + \alpha_i\}$ as desired.
\end{proof}

\begin{defin} Let $\alpha = (\alpha_1, \alpha_2,..., \alpha_l) \vDash n$ and $A \in \injDecArray_{P_n}(\alpha)$. Let $\mathbf{w}^{(1)},\mathbf{w}^{(2)},..., \mathbf{w}^{(l)}$ be the rows of $A$. Define the \emph{peak vector} $\peak(A) \in [\alpha_1] \times [\alpha_2] \times ... \times [\alpha_l]$ by $\peak(A) = (r_1, r_2,..., r_l)$, where $r_i$ is the index such that $\mathbf{w}^{(i)}_{r_i} = \max(\mathbf{w}^{(i)})$. Write $\Peak(\alpha)$ for the image of $\injDecArray_{P_n}(\alpha)$ under $\peak$.
\end{defin}

\begin{lemma}
\label{pathPeakLemma}
Let $\alpha = (\alpha_1, \alpha_2,...,\alpha_l) \vDash n$. Then the peak vector map
\begin{equation}
\peak: \injDecArray_{P_n}(\alpha) \to \Peak(\alpha)
\end{equation}
is a bijection. Furthermore, $\Peak(\alpha)$ consists of vectors $(r_1, r_2,...,r_l)$ such that, for all $1 < i \leq l$,
\begin{enumerate}
\item if $r_i = 1$, then $r_{i-1} \neq \min\{\alpha_i, \alpha_{i-1}\},$ and
\item if $r_i \neq 1$, then $r_{i-1} \neq 1$.
\end{enumerate}
\end{lemma}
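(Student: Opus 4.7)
The plan is to handle injectivity and the explicit characterization of $\Peak(\alpha)$ in two stages. For injectivity, I would appeal to Lemma~\ref{pathContentLemma}, which pins down the content of row $i$ in any $A \in \injDecArray_{P_n}(\alpha)$ as the contiguous block $\{z_i+1, \ldots, z_i + \alpha_i\}$ with $z_i = \sum_{j<i} \alpha_j$, combined with Lemma~\ref{pathOrientationLemma}, which says that a powersum word on a contiguous block of integers is completely determined by the position of its maximum. Hence $\peak(A)$ recovers each row of $A$ individually, and so recovers $A$.

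For the characterization, I would work from the explicit formula coming out of Lemma~\ref{pathOrientationLemma}: if row $i$ has peak position $r_i$, then
\begin{equation*}
A_{i,t} = \begin{cases} z_i + t & \text{if } t < r_i, \\ z_i + \alpha_i - (t - r_i) & \text{if } t \geq r_i. \end{cases}
\end{equation*}
With this, conditions (1), (2), and (3) of Definition~\ref{def dec array} can be checked by examining adjacent row pairs. Condition (1) holds by construction. The goal becomes: identify, in terms of $(r_i, r_{i+1})$ and $(\alpha_i, \alpha_{i+1})$, exactly when (2) or (3) is violated, then show that the stated conditions rule out precisely those violations.

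For condition (2), since $A_{i+1,t} - A_{i,t} \geq z_{i+1}+1 - (z_i+\alpha_i) = 1$ always, the $P_n$-increasing requirement (difference $\geq 2$) can be violated only when $A_{i,t}$ attains the maximum of row $i$ \emph{and} $A_{i+1,t}$ attains the minimum of row $i+1$. The former forces $t = r_i$, while the latter forces either $t = 1$ with $r_{i+1} > 1$, or $t = \alpha_{i+1}$ with $r_{i+1} = 1$. Condition (3) can likewise fail only at $s = i+1$, $t = \alpha_{i+1} > \alpha_i$, with $r_i = \alpha_i$ and $r_{i+1} = 1$. Merging these three failure cases gives exactly the two conditions in the statement: $r_{i+1} > 1$ forces $r_i \neq 1$ (condition (2) of the lemma), while $r_{i+1} = 1$ forces $r_i \neq \min(\alpha_i, \alpha_{i+1})$ (combining the column-failure on the $\alpha_i \geq \alpha_{i+1}$ side, which contributes $r_i \neq \alpha_{i+1}$, with the tail-failure on the $\alpha_i < \alpha_{i+1}$ side, which contributes $r_i \neq \alpha_i$).

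The main obstacle will be the bookkeeping in this last case-split, specifically reconciling how the two distinct failure modes from Definition~\ref{def dec array}(2) and (3) merge into the single criterion $r_i = \min(\alpha_i, \alpha_{i+1})$ when $r_{i+1} = 1$. Once that is done, surjectivity is immediate because the explicit formula above manifestly produces a valid powerful array whenever none of the failure modes is triggered, and injectivity was already established in the first step.
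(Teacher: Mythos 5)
Your proposal is correct and takes essentially the same approach as the paper's proof: injectivity from Lemmas~\ref{pathContentLemma} and \ref{pathOrientationLemma} (row contents are forced, and each row is determined by its peak position), then an explicit reconstruction of the array from a candidate peak vector followed by a case analysis of exactly when Definition~\ref{def dec array}(2) or (3) can fail between consecutive rows, noting the difference-of-one scenario between $\max$ of one row and $\min$ of the next. The paper reaches the same failure criteria, organized slightly differently (checking condition (3) first, then condition (2)), and likewise leaves implicit the easy observation that only adjacent rows can produce a violation.
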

\begin{proof}
As $\Peak(\alpha)$ is defined to be the image of $\injDecArray_{P_n}(\alpha)$ under the peak vector map, $\peak$ is surjective.
By Lemma \ref{pathContentLemma}, the entries of each row of $A \in \injDecArray_{P_n}(\alpha)$ are determined by $\alpha$, and by Lemma \ref{pathOrientationLemma}, each row is determined by the position of its maximum entry. Therefore, $\peak$ is an injection.
Let $(r_1, r_2,...,r_l) \in [\alpha_1] \times [\alpha_2] \times ... \times [\alpha_l]$, and let $\mathbf{w}^{(1)}, \mathbf{w}^{(2)},..., \mathbf{w}^{(l)}$ be powersum permutations of length $\alpha_1, \alpha_2,..., \alpha_l$ with content satisfying Lemma \ref{pathContentLemma} and maximum elements in positions $r_1,r_2,..., r_l$. Let $A$ be the array of shape $\row(\alpha)$ such that entry $A_{s,t} = \mathbf{w}_t^{(s)}$.

Fix $1 < i \leq l$. 
By Lemma~\ref{pathContentLemma}, for $s > i-1$ and $t > \alpha_{i-1}$, one knows $A_{i-1, \alpha_{i-1}} \not<_{P_n} A_{s,t}$ only if $A_{i-1, \alpha_{i-1}} = \max(\mathbf{w}^{(i-1)})$ and $A_{s,t} = A_{i-1, \alpha_{i-1}} + 1 = \min(\mathbf{w}^{(i)})$.
If $A_{i-1, \alpha_{i-1}} = \max(\mathbf{w}^{(i-1)})$, then $r_{i-1} = \alpha_{i-1}$, and if $r_i = 1,$ then $\min(\mathbf{w}^{(i)}) = \mathbf{w}^{(i)}_{\alpha_i}$. Thus, if $r_{i-1} = \alpha_{i-1}$, $r_i = 1$, and $\alpha_i > \alpha_{i-1}$, then $A_{i-1, \alpha_{i-1}} \not<_{P_n} A_{i, \alpha_i}$.
Conversely, if $r_{i-1} \neq \alpha_{i-1}$, $r_i \neq 1,$ or $\alpha_i \leq \alpha_{i-1}$, then $A_{i-1, \alpha_{i-1}} <_{P_n} A_{s, t}$ for all $s > i-1$ and $t > \alpha_{i-1}$.  

By Lemma~\ref{pathContentLemma}, if $A_{i-1, j} \not<_{P_n} A_{i, j}$ for some $j$, then $r_{i-1} = j$ and $A_{i,j} = \min(\mathbf{w}^{(i)})$. As $\min(\mathbf{w}^{(i)}) = A_{i,1}$ whenever $r_i \neq 1$, if $r_{i-1} = 1$ and $r_i \neq 1$, then $A_{i-1,1} \not<_{P_n} A_{i, 1}$. If $r_i = 1$, then $\min(\mathbf{w}^{(i)}) = A_{i, \alpha_i}$, so $A_{i-1, \alpha_i} \not<_{P_n} A_{i, \alpha_i}$ if $r_{i-1} = \alpha_i$. Therefore, if $r_i = 1$ and $r_{i-1} \neq \alpha_i$ or if $r_i \neq 1$ and $r_{i-1} \neq 1$, then $A_{i-1,j} <_{P_n} A_{i, j}$ for all $j$. Hence, $A$ is a powerful array if $r_{i-1} \neq \min\{\alpha_{i-1}, \alpha_i\}$ when $r_i = 1$ and $r_{i-1} \neq 1$ when $r_i \neq 1$ for all $1 < i \leq l$.
\end{proof}

By Lemma~\ref{pathPeakLemma}, we can identify elements of $\injDecArray_{P_n}(\alpha)$ with their peak vectors. In particular, for a peak vector $\mathbf{r} = (r_1,r_2,...,r_l) \in \Peak(\alpha)$, we write $\inv_{P_n}(\mathbf{r})$ for $\inv_{P_n}(A)$, where $A \in \injDecArray_{P_n}(\alpha)$ such that $\peak(A) = \mathbf{r}$.

\begin{lemma}
\label{pathPeakInvLemma}
Let $\alpha \vDash n$, $A \in \injDecArray_{P_n}(\alpha)$, and $\peak(A) = (r_1, r_2,...,r_l)$. Then 
\begin{equation}
\inv_{P_n}(A) = \sum_{i=1}^{l} (\alpha_i - r_i + b_i),
\end{equation}
where 
\begin{equation}
b_i = \begin{cases} 0 & \text{if }i < l,\, r_{i+1} = 1, \text{ and } r_i < \alpha_{i+1}, \\
0 & \text{if i = l}, \\
1 & \text{otherwise.}
\end{cases}
\end{equation}
\end{lemma}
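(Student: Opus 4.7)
The plan is to decompose $\inv_{P_n}(A)$ into contributions from pairs of incomparable entries lying in a single row and those lying across distinct rows. By Lemma~\ref{pathContentLemma} the content of row $i$ is the consecutive block $\{z_i+1,\dots,z_i+\alpha_i\}$, and since incomparable pairs in $P_n$ are exactly pairs of adjacent integers, the only cross-row incomparable pair lives between rows $i$ and $i+1$ and is $(z_{i+1},z_{i+1}+1)$, namely the maximum of row $i$ paired with the minimum of row $i+1$.

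For the within-row part, Lemma~\ref{pathOrientationLemma} gives the explicit form of row $i$ as $z_i+1,\,z_i+2,\,\dots,\,z_i+r_i-1,\,z_i+\alpha_i,\,z_i+\alpha_i-1,\,\dots,\,z_i+r_i$, and counting pairs $(c,c+1)$ with $c$ appearing to the right of $c+1$ reproduces the argument in the proof of Corollary~\ref{pathqIntCor}, yielding $\alpha_i-r_i$ inversions inside row $i$. Summing over $i$ gives the $\sum_{i=1}^{l}(\alpha_i-r_i)$ part of the formula.

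For each $1\leq i<l$ I check whether the cross-row pair $(z_{i+1},z_{i+1}+1)$ is inverted. The entry $z_{i+1}$ sits in column $r_i$ by definition of the peak vector, and a second application of Lemma~\ref{pathOrientationLemma} places $z_{i+1}+1$ in column $1$ when $r_{i+1}>1$ and in column $\alpha_{i+1}$ when $r_{i+1}=1$. Hence the pair is inverted precisely when $r_i$ strictly exceeds that column index. In the case $r_{i+1}>1$, the peak constraint from Lemma~\ref{pathPeakLemma} forces $r_i\neq 1$, so the pair is always inverted and the contribution matches the value $b_i=1$. In the case $r_{i+1}=1$, Lemma~\ref{pathPeakLemma} forces $r_i\neq\min\{\alpha_i,\alpha_{i+1}\}$, and combined with the obvious bound $r_i\leq\alpha_i$ this rules out $r_i=\alpha_{i+1}$; the inversion inequality $r_i>\alpha_{i+1}$ therefore collapses to $r_i\not<\alpha_{i+1}$, which is exactly the complement of the condition defining $b_i=0$. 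Assembling the within-row and between-row contributions yields the claimed identity.

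The main obstacle is the careful bookkeeping in the between-row step: one must correctly locate the column of the minimum of row $i+1$ in the two cases for $r_{i+1}$, and invoke Lemma~\ref{pathPeakLemma} at exactly the right moment so that the strict inequality $r_i>\alpha_{i+1}$ can be cleanly replaced by the condition $r_i\not<\alpha_{i+1}$ appearing in the definition of $b_i$.
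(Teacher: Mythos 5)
Your proof is correct and follows essentially the same decomposition as the paper's: split $\inv_{P_n}(A)$ into within-row contributions (each equal to $\alpha_i - r_i$ via Lemma~\ref{pathOrientationLemma} and the argument of Corollary~\ref{pathqIntCor}) plus the cross-row contributions coming from the single boundary pair $(z_{i+1}, z_{i+1}+1)$, with case analysis on $r_{i+1}$. One small merit of your write-up: you explicitly invoke Lemma~\ref{pathPeakLemma} to rule out the boundary possibilities $r_i = 1$ (when $r_{i+1}>1$) and $r_i = \alpha_{i+1}$ (when $r_{i+1}=1$), which the paper's proof uses implicitly but does not flag.
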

\begin{proof}
Recall from \eqref{eqn tab inv} that $\inv_{P_n}(A)$ is the number of elements $p \in P_n$ such that $p + 1$ appears in a column to the left of $p$ in $A$. From Lemma \ref{pathContentLemma}, if $p \in P_n$ appears in row $i$ of $A$, then $p+1$ will appear in either row $i$ or row $i+1$.
Let $\mathbf{w}^{(1)}, \mathbf{w}^{(2)},..., \mathbf{w}^{(l)}$ be the rows of $A$. 
From the proof of Corollary \ref{pathqIntCor}, $\inv_{P_n}(\mathbf{w}^{(i)}) = \alpha_i - r_i$. 
Observe that $A$ has an inversion between rows $i$ and $i+1$ if $\mathbf{w}_{r_i}^{(i)}$ appears to the right of $\mathbf{w}_{r_i}^{(i)}+1$. 
If $r_{i+1} \neq 1$, then $\mathbf{w}_{r_i}^{(i)}+1 = \mathbf{w}^{(i+1)}_1$, which means $\mathbf{w}_{r_i}^{(i)}+1$ appears to the left of $\mathbf{w}_{r_i}^{(i)}$ for all values of $r_i$. 
If $r_{i+1} = 1$, then $\mathbf{w}_{r_i}^{(i)}+1 = \mathbf{w}_{\alpha_{i+1}}^{(i+1)}$, which means $\mathbf{w}_{r_i}^{(i)}+1$ appears to the left of $\mathbf{w}_{r_i}^{(i)}$ if $r_i > \alpha_{i+1}$. Therefore, $b_i$ is the number of inversions between rows $i$ and $i+1$, and $\inv_{P_n}(A) = \sum_{i=1}^l (\alpha_i - r_i + b_i)$.
\end{proof}

\begin{lemma}
\label{pathInjDecArrayqInv}
Let $\alpha = (\alpha_1, \alpha_2,..., \alpha_l) \vDash n$. Then 
\begin{equation}
\sum_{A\, \in\, \injDecArray_{P_n}(\alpha)} q^{\inv_{P_n}(A)} = q^{l-1}[\alpha_l]_q\prod_{i=1}^{l-1} [\alpha_i - 1]_q.
\end{equation}
\end{lemma}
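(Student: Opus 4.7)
The plan is to rewrite the sum via the bijection $\peak$ from Lemma~\ref{pathPeakLemma}, reindexing over valid peak vectors $\mathbf{r} = (r_1, \ldots, r_l) \in \Peak(\alpha)$, with the exponents supplied by the explicit formula for $\inv_{P_n}$ in Lemma~\ref{pathPeakInvLemma}. A key structural observation is that condition~(2) of Lemma~\ref{pathPeakLemma} forces the set $\{j : r_j = 1\}$ to form a suffix of $[l]$: once $r_{j-1} = 1$ we must have $r_j = 1$. This suggests stratifying $\Peak(\alpha)$ by the \emph{switch index} $k \in \{1, 2, \ldots, l, l+1\}$, defined as the least index with $r_k = 1$ (with $k = l+1$ if no such index exists).

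For a fixed switch index $k$, I would factor the contribution into three pieces. First, a \emph{free} piece over positions $j < k-1$, where $r_j \in \{2, \ldots, \alpha_j\}$ is unconstrained and each $b_j = 1$ (since $r_{j+1} \neq 1$), contributing $q^{k-2}\prod_{j < k-1}[\alpha_j - 1]_q$. Second, a \emph{boundary} piece at $j = k-1$, where the constraint $r_{k-1} \neq \min\{\alpha_{k-1}, \alpha_k\}$ (from condition~(1) at $i = k$) interacts with the $b_{k-1}$ term (which equals $0$ or $1$ depending on whether $r_{k-1} < \alpha_k$). Third, a \emph{frozen suffix} over $j \geq k$ with $r_j = 1$ forced and $b_j = 0$, contributing $\prod_{j \geq k} q^{\alpha_j - 1}$. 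The edge cases $k = 1$ (all $r_j = 1$) and $k = l+1$ (no $r_j = 1$) are handled separately.

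The central calculation is the evaluation of the boundary piece: a case-split on $\alpha_k \leq \alpha_{k-1}$ versus $\alpha_k > \alpha_{k-1}$ shows that the partial sum $\sum_{r_{k-1}} q^{\alpha_{k-1} - r_{k-1} + b_{k-1}}$ over admissible $r_{k-1}$ collapses to $q[\alpha_{k-1} - 2]_q$ in \emph{both} cases, remarkably independent of $\alpha_k$. This uniformity allows the sum over switch indices to telescope cleanly; combined either with induction on $l$ (base case $l = 1$ giving $\sum_{r_1} q^{\alpha_1 - r_1} = [\alpha_1]_q$) or a direct application of the identity $[\alpha]_q = [\alpha-1]_q + q^{\alpha-1}$, this yields the claimed $q$-product formula. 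I expect the boundary cancellation, together with the bookkeeping required for the edge cases, to be the main obstacle.
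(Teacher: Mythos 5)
Your stratification by the ``switch index'' is a genuinely different route from the paper's. The paper instead works with the forgetful map $\varphi:\Peak(\alpha)\to\Peak(\hat\alpha)$ (where $\hat\alpha=(\alpha_2,\dots,\alpha_l)$) that drops $r_1$, and shows directly that the fiber over every $\hat{\mathbf r}\in\Peak(\hat\alpha)$ contributes the exponent set $\{1,2,\dots,\alpha_1-1\}$, i.e.\ a factor $q[\alpha_1-1]_q$; this yields the one-line recurrence $p_\alpha(q)=q[\alpha_1-1]_q\,p_{\hat\alpha}(q)$, $p_{(a)}(q)=[a]_q$, which then just iterates. Your approach instead slices all of $\Peak(\alpha)$ at once by the least $k$ with $r_k=1$ (a valid reduction, since Lemma~\ref{pathPeakLemma}(2) does force $\{j:r_j=1\}$ to be a suffix), at the cost of a more involved reassembly: the ``free'' factors $q[\alpha_j-1]_q$ for $j<k-1$ and the ``frozen'' factors $q^{\alpha_j-1}$ for $j\ge k$ are both right, and the $k=1,2$ strata recombine via $q^{\alpha_1-1}+q[\alpha_1-2]_q=q[\alpha_1-1]_q$ to recover the paper's recurrence, so the telescoping does close. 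The paper's fiber argument absorbs this $q$-integer identity once; your stratification re-derives it at each switch index, which is more bookkeeping for the same content.

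There is, however, a concrete error in the ``uniformity'' step. The boundary sum $\sum_{r_{k-1}}q^{\alpha_{k-1}-r_{k-1}+b_{k-1}}$ is \emph{not} always $q[\alpha_{k-1}-2]_q$: when $\alpha_k=1$, the excluded value $\min\{\alpha_{k-1},\alpha_k\}=1$ already lies outside the admissible range $\{2,\dots,\alpha_{k-1}\}$, every admissible $r_{k-1}\ge 2>\alpha_k$ has $b_{k-1}=1$, and the sum is $q[\alpha_{k-1}-1]_q$. This subcase is not covered by your split on $\alpha_k\le\alpha_{k-1}$ versus $\alpha_k>\alpha_{k-1}$. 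It is genuinely reachable: $\alpha_k=1$ can occur in a nonempty stratum exactly when $k=l$ (for $k<l$, Lemma~\ref{pathPeakLemma}(1) at $i=k+1$ would force $r_k\ne\min\{\alpha_k,\alpha_{k+1}\}=1$, contradicting $r_k=1$), and this is precisely the case where the closed form's factor $[\alpha_l]_q=[1]_q=1$ must come out right. As stated, your boundary contribution would vanish (since $[\alpha_{k-1}-2]_q$ could be $[0]_q=0$) in situations where $\Peak(\alpha)$ is nonempty — e.g.\ $\alpha=(2,1)$ has the single peak vector $(2,1)$ with $\inv=1$. You need a third branch of the case analysis for $\alpha_k=1$, after which the argument goes through.
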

\begin{proof}
Let $p_\alpha(q) = \sum_{A \in \injDecArray_{P_n}(\alpha)} q^{\inv_{P_n}(A)}$. We will prove the recurrence
\begin{equation}
p_{\alpha}(q) = \begin{cases} q[\alpha_1-1]_q p_{\hat{\alpha}}(q) &\text{if } \ell(\alpha) > 1, \\
[\alpha_1]_q &\text{if } \ell(\alpha) = 1,
\end{cases}
\end{equation}
where $\hat{\alpha} = (\alpha_2,\alpha_3,...,\alpha_l)$.
Observe that if $\ell(\alpha) = 1$, then $\injDecArray_{P_n}(\alpha) = \mathcal{P}(P_n)$. By Corollary \ref{pathqIntCor}, we have that $p_\alpha(q) = [\alpha_1]_q$.

Suppose $\ell(\alpha) > 1$, and consider the forgetful map $\varphi: \Peak(\alpha) \to \Peak(\hat{\alpha})$ that sends $(r_1,r_2,...,r_l)$ to $(r_2,r_3,...,r_l)$. Observe that if $\Peak(\alpha)$ is non-empty, then $\varphi$ is a surjection. To show that $p_\alpha(q) = q[\alpha_1 - 1]_qp_{\hat{\alpha}}(q)$, it suffices to show that if $\peak(\alpha)$ is non-empty, then 
\begin{equation}
\{\inv_{P_n}(\mathbf{r}) \mid \mathbf{r} \in \varphi^{-1}(\hat{\mathbf{r}})\} = \{1 + \inv_{P_{n'}}(\hat{\mathbf{r}}), 2 + \inv_{P_{n'}}(\hat{\mathbf{r}}),..., \alpha_1 - 1 + \inv_{P_{n'}}(\hat{\mathbf{r}})\}
\end{equation}
for all $\hat{\mathbf{r}} \in \Peak(\hat{\alpha})$, where $n' = n - \alpha_1$.
From Lemma \ref{pathPeakInvLemma}, if $\mathbf{r} = (r_1,r_2,...,r_l) \in \Peak(\alpha)$, then
\begin{equation}
\inv_{P_n}(\mathbf{r}) = \sum_{i=1}^{l} \alpha_i - r_i + b_i = \alpha_1 - r_1 + b_1 + \inv_{P_{n'}}(\varphi(\mathbf{r})),
\end{equation}
where
\begin{equation}
b_i = \begin{cases} 0 & \text{if }i < l,\, r_{i+1} = 1, \text{ and } r_i < \alpha_{i+1}, \\
1 & \text{otherwise.}
\end{cases}
\end{equation}
Thus, it suffices to show that for $\hat{\mathbf{r}} = (r_2,r_3,...,r_l) \in \Peak(\hat{\alpha})$,
\begin{equation}
\{\alpha_1 - r_1 + b_1 \mid (r_1,r_2,...,r_l) \in \varphi^{-1}(\hat{\mathbf{r}})\} = \{1,2,...,\alpha_1-1\}.
\end{equation}
If $r_2 \neq 1$, then $b_1 = 1$ for all values of $r_1$. As $r_1$ ranges from $2$ to $\alpha_1$,
\begin{align}
\{\alpha_1 - r_1 + b_1 \mid \mathbf{r} \in \varphi^{-1}(\hat{\mathbf{r}})\} = \{\alpha_1 - r_1 + 1 \mid r_1 = 2,3,...,\alpha_1\} = \{1,2,...,\alpha_1 - 1\}. 
\end{align}
If $r_2 = 1$ and $\alpha_1 \leq \alpha_2,$ then $r_1 = 1,2,...,\alpha_1-1$ and $b_1 = 0$ for all values of $r_1$. Thus,
\begin{align}
\{\alpha_1 - r_1 + b_1 \mid \mathbf{r} \in \varphi^{-1}(\hat{\mathbf{r}})\} = \{\alpha_1 - r_1 \mid r_1 = 1,2,...,\alpha_1-1\} = \{1,2,...,\alpha_1 - 1\}. 
\end{align}
If $r_2 = 1$ and $\alpha_1 > \alpha_2$, then $r_1 \in [\alpha_1] \setminus \{\alpha_2\}$, $b_1 = 0$ if $r_1 < \alpha_2$, and $b_1 = 1$ if $r_1 > \alpha_2$. Thus,
\begin{align}
\{\alpha_1 &- r_1 + b_1 \mid \mathbf{r} \in \varphi^{-1}(\hat{\mathbf{r}})\} \\
&= \{\alpha_1 - r_1 \mid r_1 = 1,2,...,\alpha_2 - 1\} \cup \{\alpha_1 - r_1 + 1 \mid r_1 = \alpha_2 + 1,\alpha_2 + 2,...,\alpha_1\} \\
&= \{\alpha_1 - \alpha_2 + 1, \alpha_1 -\alpha_2 + 2, ..., \alpha_1 - 1\} \cup \{1,2,...,\alpha_1 - \alpha_2\} \\
&= \{1,2,...,\alpha_1-1\}.
\end{align}
Therefore $p_\alpha(q) = q[\alpha_1-1]_qp_{\hat{\alpha}}(q)$ as desired.
\end{proof}

\begin{theorem}
\label{BPA path theorem}
With the notation above, we have 
\begin{equation}
X_{\inc(P_n)}(\mathbf{x}, q) = \sum_{\alpha \vDash n} \sum_{A \,\in\, \injDecArray_{P_n}(\alpha)} q^{\inv_{P_n}(A)} e_{\text{\rm sort}(\alpha)}(\mathbf{x}).
\end{equation}
\end{theorem}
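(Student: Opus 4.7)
The plan is to derive this identity directly from two already-established ingredients in the excerpt: the Shareshian--Wachs formula (Theorem~\ref{pathSWqFormulaThm}) and the evaluation of the inner sum in Lemma~\ref{pathInjDecArrayqInv}. The strategy is to rewrite both sides as sums indexed by compositions of $n$, observe that the coefficients of $e_{\sort(\alpha)}(\mathbf{x})$ on the two sides differ only in where a distinguished factor sits inside $\alpha$, and then reconcile them via the reversal involution on compositions.

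First I would apply Lemma~\ref{pathInjDecArrayqInv} to rewrite the right-hand side of the target identity as $\sum_{\alpha \vDash n} q^{\ell(\alpha)-1}[\alpha_1]_q\prod_{i=2}^{\ell(\alpha)}[\alpha_i-1]_q \, e_{\sort(\alpha)}(\mathbf{x})$. On the other hand, Theorem~\ref{pathSWqFormulaThm} expresses $X_{\inc(P_n)}(\mathbf{x},q)$ as $\sum_{\alpha \vDash n} q^{\ell(\alpha)-1}[\alpha_{\ell(\alpha)}]_q\prod_{i=1}^{\ell(\alpha)-1}[\alpha_i-1]_q \, e_{\sort(\alpha)}(\mathbf{x})$. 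Thus the two sums have the same shape; they differ only in whether the unshifted factor $[\alpha_j]_q$ (rather than the shifted $[\alpha_j - 1]_q$) is attached to the first or to the last part of $\alpha$.

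Second, I would invoke the reversal map $\alpha = (\alpha_1,\ldots,\alpha_\ell) \mapsto \alpha^R = (\alpha_\ell,\ldots,\alpha_1)$, which is an involution on the set of compositions of $n$. It preserves the length $\ell(\alpha)$ and the underlying multiset of parts, hence preserves $\sort(\alpha)$ and the symmetric function $e_{\sort(\alpha)}(\mathbf{x})$. Since $(\alpha^R)_1 = \alpha_{\ell(\alpha)}$ and the multisets $\{(\alpha^R)_i - 1 : 2 \le i \le \ell\}$ and $\{\alpha_j - 1 : 1 \le j \le \ell - 1\}$ coincide, substituting $\alpha \mapsto \alpha^R$ in the Shareshian--Wachs sum transforms its summand into precisely $q^{\ell(\alpha)-1}[\alpha_1]_q\prod_{i=2}^{\ell(\alpha)}[\alpha_i-1]_q \, e_{\sort(\alpha)}(\mathbf{x})$. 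Matching term by term proves the identity.

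There is no real obstacle here: both ingredients are in place and the matching reduces to a one-line bijection on compositions. The only care required is bookkeeping the indices under reversal and observing that the shifted product $\prod_{j}[\alpha_j - 1]_q$ depends only on the multiset of parts of $\alpha$ after excluding the distinguished position, so that once reversal moves the unshifted factor from the last to the first slot the two expressions agree identically summand by summand.
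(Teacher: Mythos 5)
Your proposal is correct and follows essentially the same route as the paper: both start from Theorem~\ref{pathSWqFormulaThm}, re-index the sum so that the unshifted $q$-integer factor is attached to $\alpha_1$ rather than $\alpha_{\ell(\alpha)}$, and then apply Lemma~\ref{pathInjDecArrayqInv} to recognize that coefficient as $\sum_{A \in \injDecArray_{P_n}(\alpha)} q^{\inv_{P_n}(A)}$. The only difference is one of presentation: the paper performs the re-indexing silently in a single displayed equality, while you make the underlying bijection (reversal of compositions) explicit and justify why it preserves $\ell(\alpha)$, $\sort(\alpha)$, and the multiset of shifted parts, which is a small but welcome clarification of the same argument.
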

\begin{proof}
From Theorem \ref{pathSWqFormulaThm},
\begin{equation}
X_{\inc(P_n)}(\mathbf{x}, q) = \sum_{\alpha \vDash n} q^{\ell(\alpha) - 1} [\alpha_{\ell(\alpha)}]_q \prod_{i=1}^{\ell(\alpha) - 1} [\alpha_i - 1]_q e_{\text{\rm sort}(\alpha)}(\mathbf{x}).
\end{equation}
Then by Lemma~\ref{pathInjDecArrayqInv}, 
\begin{equation}
X_{\inc(P_n)}(\mathbf{x}, q) = \sum_{\alpha \vDash n} \sum_{A \,\in\, \injDecArray_{P_n}(\alpha)} q^{\inv_{P_n}(A)} e_{\text{\rm sort}(\alpha)}(\mathbf{x}).
\end{equation}
\end{proof}

\begin{theorem}[Theorem~\ref{dumbbellqPositivityTheorem}]
Let $P_n$ be the natural unit interval order on $[n]$ such that $\inc(P_n)$ is a path. Then for all partitions $\lambda \vdash n$,
\begin{equation}
c_\lambda^{P_n}(q) = \sum_{T\, \in\, \decSYT_{P_n}(\lambda)} q^{\inv_{P_n}(T)}.
\end{equation}
\end{theorem}
\begin{proof}
Recall from Definition~\ref{def pow tabx} and Lemma~\ref{lem tab injective} that $\decSYT_{P_n}(\lambda)$ is the image of $\sqcup_{\sort(\alpha) = \lambda} \injDecArray_{P_n}(\alpha)$ under the bijection $\tab$. Thus, by Theorem~\ref{BPA path theorem}, 
\begin{equation}
X_{\inc(P_n)}(\mathbf{x}, q) = \sum_{\alpha\, \vDash\, n} \sum_{A\, \in\, \injDecArray_{P_n}(\alpha)} q^{\inv_{P_n}(A)} e_{\sort(\alpha)}(\mathbf{x}) = \sum_{\lambda\, \vdash\, n} \left(\sum_{T\, \in\, \decSYT_{P_n}(\lambda)} q^{\inv_{P_n}(T)} \right) e_{\lambda}(\mathbf{x}),
\end{equation} 
as desired.
\end{proof}
A natural goal is to extend Theorem \ref{dumbbellqPositivityTheorem} to $K$-chains.
Towards this goal, computational experiments suggest that $\keySYT_P(\lambda) = \decSYT_P(\lambda)$ for all $\lambda \vdash |P|$ when $\inc(P)$ is two complete graphs connected by a path. In the language of Definition \ref{def K chain}, this gives the following conjecture.
\begin{conjecture} 
\label{conj barbell powerful}
Let $a, b \geq 2$ be positive integers. Then for $\gamma = (a,2,2,...,2, b)$, 
\begin{equation}
X_{K_\gamma}(\mathbf{x}, q) = \sum_{\lambda \vdash |K_\gamma|} \sum_{T \in\, \decSYT_P(\lambda)} q^{\inv_{K_\gamma}(T)} e_\lambda(\mathbf{x}).
\end{equation}
\end{conjecture}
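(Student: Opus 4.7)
The plan is to mimic the proof of Theorem~\ref{dumbbellqPositivityTheorem}: combine an explicit formula for $X_{K_\gamma}(\mathbf{x}, q)$, a parametrization of bijective powerful arrays $\injDecArray_{P_\gamma}(\beta)$ refining the peak-vector idea, and a matching of the two $q$-generating functions. For $\gamma = (a, 2, \ldots, 2, b)$ with $l$ parts the middle factors $\gamma_i - 2$ vanish, so Theorem~\ref{TomFormulaThm} simplifies to
\begin{equation}
X_{K_\gamma}(\mathbf{x}, q) = [a-2]_q! \, [b-1]_q! \sum_{\alpha\, \in\, A_\gamma} [\alpha_1]_q \prod_{i=2}^{l} q^{m_i} [|\alpha_i - (\gamma_{i-1}-1)|]_q \, e_{\sort(\alpha)}(\mathbf{x}).
\end{equation}
By Lemma~\ref{lem tab injective} the conjecture reduces to showing that, after grouping $\beta$ by $\sort(\beta) = \lambda$, the right-hand side equals $\sum_\beta \sum_{A\, \in\, \injDecArray_{P_\gamma}(\beta)} q^{\inv_{P_\gamma}(A)}$.

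Next I would analyze the rows of a bijective powerful array of $P_\gamma$. A chain in $P_\gamma$ decomposes uniquely as an optional element of the initial antichain $\{1, \ldots, a-1\}$, then an increasing gap-two walk through the central path $\{a, a+1, \ldots, a+l-2\}$, then an optional element of the terminal antichain $\{a+l-1, \ldots, a+b+l-3\}$. Consequently each row of such an array is a powersum word whose middle portion is governed by Lemma~\ref{pathOrientationLemma} and which carries at most one clique element at each end. The natural generalization of Lemma~\ref{pathPeakLemma} augments each row's peak index with a pair of labels indicating which clique elements are attached, and the hope is to prove an analogue of Lemma~\ref{pathPeakInvLemma} expressing $\inv_{P_\gamma}(A)$ in terms of this augmented data. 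The factor $[\alpha_1]_q$ should track the peak within the first row and the factors $q^{m_i}$ should account for cross-row inversions, exactly as in the path case.

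The main obstacle is the loss of the content-rigidity of Lemma~\ref{pathContentLemma}. In the path case the content of each row is forced by $\beta$ so rows interact only through a simple boundary term, but in the barbell case the $a-1$ non-shared elements of the initial clique and the $b-1$ non-shared elements of the terminal clique may be allocated among several candidate rows whose path-segments begin at the shared vertex. I expect that, summed over admissible placements weighted by $q^{\inv_{P_\gamma}}$, these choices collapse exactly to the $[a-2]_q!$ and $[b-1]_q!$ factors of Tom's formula, while the peak-vector sum over central rows produces the remaining $[\alpha_1]_q \prod_{i=2}^{l} q^{m_i}[|\alpha_i - (\gamma_{i-1}-1)|]_q$ in the spirit of Lemma~\ref{pathInjDecArrayqInv}. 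Turning this expectation into a proof will require either an inductive argument that peels off clique rows before applying the peak-vector bijection, or a direct combinatorial correspondence between augmented peak vectors and elements of $A_\gamma$ equipped with two $q$-factorial chain decompositions. Keeping track of which clique elements attach to which rows without over-counting, and matching the resulting $q$-statistic to Tom's $m_i$ exponents, is where I expect essentially all of the work to lie.
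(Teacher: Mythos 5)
This statement is an open \emph{conjecture} in the paper; the paper offers no proof, only the remark that computational experiments support it. So there is no paper argument to compare against, and the question is simply whether your sketch closes the gap. It does not — and, to your credit, you say so yourself. Phrases like ``I expect,'' ``the hope is to prove,'' and ``this is where I expect essentially all of the work to lie'' are accurate: what you have is a program, not a proof.

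That said, the program is the natural one and is worth recording carefully. A few specific points to be aware of if you pursue it. First, the content-rigidity of Lemma~\ref{pathContentLemma} fails more severely than your sketch suggests: in the barbell case the clique vertices $\{1,\dots,a-1\}$ and $\{a+l-1,\dots,a+b+l-3\}$ can be distributed across rows in ways that also shift the path-segment boundaries, so the rows of a bijective powerful array are not even parametrized by a clean ``peak index plus two attachment labels'' datum independently of each other; the admissible allocations are globally constrained by condition (3) of Definition~\ref{def dec array}. Second, your claimed one-to-one match between shapes $\beta$ with $\injDecArray_{P_\gamma}(\beta)\neq\emptyset$ and the compositions $\alpha\in A_\gamma$ of Tom's formula is not automatic: several $\beta$ with the same $\sort(\beta)$ can contribute, and Tom's $A_\gamma$ has its own admissibility conditions, so one really needs a weight-preserving bijection between powerful arrays and Tom's indexing set, not just an equality of $q$-polynomials shape-by-shape. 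Third, minor but worth fixing: the sets of entries appearing in a row of a powerful array are contents of powersum words, which in the path case are contiguous intervals (Lemma~\ref{pathOrientationLemma}), not chains of $P_\gamma$; your phrase ``a chain in $P_\gamma$ decomposes'' conflates the two and would need to be restated in terms of powersum-word contents for $P_\gamma$. None of these are fatal to the strategy, but none of them are resolved by your sketch either, so the conjecture remains open.
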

\noindent
See Figure~\ref{fig dumbbell} for the $K$-chain associated to $\gamma = (4,2,2,2,3)$.
For general $K$-chains, powerful $P$-tableaux do not serve as a combinatorial interpretation of $c_\lambda^P(q)$.

\begin{figure}[h]
\caption{The $K$-chain $K_{(4,2,2,2,3)}$}
\label{fig dumbbell}
% https://q.uiver.app/#q=WzAsOSxbMCwwLCIxIl0sWzEsMCwiMiJdLFsyLDAsIjMiXSxbMywwLCI0Il0sWzQsMCwiNSJdLFs1LDAsIjYiXSxbNiwwLCI3Il0sWzcsMCwiOCJdLFs4LDAsIjkiXSxbMCwxLCIiLDEseyJzdHlsZSI6eyJoZWFkIjp7Im5hbWUiOiJub25lIn19fV0sWzEsMiwiIiwxLHsic3R5bGUiOnsiaGVhZCI6eyJuYW1lIjoibm9uZSJ9fX1dLFsyLDMsIiIsMSx7InN0eWxlIjp7ImhlYWQiOnsibmFtZSI6Im5vbmUifX19XSxbMywwLCIiLDEseyJjdXJ2ZSI6Mywic3R5bGUiOnsiaGVhZCI6eyJuYW1lIjoibm9uZSJ9fX1dLFswLDIsIiIsMSx7ImN1cnZlIjotMSwic3R5bGUiOnsiaGVhZCI6eyJuYW1lIjoibm9uZSJ9fX1dLFsxLDMsIiIsMSx7ImN1cnZlIjotMSwic3R5bGUiOnsiaGVhZCI6eyJuYW1lIjoibm9uZSJ9fX1dLFszLDQsIiIsMSx7InN0eWxlIjp7ImhlYWQiOnsibmFtZSI6Im5vbmUifX19XSxbNCw1LCIiLDEseyJzdHlsZSI6eyJoZWFkIjp7Im5hbWUiOiJub25lIn19fV0sWzUsNiwiIiwxLHsic3R5bGUiOnsiaGVhZCI6eyJuYW1lIjoibm9uZSJ9fX1dLFs2LDcsIiIsMSx7InN0eWxlIjp7ImhlYWQiOnsibmFtZSI6Im5vbmUifX19XSxbNyw4LCIiLDEseyJzdHlsZSI6eyJoZWFkIjp7Im5hbWUiOiJub25lIn19fV0sWzYsOCwiIiwxLHsiY3VydmUiOi0xLCJzdHlsZSI6eyJoZWFkIjp7Im5hbWUiOiJub25lIn19fV1d
\[\begin{tikzcd}
	1 & 2 & 3 & 4 & 5 & 6 & 7 & 8 & 9
	\arrow[no head, from=1-1, to=1-2]
	\arrow[curve={height=-8pt}, no head, from=1-1, to=1-3]
	\arrow[no head, from=1-2, to=1-3]
	\arrow[curve={height=-8pt}, no head, from=1-2, to=1-4]
	\arrow[no head, from=1-3, to=1-4]
	\arrow[curve={height=20pt}, no head, from=1-4, to=1-1]
	\arrow[no head, from=1-4, to=1-5]
	\arrow[no head, from=1-5, to=1-6]
	\arrow[no head, from=1-6, to=1-7]
	\arrow[no head, from=1-7, to=1-8]
	\arrow[curve={height=-8pt}, no head, from=1-7, to=1-9]
	\arrow[no head, from=1-8, to=1-9]
\end{tikzcd}\]
\end{figure}
\section*{Acknowledgements}
The author would like to thank his advisor Sara Billey for carefully reading this manuscript and offering many helpful suggestions. The author would also like to thank Jonah Blasiak, Holden Eriksson, Jim Haglund, Nathan Lesnevich, Ricky Liu, Martha Precup, John Shareshian, and Foster Tom for helpful conversations.

In a previous version of this paper, we conjectured that powerful $P$-tableaux give an upper bound to $c_\lambda^P(q)$. The author is very grateful to an anonymous reviewer for finding a counterexample to this conjecture.

\appendix

\section{}
\label{Appendix}
\ytableausetup{boxsize= 1em}

In this appendix, we apply the results of this paper to the poset from Example~\ref{ex poset}. Additionally, we give an example of a natural unit interval order $P$ for which the polynomial $\sum_{T \in \decSYT_P(\lambda)} q^{\inv_P(T)} - c_\lambda^P(q)$ does not have non-negative coefficients, giving a counterexample to a conjecture from a previous version of this paper.

\subsection{Examples and Data}
\label{append data}
Let $P$ be the poset from Example~\ref{ex poset}, shown again in Figure~\ref{fig 5 elem poset}. In the language of reverse Hessenberg functions, we have $P = P_{(0, 0, 1, 1, 3)}.$ The chromatic symmetric function $X_{\inc(P)}(\mathbf{x}, q)$ is given by
\begin{align}
X_{\inc(P)}(\mathbf{x}, q) &=
\left(q^{5} + 4 q^{4} + 7 q^{3} + 7 q^{2} + 4 q + 1\right) s_{1,1,1,1,1}(\bx) \\
&+ \left(2 q^{4} + 6 q^{3} + 6 q^{2} + 2 q\right) s_{2,1,1,1}(\bx) + \left(2 q^{3} + 2 q^{2}\right) s_{2,2,1}(\bx) + \left(q^{3} + q^{2}\right) s_{3,1,1}(\bx) \\
&=
\left(q^{3} + q^{2}\right) e_{3,1,1}(\mathbf{x}) + \left(q^{3} + q^{2}\right) e_{3,2}(\mathbf{x}) + \left(2 q^{4} + 3 q^{3} + 3 q^{2} + 2 q\right) e_{4,1}(\mathbf{x})\\
& + \left(q^{5} + 2 q^{4} + 2 q^{3} + 2 q^{2} + 2 q + 1\right) e_{5}(\mathbf{x}).
\end{align}
For each partition $\lambda \vdash 5$, the coefficient $c_{\lambda}^P(q)$ of $e_\lambda(\bx)$ is bounded above by the set of powerful standard $P$-tableaux, as defined in Definition~\ref{def pow tabx}.
Figure~\ref{fig 5 elem pow tabs} shows all powerful standard $P$-tableaux, sorted by shape and inversion number, labeled with 'Yes' or 'No' to indicate whether the $P$-tableau is a strong $P$-tableau. Additionally, each $P$-tableau $T$ is labeled with $q^{\inv_P(T)}$. 
\begin{figure}[h]
\caption{A $5$-element poset}
\label{fig 5 elem poset}
 % https://q.uiver.app/#q=WzAsNSxbMSwyLCIxIl0sWzEsMSwiMyJdLFsxLDAsIjUiXSxbMCwxLCIyIl0sWzIsMSwiNCJdLFswLDFdLFsxLDJdLFszLDJdLFswLDRdXQ==
\[P = \begin{tikzcd}
	& 5 \\
	2 & 3 & 4 \\
	& 1
	\arrow[from=2-1, to=1-2]
	\arrow[from=2-2, to=1-2]
	\arrow[from=3-2, to=2-2]
	\arrow[from=3-2, to=2-3]
\end{tikzcd}\]
\end{figure}
\begin{figure}
\caption{Powerful $P$-tableaux. Each $P$-tableau $T$ is labeled with 'Yes' if $T$ is a strong $P$-tableau and 'No' if $T$ is not a strong $P$-tableau. Additionally, each $P$-tableau $T$ is labeled with $q^{\inv_P(T)}.$}
\label{fig 5 elem pow tabs}
\begin{center}
\begin{tabular}{p{3cm} p{3cm} p{3cm} p{3cm}}
\\
$\begin{ytableau}
1 & 2 & 3 & 4 & 5 \\ 
\end{ytableau}$&
$\begin{ytableau}
1 & 2 & 3 & 5 & 4 \\ 
\end{ytableau}$&
$\begin{ytableau}
1 & 3 & 2 & 4 & 5 \\ 
\end{ytableau}$&
$\begin{ytableau}
1 & 2 & 5 & 4 & 3 \\ 
\end{ytableau}$\\
Yes, $q^0$ &\textcolor{red}{No}, $q^1$ &\textcolor{red}{No}, $q^1$ &\textcolor{red}{No}, $q^2$ \\[12pt]
$\begin{ytableau}
1 & 3 & 2 & 5 & 4 \\ 
\end{ytableau}$&
$\begin{ytableau}
1 & 3 & 5 & 4 & 2 \\ 
\end{ytableau}$&
$\begin{ytableau}
1 & 5 & 4 & 2 & 3 \\ 
\end{ytableau}$&
$\begin{ytableau}
1 & 5 & 4 & 3 & 2 \\ 
\end{ytableau}$\\
\textcolor{red}{No}, $q^2$ &\textcolor{red}{No}, $q^3$ &\textcolor{red}{No}, $q^3$ &\textcolor{red}{No}, $q^4$ \\[12pt]
$\begin{ytableau}
3 & 5 & 4 & 2 & 1 \\ 
\end{ytableau}$&
$\begin{ytableau}
5 & 4 & 3 & 2 & 1 \\ 
\end{ytableau}$&
$\begin{ytableau}
1 & 2 & 4 & 5 \\ 
3 \\ 
\end{ytableau}$&
$\begin{ytableau}
1 & 2 & 3 & 4 \\ 
5 \\ 
\end{ytableau}$\\
\textcolor{red}{No}, $q^4$ &Yes, $q^5$ &Yes, $q^1$ &Yes, $q^1$ \\[12pt]
$\begin{ytableau}
1 & 2 & 5 & 4 \\ 
3 \\ 
\end{ytableau}$&
$\begin{ytableau}
1 & 2 & 4 & 3 \\ 
5 \\ 
\end{ytableau}$&
$\begin{ytableau}
1 & 3 & 2 & 4 \\ 
5 \\ 
\end{ytableau}$&
$\begin{ytableau}
1 & 5 & 4 & 2 \\ 
3 \\ 
\end{ytableau}$\\
\textcolor{red}{No}, $q^2$ &Yes, $q^2$ &\textcolor{red}{No}, $q^2$ &\textcolor{red}{No}, $q^3$ \\[12pt]
$\begin{ytableau}
1 & 3 & 4 & 2 \\ 
5 \\ 
\end{ytableau}$&
$\begin{ytableau}
1 & 4 & 2 & 3 \\ 
5 \\ 
\end{ytableau}$&
$\begin{ytableau}
1 & 4 & 3 & 2 \\ 
5 \\ 
\end{ytableau}$&
$\begin{ytableau}
3 & 4 & 2 & 1 \\ 
5 \\ 
\end{ytableau}$\\
\textcolor{red}{No}, $q^3$ &Yes, $q^3$ &Yes, $q^4$ &Yes, $q^4$ \\[12pt]
$\begin{ytableau}
1 & 2 & 3 \\ 
4 & 5 \\ 
\end{ytableau}$&
$\begin{ytableau}
2 & 1 & 3 \\ 
5 & 4 \\ 
\end{ytableau}$&
$\begin{ytableau}
1 & 3 & 2 \\ 
4 & 5 \\ 
\end{ytableau}$&
$\begin{ytableau}
1 & 2 & 4 \\ 
3 \\ 
5 \\ 
\end{ytableau}$\\
Yes, $q^2$ &Yes, $q^3$ & \textcolor{red}{No}, $q^3$ &Yes, $q^2$ \\[12pt]
$\begin{ytableau}
1 & 4 & 2 \\ 
3 \\ 
5 \\ 
\end{ytableau}$\\
Yes, $q^3$ &
\end{tabular}
\end{center}
\end{figure}
Observe that, for the hook-shape partitions $\lambda = (3,1,1), (4,1)$, and $(5)$, we have $c_\lambda^P(q) = \sum_{T \in \decSYT_P(\lambda)} q^{\inv_P(T)}$, which is consistent with Lemma~\ref{hookDecomp}.
Note that the greedy partition $\lambda^{gr}(P)$ of $P$, as defined in Lemma~\ref{greedyPartThm}, is $(3,1,1)$. Hence, Theorem~\ref{dominantPositivity} shows that $c_{\lambda}^P(q) = \sum_{T \in \strongSYT_P(\lambda)}q^{\inv_P(T)}$ for $\lambda = (3,1,1)$ and $\lambda = (3,2)$. As $T = \small{\begin{ytableau} 1 & 3 & 2 \\ 4 & 5 \end{ytableau}}$ is a powerful $P$-tableau of shape $(3, 2)$ that is not a strong $P$-tableau, $c_{3,2}^P(q) \neq \sum_{T \in \decSSYT_P(3,2)} q^{\inv_P(T)}$. Furthermore, $T$ is the only powerful $P$-tableau of shape $(3,2)$ which does not satisfy the definition of $K_P(n-2,2)$ given in Definition~\ref{def KP}.

\subsection{Counterexample to Powerful $P$-Tableaux Upper Bound}
\label{append counterexamp}

In a previous version of this paper, we conjectured that powerful $P$-tableaux serve as an upper bound for the polynomial $e$-coefficients $c_\lambda^P(q)$ when $P$ is a natural unit interval order. A counterexample to this conjecture was pointed out by an anonymous reviewer. We restate the false conjecture and give details of the counterexample.

\begin{falseconjecture}
Let $P$ be a natural unit interval order on $n$ elements, and let $\lambda \vdash n$. Then the polynomial
\begin{equation}
\sum_{T\, \in\, \decSYT_P(\lambda)} q^{\inv_P(T)} - c_\lambda^P(q)
\end{equation}
has non-negative coefficients.
\end{falseconjecture} 

The smallest counterexample to this conjecture corresponds to the reverse Hessenberg function $\bm = (0, 0, 1, 1, 2, 3, 4, 6) \in \mathbb{E}_8$ and $\lambda = (4,4)$. Figure~\ref{fig counterexample graph} shows $\inc(P_\bm)$. The polynomial $e$-coefficient $c_{4,4}^{P_\bm}(q)$ is $2q^8 + 6q^7 + 8q^6 + 8q^5 + 6q^4 + 2q^3$, whereas $\sum_{T \in \decSYT_{P_\bm}(4,4)} q^{\inv_{P_\bm}(T)} = 2q^8 + 7q^7 + 9q^6 + 7q^5 + 6q^4 + 2q^3$. Hence, the coefficient of $q^5$ in $\sum_{T \in \decSYT_{P_\bm}(4,4)} q^{\inv_{P_\bm}(T)} - c_{4,4}^{P_\bm}(q)$ is negative. 

\begin{figure}[h!]
\caption{$\inc(P_{(0,0,1,1,2,3,4,6)})$}
\label{fig counterexample graph}
% https://q.uiver.app/#q=WzAsOCxbMCwwLCIxIl0sWzEsMCwiMiJdLFsyLDAsIjMiXSxbMywwLCI0Il0sWzQsMCwiNSJdLFs1LDAsIjYiXSxbNiwwLCI3Il0sWzcsMCwiOCJdLFswLDEsIiIsMCx7InN0eWxlIjp7ImhlYWQiOnsibmFtZSI6Im5vbmUifX19XSxbMSwyLCIiLDAseyJzdHlsZSI6eyJoZWFkIjp7Im5hbWUiOiJub25lIn19fV0sWzIsMywiIiwwLHsic3R5bGUiOnsiaGVhZCI6eyJuYW1lIjoibm9uZSJ9fX1dLFszLDQsIiIsMCx7InN0eWxlIjp7ImhlYWQiOnsibmFtZSI6Im5vbmUifX19XSxbNiw3LCIiLDAseyJzdHlsZSI6eyJoZWFkIjp7Im5hbWUiOiJub25lIn19fV0sWzQsNSwiIiwwLHsic3R5bGUiOnsiaGVhZCI6eyJuYW1lIjoibm9uZSJ9fX1dLFs1LDYsIiIsMCx7InN0eWxlIjp7ImhlYWQiOnsibmFtZSI6Im5vbmUifX19XSxbMSwzLCIiLDEseyJjdXJ2ZSI6LTIsInN0eWxlIjp7ImhlYWQiOnsibmFtZSI6Im5vbmUifX19XSxbMiw0LCIiLDEseyJjdXJ2ZSI6LTIsInN0eWxlIjp7ImhlYWQiOnsibmFtZSI6Im5vbmUifX19XSxbMyw1LCIiLDEseyJjdXJ2ZSI6LTIsInN0eWxlIjp7ImhlYWQiOnsibmFtZSI6Im5vbmUifX19XSxbNCw2LCIiLDEseyJjdXJ2ZSI6LTIsInN0eWxlIjp7ImhlYWQiOnsibmFtZSI6Im5vbmUifX19XV0=
\[\begin{tikzcd}[ampersand replacement=\&]
	1 \& 2 \& 3 \& 4 \& 5 \& 6 \& 7 \& 8
	\arrow[no head, from=1-1, to=1-2]
	\arrow[no head, from=1-2, to=1-3]
	\arrow[curve={height=-12pt}, no head, from=1-2, to=1-4]
	\arrow[no head, from=1-3, to=1-4]
	\arrow[curve={height=-12pt}, no head, from=1-3, to=1-5]
	\arrow[no head, from=1-4, to=1-5]
	\arrow[curve={height=-12pt}, no head, from=1-4, to=1-6]
	\arrow[no head, from=1-5, to=1-6]
	\arrow[curve={height=-12pt}, no head, from=1-5, to=1-7]
	\arrow[no head, from=1-6, to=1-7]
	\arrow[no head, from=1-7, to=1-8]
\end{tikzcd}\]
\end{figure}

\clearpage
\bibliographystyle{plain}
\bibliography{mycitations}

\end{document}